\newtheorem{theorem}{Theorem}[section]
\newtheorem{corollary}[theorem]{Corollary}
\newtheorem{assumption}[theorem]{Assumption}
\newtheorem{hypothesis}[theorem]{Hypothesis}
\newtheorem{proposition}[theorem]{Proposition}
\newtheorem{lemma}[theorem]{Lemma}
\newtheorem{definition}[theorem]{Definition}
\newtheorem{remark}[theorem]{Remark}
\def\cA{\mathcal{A}}
\def\cB{\mathcal{B}}
\def\cE{\mathcal{E}}
\def\cF{\mathcal{F}}
\def\cG{\mathcal{G}}
\def\cI{\mathcal{I}}
\def\cJ{\mathcal{J}}
\def\cL{\mathcal{L}}
\def\cM{\mathcal{M}}
\def\cP{\mathcal{P}}
\def\cR{\mathcal{R}}
\def\cS{\mathcal{S}}
\def\cV{\mathcal{V}}
\def\bE{\mathbb{E}}
\def\bN{\mathbb{N}}
\def\bP{\mathbb{P}}
\def\bR{\mathbb{R}}
\def\wP{\widetilde{\cP}}
\def\Pb{\widetilde{\cP}_b}
\def\e{\varepsilon}
\begin{document}

\title{Series expansions for SPDEs with \\symmetric $\alpha$-stable L\'evy noise}

\author{Raluca M. Balan\footnote{Corresponding author. University of Ottawa, Department of Mathematics and Statistics, 150 Louis Pasteur Private, Ottawa, Ontario, K1G 0P8, Canada. E-mail address: rbalan@uottawa.ca.} \footnote{Research supported by a grant from the Natural Sciences and Engineering Research Council of Canada.}
\and
Juan J. Jim\'enez\footnote{University of Ottawa, Department of Mathematics and Statistics, 150 Louis Pasteur Private, Ottawa, Ontario, K1G 0P8, Canada. E-mail address: jjime088@uottawa.ca.}
}

\date{September 20, 2024}
\maketitle

\begin{abstract}
\noindent In this article, we examine a stochastic partial differential equation (SPDE) driven 
by a symmetric $\alpha$-stable (S$\alpha$S) L\'evy noise, that is multiplied by a linear function
 $\sigma(u)=u$ of the solution. The solution is interpreted in the mild sense. For this models, in the case of the Gaussian noise, the solution has an explicit Wiener chaos expansion, and is studied using tools from Malliavin calculus. These tools cannot be used for an infinite-variance L\'evy noise. In this article, we provide sufficient conditions for the existence of a solution, and we give an explicit series expansion of this solution. To achieve this, we use the multiple stable integrals, which were developed in \cite{ST90,ST91}, and originate from the LePage series representation of the noise. To give a meaning to the stochastic integral which appears in the definition of solution, we embed the space-time L\'evy noise into a L\'evy basis, and use the stochastic integration theory \cite{BJ83,bichteler02} with respect to this object, as in other studies of SPDEs with heavy-tailed noise: \cite{chong17-JTP,chong17-SPA,CDH19}. As applications, we consider the heat and wave equations with linear multiplicative noise, also called the parabolic/hyperbolic Anderson models. 
\end{abstract}

\noindent {\em MSC 2020:} Primary 60H15; Secondary 60G60, 60G52

\vspace{1mm}

\noindent {\em Keywords:} stochastic partial differential equations, random fields, $\alpha$-stable random measure, L\'evy basis

\pagebreak

\tableofcontents

\section{Introduction}

In this article, we study the stochastic partial differential equation (SPDE):
\begin{equation}
\label{eq}
\cL u(t,x)=u(t,x) \dot{Z}(t,x), t \in [0,T], x \in \bR^d
\end{equation}
with constant initial condition  $1$, where $\cL$ is a second-order pseudo-differential operator and 
$\dot{Z}$ is the formal derivative associated with a {\em symmetric $\alpha$-stable (S$\alpha$S) random measure} $Z$ on $[0,T] \times \bR^d$, with stability index $\alpha \in (0,2)$ and control measure $m$ given by:
\begin{equation}
\label{def-m}
m(B)=C_{\alpha}^{-1}{\rm Leb}(B) \quad \mbox{where} \quad C_{\alpha}=\left(\int_{0}^{\infty}\frac{\sin x}{x^{\alpha}} dx\right)^{-1}.
\end{equation}
This means that  $Z=\{Z(B)\}_{B \in \cB_b}$ is a collection of random variables defined on a complete probability space $(\Omega,\cF,\bP)$ and indexed by the class $\cB_b$ of bounded Borel sets in $[0,T]\times \bR^d$, which has the following properties:\\
{\em (i)} for any disjoint sets $B_1,\ldots,B_n$ in $\cB_b$, $Z(B_1),\ldots,Z(B_n)$ are independent;\\
{\em (ii)} for any disjoint sets $(B_i)_{i\geq 1}$ in $\cB_b$ with $\bigcup_{i\geq 1}B_i \in \cB_0$,
$$Z\Big(\bigcup_{i\geq 1}B_i\Big)=\sum_{i\geq 1}Z(B_i) \quad \mbox{a.s.};$$ 
{\em (iii)} for any $B \in \cB_b$, $Z(B)$ has an $\alpha$-stable $S_{\alpha}(m(B)^{1/\alpha},0,0)$ distribution, i.e.
\[
\bE[e^{iuZ(B)}]=e^{-m(B) |u|^{\alpha}} \quad \mbox{for any $u \in \bR$}.
\]
We refer the reader to Chapter 3 of \cite{ST94} for more properties of $\alpha$-stable random measures. 

\medskip

More generally, if instead of (iii), one assumes that $Z(B)$ has an infinitely divisible (ID) distribution, then  $Z$ is an {\em ID independently scattered random measure}, an object which was introduced and studied in \cite{RR89}, and shares many properties with L\'evy processes.

\medskip

One way to construct a S$\alpha$S random measure $Z$ is by setting: (see \cite{B14})
\begin{equation}
\label{Poisson-repr}
Z(B)=\int_{B \times \{0<|z|\leq 1\}}z \widehat{N}(dt,dx,dz)+\int_{B \times \{|z|> 1\}}z N(dt,dx,dz),
\end{equation}
where $N$ is a Poisson random measure $N$ on $[0,T] \times \bR^d \times \bR_0$ of intensity $\mu(dt,dx,dz)=dtdx \nu_{\alpha}(dz)$, $\bR_0=\bR \verb2\2\{0\}$, $\widehat{N}(F)=N(F)-\mu(F)$ is the compensated version of $N$, and
\begin{equation}
\label{def-nu-a}
\nu_{\alpha}(dz)=\frac{1}{2}\alpha|z|^{-\alpha-1}1_{\{|z|>0\}}dz.
\end{equation}

Another way is to use the {\em LePage series representation}: 
for any $B \in \cB_b$, 
\begin{equation}
\label{LePage1}
Z(B) =\sum_{i\geq 1}\e_i \Gamma_i^{-1/\alpha} \frac{1}{\psi(T_i,X_i)} 1_{B}(T_i,X_i) \quad \mbox{a.s.},
\end{equation}
where $(\e_i)_{i\geq 1}$ are i.i.d. Rademacher random variables, i.e.  $$\bP(\e_i=1)=\bP(\e_i=-1)=\frac{1}{2},$$ $\{\Gamma_i=\sum_{j=1}^{i}E_j, i\geq 1\}$ are the arrival times of a Poisson process on $\bR_{+}$ of intensity $1$ (with $(E_i)_{i\geq 1}$ i.i.d. exponential random variables of mean $1$), and
$\{(T_i,X_i)\}_{i\geq 1}$ are i.i.d. random variables on $[0,T] \times \bR^d$ with law $m_{\psi}(dt,dx)=\psi^{\alpha}(t,x)dtdx$, 
where $\psi:[0,T] \times \bR^d \to (0,\infty)$ is an arbitrary measurable function satisfying
\begin{equation}
\label{def-psi}
\int_0^T \int_{\bR^d}\psi^{\alpha}(t,x)dxdt=1.
\end{equation}
The sequences $(\e_i)_{i\geq 1}$, $(E_i)_{i\geq 1}$ and $\{(T_i,X_i)\}_{i\geq 1}$ are independent. Note that the series on the right-hand side of \eqref{LePage1} converges a.s. since $W_i=\frac{1}{\psi(T_i,X_i)} 1_{B}(T_i,X_i)$ are i.i.d. random variables with $\bE \left[ W_i^{\alpha} \right] <\infty$ (see Theorem 1.4.2 of \cite{ST94}).

\medskip

For simplicity, throughout this work we will use a weight function $\psi$ of the form:
\begin{equation}
\psi(t,x)=T^{-1/\alpha}\phi(x) \quad \mbox{for all $(t,x) \in [0,T] \times \bR^d$},
\end{equation}
where $\phi:\bR^d \to (0,\infty)$ is a measurable function such that $\int_{\bR^d} \phi^{\alpha}(x)dx=1$. 
Therefore, $(T_i)_{i \geq 1}$ are i.i.d. random variables with a uniform distribution on $[0,T]$, $(X_i)_{i\geq 1}$ are i.i.d. random vectors in $\bR^d$ with density $\phi^{\alpha}(x)$, and $(T_i)_{i \geq 1}$ and $(X_i)_{i \geq 1}$ are independent. The LePage series representation \eqref{LePage1} becomes: for any $B \in \cB_b$,
\[
Z(B)=T^{1/\alpha} \sum_{i\geq 1}\e_i \Gamma_i^{-1/\alpha} \frac{1}{\phi(X_i)} 1_{B}(T_i,X_i) \quad \mbox{a.s.}
\]

\medskip

A (mild) {\bf solution} of \eqref{eq} is a predictable random field $u=\{u(t,x);t\in [0,T],x\in \bR^d\}$ which satisfies the integral equation:
\begin{equation}
\label{int-eq}
u(t,x)=1+\int_0^t \int_{\bR^d}G_{t-s}(x-y)u(s,y)Z(ds,dy),
\end{equation}
where $G$ is the fundamental solution of the deterministic equation $\cL u=\delta_0$. 

\medskip

To give a meaning to the right-hand side of equation \eqref{int-eq}, we need a stochastic integral which can be defined for {\em random integrands}. This case is not discussed in \cite{ST94} or \cite{RR89}. It turns out that the theory of stochastic integration with respect to $\alpha$-stable random measures developed by the first author in \cite{B14} (as a multi-dimensional extension of the theory of \cite{GM83}) is too restrictive, and the best approach is to use the theory of stochastic integration with respect to $L^0$-random measures, which was introduced in \cite{BJ83} and was developed further in \cite{leb1,leb2,bichteler02,chong-kluppelberg15}. 
This theory was used in the literature for other studies of SPDEs with heavy-tailed noise, such as \cite{chong17-JTP,chong17-SPA,CDH19}. To implement this method, we need embed the S$\alpha$S random measure $Z$ into a more general process $\Lambda$ (called a {\em L\'evy basis}), indexed by subsets of $\Omega \times [0,T]\times \bR^d$. This will unavoidably increase the technical level of the paper, 
but the gain will be substantial. 
At the same time, we need to preserve the series representation \eqref{LePage1}, to define the multiple integrals with respect to $Z$. This delicate technical issue is addressed in Section \ref{subsection-construction} below, where we give an explicit construction of a L\'evy basis $\Lambda$ which allows us to achieve both goals. This construction uses as the source of randomness the three sequences $(\e_i)_{i\geq 1}$, $(\Gamma_i)_{i\geq 1}$ and $\{(T_i,X_i)\}_{i\geq 1}$ to define a Poisson random measure $N$, which in turn is used to define $\Lambda$, via its canonical decomposition. 
Specifically, $u(t,x)$ is represented as a series of {\em random multilinear forms} that depend only on $(\epsilon_i)_{i \geq 1}$, $(\Gamma_i)_{i \geq 1}$, $\{(T_i, X_i)\}_{i \geq 1}$, and $G$.

\medskip

As examples, we consider the case of the heat operator $\cL=\frac{\partial }{\partial t}-\frac{1}{2}\Delta$, for which
\[
G_t(x)=\frac{1}{2}\exp\left(-\frac{|x|^2}{2t}\right),
\]
and the case of the wave operator $\cL=\frac{\partial^2 }{\partial t^2}-\Delta$ in dimension $d\leq 2$, for which
\begin{align}
	G_t(x)=
	\begin{cases}
		\displaystyle \frac{1}{2}1_{\{|x|<t\}}                               & \text{if $d=1,$}\\[1em]
		\displaystyle \frac{1}{2\pi} \frac{1}{\sqrt{t^2-|x|^2}}1_{\{|x|<t\}} & \text{if $d=2.$}
	\end{cases}
\end{align} 
Here $|\cdot|$ is the Euclidean norm in $\bR^d$. These examples are referred in the literature as the {\em parabolic Anderson model} (PAM), respectively {\em the hyperbolic Anderson model} (HAM).
By convention, we let $G_t(x)=0$ if $t \leq 0$.

\medskip

The existence of a solution of (PAM) driven by L\'evy noise with 
positive jumps was proved in \cite{berger} using a different method than in the present paper. 
This method relies on the ``continuum directed polymer'' developed in \cite{berger-lacoin22}, and consists in solving the equation driven by the truncated noise with jumps that exceed a fixed value $a$, and then let $a\to 0$. A different truncation method was used in \cite{chong17-JTP,chong17-SPA,CDH19} for the {\em stochastic heat equation} (SHE) with general L\'evy noise multiplied by a Lipschitz function $\sigma(u)$. 
This method relies on first solving the equation driven by the noise truncated to the 
region $\{(x,z);|z|<Kh(x)\}$ for a suitable function  $h(x)$, up to a stopping time $\tau_K$, 
show that the solutions are consistent if $K<K'$, and finally paste together all these solutions. 
Unfortunately, this method does not yield uniqueness of the solution.
The same truncation method was used  in \cite{balan49} to show the existence of a 
solution of the {\em stochastic wave equation} (SWE) on $\mathbb{R}^d$ with  $d \le 2$. The 
uniqueness of this solution has been obtained in the recent article \cite{JJ1} using 
the past light-cone property of the fundamental solution $G$ of the wave equation.
 In the case of (SHE) with general Lipschitz function $\sigma$, uniqueness of the solution remains an open problem.

\medskip

In the present article, we introduce a different method, which is more robust, 
does not require any truncation, and can be applied to a large class of SPDEs, being inspired 
by the methodology used in the Gaussian case. We explain this method below.
Writing
\[
u(s,y)=1+\int_{0}^{s}\int_{\bR^d}G_{s-r}(y-z) u(r,z)Z(dr,dz)
\]
and inserting this into \eqref{int-eq}, we obtain:
\begin{align*}
u(t,x)&=1+\int_0^t \int_{\bR^d}G_{t-s}(x-y)Z(ds,dy)+\\
& \quad \quad \int_0^t \int_{\bR^d} G_{t-s}(x-y) \left(\int_0^s \int_{\bR^d}G_{s-r}(y-z)u(r,z)Z(dr,dz)\right) Z(ds,dy).
\end{align*}

Intuitively, it should be possible to iterate this procedure, and therefore obtain that the solution has the ``stable chaos expansion'':
\begin{equation}
\label{series}
u(t,x)=1+\sum_{n\geq 1}\int_{([0,T] \times \bR^d)^n} f_n(t_1,x_1,\ldots,t_n,x_n,t,x) Z(dt_1,dx_1) \ldots Z(dt_n,dx_n),
\end{equation}
where the kernel $f_n(\cdot,t,x)$ is given by:
\begin{equation}
\label{def-fn}
f_n(t_1,x_1,\ldots,t_n,x_n,t,x)=G_{t-t_n}(x-x_n)\ldots G_{t_2-t_1}(x_2-x_1) 1_{\{0<t_1<\ldots<t_n<t\}},
\end{equation}
and the integral is interpreted as a ``multiple stable integral'', which was 
introduced and studied in \cite{ST90,ST91}, using series representations 
(originating from the LePage series representation \eqref{LePage1}). The construction
 and basic properties of the multiple stable integrals are recalled in Section \ref{subsection-multiple} below.

Note that the first integral appearing in series \eqref{series} is $\int_0^t \int_{\bR^d}G_{t-s}(x-y)Z(ds,dy)$, and this integral is well-defined if and only if 
\begin{equation}
\label{C-eq}
\int_0^t \int_{\bR^d}G_{t-s}^{\alpha}(x-y) dyds<\infty.
\end{equation}
In the case of the heat equation, \eqref{C-eq} is equivalent to $\alpha<1+\frac{2}{d}$, 
whereas in the case of the wave equation in dimension $d\leq 2$, \eqref{C-eq} holds for all $\alpha \in (0,2)$. 
In light of this, we introduce the following hypothesis:

\begin{hypothesis}
\label{G_assumption}
The fundamental solution $(t,x)\mapsto G_t(x)$ of the operator $\mathcal{L}$ is a jointly measurable
 function on $[0,T] \times \bR^d$, which satisfies the following condition:
\[
\int_0^T \int_{\bR^d}G_t^{\alpha}(x) dxdt<\infty.
 \]
\end{hypothesis}

Note that Hypothesis \ref{G_assumption} does not hold for the wave operator in dimension $d\geq 3$,
 since the fundamental solution
is a distribution.

\medskip

We expect that if the series \eqref{series} is well-defined, the corresponding partial sum sequence will coincide with the sequence $(u_n)_{n\geq 0}$ of Picard's iterations, defined by: $u_0(t,x)=1$,
\begin{equation}
\label{picard}
u_{n+1}(t,x)=1+\int_0^t \int_{\bR^d}G(t-s,x-y)u_n(s,y)Z(ds,dy), \quad n \geq 0.
\end{equation}

This procedure is well-established in the case of equations with Gaussian noise or finite variance L\'evy noise, using tools from Malliavin calculus (see e.g. \cite{hu-nualart09,BZ24}). 
The goal of this article is to show that this method can be extended to the case of the S$\alpha$S noise, using the LePage series representations of the noise and of the associated multiple stable integrals.

\medskip

Guided again by the intuition gained from the Gaussian framework, 
we expect this solution to be unique. But uniqueness turns out to be a delicate
problem in our framework.
The fact that our noise may not have any moments forces us to work in the space $L^0$ of 
random variables equipped with the pseudo-norm $\|\cdot\|_0$ (see \eqref{L0-norm} below).
Since the topology induced by \(\|\cdot\|_{L^0}\)  is not locally convex, 
 we do not have access to the same techniques as in a Hilbert space, such as $L^2$, 
 nor do we have the Banach space structure of $L^p$ for $p>0$. 
 As a consequence, proving the uniqueness of the solution remains an open problem.

\medskip

We should also mention that stochastic evolution equations driven by a {\em cylindrical 
$\alpha$-stable L\'evy noise}
have been studied by several authors: see  
\cite{BRT24,kosmala-riedle21} and the references therein. Unlike the Gaussian case
 (see \cite{dalang-quer11}), a direct comparison of our results with the results obtained using
  this framework is not available at the present time.

\medskip

We are now ready to state the main result. But first, we need to introduce some notation and assumptions. 
For any $t \in [0,T]$, $x \in \bR^d$ and $p>0$, we define:
\begin{equation}
\label{def-K}
K_n^{(p)}(t,x):=\int_{T_n(t)} \int_{(\bR^d)^n} f_n^p(t_1,x_1,\ldots,t_n,x_n,t,x) \prod_{k=1}^{n} \phi^{\alpha-p}(x_k) d\pmb{x}  d\pmb{t},
\end{equation}
with $\pmb{x}=(x_1,\ldots,x_n)$, $\pmb{t}=(t_1,\ldots,t_n)$ and  $T_n(t)=\{\pmb{t}\in[0,t]^n;t_1<\ldots<t_n\}$.

\begin{assumption}
\label{ass-A2}
There exists \(p \in (\alpha,2]\) such that for all \((t,x) \in [0,T] \times \mathbb{R}^d\), 
\begin{equation}
\label{A2}
     \sum_{n \geq 1} \left(T^{(\frac{p}{\alpha}-1)n} K_n^{(p)}(t,x)\right)^{1/2} \Big( \sum_{j \geq 1}\Gamma_j^{-p/\alpha}\Big)^{n/2}<\infty \ \text{a.s.}
\end{equation}    
\end{assumption}

\begin{assumption}
\label{ass-A3}
There exists \(p \in (\alpha,2]\) such that for all \((t,x) \in [0,T] \times \mathbb{R}^d\), 
\begin{equation}
\label{A3}
\sum_{n \geq 1}\left(T^{(\frac{p}{\alpha}-1)n} \int_0^t \int_{\mathbb{R}^d} G_{t-s}^{\alpha}(x-y) K_n^{(p)}(s,y) \, dy \, ds \right)^{\frac{\alpha\wedge 1}{p}} \Big(\sum_{j \geq 1} \Gamma_j^{-p/\alpha} \Big)^{\frac{n(\alpha\wedge 1)}{p}}<\infty \quad \text{a.s.}
\end{equation}
\end{assumption}

The following theorem is the main result of the present article.

\begin{theorem}
\label{main-th}
Suppose that the fundamental solution $G$ satisfies Hypothesis \ref{G_assumption}. 

(a) If Assumption \ref{ass-A2} holds,
then the series on the right-hand side of \eqref{series} converges absolutely almost surely, for any $(t,x) \in [0,T] \times \bR^d$.

(b) If Assumptions \ref{ass-A2} and \ref{ass-A3} hold (with possibly different values $p$), then the process $\{u(t,x);t\in [0,T],x\in \bR^d\}$ given by \eqref{series} is 
a solution of equation \eqref{eq}. 
Moreover, for any $(t,x) \in [0,T] \times \bR^d$,
\begin{equation}
\label{Gu-int}
\int_0^t \int_{\mathbb{R}^d} G_{t-s}^{\alpha} (x-y) |u (s,y)|^{\alpha} \, ds \, dy < \infty \quad \text{a.s.}
\end{equation}
and $u(t,x)$ has representation:
\begin{equation}
\label{u-series}
u(t,x) = 1 + \sum_{n \ge 1} T^{n/\alpha}  n! \sum_{j_1<\ldots<j_n}
\prod_{k=1}^{n} \e_{j_k} \Gamma_{j_k}^{-1/\alpha} \phi^{-1}(X_{j_k})  \widetilde{f}_n(T_{j_1},X_{j_1},\ldots,T_{j_n},X_{j_n},t,x) \quad \text{a.s.},
\end{equation}
where $\widetilde{f}_n(\cdot,t,x)$ is the symmetrization of $f_n(\cdot,t,x)$.
\end{theorem}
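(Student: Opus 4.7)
The plan is to exploit the LePage representation \eqref{LePage1} of the S$\alpha$S noise together with the Samorodnitsky--Taqqu series representation \cite{ST90,ST91} of multiple stable integrals. These identify the $n$-th term of \eqref{series} with the Rademacher multilinear form appearing inside the sum in \eqref{u-series}, so the analysis reduces to controlling such random multilinear forms in the independent sequences $(\e_j)$, $(\Gamma_j)$, $(T_j,X_j)$ through conditioning and classical moment inequalities.

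For part (a), I would condition on $\cG=\sigma((\Gamma_j),(T_j,X_j))$ and apply a Khintchine--Kahane (or Cauchy--Schwarz) inequality in the Rademacher variables to obtain
\[
\bE[\,|I_n^Z(f_n(\cdot,t,x))|\mid \cG\,]\le T^{n/\alpha}n!\,\Big(\sum_{j_1<\ldots<j_n}\prod_{k=1}^n\Gamma_{j_k}^{-2/\alpha}\phi^{-2}(X_{j_k})\widetilde{f}_n^2\Big)^{1/2}.
\]
The right-hand side is then controlled by a combination of Jensen's inequality, the subadditivity $(\sum a_i)^{p/2}\le\sum a_i^{p/2}$ valid for $p\in(\alpha,2]$, the key integral identity $\int_{[0,T]^n\times\bR^{dn}}\widetilde{f}_n^p\prod_k\phi^{\alpha-p}(x_k)\,d\pmb{x}d\pmb{t}=(n!)^{1-p}K_n^{(p)}(t,x)$ (a consequence of the nonoverlapping supports of the $n!$ permuted copies of $f_n$ on the simplices), and the elementary bound $\sum_{j_1<\ldots<j_n}\prod_k\Gamma_{j_k}^{-p/\alpha}\le(n!)^{-1}(\sum_j\Gamma_j^{-p/\alpha})^n$. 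The resulting conditional bound on $\bE[|I_n^Z|\mid\sigma((\Gamma_j))]$ is exactly what makes Assumption \ref{ass-A2} a sufficient summability condition, and a.s.\ absolute convergence of \eqref{u-series} then follows from Fubini.

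For part (b), the product structure of $f_n$ gives the recursion $f_{n+1}(t_1,x_1,\ldots,t_n,x_n,s,y,t,x)=G_{t-s}(x-y)f_n(t_1,x_1,\ldots,t_n,x_n,s,y)$, which by a stochastic Fubini theorem for multiple stable integrals (a direct consequence of the LePage series representation) translates into
\[
I_{n+1}^Z(f_{n+1}(\cdot,t,x))=\int_0^t\int_{\bR^d}G_{t-s}(x-y)I_n^Z(f_n(\cdot,s,y))Z(ds,dy).
\]
Summing over $n\ge 0$ (with $I_0^Z\equiv 1$) formally produces \eqref{int-eq}. To make this rigorous I need (i) the integrability condition \eqref{Gu-int} and (ii) the ability to interchange the sum with the stochastic integral; both reduce to suitably bounding $\int_0^t\int G_{t-s}^\alpha(x-y)|I_n^Z(f_n(\cdot,s,y))|^\alpha\,dsdy$. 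Using subadditivity of $z\mapsto z^\alpha$ for $\alpha\le 1$ or Minkowski's inequality in $L^\alpha(G_{t-s}^\alpha(x-y)\,dsdy)$ for $\alpha>1$, and then applying the conditional bound from part (a) to $I_n^Z(f_n(\cdot,s,y))$ integrated against $G_{t-s}^\alpha(x-y)$, one arrives at a sum whose a.s.\ finiteness is precisely the content of Assumption \ref{ass-A3}; the exponent $(\alpha\wedge 1)/p$ there reflects this $\alpha\le 1$/$\alpha>1$ dichotomy.

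The main obstacle is the final passage to the limit in the stochastic integral. Since the topology of $L^0$ is not locally convex and the noise has no moments of order $\alpha$, the classical $L^2$ or $L^p$ machinery is unavailable. Instead I would invoke the $L^0$-continuity of the stochastic integral from \cite{BJ83,bichteler02}: once \eqref{Gu-int} is available, the a.s.\ convergence $\int_0^t\int G_{t-s}^\alpha(x-y)|u_N(s,y)-u(s,y)|^\alpha\,dsdy\to 0$ (obtained by dominated convergence using \eqref{A3} as dominator) implies convergence in probability of the corresponding stochastic integrals, closing the argument. This is also why Assumptions \ref{ass-A2} and \ref{ass-A3} are stated as pathwise summability conditions rather than $L^1$-type integrability conditions.
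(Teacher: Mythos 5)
Your treatment of part (a) is essentially the paper's argument: conditioning on $(\Gamma_j)$ and $(T_j,X_j)$, Cauchy--Schwarz in the Rademacher variables, subadditivity of $z\mapsto z^{p/2}$, the identity $\int \widetilde f_n^{\,p}\prod_k\phi^{\alpha-p}(x_k)\,d\pmb{x}\,d\pmb{t}=(n!)^{1-p}K_n^{(p)}(t,x)$ (the paper's Lemma \ref{A-K-lemma}), and the bound $n!\sum_{j_1<\ldots<j_n}\prod_k\Gamma_{j_k}^{-p/\alpha}\le(\sum_j\Gamma_j^{-p/\alpha})^n$. The same is true of your final limiting argument, which is the paper's Theorem \ref{solvability-th}: dominated convergence in the Daniell mean with the absolutely convergent series as dominator, plus the contraction property \eqref{contraction2} of $I^{\Lambda}$, and the $\alpha\le 1$ versus $\alpha>1$ dichotomy via subadditivity or Minkowski, which is exactly how Assumption \ref{ass-A3} enters in Theorem \ref{A3-th}.

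The genuine gap is in the middle step of part (b), where you assert that the recursion
\[
I_{n+1}\big(f_{n+1}(\cdot,t,x)\big)=\int_0^t\int_{\bR^d}G_{t-s}(x-y)\,I_n\big(f_n(\cdot,s,y)\big)\,Z(ds,dy)
\]
follows ``by a stochastic Fubini theorem for multiple stable integrals (a direct consequence of the LePage series representation).'' No such off-the-shelf result exists, and this identity is the technical core of the paper (all of Section \ref{section-recurrence}). The two sides are defined by entirely different constructions: the left side via the Samorodnitsky--Taqqu multiple integral, the right side via the Bichteler--Jacod $L^0$-theory for the L\'evy basis $\Lambda$, since the integrand $G_{t-s}(x-y)I_n(f_n(\cdot,s,y))$ is \emph{random} and built from the same noise. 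To identify them one must show that the $L^0$-stochastic integral of a predictable integrand admits a LePage-type series obtained by evaluating the integrand at the Poisson points. The paper proves such a representation (relation \eqref{Lambda-LePage}) only for $\alpha\in(0,1)$, and Remark \ref{rem-LePage} explicitly states that it is \emph{not known} whether it holds for $\alpha\in[1,2)$. For $\alpha\ge 1$ the paper has to replace it by the truncation decomposition $I_n=\cJ_n^{(k)}+\cR_n^{(k)}$, Lenglart's inequality for the compensated small-jump part, and a long conditioning argument showing the remainder is negligible (Lemmas \ref{lem-R0}--\ref{lem-J-conv}). Your proposal offers no substitute for any of this, so as written it only covers $\alpha<1$ at best, and even there it omits the verification that the evaluation-at-Poisson-points series converges absolutely and agrees with the $L^0$-integral (the paper's Theorem \ref{proof-IZ-Xn}), as well as the predictable-modification argument needed before the integrand is even admissible.
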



To verify Assumptions \ref{ass-A2} and \ref{ass-A3}, we introduce the following condition on $\phi$:

\begin{hypothesis}
\label{hypo2}
There exist some $c_0>0$ and $\delta>0$ such that
\begin{equation}
\label{bound-phi}
\frac{1}{\phi(x)} \leq c_0 (1+|x|^{\delta}) \quad \mbox{for all $x \in \bR^d$}.
\end{equation}
\end{hypothesis}

\begin{remark}
{\rm
An example of a function $\phi>0$ which satisfies Hypothesis \ref{hypo2} 
 is 
\begin{equation}
\label{def-phi}
\phi(x)=c
\big(1_{\{|x|\leq 1\}}+|x|^{-\delta}1_{\{|x|>1\}}\big),
\end{equation}
 with  $\delta>d/\alpha$, and value
$c=c(\alpha,\delta)>0$ chosen such that $\int_{\bR^d}\phi^{\alpha}(x)dx=1$.
}
\end{remark}

\medskip

As an application of Theorem \ref{main-th}, we obtain the following result.

\begin{theorem}
\label{main-appl}
Suppose that $\phi$ satisfies Hypothesis \ref{hypo2}. If either \\
(i) $\cL=\frac{\partial}{\partial t}-\frac{1}{2}\Delta$ is the heat operator and 
$\alpha<1+\frac{2}{d}$, or\\
(ii) $\cL=\frac{\partial^2}{\partial t^2}-\Delta$ is the wave operator and $d\leq 2$, \\
then Assumptions \ref{ass-A2} and \ref{ass-A3} are satisfied, and consequently, the conclusion of Theorem \ref{main-th}.(b) holds.
\end{theorem}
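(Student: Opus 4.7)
The plan is to reduce both assumptions to a single deterministic bound of the form
\[
K_n^{(p)}(t,x)\;\le\;C^{n}\,P(|x|)^{n}\,\Gamma(\gamma n+2)^{-1},
\]
with $\gamma>0$ and $P$ a polynomial. Given such a bound, the a.s.\ finiteness of $\sum_{j\ge 1}\Gamma_j^{-p/\alpha}$ (which holds because $\Gamma_j\sim j$ and $p>\alpha$) together with super-exponential decay of the deterministic factor makes both series \eqref{A2} and \eqref{A3} absolutely convergent by the root test. Accordingly, I first select $p\in(\alpha,2]$ appropriately: in the heat case, take $p\in(\alpha,\min(2,1+2/d))$ (permitted by $\alpha<1+2/d$); in the wave case, $p\in(\alpha,2]$ when $d=1$ and $p\in(\alpha,2)$ when $d=2$. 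By taking $p$ close enough to $\alpha$ I also arrange $\mu:=\delta(p-\alpha)\le 1$, where $\delta$ comes from Hypothesis \ref{hypo2}.

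The key estimate on $K_n^{(p)}$ is obtained as follows. Hypothesis \ref{hypo2} yields $\phi^{\alpha-p}(x)\le C(1+|x|^\mu)$. I change variables via $y_0=x-x_n$, $y_k=x_{n-k+1}-x_{n-k}$ for $1\le k\le n-1$ (with analogous time increments $s_k$ and the extra variable $t_1\in[0,t-\sum_i s_i]$; the Jacobian is unit). Because $x_k=x-\sum_{i=0}^{n-k}y_i$, subadditivity of $u\mapsto u^\mu$ gives
\[
\prod_{k=1}^{n}\phi^{\alpha-p}(x_k)\;\le\;C^{n}\Big(1+|x|^\mu+\sum_{i=0}^{n-1}|y_i|^\mu\Big)^{n}.
\]
Expanding the right-hand side by the multinomial theorem produces a finite sum of products of explicit one-dimensional integrals $\int G_s^p(y)|y|^q\,dy$ (equal to $C_q\,s^{(d(1-p)+q)/2}$ in the heat case, $C_q\,s^{q+1}$ for the $d=1$ wave kernel, and $C_{p,q}\,s^{q+2-p}$ for the $d=2$ wave kernel, the last being finite precisely because $p<2$) times a Dirichlet integral
\[
\int_{\{s_i\ge 0,\,\sum_i s_i\le t\}}\prod_{i=0}^{n-1}s_i^{a_i}\Big(t-\sum_i s_i\Big)\,d\bs\;=\;\frac{\Gamma(2)\prod_i\Gamma(a_i+1)}{\Gamma\bigl(\sum_i a_i+n+2\bigr)}\,t^{\sum_i a_i+n+1}.
\]
The chosen range of $p$ makes each base exponent in $a_i$ strictly larger than $-1$, so $\sum_i a_i+n$ grows like $\gamma n$ with $\gamma>0$, producing the factorial denominator.

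For Assumption \ref{ass-A3} I apply this bound on $K_n^{(p)}(s,y)$ and integrate against $G_{t-s}^\alpha(x-y)$: the kernel $G^\alpha$ has compact support in the wave case and Gaussian tails in the heat case (by Hypothesis \ref{G_assumption} with $\alpha$ in place of $p$), so $y$-integration converts $P(|y|)^n$ into a polynomial in $|x|$ of the same order, while the $ds$-integration contributes only a power of $t$; the super-exponential decay in $n$ is preserved, and both assumed series then converge. The main obstacle I anticipate is the worst-case term of the multinomial expansion, in which a single exponent reaches size of order $n$: the matching $\Gamma(a_i+1)$ then also grows factorially in the numerator, and Stirling's formula must be invoked to confirm that the ratio $\prod_i\Gamma(a_i+1)/\Gamma(\sum_i a_i+n+2)$ still decays faster than any exponential. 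This is precisely where the thresholds $\alpha<1+2/d$ (heat) and $d\le 2$ combined with $p<2$ (wave) are used: they guarantee $\gamma>0$, so that Stirling forces the denominator to dominate and the bound closes.
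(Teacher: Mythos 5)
Your overall architecture (bound $K_n^{(p)}$ by $C^n$ times a polynomial in $|x|$ times an inverse Gamma factor, then close both series with the root test and $\sum_j\Gamma_j^{-p/\alpha}<\infty$) matches the paper's, and your treatment of the wave case and of the heat case with $p\le 1$ would go through. But there is a genuine gap in the heat case whenever $p>1$ is forced, i.e.\ for all $\alpha\in[1,1+\tfrac{2}{d})$. The problem is the step where you bound $\prod_{k=1}^n\phi^{\alpha-p}(x_k)\le C^n\big(1+|x|^\mu+\sum_{i}|y_i|^\mu\big)^n$ and then expand by the multinomial theorem. You only anticipate the term in which the whole degree $n\mu$ concentrates on a single increment (where the multinomial coefficient is $1$ and the danger is a large $\Gamma$ in the numerator). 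The term that actually kills the argument is the opposite extreme, $m_0=\cdots=m_{n-1}=1$: its multinomial coefficient is $n!$, each spatial factor is only $\int G_s^p(y)|y|^\mu dy=C\,s^{a}$ with $a=\tfrac{d(1-p)+\mu}{2}$, and the Dirichlet integral contributes $\Gamma(a+1)^n/\Gamma(n(a+1)+1)\sim C^n(n!)^{-(a+1)}$. The net contribution of this single term is $C^n(n!)^{1-(a+1)}=C^n(n!)^{-a}$, which \emph{blows up} factorially as soon as $a<0$, i.e.\ $\mu<d(p-1)$ --- and your strategy of taking $p$ close to $\alpha$ makes $\mu=\delta(p-\alpha)$ small while $d(p-1)\to d(\alpha-1)>0$, so for $\alpha>1$ you land squarely in the bad regime. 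Since all terms of the expansion are nonnegative, this is not a bookkeeping issue: the intermediate quantity $\int\prod_kG^p_{s_k}(y_k)\big(1+|x|^\mu+\sum_i|y_i|^\mu\big)^n$ genuinely grows like $(n!)^{-a}$ with $a<0$, so the bound is fatally lossy at that step and cannot be repaired downstream.

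The paper avoids this by a different device (Lemma \ref{lem-Gh}): the generalized H\"older inequality $\int\prod_kG^p\prod_kh(x_k)\le\prod_i\big(\int\prod_kG^p\,h(x_i)^n\big)^{1/n}$ replaces the product of $n$ weights by a \emph{single} weight of degree $n\mu$ in each factor, which is then a single Gaussian moment of order $n\mu$, costing only $(n!)^{\mu/2}$ with no multinomial coefficient. This yields $K_n^{(p)}(t,x)\lesssim C^n(1+|x|^{n\eta}+t^{n\eta/2}\Gamma(\tfrac{1+n\eta}{2}))\,t^{n(\frac{d(1-p)}{2}+1)}/\Gamma(n(\tfrac{d(1-p)}{2}+1)+1)$, and the series close under $\delta(p-\alpha)<d(1-p)+2$ (the paper's condition \eqref{cond-p-delta}), which is achievable by taking $p$ close to $\alpha$. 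Note also that even for your concentrated worst-case term the correct closing condition is this quantitative inequality, not merely ``$\gamma>0$'' or ``$\mu\le1$'' as you state; that part is fixable within your setup, but the spread-out multinomial term is not. For the wave equation your computation survives (there $a=\mu+1$ or $a=\mu+2-p>0$), though the paper's route is simpler: on the support of the product of wave kernels one has $|x_k|\le|x|+t$, so the weights are bounded pointwise and no moment computation is needed.
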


We include below some simulations for the profile of the solution of $\cL u(t,x)= \dot{Z}(t,x) $ with initial condition 1, as a function of $(t,x)\in [0,1]^2$,
\begin{equation}
\label{linear}
u(t,x) = 1 + \int_0^t  \int_{\bR^d} G_{t-s}(x-y) Z(ds,dy)=
1+\sum_{i\geq 1}\varepsilon_i \Gamma_i^{-1/\alpha}\frac{1}{\phi(X_i)}G_{t-T_i}(x-X_i),
\end{equation}
for (SHE) and (SWE) on $[0,1]\times \bR$, when $\alpha=0.7$ and $\alpha=1.5$.
We approximated the series \eqref{linear} by the partial sum up to $n=1000$, and we used the function $\phi$ given by \eqref{def-phi} with $\delta=1.5$.

\begin{figure}[h!]
    \centering
\begin{subfigure}[b]{.48\textwidth}
        \includegraphics[width=.9\textwidth]{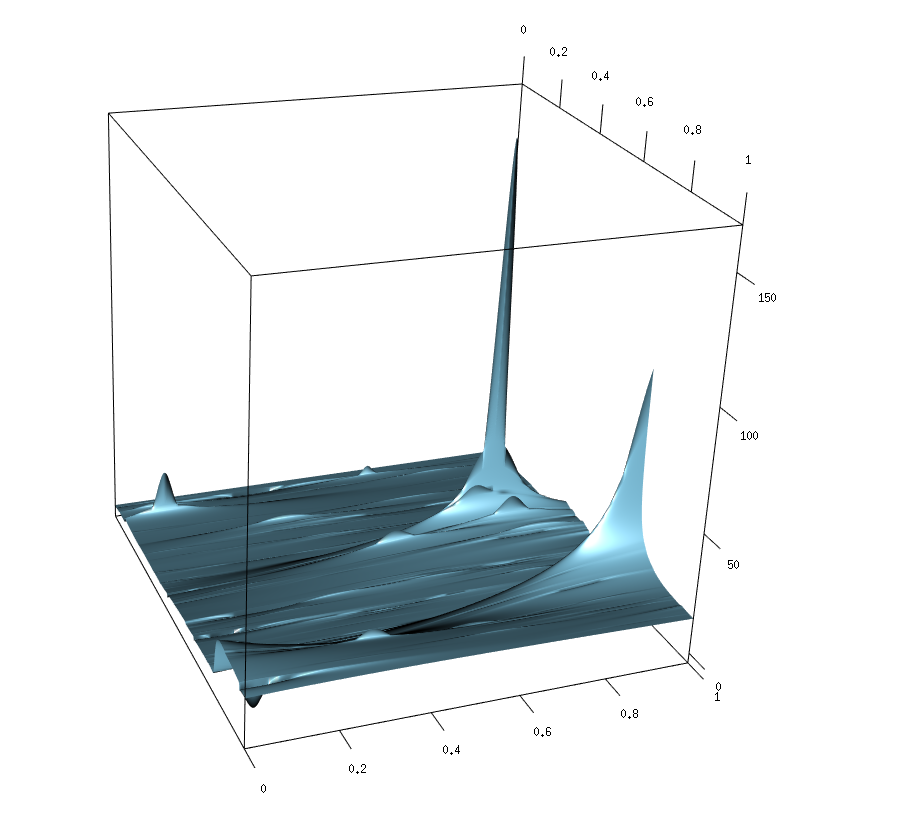}
        \caption{$\alpha=0.7$}
        \label{fig:heat0715}
\end{subfigure}
\begin{subfigure}[b]{.45\textwidth}
        \includegraphics[width=.9\textwidth]{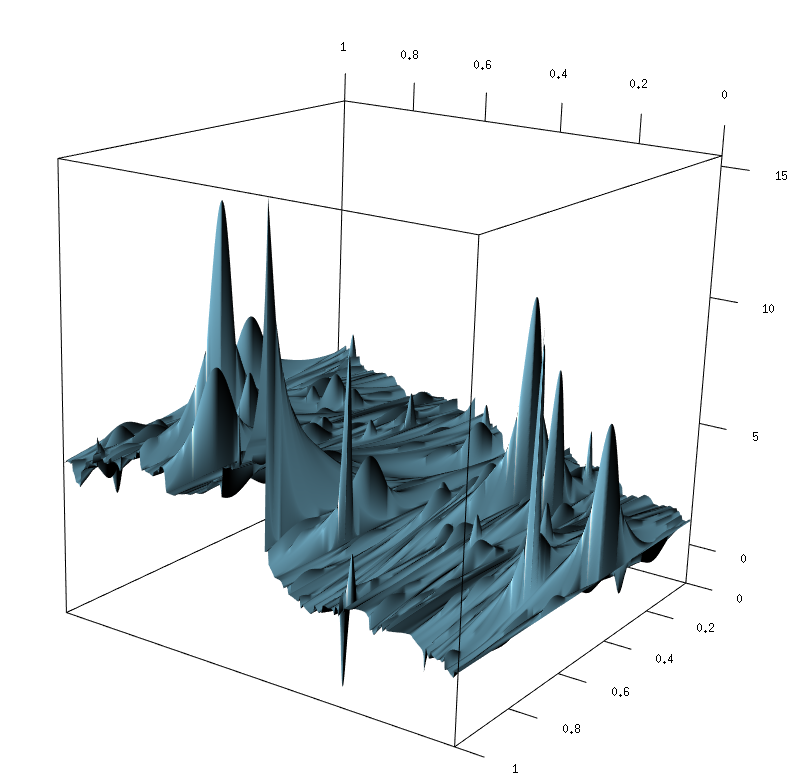}
        \caption{$\alpha=1.5$}
        \label{fig:heat1507}
\end{subfigure}
\caption{Simulation of the solution of the (SHE) with additive S$\alpha$S L\'evy noise}
    \label{fig:heat}
\end{figure}
\begin{figure}[h!]
    \centering
\begin{subfigure}[b]{0.47\textwidth}
        \includegraphics[width=0.9\textwidth]{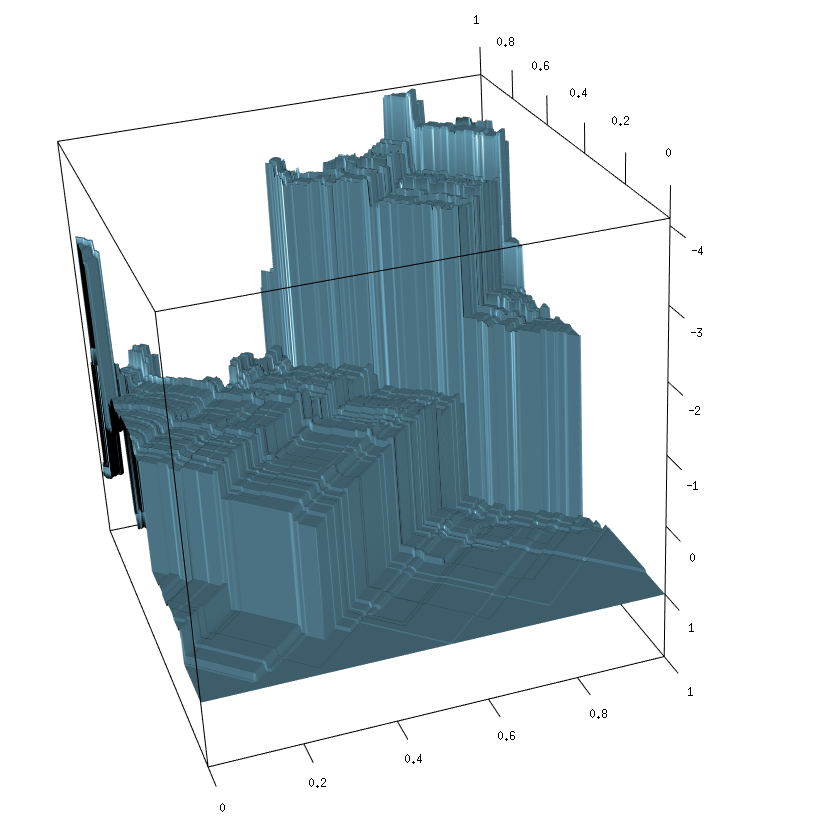}
        \caption{$\alpha=0.7$}
        \label{fig:wave0715}
\end{subfigure}
\begin{subfigure}[b]{0.48\textwidth}
        \includegraphics[width=.9\textwidth]{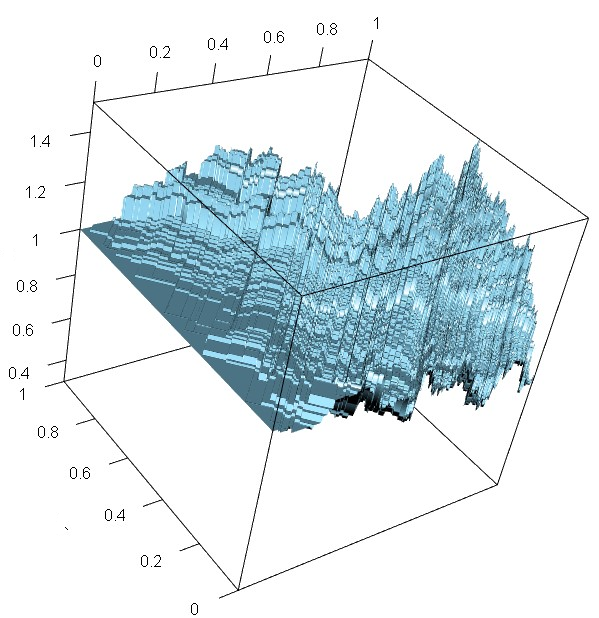}
        \caption{$\alpha=1.5$}
        \label{fig:wave1513}
\end{subfigure}
    \caption{Simulation of the solution of the (SWE) with additive S$\alpha$S L\'evy noise}
    \label{fig:wave}
\end{figure}

This article is organized as follows. In Section \ref{section-background}, we introduce 
the necessary background material, related to multiple stable integrals and L\'evy bases. 
In Section \ref{section-convergence} we give the proof of
Theorem \ref{main-th}.(a), i.e. we show that the series \eqref{series} converges absolutely 
almost surely. In Section \ref{section-recurrence}, we show that the partial sum sequence $(u_n)_{n\geq 0}$ 
associate with series \eqref{series} satisfies the recurrence relation \eqref{picard}.  
In Section \ref{section-solvability},  we show that 
the process $u(t,x)$ defined by \eqref{series} satisfies the integral equation
\eqref{int-eq}. In Section \ref{section-applications}, we give the proof of Theorem \ref{main-appl}, i.e.
we show that Assumptions \ref{ass-A2} and \ref{ass-A3} are satisfied in the case of the heat and 
wave equations.
Appendix \ref{app-integrN} contains some basic facts about the 
stochastic integration with respect to a compensated Poisson random measure. 
Appendix \ref{app-Fubini} includes an application of Fubini's theorem which is used in the sequel. 
Appendix \ref{app-heat} contains some estimates for several integrals associated with the heat
kernel.

\medskip

Finally, we mention few words about the notation. We work on 
a complete probability space  $(\Omega,\cF,\bP)$ equipped with a filtration $(\cF_t)_{t\in [0,T]}$ which satisfies the usual conditions. We denote by $L^0$ be the set of all random variables on $(\Omega,\cF,\bP)$, equipped with the {\em pseudo-norm}:
\begin{equation}
\label{L0-norm}
\|X\|_{L^0}=\bE[|X| \wedge 1].
\end{equation}
 Note that convergence in the $L^0$-norm is equivalent to the convergence in probability, since for any $\e\in (0,1)$, 
\[
\|X\|_{L^0} \leq \e+\bP(|X|>\e) \quad \mbox{and} \quad P(|X|>\e)=\bP(|X|\wedge 1>\e)\leq \frac{1}{\e}\|X\|_{L^0}.
\]

We let $\cB_b(\bR_0)$ be the class of bounded Borel sets in $\bR_0$ equipped with the 
distance $d(x,y)=|x^{-1}-y^{-1}|$. 
(A set $F \subset \bR_0$ is bounded if $F \subset \{|z|>a\}$ for some $a>0$.)
The notation $\phi^{-1}(x)$ is used for $1/\phi(x)$ (not for the inverse function of $\phi$).

\section{Background}
\label{section-background}

In this section, we include the necessary background material, which is substantial, since it covers two topics: multiple stable integrals (Section \ref{subsection-multiple}) and integration with respect to L\'evy bases (Section \ref{subsection-Levy-bases}). In Section \ref{subsection-construction}, we give the construction of the noise.

\subsection{Multiple stable integrals}
\label{subsection-multiple}

In this section, we review the construction of the multiple stable integral with respect to the S$\alpha$S random measure $Z$, following \cite{ST90,ST91}. The results contained in these references are stated for S$\alpha$S random measures on $\bR$, but they can be easily extended to more general spaces, such as $[0,T]\times \bR^d$.

\medskip

Before we begin, we recall that the stochastic integral $I_1(f)=\int fdZ$ of a deterministic function $f$ with respect to $Z$ is constructed using the classical procedure, starting with simple functions, followed by approximation. This is explained in Chapter 3 of \cite{ST94}. It turns out that the integral $I_1(f)$ is well-defined for any function $f \in L^{\alpha}([0,T] \times \bR^d)$, and the process $\{I_1(f); f \in L^{\alpha}([0,T] \times \bR^d)\}$ has $\alpha$-stable finite dimensional distributions. In particular, $I_1(f)$ has a $S_{\alpha}(\sigma_f,0,0)$-distribution, where
\[
\sigma_f=\left(\int_{[0,T] \times \bR^d} |f(t,x)|^{\alpha}m(dt,dx)\right)^{1/\alpha}.
\]
This property is in fact valid even in the non-symmetric case, when $\nu_{\alpha}(z,\infty)=pz^{-\alpha}$ and $\nu(-\infty,-z)=q(-z)^{-\alpha}$ for all $z>0$, for some $p,q\geq 0$. 
From this perspective, $\alpha$-stable random measures are similar to isonormal Gaussian processes from Malliavin calculus, in which case the integrals $I_1(f)$ live in the first Wiener chaos. This analogy is further enhanced by the fact that,
{\em in the symmetric case} (when $\nu_{\alpha}$ is given by \eqref{def-nu-a}), it is possible to define a multiple integral with respect to $Z$, and this is what we will explain below.

\begin{remark}
{\rm
If $\alpha \in (1,2)$, then $\int_{|z|>1}|z|\nu_{\alpha}(dz)<\infty$, and the Poisson representation \eqref{Poisson-repr} can be written as 
$Z(B)=\int_{B \times \bR_0}z\widehat{N}(dt,dx,dz)$.
In view of this, one may try to define the multiple integral with respect to $Z$ of a function $f:([0,T]\times \bR^d)^n \to \bR$ by:
\[
I_n(f)=\int_{([0,T]\times \bR^d \times \bR_0)^n} f(t_1,x_1,\ldots,t_n,x_n)z_1 \ldots z_n \widehat{N}(dt_1,dx_1,dz_1)\ldots \widehat{N}(dt_n,dx_n,dz_n).
\]
This procedure works for finite-variance L\'evy noise (see \cite{BZ24}), but fails for the infinite variance noise $Z$, since the multiple integral with respect to $\widehat{N}$  is defined only for functions which are square-integrable with respect to the measure $\mu^n$ 
(see Section 5.4 of \cite{applebaum09}).
}
\end{remark}

\medskip

A different idea, which has been exploited successfully in \cite{ST90,ST91}, is to start with a S$\alpha$S random measure $Z$ {\em which has the LePage representation \eqref{LePage1}}, and extend this representation to 
multi-index series. We explain this procedure below. To simplify the writing, we let $E=[0,T] \times \bR^d$, $\cB$ be the class of Borel sets of $E$, and $\ell$ be the Lebesgue measure on $\bR$. Let $\cB_0=\{A \in \cB; \ell(A)<\infty\}$.

\medskip

The first step is the construction of the product stable measure $Z^{(n)}$.
For this, we recall that a {\em symmetric rectangle} of $E^n$ is a set of the form
\[
B=\bigcup_{\pi \in \Sigma_n} B_{\pi(1)} \times \ldots \times B_{\pi(n)}
\]
for some  disjoint sets $B_1,\ldots,B_n \in \cB_0$, where $\Sigma_n$ is the set of all permutations of $\{1,\ldots,n\}$.
Defining
$$Z^{(n)}(B):=n! Z(B_1) \ldots Z(B_n),$$
and using the Le Page representation \eqref{LePage1} for each $B_i$, we obtain that:
\begin{equation}
\label{lepage-n}
Z^{(n)}(B)=n! \sum_{j_1<\ldots< j_n} \, \prod_{k=1}^{n}
\e_{j_k}
\Gamma_{j_k}^{-1/\alpha} \psi^{-1}(T_{j_k},X_{j_k})
1_{B}(T_{j_1},X_{j_1},\ldots,T_{j_n},X_{j_n}).
\end{equation}

The following theorem shows that this representation can be extended to more general sets.
First, we need to introduce some notation. Let $\cB_{n}^{(s)}$ be the $\sigma$-algebra generated by the symmetric rectangles and
$\cB_{n,0}^{(s)}=\{B \in \cB_n^{(s)};\ell^{(n)}(B)<\infty\}$, where $\ell^{(n)}$ is the Lebesgue measure on $E^n$.

\begin{theorem}
For any set $B \in \cB_{n,0}^{(s)}$, 
the series
\[
S_n(B)=n! \sum_{j_1<\ldots< j_n} \, \prod_{k=1}^{n}
\e_{j_k}
\Gamma_{j_k}^{-1/\alpha} \psi^{-1}(T_{j_k},X_{j_k})
1_{B}(T_{j_1},X_{j_1},\ldots,T_{j_n},X_{j_n})
\]
converges a.s. if and only if
\begin{equation}
\label{A-conv}
\sum_{j_1<\ldots<j_n} \prod_{k=1}^{n}
\Gamma_{j_k}^{-2/\alpha} \psi^{-2}(T_{j_k},X_{j_k})
1_{B}(T_{j_1},X_{j_1},\ldots,T_{j_n},X_{j_n}) <\infty \quad \mbox{a.s.}
\end{equation}
In this case, we let $Z^{(n)}(B):=S_n(B)$ a.s.
\end{theorem}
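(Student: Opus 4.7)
The plan is to condition on the $\sigma$-algebra $\cG:=\sigma(\Gamma_j,T_j,X_j:j\geq 1)$ to freeze all randomness except the Rademacher signs, reducing the problem to the convergence of a Rademacher chaos of degree $n$ with deterministic coefficients. Setting
\[
a_{\mathbf{j}}:=\prod_{k=1}^{n}\Gamma_{j_k}^{-1/\alpha}\psi^{-1}(T_{j_k},X_{j_k})\,1_B(T_{j_1},X_{j_1},\ldots,T_{j_n},X_{j_n})
\]
for $\mathbf{j}=(j_1,\ldots,j_n)$ with $j_1<\ldots<j_n$, the coefficients $a_{\mathbf{j}}$ are $\cG$-measurable, and the partial sums
\[
M_N:=n!\sum_{j_n\leq N}\e_{j_1}\cdots\e_{j_n}\,a_{\mathbf{j}}
\]
of $S_n(B)$ lie, conditionally on $\cG$, in the $n$-th Walsh--Rademacher chaos. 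A direct computation using the independence of $(\e_i)_{i\geq 1}$ shows that the family $\{\e_{j_1}\cdots\e_{j_n}:j_1<\ldots<j_n\}$ is orthonormal in $L^2(\bP)$ (any two distinct sorted tuples give a product with at least one $\e_i$ to an odd power), so
\[
\bE[M_N^2\mid\cG]=(n!)^2\sum_{j_1<\ldots<j_n\leq N}a_{\mathbf{j}}^2.
\]

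For the sufficiency direction, I would observe that $(M_N)$ is a martingale with respect to $\cF_N:=\sigma(\e_1,\ldots,\e_N)\vee\cG$: writing $M_N-M_{N-1}=\e_N R_{N-1}$, where $R_{N-1}$ gathers the contribution of all tuples with $j_n=N$ (each containing exactly one factor $\e_N$), $R_{N-1}$ is $\cF_{N-1}$-measurable and $\e_N$ is symmetric and independent of $\cF_{N-1}$. Condition \eqref{A-conv} forces $(M_N)$ to be $L^2$-bounded with respect to $\bP(\cdot\mid\cG)$ on a $\cG$-measurable event of full probability, so by the martingale convergence theorem $M_N$ converges a.s.\ conditionally on $\cG$; integrating over $\cG$ yields the a.s.\ convergence of $S_n(B)$.

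For the necessity direction, I would argue by contradiction: on the $\cG$-measurable event $\Omega_0:=\{\sum_{\mathbf{j}}a_{\mathbf{j}}^2=\infty\}$ one has $\bE[M_N^2\mid\cG]\to\infty$, and the hypercontractivity of the $n$-th Rademacher chaos (a consequence of the Bonami--Beckner inequality) gives $\bE[M_N^4\mid\cG]\leq C_n\,\bE[M_N^2\mid\cG]^2$ with $C_n$ depending only on $n$. The Paley--Zygmund inequality then produces a uniform lower bound $\bP(|M_N|\geq \tfrac{1}{\sqrt{2}}(\bE[M_N^2\mid\cG])^{1/2}\mid\cG)\geq c_n>0$, which forces $|M_N|\to\infty$ in conditional probability on $\Omega_0$; this contradicts the hypothesized a.s.\ convergence, so $\bP(\Omega_0)=0$ and \eqref{A-conv} must hold. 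The principal obstacle is precisely this last step: because the products $\e_{j_1}\cdots\e_{j_n}$ are not independent across distinct tuples when $n\geq 2$, Kolmogorov's three-series theorem is not directly available, and one must substitute for independence using the moment-equivalence property of fixed-degree Rademacher chaos, which is the standard device used in \cite{ST90,ST91}.
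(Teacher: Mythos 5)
Your proof is correct. Note first that the paper itself offers no proof of this statement: it is recalled verbatim from \cite{ST90,ST91} in the background section, with the only hint of an argument being the remark that invokes the generalized Khintchine inequality (Proposition 1 of \cite{ST90}) for the sufficiency direction. Your argument is essentially the standard one from those references: condition on $\cG=\sigma(\Gamma_j,T_j,X_j)$ to reduce to a degree-$n$ Rademacher chaos with frozen coefficients, use orthonormality of the products $\e_{j_1}\cdots\e_{j_n}$ over distinct sorted tuples plus the $L^2$-bounded-martingale (or, equivalently, the generalized Khintchine) argument for sufficiency, and use hypercontractivity of fixed-degree Rademacher chaos together with Paley--Zygmund for necessity. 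Your diagnosis of why Kolmogorov's three-series theorem is unavailable for $n\geq 2$, and of moment equivalence as the substitute for independence, is exactly the right one.

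One small imprecision: the Paley--Zygmund step does not give $|M_N|\to\infty$ in conditional probability on $\Omega_0$ (the complementary probability $1-c_n$ leaves room for $M_N$ to be small); what it gives is $\liminf_N\bP(|M_N|>K)\geq c_n\,\bP(\Omega_0)$ for every fixed $K$, which contradicts the tightness of any a.s.\ convergent sequence unless $\bP(\Omega_0)=0$. The conclusion is unaffected, but the statement as you phrased it is stronger than what the bound delivers.
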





\medskip

We proceed now with the construction of the multiple stable integral. We recall some terminology.

\begin{definition}
{\rm Let $f:E^n \to \bR$ be an arbitrary function. We say that:\\
a) $f$ is {\em symmetric} if for any $x_1,\ldots,x_n\in E$ and for any $\pi \in \Sigma_n$,
\[
f(x_1,\ldots,x_n)=f(x_{\pi(1)},\ldots,x_{\pi(n)});
\]
b) $f$ {\em vanishes on the diagonals} if $f(x_1,\ldots,x_n)=0$ whenever $x_i=x_j$ for some $i\not=j$.
} 
\end{definition}

\begin{lemma}
A symmetric $\cB^n$-measurable function $f:E^n\to \bR$ which vanishes on the diagonals is $\cB_{n}^{(s)}$-measurable.
\end{lemma}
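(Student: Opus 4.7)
The plan is to reduce the lemma to a set-theoretic claim and prove it via a chamber decomposition. Splitting $f=f^+-f^-$ and using that $f$ vanishes on the generalized diagonal $D=\bigcup_{i<j}\{x_i=x_j\}$, the level sets $\{f^{\pm}>a\}$ for $a>0$ are symmetric Borel subsets of $D^c:=E^n\setminus D$; hence the lemma reduces to showing that \emph{every symmetric $A\in\cB^n$ with $A\subseteq D^c$ lies in $\cB_n^{(s)}$}. As a preliminary, $D^c$ itself is in $\cB_n^{(s)}$: choosing a refining sequence $\{G_k^{(m)}\}_k$ of countable measurable partitions of $E$ that generates $\cB$ (available since $E=[0,T]\times\bR^d$ is Polish) yields
\[
D^c=\bigcup_{m}\bigsqcup_{k_1<\ldots<k_n}\mathrm{sym}\bigl(G_{k_1}^{(m)}\times\ldots\times G_{k_n}^{(m)}\bigr),
\]
a countable union of symmetric rectangles.

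For the main claim, equip $E$ with the Borel lexicographic order $\prec$ and set $C=\{(x_1,\ldots,x_n)\in E^n:x_1\prec\ldots\prec x_n\}$, so that $D^c=\bigsqcup_{\pi\in\Sigma_n}\pi C$. Since every element of $\cB_n^{(s)}$ is symmetric (the class of symmetric sets being closed under complements and countable unions), any symmetric $A\subseteq D^c$ is uniquely determined by $A\cap C$: if $B\in\cB_n^{(s)}$ satisfies $B\cap C=A\cap C$, then symmetry of $B$ yields $B\cap\pi C=\pi(A\cap C)=A\cap\pi C$ for each $\pi$, whence $A=B\cap D^c\in\cB_n^{(s)}$. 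It thus suffices to prove $\cB_n^{(s)}|_C=\cB^n|_C$ for the trace $\sigma$-algebras on $C$. For this, note that whenever $B_1,\ldots,B_n\in\cB$ are disjoint and $\prec$-well-ordered (every element of $B_i$ precedes every element of $B_{i+1}$), the identity
\[
\mathrm{sym}(B_1\times\ldots\times B_n)\cap C=B_1\times\ldots\times B_n
\]
places these ``ordered rectangles'' in $\cB_n^{(s)}|_C$. Choosing a refining sequence $\{I_k^{(m)}\}_k$ of countable $\prec$-well-ordered partitions of $E$ that generates $\cB$ (e.g., dyadic time-slabs further refined lex-wise in $\bR^d$), every Borel rectangle then satisfies
\[
C\cap(A_1\times\ldots\times A_n)=\bigcup_{m}\bigsqcup_{k_1<\ldots<k_n}(A_1\cap I_{k_1}^{(m)})\times\ldots\times(A_n\cap I_{k_n}^{(m)}),
\]
expressing it as a countable union of ordered rectangles and giving $\cB^n|_C\subseteq\cB_n^{(s)}|_C$.

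The main technical obstacle is the last displayed identity: one must verify that every $(x_1,\ldots,x_n)\in C\cap(A_1\times\ldots\times A_n)$, with coordinates automatically distinct, lies in cells $I_{k_i}^{(m)}$ with strictly increasing indices $k_1<\ldots<k_n$ at some finite level $m$. This relies on the separation of distinct points by the refining partitions, a routine consequence of second countability of $E$; the construction of $\prec$-well-ordered refining partitions themselves is straightforward for the lex order on $[0,T]\times\bR^d$.
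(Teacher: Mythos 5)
The paper does not actually prove this lemma: it is quoted as background from \cite{ST90,ST91}, so there is no in-paper argument to compare against, and your proposal must stand on its own. Most of your architecture is sound: the reduction to symmetric Borel subsets of $D^c$, the identity exhibiting $D^c$ as a countable union of symmetric rectangles built from refining cube partitions, the observation that every set in $\cB_n^{(s)}$ is symmetric, the chamber decomposition $D^c=\bigsqcup_{\pi}\pi C$, and the reduction to $\cB^n|_C\subseteq\cB_n^{(s)}|_C$ are all correct (modulo the minor point that the partition cells must be taken of finite Lebesgue measure so that the symmetric rectangles have sides in $\cB_0$).

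The gap is in the very last step, which you dismiss as ``straightforward for the lex order'': the lexicographic order on $E=[0,T]\times\bR^d$ ($d\geq 1$) does \emph{not} admit a countable refining sequence of partitions into elementwise $\prec$-ordered cells that separates points. If the cells of a partition of $E$ are pairwise elementwise ordered, each cell is forced to be lex-order-convex; a lex-order-convex set either contains a full slab $(t_1,t_2)\times\bR^d$ (so there are only countably many such ``fat'' cells per level) or is contained in a single hyperplane $\{t\}\times\bR^d$ (``thin''). Across all levels there are only countably many thin cells and only countably many values of $t$ at which a fat cell is ``cut'' inside $\{t\}\times\bR^d$; hence for all but countably many $t$, two distinct points of $\{t\}\times\bR^d$ are never separated, and the displayed identity for $C\cap(A_1\times\ldots\times A_n)$ fails (its right-hand side misses those tuples). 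This is the classical non-separability of the lex order topology on $\bR^2$, and second countability of $E$ in its usual topology does not rescue it. The fix is to abandon the lex order: take an injective Borel map $\varphi:E\to\bR$ (which exists since $E$ is standard Borel, and can be arranged so that preimages of bounded intervals have finite Lebesgue measure), define $x\prec y$ iff $\varphi(x)<\varphi(y)$, and use $I_k^{(m)}=\varphi^{-1}([k2^{-m},(k+1)2^{-m}))$. These cells are elementwise ordered, refine, separate points because $\varphi$ is injective and dyadic intervals separate points of $\bR$, and the rest of your argument then goes through verbatim.
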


We say that $f:E^n \to \bR$  is a {\em simple function} if it is the form $f=\sum_{i=1}^{k}a_i 1_{B_i}$, for some $a_1,\ldots,a_k \in \bR$ and disjoint sets $B_1,\ldots,B_k \in \cB_{n,0}^{(s)}$. If $Z^{(n)}(B_i)$ is well-defined for any $i=1,\ldots,k$, we say that $f$ is {\em $n$-times integrable with respect to $Z$}, and we let
\[
I_n(f):=\sum_{i=1}^{k}a_i Z^{(n)}(B_i).
\]


\begin{definition}
\label{def-n-int-f}
{\rm Let $f:E^n \to \bR$ be a $\cB^{n}$-measurable symmetric function which vanishes on the diagonals of $E^n$.
We say that {\em $f$ is $n$-times integrable with respect to $Z$} if there exists a sequence $\{f^{(k)}\}_{k\geq 1}$ of simple functions which are $n$-times integrable with respect to $Z$ such that: \\
(i) $\{f^{(k)}\}_{k\geq 1}$ converges to $f$ in the measure $\ell^{(n)}$;\\
(ii) for any $B \in \cB_{n}^{(s)}$, the sequence $\{I_n(f^{(k)} 1_{B})\}_{k\geq 1}$ converges in probability to a limit denoted by $I_{n}(f 1_{B})$. \\
In this case, we let $I(f1_{B})$ be the limit in probability of $\{I(f^{(k)}1_{B})\}_{k\geq 1}$.
}
\end{definition}

Recall that the {\em symmetrization} of a function $f$ is defined by:
\[
\widetilde{f}(x_1,\ldots,x_n)=\frac{1}{n!}\sum_{\pi \in \Sigma_n} f(x_{\pi(1)},\ldots,x_{\pi(n)}).
\]
A $\cB^n$-measurable function $f$ which vanishes on the diagonals is  $n$-times integrable with respect to $Z$ if its symmetrization $\widetilde{f}$ is so; in this case, we let $I_n(f)=I_n(\widetilde{f})$. 

We use the notation
\[
I_n(f)=\int_{E^n}f(t_1,x_1,\ldots,t_n,z_n)Z(dt_1,dx_1)\ldots Z(dt_n,dx_n),
\]
and we say that $I_n$ is the {\em multiple integral of order $n$} of $f$ with respect to $Z$. 

\medskip



\medskip

\medskip


We now present a criterion for integrability.

\begin{theorem}
\label{integr-crit}
A symmetric $\cB^n$-measurable function $f:E^n\to \bR$ which vanishes on the diagonals is 
$n$-times integrable with respect to $Z$ if and only if
\begin{equation}
\label{conv-s2}
\sum_{j_1<\ldots<j_n} \,
\prod_{k=1}^{n} \Gamma_{j_k}^{-2/\alpha} \psi^{-2}(T_{j_k},X_{j_k})
f^2(T_{j_1},X_{j_1},\ldots,T_{j_n},X_{j_n})<\infty \quad \mbox{a.s.}
\end{equation}
In this case, the series
\[
S_n(f):=n!\sum_{j_1<\ldots<j_n} \,
\prod_{k=1}^{n} \e_{j_k}\Gamma_{j_k}^{-1/\alpha} \psi^{-1}(T_{j_k},X_{j_k})
f(T_{j_1},X_{j_1},\ldots,T_{j_n},X_{j_n})
\]
converges a.s. and $I_n(f)=S_n(f)$ a.s. 
\end{theorem}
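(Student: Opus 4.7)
The strategy is to reduce the claim to a classical equivalence for tetrahedral Rademacher chaos by conditioning out the Poisson component of the noise. Define $\cG = \sigma(\Gamma_i, T_i, X_i : i \geq 1)$. Conditionally on $\cG$, the series $S_n(f)$ is a Rademacher multilinear form of order $n$ in the $\e_{j_k}$'s whose coefficients are $\cG$-measurable; the key input is the Kahane--Khintchine inequality for tetrahedral Rademacher chaos, which states that such a series converges a.s.\ if and only if the sum of squares of its coefficients is finite, and in that case all $L^p$ norms ($p > 0$) of the series are mutually equivalent. Condition \eqref{conv-s2} is exactly the $\ell^2$-summability condition for these $\cG$-conditional coefficients, so modulo the conditioning it is equivalent to a.s.\ convergence of $S_n(f)$. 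The base case of a simple function $f = \sum_{i=1}^k a_i 1_{B_i}$ with disjoint $B_i \in \cB_{n,0}^{(s)}$ is immediate: by linearity and the LePage representation \eqref{lepage-n}, $I_n(f) = \sum a_i Z^{(n)}(B_i) = S_n(f)$ a.s., and \eqref{conv-s2} is trivially finite since only finitely many values of $f$ are nonzero along the LePage atoms with positive mass.

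For the sufficiency direction, assume \eqref{conv-s2}. Construct symmetric simple functions $f^{(k)}$ vanishing on the diagonals with $f^{(k)} \to f$ in $\ell^{(n)}$-measure (standard truncate-and-step approximation, symmetrized, removing neighborhoods of the diagonals); passing to a subsequence we may assume $f^{(k)} \to f$ at $\ell^{(n)}$-a.e.\ point, hence at a.e.\ LePage atom, since the joint law of $(T_{j_1}, X_{j_1}, \ldots, T_{j_n}, X_{j_n})$ is absolutely continuous with respect to $\ell^{(n)}$ with density $\prod_k \psi^\alpha$. For any $B \in \cB_n^{(s)}$, the conditional Rademacher-chaos coefficients of $S_n((f^{(k)} - f^{(\ell)}) 1_B)$ are Cauchy in $\ell^2$ by dominated convergence, with dominating function supplied by \eqref{conv-s2}. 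The Kahane equivalence then promotes this to Cauchy-ness in probability of $I_n(f^{(k)} 1_B) = S_n(f^{(k)} 1_B)$, with limit equal to $S_n(f 1_B)$ a.s. This verifies Definition \ref{def-n-int-f} and yields $I_n(f) = S_n(f)$ a.s. For the necessity direction, starting from any admissible approximating sequence $(f^{(k)})$, the base case gives $I_n(f^{(k)} 1_B) = S_n(f^{(k)} 1_B)$, and convergence in probability combined with the Kahane equivalence forces $\cG$-conditional $\ell^2$-Cauchy-ness of the coefficients; a Fatou-type lower-semicontinuity argument combined with a.e.\ convergence along the LePage atoms then gives \eqref{conv-s2} for the limit $f$.

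The principal technical obstacle is the (conditional) application of the Kahane--Khintchine inequality for tetrahedral Rademacher chaos of order $n \geq 2$, that is, the equivalence between a.s.\ convergence, $L^p$-convergence for every $p > 0$, and $\ell^2$-summability of the coefficients. For $n = 1$ this reduces to Kolmogorov's three-series theorem plus Khintchine's inequality; for $n \geq 2$ it hinges on hypercontractivity of the Rademacher system (Bonami--Beckner), which is exactly the tool used in \cite{ST90,ST91}. A secondary care point is ensuring that symmetric simple approximants vanishing on the diagonals can be chosen so that $\ell^{(n)}$-a.e.\ convergence transfers to the random LePage atoms; absolute continuity of the atom law with respect to $\ell^{(n)}$ makes this routine, but it must be invoked explicitly to apply the conditional dominated convergence step.
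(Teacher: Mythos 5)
The paper does not prove this statement: Theorem \ref{integr-crit} is quoted as background from \cite{ST90,ST91}, so there is no in-paper argument to compare yours against. Your overall route --- condition on $\cG=\sigma(\Gamma_i,T_i,X_i)$, view $S_n(f)$ as a tetrahedral Rademacher chaos with $\cG$-measurable coefficients, and invoke the generalized Khintchine/Kahane equivalence (hypercontractivity) together with a simple-function approximation --- is indeed the strategy of those references (the paper itself cites ``the generalized Khintchine inequality (Proposition 1 of \cite{ST90})'' for exactly this purpose). So the architecture is right.

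There are, however, concrete defects. First, your base case is wrongly justified: for a simple function $f=\sum_i a_i 1_{B_i}$ with $B_i\in\cB_{n,0}^{(s)}$, the sum \eqref{conv-s2} is \emph{not} ``trivially finite because $f$ takes finitely many nonzero values'' --- infinitely many tuples $(T_{j_1},X_{j_1},\ldots,T_{j_n},X_{j_n})$ land in each $B_i$ almost surely, and for $f=1_B$ condition \eqref{conv-s2} is precisely condition \eqref{A-conv}, which is the nontrivial criterion for $Z^{(n)}(B)$ to exist. The finiteness you need is not automatic; it is built into the hypothesis that the simple function is $n$-times integrable (each $Z^{(n)}(B_i)$ well defined). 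Second, in the sufficiency step you apply conditional dominated convergence to the coefficients of $(f^{(k)}-f^{(\ell)})1_B$ ``with dominating function supplied by \eqref{conv-s2}'', but convergence in $\ell^{(n)}$-measure alone gives no pointwise domination; you must construct the approximants so that $|f^{(k)}|\leq |f|$ (or a fixed multiple) pointwise, and state this. Third, the necessity direction is asserted rather than proved: passing from unconditional convergence in probability of $I_n(f^{(k)}1_B)$ to $\cG$-conditional $\ell^2$-Cauchyness of the coefficients requires an anti-concentration (Paley--Zygmund-type) argument for Rademacher chaos, which is the genuinely delicate step in \cite{ST90,ST91}; naming it ``the Kahane equivalence'' does not discharge it. As it stands the proposal is a correct roadmap with the right key lemma identified, but the base case contains a false claim and the two hardest steps are left at the level of assertion.
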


A sufficient condition for \eqref{conv-s2} is
\[
\int_{E^n} |f(t_1,x_1,\ldots,t_n,x_n)|^{\alpha}\left[\ln_{+}\frac{|f(t_1,x_1,\ldots,t_n,x_n)|}{\psi(t_1,x_1) \ldots \psi(t_n,x_n)} \right]^{n-1} dt_1 dx_1 \ldots dt_n dx_n<\infty.
\]
Under this condition, the multiple integral has the following asymptotic tail behaviour:
\begin{equation}
\label{tail}
\lim_{\lambda \to \infty}\frac{\lambda^{\alpha}}{(\ln \lambda)^{n-1}} \bP(|I_n(f)|>\lambda)=n (n!)^{\alpha-2} \alpha^{n-1}\|f\|_{\alpha,n}^{\alpha}, \quad \mbox{for any $n\geq 3$},
\end{equation}
where $\|f\|_{\alpha,n}^{\alpha}=
\int_{E^n}|f(t_1,x_1,\ldots,t_n,x_n)|^{\alpha}dt_1dx_1\ldots dt_n dx_n$. Relation \eqref{tail} holds also for $n=2$ under the additional condition:
\[
\int_{E^2} |f(t_1,x_1,t_2,x_2)|^{\alpha}\ln_{+}\frac{|f(t_1,x_1,t_2,x_2)|}{\psi(t_1,x_1)\psi(t_2,x_2)}
\ln_{+}\left|\ln \frac{|f(t_1,x_1,t_2,x_2)|}{\psi(t_1,x_1)\psi(t_2,x_2)}  \right|dt_1 dx_1 dt_2 dx_2<\infty.
\]

This concludes our summary about multiple stable integrals.

\subsection{L\'evy bases and stochastic integration}
\label{subsection-Levy-bases}

In this section, we introduce the definition of a L\'evy basis, and present the construction and main properties of the stochastic integral with respect to this object, following closely the notation and terminology of \cite{chong17-JTP}.

Let $(\Omega,\cF,\bP)$ be a complete probability space, equipped with a filtration $(\cF_t)_{t \in [0,T]}$ which satisfies the usual conditions.
Let $\cP$ be the predictable $\sigma$-field on $\Omega \times [0,T]$ with respect to the filtration $(\cF_t)_{t\in [0,T]}$ and $\wP=\cP \times \cB(\bR^d)$, be the predictable $\sigma$-field on $\Omega \times [0,T] \times \bR^d$. The predictable $\sigma$-field on $\Omega \times [0,T] \times \bR^d \times \bR_0$ is  $\wP \times \cB(\bR_0)$.
Processes $\{X(t,x);t\in [0,T],x\in \bR^d\}$ and $\{H(t,x,z);t\in [0,T],x\in \bR^d,z\in \bR_0\}$ are called {\em predictable} if they are measurable with respect to the corresponding predictable $\sigma$-field.
We denote by $\Pb$ the class of all sets $A \in \widetilde{\cP}$ such that $A \subseteq \Omega \times [0,k] \times [-k,k]^d$ for some $k\in \bN$.

\begin{definition}
{\rm A {\em L\'evy basis} is a mapping $\Lambda:\Pb\to L^0$ which satisfies the following: \\
(1) $\Lambda(\emptyset)=0$ a.s.\\
(2) For any disjoint sets $(A_i)_{i\geq 1}$ in $\Pb$ with $\bigcup_{i\geq 1}A_i \in \Pb$, 
\[
\Lambda\Big(\bigcup_{i\geq 1}A_i\Big)=\sum_{i\geq 1}\Lambda(A_i) \quad \mbox{in} \quad L^0. 
\]
(3) For all $A \in \Pb$ with $A \subseteq \Omega \times [0,t] \times \bR^d$, $\Lambda(A)$ is $\cF_t$-measurable.\\
(4) For all $A \in \Pb$ and $F \in \cF_t$ for some $t \leq T$,
\begin{equation}
\label{predict}
\Lambda\big(A \cap (F \times (t,T] \times \bR^d)\big)=1_{F} \Lambda\big(A \cap (\Omega \times (t,T] \times \bR^d)\big) \quad \mbox{a.s.}
\end{equation}
(5) For any disjoint sets $(B_i)_{i\geq 1}$ in $\cB_b$, $\{\Lambda(\Omega \times B_i)\}_{i\geq 1}$ are independent. Moreover, if $B \in \cB_b$ is such that $B \subseteq (t,T] \times \bR^d$ for some $t\leq T$, then $\Lambda(\Omega \times B)$ is independent of $\cF_t$.\\
(6) For any $B \in \cB_b$, $\Lambda(\Omega \times B)$ has an infinitely divisible (ID) distribution.\\
(7) for any $t\in [0,T]$ and $k \in \bN$, $\Lambda(\Omega \times \{t\} \times [-k,k]^d)=0$ a.s.
}
\end{definition}

Intuitively, a L\'evy basis is a generalization of a L\'evy process. But for our purposes, it is important to realize that if $\Lambda$ is L\'evy basis, then the process $Z$ defined by
\begin{equation}
\label{def-Z}
Z(B)=\Lambda(\Omega \times B) \quad \mbox{for all $B \in \cB_b$ }
\end{equation}
is an ID independently scattered random measure.

 \medskip
Similarly to the classical L\'evy-It\^o decomposition of a L\'evy process, a L\'evy basis has a canonical decomposition (see Theorem 3.2 of \cite{chong-kluppelberg15}):
\begin{equation}
\label{canon-Lambda}
\Lambda(A)=B(A)+\Lambda^{C}(A)+\int_{A \times \{|z|\leq 1\}}z \widehat{N}(dt,dx,dz)+\int_{A \times \{|z|> 1\}}z N(dt,dx,dz),
\end{equation}
which contains three terms: \\
(i) a drift term, given by a deterministic signed measure $B$ on $[0,T] \times \bR^d$; \\
(ii) a continuous component, given by a Gaussian random measure $\Lambda^{C}$ on $[0,T]\times \bR^d$ with variance measure $C$; \\
(iii) a pure-jump component, characterized by an underlying Poisson random measure $N$ on $U=[0,T] \times \bR^d \times \bR_0$ of intensity measure $\mu$, whose compensated version is called $\widehat{N}$. 

Moreover, $B$, $C$ and $\mu(\cdot,dz)$ have respective densities $b(t,x)$, $c(t,x)$, and $\nu(t,x,dz)$ with respect to the Lebesque measure. When these densities do not depend on $(t,x)$, we say that $\Lambda$ is a {\em homogeneous L\'evy basis}. If $C=0$, we say that $\Lambda$ {\em pure-jump}. If $b(t,x)=0$ and the measure $\nu(t,x,\cdot)$ is symmetric for all $(t,x)$, then we say that $\Lambda$ is {\em symmetric}. 

\medskip

\begin{definition}
\label{def-SL}
{\rm A homogenous L\'evy basis $\Lambda$ with canonical decomposition \eqref{canon-Lambda} with $B=0$, $\Lambda^C=0$ and $\nu=\nu_{\alpha}$ (where $\nu_{\alpha}$ is given by \eqref{def-nu-a}) is called a {\em S$\alpha$S L\'evy basis}.
}
\end{definition}

\begin{remark}
\label{rem-SL}
{\rm  If $\Lambda$ is a S$\alpha$S L\'evy basis with canonical decomposition \eqref{canon-Lambda}, then the process $Z$ defined by \eqref{def-Z} is a S$\alpha$S random measure with Poisson representation \eqref{Poisson-repr}.}
\end{remark}

The Poisson random measure $N$ is the key object of this theory.
We recall its definition below, but we refer the reader to Chapter 3 of \cite{resnick87} for some of its properties.

\medskip

 Let $U$ be a locally compact space with a countable basis, equipped with its Borel $\sigma$-field (for us, $U=[0,T] \times \bR^d \times \bR_0$).
Let $M_p(U)$ be the set of point measures on $U$ equipped with the topology of vague convergence, and $\mathcal{M}_p(U)$ be the corresponding Borel $\sigma$-field. Let $\mu$ be a Radon measure on $U$, i.e. $\mu(F)<\infty$ for any compact set $F \subset U$.

\begin{definition}
{\rm 
 A {\em Poisson random measure} (PRM) on $U$ of intensity $\mu$ is a map $N:\Omega\to M_p(U)$ which is $\mathcal{M}_p(U)$- measurable and satisfies the following properties: \\ 
{\em (i)} $N(F)$ has a Poisson distribution with mean $\mu(F)$\footnote{We use the convention that $N(F)=\infty$ a.s. if $\mu(F)=\infty$}, for any Borel set $F$ in $U$; \\
{\em (ii)} $N(F_1),\ldots, N(F_n)$ are independent for any disjoint Borel sets $F_1,\ldots,F_n$ in $U$. 
}
\end{definition}

\medskip

If $N$ is a PRM on $U$ of intensity $\mu$, we define the {\em compensated process} $\widehat{N}$ by 
\[
\widehat{N}(F)=N(F)-\mu(F)
\] 
for any Borel set $F$ in $U$ with $\mu(F)<\infty$. Unlike $N(\omega,\cdot)$, $\widehat{N}(\omega,\cdot)$ is not a measure for any fixed $\omega \in \Omega$.
But one can develop a theory of stochastic integration with respect to $\widehat{N}$, similarly to It\^o's theory. We refer the reader to Appendix \ref{app-integrN} for more details.

\medskip

We recall now briefly the construction and main properties of the stochastic integral with respect to a L\'evy basis $\Lambda$, which shares many elements with the classical stochastic integral with respect to semi-martingales.

\medskip

A {\em simple integrand} is a linear combination of indicators of the form $1_{A}$ with $A \in \Pb$. On the other hand, an {\em elementary process} is a linear combination of processes of the form
\begin{equation}
\label{def-elem}
X(t,x)=Y 1_{(a,b]}(t) 1_{B}(x),
\end{equation}
where $Y$ is $\cF_a$-measurable, $0\leq a<b\leq T$ and $B \in \cB_b$.

We let $\cS$ be the set of all simple integrands and $\cE$ be the set of all elementary processes.
It is known that
 $\wP=\sigma(\cE)$. In particular,
a set of the form $A=F \times (a,b] \times B$ with $F\in \cF_a$, $0\leq a<b\leq T$ and $B \in \cB_b$  is in $\Pb$. Moreover, by property \eqref{predict} of $\Lambda$, we have:
\begin{equation}
\label{predict2}
\Lambda(F\times (a,b] \times B)=1_{F}Z((a,b] \times B).
\end{equation}

 The two sets $\cS$ and $\cE$ are not the same, but every process in one set can be approximated by a process in the other set, as shown by the next lemma.

\begin{lemma}
\label{approx-lem}
(i) For any process $S \in \cS$, there exists a sequence $(X_n)_{n\geq 1}$ in $\cE$ such that $X_n \to S$ uniformly on compact sets of $\bR_+ \times \bR^d$.\\
(ii) For any process $X\in \cE$, there exists a sequence $(S_n)_{n\geq 1}$ in $\cS$ such that for all $(t,x)$,
 $S_n(t,x)\to X(t,x)$ and $|S_n(t,x)|\leq  |X(t,x)|$ for all $n$.
\end{lemma}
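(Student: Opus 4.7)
This is a direct application of the standard dyadic approximation of $\cF_a$-measurable random variables. By linearity, it suffices to handle a single elementary process $X(t,x) = Y\,\mathbf{1}_{(a,b]}(t)\,\mathbf{1}_B(x)$, with $Y$ being $\cF_a$-measurable. Write $Y = Y^+ - Y^-$, and for each sign set
\[
Y_n^{\pm} \;=\; \sum_{k=0}^{n 2^n - 1}\frac{k}{2^n}\,\mathbf{1}_{\{k/2^n \le Y^{\pm} < (k+1)/2^n\}} \;+\; n\,\mathbf{1}_{\{Y^{\pm} \ge n\}},
\]
and let $Y_n := Y_n^+ - Y_n^-$. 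Each $Y_n$ is a simple $\cF_a$-measurable random variable with $|Y_n|\le |Y|$ and $Y_n \to Y$ pointwise. Then
\[
S_n(t,x) \;:=\; Y_n\,\mathbf{1}_{(a,b]}(t)\,\mathbf{1}_B(x)
\]
is a finite linear combination of indicators of sets of the form $F \times (a,b] \times B$ with $F \in \cF_a$, and each such set lies in $\Pb$ because $B\in \cB_b$. Hence $S_n \in \cS$, $|S_n(t,x)| \le |X(t,x)|$ for all $(t,x)$, and $S_n(t,x) \to X(t,x)$ pointwise. Linearity extends the conclusion to arbitrary $X\in \cE$.

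\textbf{Plan for part (i).} A monotone class argument. By linearity, it is enough to produce, for each $A\in \Pb$, a sequence $(X_n)\subset \cE$ with $X_n \to \mathbf{1}_A$ uniformly on compacts of $\bR_+\times\bR^d$ (for each fixed $\omega$). Call a set a \emph{predictable rectangle} if it has the form $R = F\times (a,b]\times B$ with $F \in \cF_a$, $0\le a<b\le T$, and $B \in \cB_b$; its indicator equals the elementary process $Y\,\mathbf{1}_{(a,b]}(t)\,\mathbf{1}_B(x)$ with $Y = \mathbf{1}_F$, hence lies in $\cE$. The algebra $\mathcal{A}$ of finite disjoint unions of such rectangles generates $\wP$, and indicators of sets in $\mathcal{A}$ are exactly the indicator-valued elements of $\cE$.

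Let $\mathcal{D}$ be the class of $A\in \Pb$ for which the approximation property holds. Then $\mathcal{A}\cap \Pb \subset \mathcal{D}$ (take the constant sequence); $\mathcal{D}$ is trivially closed under complements relative to the bounding box $\Omega\times[0,k]\times[-k,k]^d$, and, using a diagonal truncation against an exhausting sequence of compacts in $\bR_+\times\bR^d$, $\mathcal{D}$ is closed under monotone increasing and decreasing limits. The monotone class theorem yields $\mathcal{D} = \Pb$, and a final passage through finite linear combinations concludes the argument for all $S\in\cS$.

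\textbf{Main obstacle.} The delicate point lies in part (i): verifying that the \emph{uniform-on-compacts} approximation property is preserved under monotone limits of predictable sets. The bound $A\subseteq \Omega\times[0,k]\times[-k,k]^d$ built into $\Pb$ is what makes this possible: on any fixed compact $K\subset \bR_+\times \bR^d$, one can truncate an exhausting sequence of rectangle-unions and perform a diagonal extraction so that, $\omega$ by $\omega$, the approximating elementary processes agree with $\mathbf{1}_A$ on $K$ outside an arbitrarily small symmetric-difference region. Once this step is executed carefully (combining the structure of $\cF_a$ with the compactness of the spatial-temporal slice), the remaining monotone class bookkeeping is routine.
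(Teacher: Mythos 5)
Your part (ii) is correct and is essentially the paper's argument: the paper invokes Theorem 13.5 of Billingsley, which is exactly the dyadic simple-function approximation you write out, applied to a single elementary summand. (Like the paper, you reduce to one summand "by linearity"; strictly speaking the bound $|S_n|\leq |X|$ need not survive taking linear combinations, but this is harmless for how the lemma is used.)

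Part (i) contains a genuine gap. The paper's proof of (i) consists of a single citation: the fact on p.~172 of \cite{bichteler02} that $\Pb$ lies in the sequential closure of $\cE$ for the topology of confined uniform convergence. That citation is the entire nontrivial content, and it is precisely the step your monotone-class scheme does not reproduce. The class $\mathcal{D}$ of sets $A\in\Pb$ whose indicators are uniform-on-compacts limits of sequences from $\cE$ is \emph{not} closed under monotone limits: if $A_m\uparrow A$, then $1_{A_m}\to 1_{A}$ only pointwise, and since these functions are $\{0,1\}$-valued one has $\sup_{(t,x)\in K}|1_{A_m}(\omega,t,x)-1_{A}(\omega,t,x)|=1$ whenever $(\{\omega\}\times K)\cap (A\setminus A_m)\neq\emptyset$. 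Already for the increasing rectangles $A_m=\Omega\times(1/m,1]\times B$ the natural diagonal sequence fails to converge uniformly on $[0,1]\times\overline{B}$. Your own description of the remedy --- arranging that the approximants agree with $1_{A}$ on $K$ ``outside an arbitrarily small symmetric-difference region'' --- concedes only convergence in measure; for indicator functions the sup-distance on the exceptional region remains $1$, so this does not yield uniform convergence on compacts. Consequently the inclusion $\mathcal{D}=\Pb$ is not established, and the argument as described would fail. To close the gap you would need either to invoke Bichteler's confined-uniform sequential-closure result directly, as the paper does, or to supply a genuinely different mechanism adapted to the uniform topology rather than a monotone-class/in-measure argument.
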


\begin{proof}
(i) This follows from the following fact, stated on page 172 of \cite{bichteler02}: $\Pb$ is the sequential closure of $\cE$ with respect to the topology of ``confined uniform convergence'', i.e. for any $A \in \Pb$ and for any $\varepsilon>0$, there exists a process $X_{\varepsilon} \in \cE$, such that
for any compact set $K \subset [0,T] \times \bR^d$,
\[
\sup_{(t,x)\in K}|X_{\varepsilon}(t,x)-1_{A}(t,x)|<\varepsilon.
\]

(ii) Without loss of generality, we assume that $X$ is an elementary process of the form \eqref{def-elem}. By Theorem 13.5 of \cite{billingsley95}, there exists a sequence $(Y_n)_{n\geq 1}$ of simple random variables such that $Y_n\to Y$, $|Y_n|\leq |Y|$ for all $n$, and each $Y_n$ is a linear combination of indicator functions of the form $1_{F}$ with $F\in \cF_a$. It suffices to take $S_n(t,x)=Y_n 1_{(a,b]}(t) 1_{B}(x)$.
\end{proof}

For any $A \in \Pb$, let $\int 1_{A}d\Lambda=\Lambda(A)$. By linearity, we extend this
 definition to $\cS$. For any predictable process $H$, we define the {\em Daniell mean}: 
\[
\|H\|_{\Lambda}=\sup_{S \in \cS,|S|\leq |H|}\left\|\int S d\Lambda \right\|_{L^0}.
\]

Note that the Daniell mean satisfies the triangular inequality: 
\begin{equation}
\label{triang-Daniell}
\|H_1+H_2\|_{\Lambda} \leq \|H_1\|_{\Lambda} +\|H_2\|_{\Lambda}.
\end{equation}

\begin{definition}
\label{def-integrable} {\rm Let $\Lambda$ be a L\'evy basis and $Z$ be the corresponding ID independently scattered random measure (given by \eqref{def-Z}).\\
a) A predictable process $H$ is {\em integrable} with respect to $\Lambda$ if there exists a sequence $(S_n)_{n\geq 1}$ in $\cS$ such that $\|S_n-H\|_{\Lambda}\to 0$ as $n \to \infty$. \\
b) A predictable process $H$ is {\em integrable} with respect to $Z$ if it is integrable with respect to $\Lambda$.
}
\end{definition}

We denote by $L^{0}(\Lambda)$ the class of integrable processes with respect to $\Lambda$, which is the closure of $\cS$ with respect to $\|\cdot\|_{\Lambda}$.  
If $Z$ is the ID independently scattered random measure induced by $\Lambda$ (via relation \eqref{def-Z}), by abuse of terminology, we let 
\begin{equation}
\label{convention}
L^{0}(Z)=L^{0}(\Lambda) \quad \mbox{and} \quad \|\cdot\|_{Z}=\| \cdot\|_{\Lambda}
\end{equation}

\medskip

For any $S \in \cS$, we denote $I^{\Lambda}(S)=\int S d\Lambda$. Unlike It\^o's theory, the map $I^{\Lambda}: \cS \to L^0$ is {\em not} an isometry! But the fact that this map satisfies the following trivial inequality
\begin{equation}
\label{contraction}
\|I^{\Lambda}(S)\|_{L^0}\leq \|S\|_{\Lambda} \quad \mbox{for all $S \in \cS$}
\end{equation}
is sufficient for extending $I^{\Lambda}$ from $\cS$ to $L^{0}(\Lambda)$. More precisely, if $H \in L^{0}(\Lambda)$ and $(S_n)_{n\geq 1}$ is the approximating sequence of simple integrands given by Definition \ref{def-integrable}, then 
$\{I^{\Lambda}(S_n)\}_{n\geq 1}$ is a Cauchy sequence in $L^0$ since
\begin{align*}
\|I^{\Lambda}(S_n)-I^{\Lambda}(S_m)\|_{L^0} \leq \|S_n-S_m\|_{\Lambda} \leq \|S_n-H\|_{\Lambda}+
\|S_m-H\|_{\Lambda} \to 0,
\end{align*}
as $n,m \to \infty$. By definition, we set
$I^{\Lambda}(H)=\lim_{n \to \infty} I^{\Lambda}(S_n)$ in $L^0$, and we say that $I^{\Lambda}(H)$ is the {\em stochastic integral} of $H$ with respect to $\Lambda$ (or $Z$). We will use both notations:
\[
I^{\Lambda}(H)=\int H d\Lambda=\int H dZ=I^{Z}(H).
\]
By approximation, it follows that inequality \eqref{contraction} can be extended to $L^{0}(\Lambda)$:
\begin{equation}
\label{contraction2}
\|I^{\Lambda}(H)\|_{L^0}\leq \|H\|_{\Lambda} \quad \mbox{for all $H \in L^{0}(\Lambda)$}.
\end{equation}

The stochastic integral $I^{\Lambda}$ satisfies the dominated convergence theorem. We include this result below, since we will use it often in the present article. This result was stated as relation (2.6) of \cite{BJ83}. The form that we present here corresponds to Theorem A.1 of \cite{CDH19}.

\begin{theorem}[Dominated Convergence Theorem for $I^{\Lambda}$]
\label{DCT}
Let $\Lambda$ be a homogeneous L\'evy basis. 
Let $(H_n)_{n\geq 1}$ be predictable processes such that $(H_n)_{n\geq 1}$ converges pointwise to $H$, and  $|H_n|\leq |H_0|$ for all $n$, for some $H_0 \in L^0(\Lambda)$. Then $H_n,H \in L^0(\Lambda)$ and $\|H_n-H\|_{\Lambda} \to 0$. Consequently,
\[
I^{\Lambda}(H_n) \to I^{\Lambda}(H) \quad \mbox{in} \quad L^0.
\]
\end{theorem}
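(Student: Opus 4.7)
The plan is to establish the statement in three stages: (i) show that each $H_n$ and the limit $H$ belong to $L^{0}(\Lambda)$; (ii) show the mean convergence $\|H_n-H\|_{\Lambda}\to 0$; (iii) pass from mean convergence to $L^{0}$ convergence of the integrals through the contraction inequality \eqref{contraction2}. Step (i) is preparatory, step (iii) is a one-line consequence of step (ii), and essentially the whole content sits in step (ii).

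For step (i), I would use monotonicity of the Daniell mean: if $|G|\leq |H|$ pointwise then every simple $S$ with $|S|\leq |G|$ also satisfies $|S|\leq |H|$, so $\|G\|_{\Lambda}\leq \|H\|_{\Lambda}$. Since $|H_n|\leq |H_0|$ for every $n$ and the pointwise limit satisfies $|H|\leq |H_0|$, we obtain $\|H_n\|_{\Lambda}, \|H\|_{\Lambda}\leq \|H_0\|_{\Lambda}<\infty$. To upgrade finiteness of the mean to actual membership in $L^{0}(\Lambda)$, I would combine this bound with Lemma \ref{approx-lem} and the standard Daniell-type completion property: the class of predictable processes dominated (in absolute value) by an element of $L^{0}(\Lambda)$ is stable under the closure $\cS \mapsto L^{0}(\Lambda)$, since one can truncate an approximating sequence for $H_0$ to stay below $|H_n|$ up to a set of small mean.

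The core of the argument is step (ii). Setting $G_n := H_n-H$, we have $G_n\to 0$ pointwise and $|G_n|\leq 2|H_0|$. I would invoke the canonical decomposition \eqref{canon-Lambda} of the homogeneous L\'evy basis into a drift part $B$, a Gaussian part $\Lambda^{C}$, a small-jump Poissonian part $\int z\,\widehat{N}$, and a large-jump part $\int z\,N$. The Daniell mean $\|\cdot\|_{\Lambda}$ is controlled by the corresponding contributions associated with each of these pieces (densities $b$, $c$, and the L\'evy measure $\nu$ with respect to Lebesgue measure), as explained in \cite{BJ83,chong17-JTP}. For the drift and Gaussian components I would apply the classical Lebesgue DCT (using $|G_n|^2\leq 4|H_0|^2$ and $|H_0|^2$ being integrable against $c\cdot dt\,dx$, which is encoded by $H_0\in L^{0}(\Lambda)$); for the Poissonian components I would use the DCT recalled in Appendix \ref{app-integrN} together with the truncation $\{|z|\leq 1\}$ vs. $\{|z|>1\}$, controlling the small-jump piece in $L^{2}$ and the large-jump piece via a straightforward pathwise estimate. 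The sum of the four bounds then goes to zero, which yields $\|G_n\|_{\Lambda}\to 0$. This is the main obstacle, because the supremum defining $\|\cdot\|_{\Lambda}$ does not commute with limits in an obvious way, and one genuinely needs the Levy-basis structure (not just the abstract properties of a Daniell mean).

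Step (iii) is then immediate: by linearity of $I^{\Lambda}$ on $L^{0}(\Lambda)$ and \eqref{contraction2},
\[
\|I^{\Lambda}(H_n)-I^{\Lambda}(H)\|_{L^{0}}=\|I^{\Lambda}(H_n-H)\|_{L^{0}}\leq \|H_n-H\|_{\Lambda}\xrightarrow[n\to\infty]{}0,
\]
which is precisely convergence in probability, as noted after \eqref{L0-norm}. The only delicate point to double-check is that the extension of $I^{\Lambda}$ from $\cS$ to $L^{0}(\Lambda)$ is well-defined and linear on the dominated class, so that $I^{\Lambda}(H_n-H)=I^{\Lambda}(H_n)-I^{\Lambda}(H)$; this follows from the Cauchy-sequence construction given right after \eqref{contraction}.
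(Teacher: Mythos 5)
The paper does not actually prove this theorem: it is quoted from the literature (relation (2.6) of \cite{BJ83}, in the form of Theorem A.1 of \cite{CDH19}), so there is no in-paper proof to compare against. Judged on its own merits, your proposal has a genuine gap in its central step (ii).

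The problem is the claim that, after invoking the canonical decomposition \eqref{canon-Lambda} with the \emph{fixed} truncation $\{|z|\leq 1\}$ versus $\{|z|>1\}$, the compensated small-jump part can be ``controlled in $L^2$'' and the large-jump part by a ``straightforward pathwise estimate''. Neither follows from the hypothesis $H_0\in L^0(\Lambda)$. For the small-jump part you would need
\[
\int_0^T\int_{\bR^d}\int_{\{|z|\leq 1\}}|H_0(t,x)|^2 z^2\,\nu(dz)\,dx\,dt<\infty \quad \mbox{a.s.},
\]
but the integrability criterion (Theorem 4.1 of \cite{chong-kluppelberg15}, restated in the paper) only gives $\int U(H_0)<\infty$ and $\int V_0(H_0)<\infty$ a.s., where $V_0(y)=\int(|yz|^2\wedge 1)\nu(dz)$ truncates the \emph{product} $yz$ at level $1$, not $z$ itself. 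Concretely, for a S$\alpha$S L\'evy basis one has $V_0(y)=\tfrac{2}{2-\alpha}|y|^{\alpha}$, so $H_0\in L^0(\Lambda)$ means only $\int|H_0|^{\alpha}<\infty$ a.s., while the $L^2$ bound you invoke requires $\int|H_0|^2<\infty$ a.s.; these are not comparable, and the relevant integrands in this paper (e.g.\ $G_{t-\cdot}(x-\cdot)u$) are exactly of the first kind. Similarly, the large-jump integral $\int H_0\,z\,1_{\{|z|>1\}}\,dN$ is an infinite sum (the spatial domain is all of $\bR^d$, so $\mu$ puts infinite mass on $\{|z|>1\}$), and its absolute convergence is again a nontrivial consequence of the $U/V_0$ conditions, not a pathwise triviality. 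A correct proof along your decomposition route must use the integrand-adapted truncation $\{|H z|\leq 1\}$ built into $U$ and $V_0$; alternatively (and this is how \cite{BJ83} and \cite{bichteler02} proceed) one works purely with the abstract properties of the Daniell mean --- countable subadditivity and the fact that $\|H_0 1_{\{|H_0|>a\}}\|_{\Lambda}\to 0$ as $a\to\infty$ for $H_0\in L^0(\Lambda)$ --- without ever decomposing $\Lambda$.

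Two smaller points. In step (i), ``finite Daniell mean plus predictability implies integrability'' is true in Bichteler's framework but is a theorem, not a routine truncation argument; as written it is circular, since the stability of the dominated class under closure is essentially the statement being proved. Step (iii) is correct and is indeed immediate from \eqref{contraction2} once step (ii) is secured.
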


Theorem 4.1 of \cite{chong-kluppelberg15} gives necessary and sufficient conditions for integrability with respect to an arbitrary {\em $L^0$-random measure} (see Definition 2.1 of \cite{chong-kluppelberg15}); by Remark 4.4 of \cite{chong-kluppelberg15}, a L\'evy basis is an orthogonal $L^0$-random measure. We include the statement of this result below for a homogeneous L\'evy basis, which is a generalization Theorem 2.7 of \cite{RR89}, that corresponds to ID independently scattered random measures with deterministic integrands.

\begin{theorem}[Theorem 4.1 of  \cite{chong-kluppelberg15} for homogeneous L\'evy bases]
Let $\Lambda$ be a pure-jump homogeneous L\'evy basis,
with canonical decomposition \eqref{canon-Lambda} (with $\Lambda^C=0$), 
and $H$ be
a predictable process. Then $H \in L^0(\Lambda)$ if and only if
\begin{equation}
\label{integr-cond}
\int_{0}^{T} \int_{\bR^d} U(H(t,x))dxdt<\infty \ \mbox{a.s.} \quad \mbox{and} \quad\int_{0}^{T}\int_{\bR^d} V_0(H(t,x))dxdt<\infty \ \mbox{a.s.},
\end{equation}
where 
\[
U(y)=by+\int_{\bR} \big(\tau(yz)-y\tau(z)\big)\nu(dz) \quad \mbox{and} \quad V_0(y)=\int_{\bR}(|yz|^2 \wedge 1)\nu(dz),
\]
for any $y \in \bR$, where $\tau(z)=z1_{\{|z|\leq 1\}}$.
\end{theorem}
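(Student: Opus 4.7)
The strategy is to adapt the Rajput--Rosinski argument for deterministic integrands (Theorem 2.7 of \cite{RR89}) to the random-integrand setting, by explicitly computing the characteristic function of $I^{\Lambda}(S)$ for simple $S \in \cS$ and then passing to a limit. Take $S(t,x) = \sum_{k=1}^{n} Y_k 1_{(t_{k-1},t_k]}(t) 1_{B_k}(x)$ with $0 \le t_0 < \ldots < t_n \le T$, $B_k \in \cB_b$, and $Y_k$ being $\cF_{t_{k-1}}$-measurable. Conditioning successively on $\cF_{t_{k-1}}$ and using properties (4)--(6) of $\Lambda$ (predictability, independence across disjoint predictable sets, and the ID law of $\Lambda(\Omega \times B)$) together with the L\'evy--Khintchine formula, one arrives at
\[
\bE[e^{iu I^{\Lambda}(S)}] = \bE\!\left[\exp\!\left(\int_0^T\!\!\int_{\bR^d}\Psi(u\,S(t,x))\,dx\,dt\right)\right], \quad \Psi(v) = ivb + \int_{\bR}\!(e^{ivz}-1-iv\tau(z))\nu(dz).
\]

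Next I would decompose $\Psi(v)$ to recognize $U$ and $V_0$. The imaginary part equals $vb + \int(\sin(vz)-v\tau(z))\nu(dz)$; writing $\sin(vz)-v\tau(z) = [\sin(vz)-\tau(vz)] + [\tau(vz)-v\tau(z)]$, the second bracket integrates to $U(v)-vb$, while $|\sin(vz)-\tau(vz)| \le C(|vz|^2 \wedge 1)$ so the first bracket is dominated by $V_0(v)$. The real part equals $-\int(1-\cos(vz))\nu(dz)$, which via $c(|vz|^2 \wedge 1) \le 1-\cos(vz) \le C(|vz|^2 \wedge 1)$ is comparable to $V_0(v)$. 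Consequently, finiteness of the random exponent $\int_0^T\!\int_{\bR^d}\Psi(uS)\,dxdt$, uniformly on compact sets of $u$, is a.s.\ equivalent to the a.s.\ finiteness of $\int U(S)\,dxdt$ and $\int V_0(S)\,dxdt$.

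To deduce the criterion for a general predictable $H$, I would construct approximating simple integrands $(S_n) \subset \cS$ with $|S_n|\le |H|$ and $S_n \to H$ pointwise (combining truncation, dyadic discretization, and Lemma \ref{approx-lem}), then apply Theorem \ref{DCT} together with monotone convergence to pass the $U$- and $V_0$-integrability conditions to the limit. Conversely, if $H \in L^0(\Lambda)$, then $I^{\Lambda}(S_n) \to I^{\Lambda}(H)$ in probability, and L\'evy's continuity theorem applied along a common subsequence forces the almost sure finiteness of $\int U(H)\,dxdt$ and $\int V_0(H)\,dxdt$, which is precisely \eqref{integr-cond}.

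The principal obstacle is the random nature of the characteristic exponent: because the exponent itself depends on $\omega$, convergence of $\bE[e^{iu I^{\Lambda}(S_n)}]$ to a limit does not directly identify the limit in distribution of $I^{\Lambda}(S_n)$. The resolution is a subsequence argument, extracting a sub-subsequence along which the random exponents converge a.s., applying L\'evy's continuity theorem pathwise, and verifying that the resulting random limit equals the conditional characteristic function of $I^{\Lambda}(H)$ given a suitable $\sigma$-field. Carrying out this transfer while keeping track of the two distinct contributions $U$ and $V_0$, and ruling out spurious drift terms coming from the imaginary part, is the most delicate point of the proof.
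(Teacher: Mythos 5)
First, note that the paper does not prove this statement at all: it is quoted verbatim (specialized to homogeneous L\'evy bases) from Theorem 4.1 of \cite{chong-kluppelberg15}, whose proof runs through the Bichteler--Jacod characterization of $L^0$-integrability for general $L^0$-random measures and a careful estimate of the Daniell mean, not through the unconditional characteristic function. So you are reconstructing an external result, and the reconstruction has a genuine gap in the sufficiency direction. The space $L^{0}(\Lambda)$ is by definition the closure of $\cS$ under the Daniell mean $\|H\|_{\Lambda}=\sup_{S\in\cS,\,|S|\leq|H|}\|\int S\,d\Lambda\|_{L^0}$, which is a supremum over \emph{all} dominated simple integrands. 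Your computation of $\bE[e^{iuI^{\Lambda}(S)}]$ controls the law of $I^{\Lambda}(S)$ for one $S$ at a time; to conclude $H\in L^{0}(\Lambda)$ you must show that \eqref{integr-cond} forces $\|S_n-H\|_{\Lambda}\to 0$ for some simple sequence, and for that you need a \emph{uniform} bound of $\|I^{\Lambda}(S)\|_{L^0}$ over all simple $|S|\leq|H|$ in terms of the $U$- and $V_0$-functionals of $H$. Your proposed shortcut --- approximating $H$ by $S_n$ with $|S_n|\leq|H|$ and invoking Theorem \ref{DCT} --- is circular: Theorem \ref{DCT} requires the dominating process $H_0$ to already belong to $L^{0}(\Lambda)$, and the only available dominator here is $H$ itself, whose membership is precisely what is to be proved. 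The quantitative Daniell-mean estimate (obtained in \cite{chong-kluppelberg15} by splitting $\Lambda$ via its canonical decomposition into a drift, a compensated small-jump martingale, and a large-jump part, and estimating each via $U$, $V_0$ and Lenglart-type inequalities after conditioning on the predictable integrand) is the actual mathematical content of the theorem, and it is absent from your outline.

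The necessity direction is also only sketched: you correctly identify the obstacle (the exponent $\int\Psi(uS_n)$ is random, so convergence of $\bE[e^{iuI^{\Lambda}(S_n)}]$ does not identify a limit law pathwise), but the proposed resolution --- a sub-subsequence plus a ``pathwise L\'evy continuity theorem'' applied to a conditional characteristic function --- is not carried out, and it is exactly here that one must rule out the possibility that $\int U(S_n)$ diverges while the real part $-\int\!\int(1-\cos(uS_nz))\,\nu(dz)\,dxdt$ stays bounded. Your algebraic decomposition of $\Psi$ into the $U$-drift and a remainder dominated by $V_0$ is correct and is the standard one, but on its own it only shows that \eqref{integr-cond} makes the exponent finite; it does not bridge the gap between ``the exponent is a.s.\ finite'' and ``$H$ lies in the Daniell-mean closure of $\cS$.''
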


In the particular case of a S$\alpha$S L\'evy basis,
$b=0$ and $\nu=\nu_{\alpha}$ is given by \eqref{def-nu-a}, so
$U(y)=0$ and $V_0(y)=\frac{2}{2-\alpha}|y|^{\alpha}$, which means that
condition \eqref{integr-cond} is equivalent to $H \in L^{\alpha}([0,T] \times \bR^d)$ a.s. We state this result in the corollary below, noting that it gives a natural extension to random integrands of the integrability criterion 
 which was mentioned at the beginning of Section \ref{subsection-multiple}.

\begin{corollary}
\label{integr-th}
Let $\Lambda$ be a S$\alpha$S L\'evy basis, and $H$ be a predictable process. Then $H \in L^0(\Lambda)$ if and only if
\begin{equation}
\label{integr-cond-a}
\int_0^{T}\int_{\bR^d}|H(t,x)|^{\alpha}dxdt<\infty \quad \mbox{a.s.}
\end{equation}
\end{corollary}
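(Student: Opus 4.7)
The plan is to apply the preceding theorem (the homogeneous-L\'evy-basis version of Theorem 4.1 of \cite{chong-kluppelberg15}) to the specific case of a S$\alpha$S L\'evy basis, which by Definition \ref{def-SL} has drift $b=0$, no Gaussian part, and L\'evy measure $\nu=\nu_\alpha$ given by \eqref{def-nu-a}. All that remains is to evaluate the two functions $U$ and $V_0$ and observe that the abstract integrability condition \eqref{integr-cond} collapses to \eqref{integr-cond-a}. I would do this in two short computations.

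First I would show $U(y)=0$ for every $y\in\bR$. Since $b=0$, we have $U(y)=\int_{\bR}\bigl(\tau(yz)-y\tau(z)\bigr)\nu_\alpha(dz)$ with $\tau(z)=z1_{\{|z|\le 1\}}$, which is an odd function of $z$. Thus $z\mapsto \tau(yz)-y\tau(z)$ is also odd, and the symmetry of $\nu_\alpha$ (which has density proportional to $|z|^{-\alpha-1}$) forces the integral to vanish, provided the integrand is integrable. Integrability near $0$ follows from the estimate $|\tau(yz)-y\tau(z)|=O(|z|^2)$ uniformly on $\{|z|\le 1/(|y|\vee 1)\}$, and integrability at infinity follows from the fact that $\tau(yz)-y\tau(z)$ is bounded and compactly supported in $z$ for fixed $y$.

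Next I would compute $V_0(y)$. By symmetry it suffices to take $y>0$, and a change of variable gives
\begin{align*}
V_0(y) &= \alpha\int_0^{\infty}(y^2z^2\wedge 1)z^{-\alpha-1}dz \\
&= \alpha y^2\int_0^{1/y} z^{1-\alpha}dz + \alpha\int_{1/y}^{\infty}z^{-\alpha-1}dz \\
&= \frac{\alpha\, y^{\alpha}}{2-\alpha}+y^{\alpha} \;=\; \frac{2}{2-\alpha}|y|^{\alpha},
\end{align*}
using $\alpha\in(0,2)$. The same formula extends to $y<0$ by $V_0(-y)=V_0(y)$, and $V_0(0)=0$.

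Finally, I would substitute these two evaluations into \eqref{integr-cond}: the first condition becomes $\int_0^T\int_{\bR^d}0\,dxdt<\infty$, which is vacuous, and the second becomes $\frac{2}{2-\alpha}\int_0^T\int_{\bR^d}|H(t,x)|^{\alpha}dxdt<\infty$ a.s., which is equivalent to \eqref{integr-cond-a}. This proves the ``if and only if'' claim. There is no real obstacle here; the only subtle point is justifying the integrability required for the symmetry argument giving $U\equiv 0$, and in particular handling the behaviour at $z=0$ carefully, which is why I would separate the integrals at $|z|=1/(|y|\vee 1)$ and use the Taylor estimate of $\tau(yz)-y\tau(z)$ on the small-$|z|$ piece.
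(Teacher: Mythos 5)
Your proposal is correct and follows exactly the route the paper takes: specialize the homogeneous version of Theorem 4.1 of \cite{chong-kluppelberg15} to the S$\alpha$S case, check $U\equiv 0$ by oddness of the integrand together with the symmetry of $\nu_{\alpha}$, and compute $V_0(y)=\frac{2}{2-\alpha}|y|^{\alpha}$ so that \eqref{integr-cond} reduces to \eqref{integr-cond-a}. The paper simply states these two evaluations without detail, so your write-up is just a more explicit version of the same argument.
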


The following result shows that for elementary processes, the stochastic integral $I^{\Lambda}$ coincides with the It\^o integral (defined in the Walsh' sense \cite{walsh86}). We include its proof since we could not find it in the literature.

\begin{theorem}
Let $X$ be an elementary process of the form \eqref{def-elem}.
If $\Lambda$ is a pure-jump homogeneous L\'evy basis, and $Z$ be given by \eqref{def-Z},
then $X \in L^0(\Lambda)$ and
\begin{equation}
\label{IXZ}
I^{\Lambda}(X)=Y Z\big((a,b] \times B\big) \quad \mbox{a.s.}
\end{equation}
\end{theorem}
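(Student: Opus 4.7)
The plan is to approximate $X$ by simple integrands in $\cS$ and pass to the limit using the dominated convergence theorem for $I^\Lambda$. First, I would invoke Lemma \ref{approx-lem}(ii): since $X$ is elementary of the form \eqref{def-elem}, one obtains a sequence of simple $\cF_a$-measurable random variables $Y_n = \sum_k c_{k,n} 1_{F_{k,n}}$ (with $F_{k,n} \in \cF_a$ disjoint) such that $Y_n \to Y$ pointwise and $|Y_n| \le |Y|$. Setting
\[
S_n(t,x) := Y_n \, 1_{(a,b]}(t) 1_B(x) = \sum_k c_{k,n} 1_{F_{k,n} \times (a,b]\times B}(t,x),
\]
each $S_n$ belongs to $\cS$ (the sets $F_{k,n} \times (a,b] \times B$ lie in $\Pb$), and $S_n \to X$ pointwise with $|S_n|\le |X|$. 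Linearity of the simple integral together with the predictability relation \eqref{predict2} then gives the explicit identification
\[
I^\Lambda(S_n) = \sum_k c_{k,n} \Lambda\big(F_{k,n} \times (a,b] \times B\big) = \sum_k c_{k,n} 1_{F_{k,n}} \, Z\big((a,b]\times B\big) = Y_n\, Z\big((a,b]\times B\big).
\]

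Next I would verify $X \in L^0(\Lambda)$ via the integrability criterion (Theorem 4.1 of \cite{chong-kluppelberg15}) stated just above, which for a pure-jump homogeneous L\'evy basis requires the almost-sure finiteness of the two integrals in \eqref{integr-cond}. Since $U(0) = V_0(0) = 0$, and both $U$ and $V_0$ are locally bounded on $\bR$ (the bound $V_0(y) \le \max(1,y^2)\int(|z|^2\wedge 1)\nu(dz)$ is immediate from $|yz|^2\wedge 1\le \max(1,y^2)(|z|^2\wedge 1)$, and $U(y)$ is finite for each fixed $y$ by splitting $\tau(yz) - y\tau(z)$ over $\{|z|\le 1\}$ and $\{|z|>1\}$), one obtains
\[
\int_0^T\!\!\int_{\bR^d}U(X(t,x))\,dx\,dt = (b-a)\,{\rm Leb}(B)\, U(Y) < \infty \text{ a.s.,}
\]
and likewise for $V_0$, so \eqref{integr-cond} holds almost surely and $X \in L^0(\Lambda)$.

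With $X \in L^0(\Lambda)$ in hand, Theorem \ref{DCT} applied with dominating process $H_0 = |X|$ and $H_n = S_n$ yields $\|S_n - X\|_\Lambda \to 0$, whence $I^\Lambda(S_n) \to I^\Lambda(X)$ in $L^0$ by \eqref{contraction2}. Simultaneously, $Y_n \to Y$ in probability and $Z((a,b] \times B)$ is almost surely finite, so $Y_n Z((a,b]\times B) \to Y Z((a,b]\times B)$ in probability, that is, in $L^0$. Uniqueness of the $L^0$-limit identifies $I^\Lambda(X) = Y Z((a,b]\times B)$, which is \eqref{IXZ}. The main subtlety I anticipate is ensuring the integrability criterion applies without any moment assumption on $Y$; this is fine because \eqref{integr-cond} is an $\omega$-wise condition where the deterministic indicator factor restricts to a bounded space-time rectangle, so pointwise finiteness of $U(Y(\omega))$ and $V_0(Y(\omega))$ is all one needs.
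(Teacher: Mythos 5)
Your proposal is correct and follows essentially the same route as the paper's own proof: verify $X\in L^0(\Lambda)$ by noting $U(X(t,x))=U(Y)1_{(a,b]}(t)1_B(x)$ and $V_0(X(t,x))=V_0(Y)1_{(a,b]}(t)1_B(x)$ so that \eqref{integr-cond} holds, approximate $Y$ by simple $\cF_a$-measurable $Y_n$ via Lemma \ref{approx-lem}(ii), identify $I^{\Lambda}(S_n)=Y_nZ((a,b]\times B)$ through \eqref{predict2}, and pass to the limit with Theorem \ref{DCT}. Your added justification of the pointwise finiteness of $U(Y)$ and $V_0(Y)$ is a harmless elaboration of a step the paper leaves implicit.
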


\begin{proof} Note that $U(X(t,x))=U(Y)1_{(a,b]}(t) 1_{B}(x)$ and $V_0(X(t,x))=V_0(Y) 1_{(a,b]}(t) 1_{B}(x) $. Hence, condition \eqref{integr-cond} holds, and
$X \in L^0(\Lambda)$.

Let $(S_n)_{n\geq 1}$ be the sequence of simple integrands given by Lemma \ref{approx-lem}.(ii). More precisely, if $Y_n=\sum_{i=1}^{k_n}a_{i,n} 1_{F_{i,n}}$ with $a_{i,n} \in \bR$ and $F_{i,n}\in \cF_a$ is the sequence of simple random variable such that $Y_n \to Y$ and $|Y_n|\leq |Y|$ for all $n$, then 
\[
S_n(t,x)=\sum_{i=1}^{k_n}a_{i,n} 1_{F_{i,n}} 1_{(a,b]}(t) 1_B(x).
\]
Using \eqref{predict2}, we obtain that:
\[
I^{\Lambda}(S_n)=\sum_{i=1}^{k_n}a_{i,n} \Lambda(F_{i,n} \times (a,b] \times B)=\sum_{i=1}^{k_n}a_{i,n} 
1_{F_{i,n}} Z \big( (a,b] \times B\big)=Y_n Z \big( (a,b] \times B\big).
\]
Relation \eqref{IXZ} follows letting $n\to \infty$, since $I^{\Lambda}(S_n) \stackrel{P}{\to} I^{\Lambda}(X)$, by Theorem \ref{DCT}. 
\end{proof}

\subsection{Construction of the noise}
\label{subsection-construction}

In this section, we give the construction of the noise. 

\medskip

Let $(\e_i)_{i\geq 1}$, $(\Gamma_i)_{i\geq 1}$ and $\{(T_i,X_i)\}_{i\geq 1}$ be the sequences mentioned in the introduction, defined on a complete probability space $(\Omega,\cF,\bP)$. Note that $\big(\e_i \Gamma_i^{-1/\alpha}\big)_{i\geq 1}$ are the points of a PRM on $\bR_{0}$ of intensity $\nu_{\alpha}$.
Therefore, using a procedure called ``augmentation'' (see Proposition 3.8 of \cite{resnick87}), the process
\begin{equation}
\label{def-J-psi}
J_{\psi}=\sum_{i\geq 1}\delta_{(T_i,X_i,\e_i \Gamma_i^{-1/\alpha})}
\end{equation}
is a PRM on $[0,T] \times \bR^d \times \bR_0$ of intensity $m_{\psi} \times \nu_{\alpha}$. Consider the transformation $T_{\psi}(t,x,z)=(t,x,\frac{z}{\psi(t,x)})$. By Proposition 3.7 of \cite{resnick87}, the process
\[
N_{\psi}=J_{\psi}\circ T_{\psi}^{-1}=\sum_{i\geq 1}\delta_{T_{\psi}(T_i,X_i,\e_i\Gamma_i^{-1/\alpha})}=\sum_{i\geq 1}\delta_{(T_i,X_i, \e_i\Gamma_i^{-1/\alpha}\psi^{-1}(T_i,X_i) )}
\]
is also PRM on $[0,T] \times \bR^d \times \bR_0$ of intensity 
\[
(m_{\psi} \times \nu_{\alpha})\circ T_{\psi}^{-1}={\rm Leb} \times {\rm Leb} \times \nu_{\alpha}.
\]

Now, using the PRM $N_{\psi}$, we define the homogeneous L\'evy basis $\Lambda$ on $[0,T] \times \bR^d$:
\begin{equation}
\label{def-Lambda}
\Lambda(A)=\int_{0}^T \int_{\bR^d} \int_{\{|z|\leq 1\}} 1_A(t,x)z \widehat{N_{\psi}}(dt,dx,dz)+
\int_{0}^T \int_{\bR^d} \int_{\{|z|> 1\}} 1_A(t,x)z N_{\psi}(dt,dx,dz),
\end{equation}
where $\widehat{N_{\psi}}$ is the compensated version of $N_{\psi}$.

We let  $(\cF_t)_{t \in [0,T]}$ be the filtration associated with $N_{\psi}$, i.e. 
\begin{equation}
\label{def-filtration}
\cF_t=\sigma\left\{ N_{\psi}([0,s] \times B \times F);s \in [0,t], B \in \cB_b, F \in \cB_b(\bR_0) \right\}.
\end{equation}

For any $B \in \cB_b$, we define $Z(B)$ by \eqref{def-Z}. Then $\Lambda$ is a S$\alpha$S L\'evy basis  (see Definition \ref{def-SL}), and $Z$ is a S$\alpha$S random measure (see Remark \ref{rem-SL}). The next result shows that $Z$ has the series representation \eqref{LePage1}.

\begin{lemma}
If $\Lambda$ is the L\'evy basis given by \eqref{def-Lambda}, then the process
$Z$ given by \eqref{def-Z} has the LePage series representation \eqref{LePage1}.
\end{lemma}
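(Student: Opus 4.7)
The plan is to unpack the two integrals in the definition \eqref{def-Lambda} of $\Lambda$ using the explicit atomic form of the PRM $N_\psi$, and then reassemble them into the single LePage sum \eqref{LePage1}.

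\medskip

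\textbf{Step 1 (reduction to integrals against $N_\psi$).} Set $B \in \cB_b$ and observe that $1_{\Omega \times B}(t,x) = 1_B(t,x)$ does not depend on $\omega$. From \eqref{def-Z} and \eqref{def-Lambda},
\[
Z(B) = \int_{B \times \{|z|\le 1\}} z\,\widehat{N_\psi}(dt,dx,dz) + \int_{B \times \{|z|>1\}} z\,N_\psi(dt,dx,dz).
\]
Since $N_\psi = \sum_{i\ge 1} \delta_{(T_i,X_i,W_i)}$ with $W_i := \e_i\Gamma_i^{-1/\alpha}\psi^{-1}(T_i,X_i)$, the ``big jump'' integral is, by definition of integration against an atomic measure,
\[
\int_{B \times \{|z|>1\}} z\,N_\psi(dt,dx,dz) = \sum_{i\ge 1} W_i\,1_B(T_i,X_i)\,1_{\{|W_i|>1\}} \quad \text{a.s.},
\]
and this is a.s.\ a \emph{finite} sum because $N_\psi(B \times \{|z|>1\})$ has mean $\mathrm{Leb}(B)\,\nu_\alpha(\{|z|>1\})<\infty$.

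\medskip

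\textbf{Step 2 (small-jump part via symmetry).} The key observation is that $\nu_\alpha$ is symmetric, so for every $\e\in(0,1)$,
\[
\int_{B \times \{\e<|z|\le 1\}} z\,dt\,dx\,\nu_\alpha(dz) \;=\; \mathrm{Leb}(B)\int_{\{\e<|z|\le 1\}} z\,\nu_\alpha(dz) \;=\; 0.
\]
Hence on the truncated set $B\times\{\e<|z|\le 1\}$ the compensator vanishes, and
\[
\int_{B \times \{\e<|z|\le 1\}} z\,\widehat{N_\psi}(dt,dx,dz) = \int_{B \times \{\e<|z|\le 1\}} z\,N_\psi(dt,dx,dz) = \sum_{i\ge 1} W_i\,1_B(T_i,X_i)\,1_{\{\e<|W_i|\le 1\}}.
\]

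\medskip

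\textbf{Step 3 (passage to the limit $\e\downarrow 0$).} Let $\e\downarrow 0$. On the one hand, by the standard $L^2$-construction of stochastic integrals against compensated Poisson measures (Appendix \ref{app-integrN}), the left-hand side converges in $L^2(\bP)$, hence in probability, to $\int_{B\times\{0<|z|\le 1\}} z\,\widehat{N_\psi}(dt,dx,dz)$. On the other hand, the series
$\sum_{i\ge 1} W_i\,1_B(T_i,X_i)$
converges a.s.\ (as noted after \eqref{LePage1}), and in particular the tail $\sum_{i:\,|W_i|\le 1} W_i 1_B(T_i,X_i)$ converges a.s. Therefore the truncated sums in Step~2 converge a.s.\ to this tail, and by uniqueness of limits in probability,
\[
\int_{B\times\{0<|z|\le 1\}} z\,\widehat{N_\psi}(dt,dx,dz) = \sum_{i\ge 1} W_i\,1_B(T_i,X_i)\,1_{\{|W_i|\le 1\}} \quad \text{a.s.}
\]

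\medskip

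\textbf{Step 4 (assembly).} Adding the two pieces from Steps 1 and 3 gives
\[
Z(B) = \sum_{i\ge 1} W_i\,1_B(T_i,X_i) = \sum_{i\ge 1} \e_i\Gamma_i^{-1/\alpha}\psi^{-1}(T_i,X_i)\,1_B(T_i,X_i) \quad \text{a.s.},
\]
which is exactly \eqref{LePage1}.

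\medskip

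\textbf{Main obstacle.} The delicate point is Step 3: we must identify two \emph{a priori} different limits. The $L^2$-definition of $\int z\,\widehat{N_\psi}$ guarantees only an $L^2$ (hence probability) limit, while the atomic description of the truncated integrals naturally gives a.s.\ convergence along the LePage enumeration. Reconciling these requires knowing \emph{in advance} that the LePage tail over small jumps converges a.s., which is precisely the classical fact recalled below \eqref{def-psi}; without it, one would have to work with a principal-value ordering of the atoms and justify convergence by hand using the symmetric centering from Step~2.
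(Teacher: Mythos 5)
Your decomposition is the same as the paper's: split off the large jumps, use the symmetry of $\nu_{\alpha}$ to discard the compensator on the annulus $\{\e<|z|\leq 1\}$, and let $\e\downarrow 0$, identifying the $L^2$-limit of the compensated small-jump integral with the limit of the truncated LePage sums. Steps 1, 2 and 4 are fine and match the paper.

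The gap is exactly at the point you flag as the "main obstacle," and the classical fact recalled below \eqref{def-psi} does \emph{not} resolve it. That fact gives a.s.\ convergence of $\sum_{i\geq 1} W_i 1_B(T_i,X_i)$ (with $W_i=\e_i\Gamma_i^{-1/\alpha}\psi^{-1}(T_i,X_i)$) in the \emph{index} ordering, i.e.\ of the partial sums $\sum_{i=1}^{n}$. What Step 3 needs is that
\[
\sum_{i\geq 1} W_i\,1_B(T_i,X_i)\,1_{\{|W_i|\leq \e\}} \longrightarrow 0 \quad \text{as } \e\downarrow 0,
\]
and this is a sum over a set of indices selected by the \emph{magnitude} $|W_i|=\Gamma_i^{-1/\alpha}\psi^{-1}(T_i,X_i)$, which is not monotone in $i$ (the factor $\psi^{-1}(T_i,X_i)$ is unbounded), so $\{|W_i|\leq\e\}$ is not a tail in $i$. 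For a conditionally convergent series, convergence of the index-ordered partial sums gives no control over such interleaved sub-sums; this is a rearrangement issue, and "uniqueness of limits" cannot be invoked until the limit of the truncated sums is actually established. The paper closes this by conditioning on $(\Gamma_i),(T_i),(X_i)$ and using the orthogonality of the Rademacher signs: the conditional second moment of the remainder is $\sum_{i\geq 1}\Gamma_i^{-2/\alpha}\psi^{-2}(T_i,X_i)1_{\{|W_i|\leq\e\}}1_B(T_i,X_i)$, which tends to $0$ by dominated convergence because the dominating sum equals $\int_{B\times\{|z|\leq 1\}}z^2\,N_{\psi}(dt,dx,dz)$, a.s.\ finite since its expectation is $\int_{B\times\{|z|\leq 1\}}z^2\,dtdx\,\nu_{\alpha}(dz)<\infty$. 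This yields convergence in probability (not a.s., contrary to what you assert), which is all that is needed. To repair your proof, replace the "therefore" in Step 3 by this conditional $L^2$/dominated-convergence estimate; the square-summability $\sum_i \Gamma_i^{-2/\alpha}\psi^{-2}(T_i,X_i)1_{\{|W_i|\leq 1\}}1_B(T_i,X_i)<\infty$ a.s.\ is the essential input you are currently missing.
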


\begin{proof}
 Let $\e\in (0,1)$ be arbitrary. By the symmetry of $\nu_{\alpha}$, $\int_{B \times \{\e<|z|\leq 1\}} z dt dx \nu_{\alpha}(dz)=0$ and hence,
\[
\int_{B \times \{\e<|z|\leq 1\}}z \widehat{N_{\psi}}(dt,dx,dz)=\int_{B \times \{\e<|z|\leq 1\}}z N_{\psi}(dt,dx,dz),
\]
and
\[
Z(B)=\int_{B \times \{0<|z|<\e\}}z \widehat{N_{\psi}}(dt,dx,dz)+\int_{B \times \{|z|> \e\}}z N_{\psi}(dt,dx,dz)=:Z_{\e}(B)+S_{\e}(B).
\]
Note that $Z_{\e}(B) \stackrel{L^2}{\to} 0$ as $\e \to 0$, since $\bE \left[ |Z_{\e}(B)|^2 \right] =\int_{B\times \{\e<|z|\leq 1\}}z^2 dtdx \nu_{\alpha} \to 0$ as $\e \to 0$, by the dominated convergence theorem. 
Next, we prove that:
\begin{equation}
\label{Se-conv}
S_{\e}(B) \stackrel{P}{\to} S(B) \quad \mbox{as $\e \to 0$},
\end{equation}
where $S(B)$ is the sum on the right-hand side of \eqref{LePage1}. Note that
\[
S(B)-S_{\e}(B)=\sum_{i\geq 1}\e_i \Gamma_i^{-1/\alpha}\psi^{-1}(T_i,X_i) 
1_{\{\Gamma_i^{-1/\alpha} \leq \e
\psi(T_i,X_i)\}} 1_B(T_i,X_i)=:\overline{S}_{\e}(B).
\]
By the Cauchy-Schwarz inequality, 
\[
\|\overline{S}_{\e}(B)\|_0=\bE\big[\min\big(1,|\overline{S}_{\e}(B)|\big)\big] 
\leq \Big\{ \bE\big[\min\big(1,|\overline{S}_{\e}(B)|^2\big)\big]\Big\}^{1/2}.
\]
Using the inequality $\bE[\min(1,|X|)]\leq \min(1,\bE|X|)$ and the orthogonality of $(\e_i)_{i\geq 1}$, 
\begin{align*}
& \bE\Big[\min\big(1,|\overline{S}_{\e}(B)|^2\big)\,| \, (\Gamma_i)_i,(T_i)_i,(X_i)_i\Big] \\
& \leq \min\Big(1, \sum_{i\geq 1}\Gamma_i^{-2/\alpha} \psi^{-2}(T_i,X_i) 
1_{\{\Gamma_i^{-1/\alpha}\leq \e\psi(T_i,X_i) \}} 1_{B}(T_i,X_i) \Big).
\end{align*}
Taking expectation in the above inequality, we obtain:
\[
\bE\Big[\min\big(1,|\overline{S}_{\e}(B)|^2\big)\Big] \leq 
\bE\Big[ \min\Big(1, \sum_{i\geq 1}\Gamma_i^{-2/\alpha} \psi^{-2}(T_i,X_i) 
1_{\{\Gamma_i^{-1/\alpha}\leq \e \psi(T_i,X_i) \}} 1_{B}(T_i,X_i) \Big)\Big].
\]
The term on the right-hand side above converges to 0 as $\e\to 0$, by an application of the dominated convergence theorem, which is justified by the fact that
\[
X:=\sum_{i\geq 1}\Gamma_i^{-2/\alpha} \psi^{-2}(T_i,X_i)1_{\{\Gamma_i^{-1/\alpha}\leq
 \psi(T_i,X_i) \}} 1_{B}(T_i,X_i)=\int_{B \times \{|z|\leq 1\}}z^2 N_{\psi}(dt,dx,dz)<\infty \quad \mbox{a.s.}
\]
since $\bE \left[ X \right] =\int_{B \times \{|z|\leq 1\}}z^2 dtdx \nu_{\alpha}(dz)<\infty$.
\end{proof}

The following result gives an alternative representation for $\Lambda$. Its proof follows essentially using a change of measure, which allows us to pass not only from $N_{\psi}$ to $J_{\psi}$, but also from $\widehat{N_{\psi}}$ to $\widehat{J_{\psi}}$ (although these are not measures), using the symmetry of $\nu_{\alpha}$. 

\begin{proposition}
Let $\Lambda$ be the L\'evy basis given by \eqref{def-Lambda}. For any $A \in \wP_b$, we have:
\begin{align}
\nonumber
\Lambda(A)&=\int_{0}^T \int_{\bR^d}\int_{\{|z|\leq \psi(t,x)\}}1_{A}(t,x)\frac{z}{\psi(t,x)}\widehat{J_{\psi}}(dt,dx,dz)+\\
\label{Lambda-repr1}
& \quad
\int_{0}^T \int_{\bR^d}\int_{\{|z|>\psi(t,x)\}}1_{A}(t,x)\frac{z}{\psi(t,x)}J_{\psi}(dt,dx,dz),
\end{align}
where $\widehat{J_{\psi}}$ is the compensated version of $J_{\psi}$. Moreover, if $\alpha \in (0,1)$, then for any $A \in \wP_b$,
\begin{align}
\label{Lambda-repr2}
\Lambda(A)&=\int_{0}^T \int_{\bR^d}\int_{\bR_0}1_{A}(t,x)\frac{z}{\psi(t,x)}J_{\psi}(dt,dx,dz)\\
\label{Lambda-LePage}
&=\sum_{i\geq 1}\e_i \Gamma_i^{-1/\alpha}\frac{1}{\psi(T_i,X_i)}1_{A}(T_i,X_i).
\end{align}
\end{proposition}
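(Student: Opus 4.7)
The plan is to translate both terms in the canonical decomposition \eqref{def-Lambda} of $\Lambda$ from integrals against $N_\psi$, $\widehat{N_\psi}$ into integrals against $J_\psi$, $\widehat{J_\psi}$. Two ingredients suffice: the pathwise change-of-variables formula $\int g\,dN_\psi = \int (g\circ T_\psi)\,dJ_\psi$ coming from $N_\psi = J_\psi\circ T_\psi^{-1}$, and the symmetry of $\nu_\alpha$, which guarantees that the compensator of any odd-in-$z$ integrand over a $z$-symmetric region vanishes, allowing us to interchange compensated and uncompensated stochastic integrals of such integrands (both for $N_\psi$ and for $J_\psi$, whose intensity $\psi^\alpha(t,x)\,dt\,dx\,\nu_\alpha(dz)$ remains symmetric in $z$).

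First I would handle the large-jump term: since $N_\psi$ is a point measure, a direct change of variables gives $\int_{|z|>1}1_A z\,N_\psi = \int_{|z|>\psi(t,x)}1_A(z/\psi)\,J_\psi$ pathwise. For the small-jump term, I would introduce a cutoff $\e\in(0,1)$, observe via symmetry of $\nu_\alpha$ that $\int_{\e<|z|\le 1}1_A z\,\widehat{N_\psi} = \int_{\e<|z|\le 1}1_A z\,N_\psi$, apply the change of variables to reach $\int_{\e\psi<|z|\le\psi}1_A(z/\psi)\,J_\psi$, and reverse the symmetry argument (now for $J_\psi$) to identify this with $\int_{\e\psi<|z|\le\psi}1_A(z/\psi)\,\widehat{J_\psi}$. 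Letting $\e\to 0$, both sides converge to their respective stochastic integrals; the key $L^2$-bound for the $\widehat{J_\psi}$-side is
\[
\int_{A}\psi^{-2}(t,x)\int_{|z|\le\psi(t,x)}z^2\,\nu_\alpha(dz)\,\psi^\alpha(t,x)\,dt\,dx
= \frac{\alpha}{2-\alpha}\,\mathrm{Leb}(A),
\]
finite because $A\subseteq\Omega\times[0,k]\times[-k,k]^d$ by the definition of $\Pb$. Summing the two pieces produces \eqref{Lambda-repr1}.

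For the case $\alpha\in(0,1)$, the extra integrability $\int_{|z|\le\psi}|z|\,\nu_\alpha(dz)<\infty$ makes the compensator of the first integral in \eqref{Lambda-repr1} a well-defined finite quantity, which vanishes by symmetry of $\nu_\alpha$. Consequently, $\widehat{J_\psi}$ can be replaced by $J_\psi$, and the two regions $\{|z|\le\psi\}$ and $\{|z|>\psi\}$ combine to give \eqref{Lambda-repr2}. Since $J_\psi = \sum_{i\ge 1}\delta_{(T_i,X_i,\e_i\Gamma_i^{-1/\alpha})}$, the integral collapses to the series on the right-hand side of \eqref{Lambda-LePage}, which converges absolutely because $\sum_i\Gamma_i^{-1/\alpha}<\infty$ a.s.\ when $\alpha<1$.

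The main technical hurdle will be justifying the passage $\e\to 0$ in the compensated representation: one must ensure the convergence of $\int_{\e\psi<|z|\le\psi}1_A(z/\psi)\,\widehat{J_\psi}$ is controlled by the uniform $L^2$-bound above, and that the symmetry argument used to equate $\widehat{N_\psi}$ and $N_\psi$ integrals before the limit can be cleanly reversed to produce $\widehat{J_\psi}$ after the change of variables—this is precisely the point that the author flags as a ``change of measure'' working for compensated objects even though they are not measures.
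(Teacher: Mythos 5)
Your proposal follows essentially the same route as the paper: split off the large-jump term and convert it pathwise via $N_\psi=J_\psi\circ T_\psi^{-1}$, then for the small jumps introduce the cutoff $\e$, use the symmetry of $\nu_\alpha$ to pass $\widehat{N_\psi}\to N_\psi\to J_\psi\to\widehat{J_\psi}$, and let $\e\to0$ under the same $L^2$/It\^o-isometry bound (the paper's constant is $\tfrac{\e^{2-\alpha}}{2-\alpha}\,\mathrm{Leb}$ for the tail, matching your $\e=1$ computation), with the $\alpha<1$ case handled identically by showing the compensator is absolutely convergent and vanishes by symmetry. The only minor imprecision is attributing the absolute convergence of the series in \eqref{Lambda-LePage} solely to $\sum_i\Gamma_i^{-1/\alpha}<\infty$, whereas one also needs the factor $\psi^{-1}(T_i,X_i)$ under control, which is exactly what the finiteness of the compensator $\int_0^T\int_{\bR^d}1_A\psi^{\alpha-1}\big(\int_{|z|\le\psi}|z|\,\nu_\alpha(dz)\big)$ (valid since $\alpha<1$) provides.
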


\begin{proof}
We first prove \eqref{Lambda-repr1}. 
Let $\e \in (0,1)$ be arbitrary. We write
\begin{align*}
\Lambda(A)&=\int_0^T \int_{\bR^d} \int_{\{|z|\leq \e\}} 1_{A}(t,x)z\widehat{N_{\psi}}(dt,dx,dz)+\int_0^T \int_{\bR^d} \int_{\{\e<|z|\leq 1\}} 1_{A}(t,x)z\widehat{N_{\psi}}(dt,dx,dz)+\\
& \quad \int_0^T \int_{\bR^d} \int_{\{|z|> 1\}} 1_{A}(t,x)z N_{\psi}(dt,dx,dz) =:T_{1}^{(\e)}+T_{2}^{(\e)}+T_3,
\end{align*}
and then we let $\e \to 0$. Since $N_{\psi}=J_{\psi}\circ T_{\psi}^{-1}$,
using a change of measure, we have
\[
T_3=\int_0^T \int_{\bR^d} \int_{\{|z|>\psi(t,x)\}} 1_{A}(t,x)\frac{z}{\psi(t,x)} J_{\psi}(dt,dx,dz).
\]

By It\^o's isometry, $T_{1}^{(\e)} \stackrel{L^2}{\to} 0$ as $\e \to 0$. For the second term, 
\begin{align*}
T_{2}^{(\e)}&=\int_0^T \int_{\bR^d} \int_{\{\e<|z|\leq 1\}} 1_{A}(t,x)z N_{\psi}(dt,dx,dz)  \quad \mbox{(by the symmetry of $\nu_{\alpha}$)}\\
&=\int_0^T \int_{\bR^d} \int_{\{\e<\frac{|z|}{\psi(t,x)}\leq 1\}} 1_{A}(t,x)\frac{z}{\psi(t,x)}J_{\psi}(dt,dx,dz) \quad \mbox{(since $N_{\psi}=J_{\psi} \circ T_{\psi}^{-1}$)}\\
&=\int_0^T \int_{\bR^d} \int_{\{\e<\frac{|z|}{\psi(t,x)}\leq 1\}} 1_{A}(t,x)\frac{z}{\psi(t,x)}\widehat{J_{\psi}}(dt,dx,dz) \quad \mbox{(by the symmetry of $\nu_{\alpha}$)} \\
& \stackrel{L^2}{\longrightarrow} \int_0^T \int_{\bR^d} \int_{\{\frac{|z|}{\psi(t,x)}\leq 1\}} 1_{A}(t,x)\frac{z}{\psi(t,x)}\widehat{J_{\psi}}(dt,dx,dz) \quad \mbox{as $\e\to 0$}.
\end{align*}
The last convergence holds by It\^o's isometry, using the fact that
\begin{align*}
&\bE \left[ \left|\int_0^T \int_{\bR^d}\int_{\{\frac{|z|}{\psi(t,x)}\leq \e\}} 1_{A}(t,x) \frac{z^2}{\psi^2(t,x)}\widehat{J_{\psi}}(dt,dx,dz)\right|^2 \right]\\
&\quad =\bE \left[ \int_{0}^T \int_{\bR^d}1_{A}(t,x) \psi^{\alpha-2}(t,x)\left(\int_{\{\frac{|z|}{\psi(t,x)}\leq \e \} }z^2 \nu_{\alpha}(dz)\right)dtdx \right] \\
&=\frac{\e^{2-\alpha}}{2-\alpha}\bE \left[ \int_{0}^T \int_{\bR^d}1_{A}(t,x)dtdx \right].
\end{align*}
This proves \eqref{Lambda-repr1}.
Relation \eqref{Lambda-repr2} follows directly from \eqref{Lambda-repr1}, since
\begin{align*}
&\int_0^T \int_{\bR^d} \int_{\{\frac{|z|}{\psi(t,x)}\leq 1\}}1_{A}(t,x) \frac{z}{\psi(t,x)} \widehat{J_{\psi}}(dt,dx,dz) \\
& =\int_0^T \int_{\bR^d} \int_{\{\frac{|z|}{\psi(t,x)}\leq 1\}}1_{A}(t,x) \frac{z}{\psi(t,x)} J_{\psi}(dt,dx,dz). \\
\end{align*}
To see this, note that by Fubini's theorem and the symmetry of $\nu_{\alpha}$,
\[
\int_0^T \int_{\bR^d} \int_{\{\frac{|z|}{\psi(t,x)} \leq 1\}}1_{A}(t,x) z \psi^{\alpha-1}(t,x)dtdx\nu_{\alpha}(dz)=0.
\]
To justify the application of Fubini's theorem, we note that since $\alpha<1$,
\[
\int_0^T \int_{\bR^d} 1_{A}(t,x) \psi^{\alpha-1}(t,x) \left(\int_{\{|z|\leq \psi(t,x)\}}|z| \nu_{\alpha}(dz) \right) dxdt=\frac{1}{1-\alpha}\int_0^T \int_{\bR^d}1_{A}(t,x)dtdx<\infty.
\]
Finally, \eqref{Lambda-LePage} follows directly from \eqref{Lambda-repr2}, using definition \eqref{def-J-psi} of $J_{\psi}$.
\end{proof}

\begin{remark}
\label{rem-LePage}
{\rm
When $\alpha \in (0,1)$, relation \eqref{Lambda-LePage} extends the LePage series representation \eqref{LePage1} to sets $A \in \widetilde{\cP}_b$. This relation will play an important role 
in proving the recurrence relation \eqref{picard}; see the proof of Theorem \ref{proof-IZ-Xn} below.
We do not know if this representation holds for sets $A \in \widetilde{\cP}_b$, when $\alpha \in [1,2)$.
}
\end{remark}

For integration purposes, we will use the following representation of $\Lambda$.

\begin{proposition}
\label{prop-Lambda-repr3}
Let $\Lambda$ be the L\'evy basis given by \eqref{def-Lambda}. For any $A \in \wP_b$, we have:
\begin{align}
\nonumber
\Lambda(A)&=
\int_{0}^T \int_{\bR^d}\int_{\{|z|\leq 1\}}1_{A}(t,x)\frac{z}{\psi(t,x)}\widehat{J_{\psi}}(dt,dx,dz)+\\
\label{Lambda-repr3}
&\quad  \int_{0}^T \int_{\bR^d}\int_{\{|z|>1\}}1_{A}(t,x)\frac{z}{\psi(t,x)}J_{\psi}(dt,dx,dz).
\end{align}
\end{proposition}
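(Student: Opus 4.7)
The plan is to derive \eqref{Lambda-repr3} by direct comparison with the representation \eqref{Lambda-repr1} just established in the preceding proposition. The two representations differ only in the placement of the cutoff between the compensated and uncompensated parts of the Poisson integral: at $|z|=\psi(t,x)$ in \eqref{Lambda-repr1}, at $|z|=1$ in \eqref{Lambda-repr3}. Using the identity
\[
\mathbf{1}_{\{|z|\leq\psi(t,x)\}}-\mathbf{1}_{\{|z|\leq 1\}}=-(\mathbf{1}_{\{|z|>\psi(t,x)\}}-\mathbf{1}_{\{|z|>1\}}),
\]
the subtraction of the proposed right-hand side of \eqref{Lambda-repr3} from the right-hand side of \eqref{Lambda-repr1} collapses into a single expression
\[
\int_0^T\!\int_{\bR^d}\!\int_{\bR_0} 1_A(t,x)\frac{z}{\psi(t,x)}\bigl(\mathbf{1}_{\{|z|\leq\psi(t,x)\}}-\mathbf{1}_{\{|z|\leq 1\}}\bigr)(\widehat{J_\psi}-J_\psi)(dt,dx,dz),
\]
so the task reduces to showing this expression vanishes.

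Because $\widehat{J_\psi}-J_\psi$ corresponds to minus the intensity $m_\psi\times\nu_\alpha$, and, for each fixed $(t,x)$, the indicator $\mathbf{1}_{\{|z|\leq\psi(t,x)\}}-\mathbf{1}_{\{|z|\leq 1\}}$ is supported on the $z$-interval $\{\min(1,\psi(t,x))<|z|\leq\max(1,\psi(t,x))\}$, which is symmetric about the origin, the symmetry of $\nu_\alpha$ makes the inner $z$-integral vanish pointwise in $(t,x)$. To upgrade this pointwise vanishing to the global identity, I would truncate small jumps by inserting $\mathbf{1}_{\{|z|>\e\}}$: on this truncated region the integrand is bounded in $z$, Fubini applies after restricting $(t,x)$ to the bounded $(t,x)$-projection of $A\in\wP_b$, and the truncated integral equals $0$ by the symmetry cancellation. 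Passing to the limit $\e\to 0$, the removed $\widehat{J_\psi}$-contribution on $\{|z|\leq\e\}$ tends to $0$ in $L^2$ via the isometry $\|\int f\,d\widehat{J_\psi}\|_{L^2}^2=\int |f|^2\,d(m_\psi\times\nu_\alpha)$ combined with $\int_{\{|z|\leq\e\}} z^2\,\nu_\alpha(dz)=\tfrac{\alpha\,\e^{2-\alpha}}{2-\alpha}\to 0$; the removed $J_\psi$-contribution (which lives on $\{\psi(t,x)<|z|\leq\min(1,\e)\}$) vanishes pathwise since for each $(t,x)$ with $\psi(t,x)>0$ the set is empty once $\e<\psi(t,x)$, so dominated convergence applies. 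This is precisely the passage-to-limit scheme already used in the proof of \eqref{Lambda-repr1}.

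The main obstacle I anticipate is handling the two cases $\psi(t,x)<1$ and $\psi(t,x)\geq 1$ uniformly, because the ``middle'' region where the two cutoffs disagree has qualitatively different behavior in each: for $\psi<1$ it accumulates at $0$ (where $\nu_\alpha$ is infinite), while for $\psi\geq 1$ it may extend to arbitrarily large $|z|$. I would therefore split the integral according to $\psi(t,x)\lessgtr 1$ and carry out the symmetry-plus-truncation argument separately on each piece, in both cases leveraging that $A\in\wP_b$ controls the $(t,x)$-support and that $\psi>0$ a.e.
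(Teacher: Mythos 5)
Your route is genuinely different from the paper's. The paper does not subtract the two representations directly: it introduces the auxiliary L\'evy basis $L_{\psi}$ of \eqref{def-Lpsi}, forms the null-spatial random measure $H\cdot L_{\psi}$ with $H=1_A\psi^{-1}$, and invokes Theorem 3.5 of \cite{chong-kluppelberg15} to write down \emph{two} canonical decompositions of $H\cdot L_{\psi}$ (one truncating $H(t,x)z$ at $1$, one truncating $z$ at $1$); equating them and using \eqref{Lambda-repr1} gives \eqref{Lambda-repr3}. The mechanism that makes the two decompositions agree is that the change of truncation only shifts mass into the drift $\int_A U(t,x,H(t,x))\,dt\,dx$, and $U\equiv 0$ by the symmetry of $\nu_{\alpha}$ --- which is exactly the cancellation you identify. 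The crucial structural difference is that in the paper's framework the drift is \emph{defined} by performing the $z$-integral first, for each fixed $(t,x)$: on the ``middle region'' $\{\min(1,\psi(t,x))<|z|\le\max(1,\psi(t,x))\}$ the measure $\nu_{\alpha}$ is finite and the odd integrand integrates to $0$ pointwise, so no joint integrability in $(t,x,z)$ is ever needed.

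This is where your argument has a genuine gap. Writing the difference as $\int g\,\widehat{J_{\psi}}-\int g\,J_{\psi}=-\int g\,d(m_{\psi}\times\nu_{\alpha})$ and then invoking Fubini requires the \emph{joint} absolute integrability $\int|g|\,d(m_{\psi}\times\nu_{\alpha})<\infty$, and your $\varepsilon$-truncation does not produce it: on $\{\psi(t,x)<1\}$ the inner integral $\int_{\{\max(\varepsilon,\psi)<|z|\le 1\}}\frac{|z|}{\psi(t,x)}\,\nu_{\alpha}(dz)\cdot\psi^{\alpha}(t,x)$ is of order $\psi^{\alpha-1}(t,x)$, so for $\alpha<1$ you need $\int_{K}\psi^{\alpha-1}\,1_{\{\psi<1\}}\,dt\,dx<\infty$ on the bounded $(t,x)$-projection $K$ of $A$. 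This does not follow from $\int\phi^{\alpha}=1$ alone (take $\phi(x)\sim c|x-x_0|^{\gamma}$ near a point with $\gamma(1-\alpha)\ge d$); it does hold under Hypothesis \ref{hypo2}, but that hypothesis is not assumed in this proposition. Without this integrability the ``collapse'' into a single integral against $\widehat{J_{\psi}}-J_{\psi}$ is not licensed, because the compensated integral cannot be split into a Poisson sum minus a compensator. To repair the argument you must either impose/verify the local integrability of $\psi^{\alpha-1}$, or reorganize so that the symmetry cancellation is performed in the inner $z$-integral for each fixed $(t,x)$ \emph{before} any integration over $(t,x)$ --- which is precisely what the paper's appeal to the drift function $U$ in the Chong--Kl\"uppelberg decomposition achieves. (A smaller point of the same nature: your claimed $L^2$ bound for the removed small-jump contribution, via $\int_{\{|z|\le\varepsilon\}}z^2\nu_{\alpha}(dz)$, also picks up a factor $\psi^{\alpha-2}$ on $\{\psi<\varepsilon\}$ and needs the same kind of domination.)
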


\begin{proof}
Consider the following (non-homogeneous) L\'evy basis:
\begin{equation}
\label{def-Lpsi}
L_{\psi}(A)=\int_{0}^T \int_{\bR^d}\int_{\{|z|\leq 1\}}1_{A}(t,x)z\widehat{J_{\psi}}(dt,dx,dz)+ \int_{0}^T \int_{\bR^d}\int_{\{|z|>1\}}1_{A}(t,x)z J_{\psi}(dt,dx,dz).
\end{equation}
This is an orthogonal $L^0$-random measure which has the canonical decomposition (3.2) of \cite{chong-kluppelberg15}, with respect to the truncation function $\tau(z)=z1_{\{|z|\leq 1\}}$, with $b=0$, $C=0$, $\mu=J_{\psi}$, $\nu(dt,dx,dz)=\psi^{\alpha}(t,x)dydx \nu_{\alpha}(dz)$, i.e. $A(dt,dx)=dtdx$ and $K(t,x,dz)=\psi^{\alpha}(t,x) \nu_{\alpha}(dz)$. For this $L^0$-random measure, we have:
\begin{align*}
U(t,x,y)&=b(t,x)y+\int_{\bR_0} \big(\tau(yz)-y\tau(z)\big)K(t,x,dz)=0\\
V_0(t,x,y)&=\int_{\bR_0}\big(|yz|^2 \wedge 1 \big)K(t,x,dz)=  \frac{2}{2-\alpha}
|y|^{\alpha} \psi^{\alpha}(t,x).
\end{align*}
By Theorem 4.1 of \cite{chong-kluppelberg15}, if $H$ is a predictable process, then
\begin{equation}
\label{equiv-L}
\mbox{$H \in L^{0}(L_{\psi})$ if and only if $\int_{0}^T \int_{\bR^d}\psi^{\alpha}(t,x) |H(t,x)|^{\alpha}dxdt<\infty$ a.s.}
\end{equation}
In particular, $1_{A}\psi^{-1} \in L^{0}(L_{\psi})$,
for any  $A \in \widetilde{P}_b$.

By Theorem 3.5 of \cite{chong-kluppelberg15}, we can define a null-spatial $L^0$-random measure $H \cdot L_{\psi}$ by $(H \cdot L_{\psi})(A)=\int 1_{A}(t) H(t,x) L_{\psi}(dt,dx)$ for suitable sets $A \in \cP$, and this has canonical decomposition:
\begin{align*}
(H \cdot L_{\psi})(A)&=\int_{[0,T]\times \bR^d \times \bR_0} 1_{A}(t) \tau(H(t,x)z)\widehat{J}_{\psi}(dt,dx,dz)+\\
& \quad
\int_{[0,T] \times \bR^d \times \bR_0} 1_{A}(t) \big( H(t,x)z -\tau(H(t,x)z)\big) J_{\psi}(dt,dx,dz).
\end{align*}
On the other hand, using the canonical decomposition \eqref{def-Lpsi} of $L_{\psi}$, we have:
\begin{align*}
(H \cdot L_{\psi})(A)&=\int_{[0,T]\times \bR^d \times \bR_0} 1_{A}(t) H(t,x) \tau(z)\widehat{J}_{\psi}(dt,dx,dz)+\\ 
& \quad \int_{[0,T]\times \bR^d \times \bR_0} 1_{A}(t) H(t,x)\big(z-\tau(z) \big) J_{\psi}(dt,dx,dz).
\end{align*}
Combining these two expressions and writing them for
 $A=\overline{\Omega}:=\Omega \times [0,T]$, we obtain:
\begin{align*}
(H \cdot L_{\psi})(\overline{\Omega})&=\int_0^T \int_{\bR^d} \int_{\{ |H(t,x) z| \leq 1\}}H(t,x) z \widehat{J}_{\psi}(dt,dx,dz)+ \\
& \int_0^T \int_{\bR^d} \int_{\{|H(t,x) z| > 1\}}H(t,x) z  J_{\psi}(dt,dx,dz)
\\
&= \int_0^T \int_{\bR^d}\int_{\{|z|\leq 1\}}H(t,x) z \widehat{J}_{\psi}(dt,dx,dz)+ \\
& \int_0^T \int_{\bR^d}\int_{\{|z|> 1\}}H(t,x) z J_{\psi}(dt,dx,dz). \\
\end{align*}
We apply this for $H=1_{A}\psi^{-1} $ with $A \in \widetilde{P}_b$. The conclusion follows by \eqref{Lambda-repr1}.
\end{proof}

Finally, we give the following representation for the integral with respect to $\Lambda$.

\begin{proposition}
\label{prop-Lambda-repr4}
For any $H \in L^{0}(\Lambda)$, we have:
\begin{align}
\nonumber
\int_0^T \int_{\bR^d} H(t,x) \Lambda(dt,dx) &=\int_0^T \int_{\bR^d}\int_{\{|z|\leq 1\}}H(t,x) \frac{z}{\psi(t,x)} \widehat{J}_{\psi}(dt,dx,dz)+\\
\label{Lambda-repr4}
& \int_0^T \int_{\bR^d}\int_{\{|z|> 1\}}H(t,x) \frac{z}{\psi(t,x)} J_{\psi}(dt,dx,dz).
\end{align}
\end{proposition}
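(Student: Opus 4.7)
The plan is to reduce the identity \eqref{Lambda-repr4} to the representation established in the proof of Proposition \ref{prop-Lambda-repr3}, by reinterpreting $\int H d\Lambda$ as an integral with respect to the auxiliary L\'evy basis $L_\psi$ introduced there. The key observation is that \eqref{Lambda-repr3} says exactly $\Lambda(A) = (1_A \psi^{-1} \cdot L_\psi)(\bar\Omega)$ in the notation of the preceding proof, so \eqref{Lambda-repr4} is morally the assertion that $\int H d\Lambda = \int H \psi^{-1} dL_\psi = (H\psi^{-1} \cdot L_\psi)(\bar\Omega)$, combined with the canonical decomposition of $L_\psi$ from \eqref{def-Lpsi}.

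First, I would verify the identity for simple integrands. If $S = \sum_{i=1}^k c_i 1_{A_i}$ with $A_i \in \Pb$, then by linearity and Proposition \ref{prop-Lambda-repr3} applied to each $A_i$,
\[
\int S \, d\Lambda = \sum_{i=1}^k c_i \Lambda(A_i) = \int_{\{|z|\leq 1\}} S \psi^{-1} z \, d\widehat{J_{\psi}} + \int_{\{|z|>1\}} S \psi^{-1} z \, d J_{\psi},
\]
where linearity of the Poisson (and compensated Poisson) integrals is used on the right. Next, for general $H \in L^{0}(\Lambda)$, I would pick an approximating sequence $(S_n) \subset \cS$ with $\|S_n - H\|_{\Lambda} \to 0$, so that by \eqref{contraction2} the left-hand side $I^{\Lambda}(S_n) \to I^{\Lambda}(H)$ in $L^0$. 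The remaining task is to pass to the limit on the right-hand side.

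For the passage to the limit, I would exploit the equivalence of integrability conditions. By Corollary \ref{integr-th}, $H \in L^{0}(\Lambda)$ iff $\int_0^T \int_{\bR^d} |H(t,x)|^{\alpha} dx dt < \infty$ a.s., while by \eqref{equiv-L} the latter is exactly the statement $H \psi^{-1} \in L^{0}(L_\psi)$, since $\int \psi^{\alpha} |H\psi^{-1}|^{\alpha} = \int |H|^{\alpha}$. Hence $H \psi^{-1} \in L^{0}(L_\psi)$, and applying Theorem 3.5 of \cite{chong-kluppelberg15} to the null-spatial random measure $(H\psi^{-1}) \cdot L_\psi$, and evaluating at $\bar\Omega$, gives precisely the sum of the two Poisson-type integrals on the right-hand side of \eqref{Lambda-repr4} (this is the same computation as in the proof of Proposition \ref{prop-Lambda-repr3}, now for the integrand $H\psi^{-1}$ instead of $1_A \psi^{-1}$). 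Combined with the simple-integrand identity and the dominated convergence property of the $L^0$-integral with respect to $L_\psi$ applied to $(S_n - H)\psi^{-1}$, this yields \eqref{Lambda-repr4}.

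The main obstacle I anticipate is the final passage to the limit on the right-hand side: the Daniell pseudo-norm $\|\cdot\|_{\Lambda}$ controls the joint integral but not each of the two Poisson components separately, and one cannot in general split an $L^0$-convergence into independent convergences of the compensated small-jump part and the large-jump part. The clean way around this is to work entirely through the L\'evy basis $L_\psi$: verify $\|(S_n - H)\psi^{-1}\|_{L_\psi} \to 0$ (which follows from $\|S_n - H\|_{\Lambda} \to 0$ once one checks that the two pseudo-norms are comparable on predictable integrands, using the integrability characterisations above and Theorem \ref{DCT} applied to $L_\psi$), and then quote Theorem 3.5 of \cite{chong-kluppelberg15} once, which produces the two Poisson integrals simultaneously as the canonical decomposition of the limiting $L^0$-random measure.
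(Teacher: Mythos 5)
Your overall route is the same as the paper's: reduce everything to the auxiliary L\'evy basis $L_\psi$ via the identity $\Lambda(A)=\int 1_A\psi^{-1}\,dL_\psi$ from Proposition \ref{prop-Lambda-repr3}, use the equivalence of $H\in L^0(\Lambda)$ with $H\psi^{-1}\in L^0(L_\psi)$ coming from Corollary \ref{integr-th} and \eqref{equiv-L}, identify $(H\psi^{-1}\cdot L_\psi)(\overline{\Omega})$ with the right-hand side of \eqref{Lambda-repr4} via the canonical decomposition, and pass to the limit along simple integrands. The one place where your argument has a genuine gap is the limiting step. You correctly flag the obstacle, but your proposed resolution --- that $\|S_n-H\|_{\Lambda}\to 0$ implies $\|(S_n-H)\psi^{-1}\|_{L_\psi}\to 0$ ``once one checks that the two pseudo-norms are comparable'' --- is not justified and is not obviously true: the Daniell mean $\|K\|_{\Lambda}$ is a supremum of $\|\int S\,d\Lambda\|_{L^0}$ over \emph{simple} integrands $S$ with $|S|\le|K|$, and the map $S\mapsto S\psi^{-1}$ does not send simple integrands to simple integrands, so there is no immediate identification of $\|K\|_{\Lambda}$ with $\|K\psi^{-1}\|_{L_\psi}$. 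The integrability characterisations tell you which processes are integrable, not that the two Daniell means are equivalent on integrands.

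The paper sidesteps this entirely by making a specific choice of approximating sequence: by Theorem 13.5 of \cite{billingsley95} one can take simple integrands $S_n\to H$ \emph{pointwise} with $|S_n|\le|H|$ for all $n$; then the Dominated Convergence Theorem (Theorem \ref{DCT}) applies separately to $\Lambda$ (giving $I^{\Lambda}(S_n)\to I^{\Lambda}(H)$ in probability) and to $L_\psi$ with dominator $H\psi^{-1}\in L^0(L_\psi)$ (giving $I^{L_\psi}(S_n\psi^{-1})\to I^{L_\psi}(H\psi^{-1})$), and the identity $I^{\Lambda}(S_n)=I^{L_\psi}(S_n\psi^{-1})$ for each $n$, which is \eqref{Lam-L-psi} extended by linearity, plus uniqueness of limits in probability finishes the proof. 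If you replace your arbitrary $\|\cdot\|_{\Lambda}$-approximating sequence by this dominated, pointwise-convergent one, your argument closes without any claim about comparability of the two pseudo-norms.
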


\begin{proof}
By Proposition \ref{prop-Lambda-repr3}, relation \eqref{Lambda-repr4} holds for $H=1_{A}$ with $A \in \widetilde{P}_b$, i.e.
\begin{equation}
\label{Lam-L-psi}
\Lambda(A)=\int_0^T \int_{\bR^d} 1_{A}(t,x) \frac{1}{\psi(t,x)}L_{\psi}(dt,dx),
\end{equation}
where $L_{\psi}$ is the L\'evy basis given by \eqref{def-Lpsi}. 
By Corollary \ref{integr-th}, $H \in L^{0}(\Lambda)$ is equivalent to $\int |H(t,x)|^{\alpha}dtdx<\infty$ a.s., which in turn is equivalent to $H\psi^{-1} \in L^0(L_{\psi})$, by \eqref{equiv-L}.

By Theorem 13.5 of \cite{billingsley95}, there exists a sequence $(S_n)_{n\geq 1}$ of simple integrands such that $S_n \to H$ and $|S_n| \leq |H|$ for all $n$. By Dominated Convergence Theorem (Theorem \ref{DCT}),
\[
I^{\Lambda}(S_n) \stackrel{P}{\to} I^{\Lambda}(H) \quad \mbox{and} \quad I^{L_{\psi}}(S_n \psi^{-1}) \stackrel{P}{\to} I^{L_{\psi}}(H\psi^{-1}).
\]
By \eqref{Lam-L-psi}, $I^{\Lambda}(S_n) =I^{L_{\psi}}(S_n \psi^{-1})$ for all $n$. By uniqueness of the limit,  $I^{\Lambda}(H)=I^{L_{\psi}}(H\psi^{-1})$.
\end{proof}

The following result will be used in the proof of Lemma \ref{lem-J-conv} below 
(in the case $\alpha \geq 1$).

\begin{proposition}
\label{prop-Lambda-repr5}
Assume that $\alpha \in [1,2)$. For any $H \in L^{0}(\Lambda)$ and $a \in (0,1)$, we have:
\begin{align*}
\int_0^T \int_{\bR^d} H(t,x) \Lambda(dt,dx) &=\int_0^T \int_{\bR^d}\int_{\{|z|\leq a\}}H(t,x) \frac{z}{\psi(t,x)} \widehat{J}_{\psi}(dt,dx,dz)+\\
& \quad \int_0^T \int_{\bR^d}\int_{\{|z|> a\}}H(t,x) \frac{z}{\psi(t,x)} J_{\psi}(dt,dx,dz).
\end{align*}
\end{proposition}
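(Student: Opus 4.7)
The plan is to deduce this from the previous Proposition \ref{prop-Lambda-repr4}, which gives exactly the same identity for the special choice $a=1$. Since only the truncation level changes, everything reduces to showing that, for each $a\in(0,1)$,
\[
\int_0^T\!\!\int_{\bR^d}\!\!\int_{\{a<|z|\le 1\}} H(t,x)\frac{z}{\psi(t,x)}\,\widehat{J_{\psi}}(dt,dx,dz) = \int_0^T\!\!\int_{\bR^d}\!\!\int_{\{a<|z|\le 1\}} H(t,x)\frac{z}{\psi(t,x)}\,J_{\psi}(dt,dx,dz).
\]
Once this ``swap'' is established, splitting the first summand in \eqref{Lambda-repr4} along $\{|z|\le a\}\cup\{a<|z|\le 1\}$ and substituting yields the claim.

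The key steps are as follows. First, recall that $J_{\psi}$ is a PRM on $[0,T]\times\bR^d\times\bR_0$ of intensity $\psi^\alpha(t,x)\,dt\,dx\,\nu_\alpha(dz)$, so the difference between the two integrals above is the deterministic quantity
\[
\int_0^T\!\!\int_{\bR^d}\!\!\int_{\{a<|z|\le 1\}} H(t,x)\frac{z}{\psi(t,x)}\,\psi^\alpha(t,x)\,dt\,dx\,\nu_\alpha(dz).
\]
Second, invoke Fubini's theorem to rewrite this as
\[
\int_0^T\!\!\int_{\bR^d} H(t,x)\,\psi^{\alpha-1}(t,x)\Big(\int_{\{a<|z|\le 1\}}z\,\nu_\alpha(dz)\Big)\,dt\,dx,
\]
which vanishes by the symmetry of $\nu_\alpha$. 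Third, combine with Proposition \ref{prop-Lambda-repr4} to conclude.

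The main obstacle is the Fubini justification, i.e.~verifying absolute integrability of the integrand. In the regime $\alpha\in[1,2)$, the short argument that worked for \eqref{Lambda-repr2} (valid only for $\alpha<1$, since it relied on $\int_{\{|z|\le\psi\}}|z|\,\nu_\alpha(dz)$ being finite) breaks down. However, the annulus $\{a<|z|\le 1\}$ removes the small-jump singularity of $\nu_\alpha$, so
\[
c_{a,\alpha}:=\int_{\{a<|z|\le 1\}}|z|\,\nu_\alpha(dz)<\infty.
\]
It thus suffices to show that $\int_0^T\int_{\bR^d}|H(t,x)|\,\psi^{\alpha-1}(t,x)\,dt\,dx<\infty$ a.s. For $\alpha=1$ this is immediate since $\psi^{0}=1$ and $\int|H|^\alpha<\infty$ a.s.~by Corollary \ref{integr-th}. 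For $\alpha\in(1,2)$, I would apply H\"older's inequality with conjugate exponents $\alpha$ and $\alpha/(\alpha-1)$ together with the normalization $\int_0^T\!\int_{\bR^d}\psi^\alpha(t,x)\,dt\,dx=1$ to bound the above by $\left(\int_0^T\!\int_{\bR^d}|H(t,x)|^\alpha\,dt\,dx\right)^{1/\alpha}$, which is finite a.s.~again by Corollary \ref{integr-th}.
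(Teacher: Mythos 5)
Your proposal is correct and follows essentially the same route as the paper's proof: split the compensated integral in \eqref{Lambda-repr4} over $\{|z|\le a\}$ and $\{a<|z|\le 1\}$, observe that on the annulus the compensator term $\int H(t,x)\,z\,\psi^{\alpha-1}(t,x)\,dt\,dx\,\nu_{\alpha}(dz)$ vanishes by Fubini and the symmetry of $\nu_{\alpha}$, and justify Fubini by proving $\int_0^T\int_{\bR^d}|H|\psi^{\alpha-1}<\infty$ a.s.\ (trivially for $\alpha=1$, by H\"older with exponent $\alpha$ and the normalization of $\psi$ for $\alpha\in(1,2)$). The only cosmetic quibble is that the compensator quantity is a pathwise (random, not deterministic) Lebesgue integral since $H$ is random, but this does not affect the argument.
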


\begin{proof}
In \eqref{Lambda-repr4}, we write the first term on the right hand-side as the sum of two integrals corresponding to sets $\{|z|\leq a\}$ and $\{a<|z|\leq 1\}$. Notice that
\[
\int_{0}^T \int_{\bR^d}\int_{\{a<|z|\leq 1\}}H(t,x)\frac{z}{\psi(t,x)}\widehat{J}_{\psi}(dt,dx,dz)=
\int_0^T \int_{\bR^d} \int_{\{a<|z|\leq 1\}} H(t,x) \frac{z}{\psi(t,x)} J_{\psi}(dt,dx,dz).
\]
This is because $\int_0^T \int_{\bR^d}\int_{\{a<|z|\leq 1\}} H(t,x)z\psi^{\alpha-1}(t,x) dxdt \nu_{\alpha}(dz)=0$, by Fubini's theorem and the symmetry of $\nu_{\alpha}$. To justify the application of Fubini's theorem, we need to prove
\[
\int_0^T \int_{\bR^d} |H(t,x)|\psi^{\alpha-1}(t,x) \left(\int_{\{a<|z|\leq 1\}} |z| \nu_{\alpha}(dz)\right) dx dt<\infty,
\]
which is equivalent to $\int_0^T \int_{\bR^d} |H(t,x)|\psi^{\alpha-1}(t,x) dxdt<\infty$. This last fact is clear when $\alpha=1$, and can be proved using H\"older's inequality with $p=\alpha$, when $\alpha \in (1,2)$.
\end{proof}

\section{Convergence of the series}
\label{section-convergence}

In this section, we give the proof of Theorem \ref{main-th}.(a). 
First, we show that under a condition weaker than the one given by Assumption \ref{ass-A2}, the series on the right-hand side of \eqref{series} converges in probability. Then, we show that under Assumption \ref{ass-A2}, this series converges absolutely almost surely. For the rest of the paper, we will denote this series by $u(t,x)$:
\begin{equation}
\label{def-u}
u(t,x):=1+\sum_{n\geq 1}I_n\big( f_n(\cdot,t,x)\big)=1+\sum_{n\geq 1}I_n\big( \widetilde{f}_n(\cdot,t,x)\big).
\end{equation}


\medskip

Recall that $f_n(\cdot,t,x)$ is the kernel given by \eqref{def-fn} and $\widetilde{f}_n(\cdot,t,x)$ is its symmetrization. By Theorem \ref{integr-crit}, the multiple stable integral
$I_n(\widetilde{f}_n(\cdot,t,x))$ is well-defined if and only if
\[
h_n^{(2)}(t,x):=T^{\frac{2n}{\alpha}} (n!)^2 
\sum_{j_1<\ldots<j_n}\prod_{k=1}^{n}\Gamma_{j_k}^{-2/\alpha}
\phi^{-2}(X_{j_k}) \widetilde{f}_n^2(T_{j_1},X_{j_1},\ldots,T_{j_n},X_{j_n},t,x)<\infty \quad \mbox{a.s.}
\]
and in this case, it has the LePage series representation:
\begin{equation}
\label{LePage-In}
I_n(\widetilde{f}_n(\cdot,t,x))=T^{n/\alpha}  n! \sum_{j_1<\ldots<j_n}
\prod_{k=1}^{n} \e_{j_k} \Gamma_{j_k}^{-1/\alpha} \phi^{-1}(X_{j_k})  \widetilde{f}_n(T_{j_1},X_{j_1},\ldots,T_{j_n},X_{j_n},t,x) \quad \mbox{a.s.}
\end{equation}

We have the following result.

\begin{lemma}
\label{gK-ineq}
Let $(\e_j)_{j\geq 1}$ be i.i.d. with $\bP(\e_j=1)=\bP(\e_j=-1)=1/2$. Let $\{a_{j_1,\ldots,j_n};1\leq j_1 <\ldots<j_n,j_i \in \bN\}$ be an array of non-negative numbers. The series \[
\sum_{n\geq 1}\sum_{j_1<\ldots<j_n}\e_{j_1}\ldots \e_{j_n}a_{j_1,\ldots,j_n} \quad \mbox{converges in $L^2(\Omega)$}
\]
if and only if
$\sum_{n\geq 1}\sum_{j_1<\ldots<j_n}a_{j_1,\ldots,j_n}^2<\infty$,
and in this case,
\[
\bE\left[ \left|\sum_{n\geq 1}\sum_{j_1<\ldots<j_n}\e_{j_1}\ldots \e_{j_n}a_{j_1,\ldots,j_n}\right|^2 \right] = \sum_{n\geq 1}\sum_{j_1<\ldots<j_n}a_{j_1,\ldots,j_n}^2.
\]
\end{lemma}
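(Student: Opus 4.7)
The plan is to identify the family of products $\{\e_{j_1}\cdots\e_{j_n} : n\geq 1,\ j_1<\ldots<j_n\}$ as an orthonormal system in $L^2(\Omega)$ (a sub-system of the Walsh system), and then invoke the standard Pythagorean/Parseval correspondence between $L^2$-convergence of an orthogonal series and $\ell^2$-summability of its coefficients.

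First I would verify orthonormality. Because $\e_j^2=1$, the product $(\e_{j_1}\cdots\e_{j_n})(\e_{k_1}\cdots\e_{k_m})$ equals the product of $\e_\ell$ over $\ell$ in the symmetric difference $\{j_1,\ldots,j_n\}\triangle\{k_1,\ldots,k_m\}$. Using independence of the $\e_\ell$ and $\bE[\e_\ell]=0$, this expectation is $0$ unless the symmetric difference is empty, i.e.\ unless $n=m$ and $(j_1,\ldots,j_n)=(k_1,\ldots,k_m)$, in which case the product is identically $1$ and the expectation equals $1$. Thus the family is orthonormal.

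Next I would argue the equivalence. For any finite set $F$ of multi-indices $(n;j_1<\ldots<j_n)$, Pythagoras gives
\[
\bE\left[\Big|\sum_{(n;j_1<\ldots<j_n)\in F}\e_{j_1}\cdots\e_{j_n}\,a_{j_1,\ldots,j_n}\Big|^2\right]=\sum_{(n;j_1<\ldots<j_n)\in F}a_{j_1,\ldots,j_n}^2.
\]
Taking an increasing exhaustion $F_N\uparrow$ (say all multi-indices with $n\leq N$ and $j_n\leq N$), the partial sums $S_N$ form a Cauchy sequence in $L^2(\Omega)$ if and only if the non-negative numerical series $\sum a_{j_1,\ldots,j_n}^2$ satisfies the Cauchy criterion, i.e.\ is finite. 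Since $L^2(\Omega)$ is complete, $S_N$ converges in $L^2$ in this case; and because the coefficients are non-negative (and the squares are summed unconditionally in $[0,\infty]$), the limit is independent of the chosen exhaustion, so the double series converges unconditionally in $L^2$. Passing to the limit in the Pythagorean identity for $S_N$ using continuity of the $L^2$-norm yields the claimed equality
\[
\bE\left[\Big|\sum_{n\geq 1}\sum_{j_1<\ldots<j_n}\e_{j_1}\cdots\e_{j_n}\,a_{j_1,\ldots,j_n}\Big|^2\right]=\sum_{n\geq 1}\sum_{j_1<\ldots<j_n}a_{j_1,\ldots,j_n}^2.
\]

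There is no substantive obstacle here; the only point requiring a touch of care is confirming that the limit of the double-indexed partial sums is well-defined independently of the summation order, which follows either from the unconditional nature of $\ell^2$-summation for non-negative squares or, equivalently, from the isometry between the closed linear span of $\{\e_{j_1}\cdots\e_{j_n}\}$ in $L^2(\Omega)$ and the $\ell^2$-space over the indexing set.
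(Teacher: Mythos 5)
Your proposal is correct and follows essentially the same route as the paper: the paper's proof consists precisely of the orthonormality computation $\bE[\e_{j_1}\cdots\e_{j_n}\e_{k_1}\cdots\e_{k_m}]=1$ if the index sets coincide and $0$ otherwise, leaving the standard Pythagorean/Parseval correspondence implicit. You have simply written out those routine details (the symmetric-difference argument and the Cauchy-criterion equivalence) in full.
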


\begin{proof} 
The terms of the series are orthogonal in $L^2(\Omega)$ since
\[
\bE \left[ \e_{j_1}\ldots \e_{j_n} \e_{k_1}\ldots e_{k_m} \right]=
\left\{
\begin{array}{ll}
1 & \mbox{if $\{j_1,\ldots,j_n\}=\{k_1,\ldots,k_m\},$} \\
0 & \mbox{otherwise.}
\end{array} \right.
\]
\end{proof}

\begin{remark}
{\rm If $\sum_{j_1<\ldots<j_n}a_{j_1,\ldots,j_n}^2<\infty$, then by the generalized Khintchine inequality (Proposition 1 of \cite{ST90}), the series
$\sum_{j_1<\ldots<j_n}\e_{j_1}\ldots \e_{j_n}a_{j_1,\ldots,j_n}$ converges a.s. and in $L^p(\Omega)$ for any $p \geq 0$.}
\end{remark}

Using Lemma \ref{gK-ineq}, we see that
\begin{equation}
\label{hn2}
h_n^{(2)}(t,x)=\bE\Big[|I_n(\widetilde{f}_n(\cdot,t,x))|^2 \left| \right. (\Gamma_i),(T_i),(X_i)\Big].
\end{equation}

The following basic conditioning fact will be used several times in the sequel: if $X$ and $Y$ are independent random elements with values in measurable spaces $(E,\cE)$, respectively $(F,\cF)$, and $f:E \times F \to [0,\infty]$ is a measurable function, then
\begin{equation}
\label{fact1}
\bE[f(X,Y)|X=x]=\bE[f(x,Y)].
\end{equation}

\begin{lemma}
\label{conv-proba}
Let $t \in [0,T]$ and $x \in \bR^d$ be arbitrary. If
\begin{equation}
\label{cond-C2}
\cA_2(t,x):=\sum_{n\geq 1}h_n^{(2)}(t,x) <\infty
 \quad \mbox{a.s.},
 \end{equation}
 then the series $\sum_{n\geq 1}I_{n}(\widetilde{f}_n(\cdot,t,x))$ converges in probability.
\end{lemma}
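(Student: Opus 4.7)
The plan is to work conditionally on the $\sigma$-algebra $\mathcal{G}=\sigma\big((\Gamma_i)_{i\geq 1},(T_i)_{i\geq 1},(X_i)_{i\geq 1}\big)$, exploit orthogonality of Rademacher products across all levels $n$ simultaneously to get conditional $L^2$-convergence, and then pass to unconditional convergence in probability by dominated convergence. Since the pseudo-norm $\|\cdot\|_{L^0}$ metrises convergence in probability and $(L^0,\|\cdot\|_{L^0})$ is complete, it suffices to prove that the partial sums $S_N(t,x):=\sum_{n=1}^N I_n\big(\widetilde{f}_n(\cdot,t,x)\big)$ form a Cauchy sequence in probability.

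\textbf{Set-up.} First I would note that the hypothesis $\cA_2(t,x)<\infty$ a.s.\ forces $h_n^{(2)}(t,x)<\infty$ a.s.\ for every $n\geq 1$, so by Theorem~\ref{integr-crit} each $I_n(\widetilde{f}_n(\cdot,t,x))$ is well-defined and admits the LePage representation \eqref{LePage-In}. Recall from the introduction that the sequence $(\varepsilon_i)_{i\geq 1}$ is independent of $\mathcal{G}$, so conditional on $\mathcal{G}$ the $\varepsilon_i$'s remain i.i.d.\ Rademacher. The key identity from the proof of Lemma~\ref{gK-ineq},
\[
\bE\big[\varepsilon_{j_1}\cdots\varepsilon_{j_n}\,\varepsilon_{k_1}\cdots\varepsilon_{k_m}\big]=\mathbf{1}_{\{\{j_1,\ldots,j_n\}=\{k_1,\ldots,k_m\}\}},
\]
then yields pairwise orthogonality of \emph{all} terms across all levels $n$ in $L^2\big(\bP(\cdot\mid\mathcal{G})\big)$: within a fixed $n$, distinct strictly increasing tuples $(j_1,\ldots,j_n)$ give distinct index sets, and across different $n$ the index sets have different cardinalities. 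Combined with \eqref{hn2}, this gives, for all $N>M\geq 1$,
\[
\bE\big[|S_N-S_M|^2\,\big|\,\mathcal{G}\big]=\sum_{n=M+1}^{N}h_n^{(2)}(t,x).
\]

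\textbf{Passing to probability.} Chebyshev's inequality applied conditionally, together with the trivial bound $\bP(\cdot\mid\mathcal{G})\leq 1$, gives for any $\eta>0$
\[
\bP\big(|S_N-S_M|>\eta\,\big|\,\mathcal{G}\big)\leq \min\!\left(1,\;\frac{1}{\eta^{2}}\sum_{n=M+1}^{N}h_n^{(2)}(t,x)\right).
\]
The right-hand side is dominated by $1$ and, because $\cA_2(t,x)<\infty$ a.s., converges almost surely to $0$ as $M,N\to\infty$. Taking expectation and applying the dominated convergence theorem yields $\bP(|S_N-S_M|>\eta)\to 0$, so $(S_N)_{N\geq 1}$ is Cauchy in probability and hence convergent in probability by completeness of $L^0$.

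\textbf{Main obstacle.} The only non-routine point is justifying the conditional orthogonality simultaneously across all levels $n$; this rests squarely on the independence of $(\varepsilon_i)_{i\geq 1}$ from the Poisson data in $\mathcal{G}$ and on the strict ordering $j_1<\ldots<j_n$ in the LePage representation, which prevents any cancellation of distinct $\varepsilon$-products via $\varepsilon_j^2=1$. Everything else is a standard Chebyshev plus dominated convergence argument.
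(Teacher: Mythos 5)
Your proof is correct and follows essentially the same route as the paper: condition on $(\Gamma_i),(T_i),(X_i)$, use the Rademacher orthogonality of Lemma \ref{gK-ineq} to identify the conditional second moment of $S_N-S_M$ with $\sum_{n=M+1}^{N}h_n^{(2)}(t,x)$, and then pass to unconditional convergence in probability via a quantity bounded by $1$ and the dominated convergence theorem under \eqref{cond-C2}. The only cosmetic difference is that you use conditional Chebyshev where the paper bounds $\bE[\min(1,|S_N-S_M|)]$ by Cauchy--Schwarz and $\bE[\min(1,|X|)]\leq\min(1,\bE|X|)$; both reduce to the same dominated convergence step.
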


\begin{proof}
Recall that convergence in probability is equivalent with the convergence in $L^0$ equipped with the norm $\|X\|_0=\bE[\min(1,|X|)]$.

Let $S_n=\sum_{k=1}^{n}I_{k}(\widetilde{f}_k(\cdot,t,x))$.
By the Cauchy-Schwarz inequality, for any $n>m$,
$$\bE[\min(1,|S_n-S_m|)]\leq \left(\bE[\min(1,|S_n-S_m|^2)]\right)^{1/2}.$$
Conditioning on $(\Gamma_i),(T_i),(X_i)$, we have:
\begin{align*}
\bE[\min(1,|S_n-S_m|^2)]&=\bE[\bE[\min(1,|S_n-S_m|^2)|(\Gamma_i),(T_i),(X_i)]].
\end{align*}
Using the inequality $\bE[\min(1,|X|)] \leq \min(1,E|X|)$, followed by Lemma \ref{gK-ineq}, we have:
\begin{align*}
&\bE[\min(1,|S_n-S_m|^2)|(\Gamma_i)=(\gamma_i),(T_i)=(t_i),(X_i)=(x_i)]\\
&\quad =\bE\left[\min\left(1,\left|\sum_{k=m+1}^n T^{k/\alpha}  k! \sum_{j_1<\ldots<j_{k}}
\prod_{\ell=1}^{k} \e_{j_{\ell}} \gamma_{j_{\ell}}^{-1/\alpha} \phi^{-1}(x_{j_{\ell}})  \widetilde{f}_n(t_{j_1},x_{j_1},\ldots,t_{j_{k}},x_{j_{k}},t,x) \right|^2 \right) \right]\\
& \quad \leq \min \left\{ 1,\bE\left|\sum_{k=m+1}^n T^{k/\alpha}  k! \sum_{j_1<\ldots<j_k}
\prod_{\ell=1}^{k} \e_{j_{\ell}} \gamma_{j_{\ell}}^{-1/\alpha} \phi^{-1}(x_{j_{\ell}})  \widetilde{f}_n(t_{j_1},x_{j_1},\ldots,t_{j_k},x_{j_k},t,x) \right|^2\right \}\\
& \quad = \min \left\{ 1,\sum_{k=m+1}^n T^{2k/\alpha}  (k!)^2 \sum_{j_1<\ldots<j_k}
\prod_{\ell=1}^{k}  \gamma_{j_{\ell}}^{-1/\alpha} \phi^{-2}(x_{j_{\ell}})  \widetilde{f}_n^2(t_{j_1},x_{j_1},\ldots,t_{j_k},x_{j_k},t,x) \right \}.
\end{align*}
This means that
\[
\bE\Big[\min(1,|S_n-S_m|^2)|(\Gamma_i),(T_i),(X_i)\Big] \leq \min \left\{ 1,\sum_{k=m+1}^n h_{k}^{(2)}(t,x) \right\}.
\]
Hence, 
\begin{align*}
\bE[\min(1,|S_n-S_m|^2)] \leq \bE \left[\min \Big(1,\sum_{k=m+1}^n h_{k}^{(2)}(t,x) \Big) \right].
\end{align*}
By \eqref{cond-C2} and the dominated convergence theorem, the last term above converges to $0$ as $n,m\to \infty$. This proves that $(S_n)_{n\geq 1}$ is a Cauchy sequence in $L^0$. 
\end{proof}

The following result gives a criterion for verifying \eqref{cond-C2}.  Recall that $K_n^{(p)}(t,x)$ is defined by \eqref{def-K}.

\begin{proposition}
\label{prop-conv-series}
Let $t \in [0,T]$ and $x \in \bR^d$ be fixed. If there exists $p \in (\alpha,2]$ such that
\begin{equation}
\label{A1}
\sum_{n\geq 1}T^{(\frac{p}{\alpha}-1)n} K_n^{(p)}(t,x)\Big(\sum_{j\geq 1} \Gamma_j^{-p/\alpha}\Big)^n<\infty \quad \mbox{a.s.},
\end{equation}
then \eqref{cond-C2} holds, and consequently, $\sum_{n\geq 1}I_n\big(\widetilde{f}_n(\cdot,t,x)\big)$ converges in probability.
\end{proposition}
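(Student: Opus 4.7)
The plan is to verify the hypothesis of Lemma \ref{conv-proba}, namely that $\sum_{n\geq 1} h_n^{(2)}(t,x) < \infty$ a.s.; the strategy will be to compare the squared terms appearing in \eqref{hn2} with the $p$-th powers from \eqref{A1} via the $\ell^p\hookrightarrow\ell^2$ inclusion, and then to condition on $(\Gamma_j)_{j\geq 1}$ in order to introduce the deterministic quantity $K_n^{(p)}(t,x)$.

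First, since $p\in(\alpha,2]$ and hence $2/p\geq 1$, I will use the elementary inequality $\sum_k a_k^2 \leq (\sum_k a_k^p)^{2/p}$ for $a_k\geq 0$. Applied term-by-term inside \eqref{hn2}, and combined with a careful accounting of the symmetrization, this yields the pointwise bound
\[
h_n^{(2)}(t,x) \leq T^{2n/\alpha}\, W_n^{2/p},
\qquad
W_n := \sum_{k_1,\ldots,k_n \in \bN} \prod_{\ell=1}^n \Gamma_{k_\ell}^{-p/\alpha}\, \phi^{-p}(X_{k_\ell})\, f_n^p(T_{k_1},X_{k_1},\ldots,T_{k_n},X_{k_n},t,x).
\]
The key intermediate identity is
\[
\sum_{j_1<\cdots<j_n}\prod_{k=1}^n \Gamma_{j_k}^{-p/\alpha}\,\phi^{-p}(X_{j_k})\,\widetilde f_n^{\,p}(T_{j_1},X_{j_1},\ldots,T_{j_n},X_{j_n},t,x) \;=\; \frac{W_n}{(n!)^p},
\]
which follows because $\widetilde f_n = f_n/n!$ on each simplex of orderings (only one permutation makes $f_n$ nonzero, by its strict-ordering indicator), and the same strict ordering automatically kills any contribution from non-distinct tuples $(k_1,\ldots,k_n)$; the factors $(n!)^p$ and $(n!)^2$ coming from \eqref{hn2} will then cancel exactly.

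Next, conditioning on $(\Gamma_j)_{j\geq 1}$ and using that $(T_i,X_i)_{i\geq 1}$ are i.i.d.\ with density $T^{-1}\phi^\alpha(x)$, tuples with repeated indices again contribute zero, while each distinct tuple contributes $T^{-n}K_n^{(p)}(t,x)\prod_\ell \Gamma_{k_\ell}^{-p/\alpha}$; bounding the sum over distinct tuples by $(\sum_j \Gamma_j^{-p/\alpha})^n$ gives
\[
\bE[W_n \mid (\Gamma_j)] \;\leq\; T^{-n} K_n^{(p)}(t,x)\, \Big(\sum_{j\geq 1} \Gamma_j^{-p/\alpha}\Big)^n.
\]
Summing in $n$ and applying Tonelli, the hypothesis \eqref{A1} then gives $\bE\big[\sum_{n\geq 1} T^{pn/\alpha}W_n \mid (\Gamma_j)\big] < \infty$ a.s., so $\sum_n T^{pn/\alpha}W_n < \infty$ a.s., and from the first step $\sum_n \big(h_n^{(2)}(t,x)\big)^{p/2} < \infty$ a.s.

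Finally, to upgrade this to $\sum_n h_n^{(2)}(t,x) < \infty$, I will use that $p/2 \leq 1$: convergence of $\sum_n (h_n^{(2)})^{p/2}$ forces $h_n^{(2)}\to 0$, so eventually $h_n^{(2)}\leq 1$, and then $h_n^{(2)} \leq (h_n^{(2)})^{p/2}$ from that index on, closing the argument and allowing Lemma \ref{conv-proba} to conclude. The hard part will be the combinatorial bookkeeping in the first step: correctly passing between the sum over subsets $\{j_1<\cdots<j_n\}$, the identity $\widetilde f_n = f_n/n!$ on each ordering simplex, and the reparametrization by the $T$-ordered tuple $(k_1,\ldots,k_n)$ which converts the sum of $\widetilde f_n^{\,p}$ over subsets into an unrestricted sum of $f_n^p$ over $\bN^n$.
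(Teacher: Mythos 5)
Your proof is correct and follows essentially the same route as the paper's: pass from the squared sums to $p$-th power sums via sub-additivity (your $\ell^p\hookrightarrow\ell^2$ step), condition on $(\Gamma_j)_{j\geq 1}$ so that the expectation over $(T_i,X_i)_{i\geq 1}$ produces $T^{-n}K_n^{(p)}(t,x)$, bound the sum over index tuples by $\big(\sum_j\Gamma_j^{-p/\alpha}\big)^n$, and conclude by the Fubini/conditioning lemma. Your combinatorial identity converting the ordered sum of $\widetilde f_n^{\,p}$ into an unrestricted sum of $f_n^p$ is exactly the content of Lemma \ref{A-K-lemma} (Method 1), re-derived inline, and your final upgrade from $\sum_n (h_n^{(2)})^{p/2}<\infty$ to $\sum_n h_n^{(2)}<\infty$ is a valid (if slightly more roundabout) substitute for the paper's direct use of $\big(\sum_n h_n^{(2)}\big)^{p/2}\le\sum_n (h_n^{(2)})^{p/2}$.
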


\begin{proof}
By the sub-additivity of the function $\varphi(x)=x^{p/2},x>0$, we have:
\begin{equation}
\label{h-p2}
\Big(h_n^{(2)}(t,x)\Big)^{p/2} \leq h_n^{(p)}(t,x),
\end{equation}
where
\[
h_n^{(p)}(t,x):= T^{pn/\alpha} (n!)^p\sum_{j_1<\ldots<j_n} \prod_{k=1}^{n}\Gamma_{j_k}^{-p/\alpha}\phi^{-p}(X_{j_k})  \widetilde{f}_n^{p}(T_{j_1},X_{j_1},\ldots,T_{j_n},X_{j_n},t,x).
\]
Hence,
\[
\cA_2^{p/2}(t,x) \leq \sum_{n\geq 1}\Big(h_n^{(2)}(t,x)\Big)^{p/2}  \leq \sum_{n\geq 1} h_n^{(p)}(t,x)=: \cA_p(t,x).
\]


We will prove that $\cA_p(t,x)<\infty$ a.s. For this, we apply Remark \ref{Fubini-lemma} to $X=(\Gamma_i)_{i\geq 1}$ and $Y=\{(T_i,X_i)\}_{i\geq 1}$. We will show that:
\begin{equation}
\label{EAp-finite}
\bE[\cA_p(t,x)|(\Gamma_i)]<\infty \quad \mbox{a.s.}
\end{equation}

By the independence between $(\Gamma_i)_i$ and $\{(T_i,X_i)\}_{i}$, we have:
\[
\bE\Big[h_n^{(p)}(t,x) |(\Gamma_i)\Big] = T^{pn/\alpha} (n!)^p \sum_{j_1<\ldots<j_n}\prod_{k=1}^{n}\Gamma_{j_k}^{-p/\alpha}\bE\Big[\prod_{k=1}^{n}\phi^{-p}(X_{j_k}) \widetilde{f}_n^p(T_{j_1},X_{j_1},\ldots,T_{j_n},X_{j_n},t,x)\Big].
\]

For any $j_1<\ldots<j_n$ fixed, we denote
\begin{equation}
\label{def-Aj1}
A_{j_1,\ldots,j_n}^{(p)}(t,x):=(n!)^p \bE\left[\prod_{k=1}^{n}\phi^{-p}(X_{j_k}) \widetilde{f}_n^p(T_{j_1},X_{j_1},\ldots,T_{j_n},X_{j_n},t,x)\right].
\end{equation}
We now use the explicit expression of $A_{j_1,\ldots,j_n}^{(p)}(t,x)$, given by Lemma \ref{A-K-lemma} below, which in particular, shows that $A_{j_1,\ldots,j_n}^{(p)}(t,x)$ does not depend on $j_1,\ldots,j_n$.
It follows that
\begin{align}
\nonumber
\bE\Big[h_n^{(p)}(t,x) |(\Gamma_i)\Big] 
&=T^{(p/\alpha-1)n} K_n^{(p)}(t,x) \, n! \sum_{j_1<\ldots<j_n}\prod_{k=1}^{n}\Gamma_{j_k}^{-p/\alpha} \\
\label{hnp}
& \leq T^{(p/\alpha-1)n} K_{n}^{(p)}(t,x) \left( \sum_{j\geq 1}\Gamma_{j}^{-p/\alpha}\right)^{n},
\end{align}
where for the last inequality, we used the fact that for any $a_j>0$,
\begin{equation}
\label{fact2}
n!\sum_{j_1<\ldots<j_n}a_{j_1} \ldots a_{j_n} \leq \Big(\sum_{j\geq 1}a_j\Big)^n.
\end{equation}
By the strong law of large numbers, $\Gamma_j/j \to 1$ a.s., and hence $\sum_{j\geq 1}\Gamma_j^{-p/\alpha}<\infty$ a.s. Finally, \eqref{EAp-finite} follows by \eqref{A1}, since
\begin{align*}
\bE[\cA_p(t,x)|(\Gamma_i)] & =\sum_{n\geq 1}\bE\Big[h_n^{(p)}(t,x) |(\Gamma_i)\Big] \leq \sum_{n\geq 1} T^{(p/\alpha-1)n} K_{n}^{(p)}(t,x) \left( \sum_{j\geq 1}\Gamma_{j}^{-p/\alpha}\right)^{n}.
\end{align*}
\end{proof}

The following lemma gives the explicit expression for $A_{j_1,\ldots,j_n}^{(p)}(t,x)$.

\begin{lemma}
\label{A-K-lemma}
For any $t\in [0,T]$, $x \in \bR^d$, $p>0$ and $j_1<\ldots<j_n$, we have:
\begin{equation}
\label{A-K}
A_{j_1,\ldots,j_n}^{(p)}(t,x)=\frac{n!}{T^n} K_n^{(p)}(t,x).
\end{equation}
\end{lemma}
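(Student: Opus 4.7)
The plan is to exploit three facts in sequence: the i.i.d.\ structure of $\{(T_i,X_i)\}_{i\ge 1}$ to eliminate the dependence on $j_1,\ldots,j_n$, the explicit joint density of $(T_1,X_1,\ldots,T_n,X_n)$ to turn the expectation into a Lebesgue integral, and the almost-disjoint support of the permuted copies of $f_n$ to handle the $p$-th power of the symmetrization.

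First, since $\{(T_i,X_i)\}_{i\ge 1}$ are i.i.d.\ and the function inside the expectation is symmetric in its $n$ arguments, we can replace $(T_{j_1},X_{j_1},\ldots,T_{j_n},X_{j_n})$ by $(T_1,X_1,\ldots,T_n,X_n)$; in particular $A_{j_1,\ldots,j_n}^{(p)}(t,x)$ does not depend on $j_1,\ldots,j_n$. Using that $T_k$ is uniform on $[0,T]$ and $X_k$ has density $\phi^{\alpha}$, the joint density is $T^{-n}\prod_{k=1}^n\phi^{\alpha}(x_k)1_{[0,T]}(t_k)$, so
\[
A_{j_1,\ldots,j_n}^{(p)}(t,x) = \frac{(n!)^p}{T^n}\int_{[0,T]^n}\int_{(\bR^d)^n} \widetilde{f}_n^{\,p}(\pmb{t},\pmb{x},t,x)\prod_{k=1}^n \phi^{\alpha-p}(x_k)\,d\pmb{x}\,d\pmb{t}.
\]

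The central step, and the one requiring care, is to justify
\[
\widetilde{f}_n^{\,p}(\pmb{t},\pmb{x},t,x) = \frac{1}{(n!)^p}\sum_{\pi\in\Sigma_n} f_n^{\,p}\bigl(t_{\pi(1)},x_{\pi(1)},\ldots,t_{\pi(n)},x_{\pi(n)},t,x\bigr) \quad \text{a.e.}
\]
This is the key observation: by definition of $f_n$ in \eqref{def-fn}, the summand for a permutation $\pi$ is supported on the chamber $\{0<t_{\pi(1)}<\cdots<t_{\pi(n)}<t\}$, and these chambers are pairwise disjoint up to a Lebesgue-null set. Hence at almost every point of $[0,T]^n$ at most one of the $n!$ summands is nonzero, so $(\sum_\pi a_\pi)^p = \sum_\pi a_\pi^p$ pointwise a.e. and the identity follows.

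Plugging this back, the weight $\prod_k\phi^{\alpha-p}(x_k)$ is symmetric in $(x_1,\ldots,x_n)$, so a change of variables $(t_k,x_k)\mapsto(t_{\pi^{-1}(k)},x_{\pi^{-1}(k)})$ shows every one of the $n!$ integrals equals
\[
\int_{[0,T]^n}\int_{(\bR^d)^n} f_n^{\,p}(\pmb{t},\pmb{x},t,x)\prod_{k=1}^n\phi^{\alpha-p}(x_k)\,d\pmb{x}\,d\pmb{t} = K_n^{(p)}(t,x),
\]
where the reduction of the time domain from $[0,T]^n$ to $T_n(t)$ is automatic from the indicator built into $f_n$. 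Substituting yields $A_{j_1,\ldots,j_n}^{(p)}(t,x) = (n!)^p\cdot T^{-n}\cdot (n!)^{-(p-1)} K_n^{(p)}(t,x) = (n!\,T^{-n})\,K_n^{(p)}(t,x)$, which is exactly \eqref{A-K}. The only real subtlety is the disjoint-chamber argument; everything else is i.i.d.\ conditioning and a change of variables.
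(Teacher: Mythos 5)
Your proof is correct and is essentially the paper's own second method: writing the expectation against the joint density $T^{-n}\prod_k\phi^{\alpha}(x_k)$, using the pairwise disjointness of the ordering chambers $\{0<t_{\pi(1)}<\cdots<t_{\pi(n)}<t\}$ to collapse the $p$-th power of the symmetrization into a sum of $p$-th powers, and relabeling to see that each of the $n!$ resulting integrals equals $K_n^{(p)}(t,x)$. The paper organizes the collapse as a decomposition of the time domain into chambers while you phrase it as the pointwise identity $(\sum_\pi a_\pi)^p=\sum_\pi a_\pi^p$, but the mechanism and the bookkeeping of the factors of $n!$ are the same.
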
 
 
\begin{proof}
By the definition of $\widetilde{f}_n(\cdot,t,x)$,
\begin{align}
\nonumber
A_{j_1,\ldots,j_n}^{(p)}(t,x)&=\bE\left[\prod_{k=1}^{n}\phi^{-p}(X_{j_k}) \Big(\sum_{\pi \in \Sigma_n} f_n(T_{j_{\pi(1)}},X_{j_{\pi(1)}},\ldots,T_{j_{\pi(n)}},X_{j_{\pi(n)}},t,x)\Big)^p\right]\\
\label{A-term}
&=\bE \left[ \left( \sum_{\pi \in \Sigma_n} \frac{f_n(T_{j_{\pi(1)}},X_{j_{\pi(1)}},\ldots,T_{j_{\pi(n)}},X_{j_{\pi(n)}},t,x)}{\phi(X_{j_{\pi(1)}}) \ldots \phi(X_{j_{\pi(n)}})}\right)^p \right].
\end{align}
We now present two methods leading to the desired relation \eqref{A-K}.

{\em Method 1.} (due to G. Samorodnitsky)
Recall that $f_n(T_{j_{\pi(1)}},X_{j_{\pi(1)}},\ldots,T_{j_{\pi(n)}},X_{j_{\pi(n)}},t,x)$ contains the indicator of
\begin{equation}
\label{order}
0<T_{j_{\pi(1)}}<\ldots<T_{j_{\pi(n)}}<t.
\end{equation}
For fixed values $T_{j_1},\ldots,T_{j_n} \in [0,t]$, there is a unique permutation $\pi$ for which \eqref{order} holds.
Therefore, the sum over all permutations above in fact contains {\em only one term}, all the other terms vanishing. The vector
$(T_{j_{\pi(1)}},X_{j_{\pi(1)}},\ldots,T_{j_{\pi(n)}},X_{j_{\pi(n)}})$ corresponding to that term
has the same distribution as $(T_{(1)},X_1,\ldots,T_{(n)},X_n)$, where $T_{(1)}<\ldots<T_{(n)}$ are the order statistics of $T_1,\ldots,T_n$. Hence,
\begin{align*}
A_{j_1,\ldots,j_n}^{(p)}(t,x)&=\bE \left[ \prod_{k=1}^{n}\phi^{-p}(X_k) f_n^p(T_{(1)},X_{1},\ldots,T_{(n)},X_n,t,x) \right]\\
&=\frac{n!}{T^n} \int_{T_n(T)} \int_{(\bR^d)^n} f_n^p(t_1,x_1,\ldots,t_n,x_n,t,x) \prod_{k=1}^{n} \phi^{\alpha-p}(x_k) dx_1 \ldots dx_n  dt_1 \ldots dt_n,
\end{align*}
using the fact that $(T_{(1)},\ldots,T_{(n)})$ has a uniform distribution over the simplex $T_n(T)$. The desired relation follows since $f_n(t_1,x_1,\ldots,t_n,x_n,t,x)=0$ if $t_n>t$.

{\em Method 2.}
From \eqref{A-term}, since $(T_{j_1},X_{j_1},\ldots,T_{j_n},X_{j_n})$ has density $T^{-n} \prod_{k=1}^{n}\phi^{\alpha}(x_k)$, we obtain:
\begin{align*}
& A_{j_1,\ldots,j_n}^{(p)}(t,x)=T^{-n}\int_{[0,T]^n} \int_{(\bR^d)^n}  \left( \sum_{\pi \in \Sigma_n} \frac{f_n(t_{\pi(1)},x_{\pi(1)},\ldots,t_{\pi(n)},
x_{\pi(n)},t,x)}{\phi(x_{\pi(1)}) \ldots \phi(x_{\pi(n)})}\right)^p \prod_{k=1}^{n}\phi^{\alpha}(x_k)  d\pmb{x} d\pmb{t} \\
&=T^{-n}\sum_{\rho \in \Sigma_n}
\int_{0<t_{\rho(1)}<\ldots<t_{\rho(n)}<t}\int_{(\bR^d)^n}  \left( \sum_{\pi \in \Sigma_n} \frac{f_n(t_{\pi(1)},x_{\pi(1)},\ldots,t_{\pi(n)},
x_{\pi(n)},t,x)}{\phi(x_{\pi(1)}) \ldots \phi(x_{\pi(n)})}\right)^p \prod_{k=1}^{n}\phi^{\alpha}(x_k)  d\pmb{x} d\pmb{t}\\
&=T^{-n}\sum_{\rho \in \Sigma_n}
\int_{0<t_{\rho(1)}<\ldots<t_{\rho(n)}<t}\int_{(\bR^d)^n}  \left(  \frac{f_n(t_{\rho(1)},x_{\rho(1)},\ldots,t_{\rho(n)},
x_{\rho(n)},t,x)}{\phi(x_{\rho(1)}) \ldots \phi(x_{\rho(n)})}\right)^p \prod_{k=1}^{n}\phi^{\alpha}(x_k)  d\pmb{x} d\pmb{t}\\
&=T^{-n}n!
\int_{0<t_{1}<\ldots<t_{n}<t}\int_{(\bR^d)^n}  \left(  \frac{f_n(t_{1},x_{1},\ldots,t_{n},
x_{n},t,x)}{\phi(x_{1}) \ldots \phi(x_{n})}\right)^p \prod_{k=1}^{n}\phi^{\alpha}(x_k)  d\pmb{x} d\pmb{t}.
\end{align*}
\end{proof}

The following result shows that a condition stronger than \eqref{cond-C2} implies
 the a.s. absolute convergence of the series \eqref{def-u}, and thus proves 
 Theorem \ref{main-th}.(a). The fact that the series converges {\em absolutely} will play a 
 crucial role in Section \ref{section-solvability} when we will show that the process $u(t,x)$ given by \eqref{def-u} is indeed a solution of equation \eqref{eq}.

\begin{proposition}
\label{prop-abs-conv}
Let $t \in [0,T]$ and $x \in \bR^d$ be fixed.
If there exists $p\in (\alpha,2]$ such that \eqref{A2} holds,
then $\cV(t,x):=\sum_{n\geq 1}|I_n\big(\widetilde{f}_n(\cdot,t,x) \big)|<\infty$ a.s.
\end{proposition}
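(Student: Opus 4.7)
The plan is to prove $\mathcal{V}(t,x)<\infty$ a.s.\ by showing that a suitable positive power of $\mathcal{V}(t,x)$ has a.s.\ finite conditional expectation given the Poisson arrival times $(\Gamma_i)_{i\geq 1}$. For brevity, write
\[
\mathcal{D}_n := T^{(p/\alpha-1)n}\,K_n^{(p)}(t,x)\Big(\sum_{j\geq 1}\Gamma_j^{-p/\alpha}\Big)^n,
\]
so that Assumption \ref{ass-A2} reads $\sum_{n\geq 1}\mathcal{D}_n^{1/2}<\infty$ a.s.

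The first step would be to establish the uniform estimate $\bE[|I_n(\widetilde{f}_n(\cdot,t,x))|^p\mid (\Gamma_i)_i] \leq \mathcal{D}_n$ a.s. Indeed, conditionally on $(\Gamma_i),(T_i),(X_i)$, the multiple integral $I_n(\widetilde{f}_n(\cdot,t,x))$ is a homogeneous Rademacher polynomial of degree $n$ with conditional second moment $h_n^{(2)}(t,x)$, by \eqref{hn2}. Jensen's inequality for the concave map $y\mapsto y^{p/2}$ (valid since $p\leq 2$) yields $\bE[|I_n|^p\mid (\Gamma_i),(T_i),(X_i)]\leq (h_n^{(2)})^{p/2}$, and the subadditivity inequality \eqref{h-p2} from the proof of Proposition \ref{prop-conv-series} gives $(h_n^{(2)})^{p/2}\leq h_n^{(p)}$. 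Taking a further conditional expectation on $(\Gamma_i)_i$ and invoking \eqref{hnp} then produces the desired bound.

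The critical observation that reconciles the exponent $1/2$ appearing in Assumption \ref{ass-A2} with the exponents delivered by Jensen and subadditivity is the following upgrade: since $\sum_n \mathcal{D}_n^{1/2}<\infty$ a.s., one has $\mathcal{D}_n\to 0$ a.s., hence $\mathcal{D}_n\leq 1$ for all $n$ sufficiently large, and consequently $\mathcal{D}_n^a\leq \mathcal{D}_n^{1/2}$ for every exponent $a\geq 1/2$ on this tail. Therefore $\sum_n \mathcal{D}_n^a<\infty$ a.s.\ for each $a\geq 1/2$; in particular, both $\sum_n \mathcal{D}_n$ and $\sum_n \mathcal{D}_n^{1/p}$ are finite a.s.\ (the latter using $1/p\geq 1/2$, since $p\leq 2$).

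To conclude, I would split into two regimes. If $p\geq 1$, the concavity of $y\mapsto y^{1/p}$ combined with Jensen's inequality gives $\bE[|I_n|\mid (\Gamma_i)_i]\leq \bE[|I_n|^p\mid (\Gamma_i)_i]^{1/p}\leq \mathcal{D}_n^{1/p}$, and summing via Fubini yields $\bE[\mathcal{V}(t,x)\mid (\Gamma_i)_i]\leq \sum_n \mathcal{D}_n^{1/p}<\infty$ a.s. If $p<1$ (which can only happen when $\alpha<1$), the subadditivity of $y\mapsto y^p$ gives $\mathcal{V}(t,x)^p\leq \sum_n |I_n|^p$, and taking conditional expectation yields $\bE[\mathcal{V}(t,x)^p\mid (\Gamma_i)_i]\leq \sum_n \mathcal{D}_n<\infty$ a.s. In both regimes, a positive power of the non-negative quantity $\mathcal{V}(t,x)$ has a.s.\ finite conditional expectation, which forces $\mathcal{V}(t,x)<\infty$ a.s. The main obstacle is precisely the exponent mismatch mentioned above; the upgrade mechanism applied to $\mathcal{D}_n$ is what lets the same assumption cover both the $p\geq 1$ and $p<1$ branches uniformly.
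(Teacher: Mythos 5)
Your argument is correct, and it reaches the conclusion by a route that differs from the paper's in two respects. The paper conditions first on $\{(\Gamma_i),(T_i),(X_i)\}$ to get $\bE[\cV(t,x)\mid(\Gamma_i),(T_i),(X_i)]\leq\sum_{n}(h_n^{(2)}(t,x))^{1/2}$ via \eqref{hn2}, raises this whole quantity to the power $p/2$, pushes the exponent inside the sum by subadditivity and \eqref{h-p2} to obtain $\sum_n (h_n^{(p)}(t,x))^{1/2}$, and then applies the conditional Jensen inequality \eqref{Jensen1} with exponent $1/2$ together with \eqref{hnp}; this lands exactly on the series in \eqref{A2}, with the exponent $1/2$ appearing for structural reasons and no case distinction on $p$. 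You instead isolate the clean conditional moment bound $\bE[|I_n(\widetilde{f}_n(\cdot,t,x))|^p\mid(\Gamma_i)_i]\leq \mathcal{D}_n$ (which is a correct repackaging of \eqref{hn2}, \eqref{h-p2} and \eqref{hnp}), and then exploit the elementary but genuinely useful observation that a.s.\ convergence of $\sum_n\mathcal{D}_n^{1/2}$ forces $\mathcal{D}_n\leq 1$ eventually and hence a.s.\ convergence of $\sum_n\mathcal{D}_n^{a}$ for every $a\geq 1/2$; this lets you finish with a single application of Jensen (if $p\geq 1$) or subadditivity (if $p<1$) at the cost of a case split. What your approach buys is transparency in the exponent bookkeeping and a reusable moment estimate; what the paper's approach buys is a uniform argument that consumes Assumption \ref{ass-A2} exactly as stated, without the tail-comparison step or the dichotomy on $p$. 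Both arguments rest on the same Fubini-type reduction of Remark \ref{Fubini-remark}, and your interchanges of sums with conditional expectations are justified by monotone convergence for non-negative terms, so there is no gap.
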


\begin{proof} 
We apply Remark \ref{Fubini-remark} to $X=\{(\Gamma_i),(T_i),(X_i)\}$ and $Y=(\e_i)$. We will prove that:
\[
\bE\Big[\cV(t,x)|(\Gamma_i),(T_i),(X_i)\Big]<\infty \quad \mbox{a.s.}
\]
By Jensen's inequality for conditional expectation and \eqref{hn2}, we have 
\[
\bE[|I_n\big(\widetilde{f}_n(\cdot,t,x) \big)| \, |(\Gamma_i),(T_i),(X_i)]\leq \Big(h_n^{(2)}(t,x)\Big)^{1/2},
\]
 and hence $
\bE\Big[\cV(t,x)|(\Gamma_i),(T_i),(X_i)\Big] \leq \sum_{n\geq 1} \Big(h_n^{(2)}(t,x)\Big)^{1/2}$.
We take power $p/2$. Using sub-additivity of the function $\varphi(x)=x^{p/2},x>0$ and \eqref{h-p2}, we get:
\[
\left(\bE\Big[\cV(t,x)|(\Gamma_i),(T_i),(X_i)\Big] \right)^{p/2}\leq \sum_{n\geq 1}\Big(h_n^{(2)}(t,x)\Big)^{p/4} \leq  \sum_{n\geq 1}\Big(h_n^{(p)}(t,x)\Big)^{1/2}=:\cB_p(t,x).
\]
To prove that $\cB_p(t,x)<\infty$ a.s., we use Remark \ref{Fubini-remark} with $X=(\Gamma_i)$ and $Y=\{(T_i),(X_i)\}$. We will show that:
\begin{equation}
\label{Vptx}
\bE[\cB_p(t,x)|(\Gamma_i)]<\infty \quad \mbox{a.s.}
\end{equation} We will use the following inequality:
\begin{equation}
\label{Jensen1}
\bE(X^p | \cG)  \leq \left(\bE[X|\cG]\right)^p \quad \mbox{if $p\in (0,1)$ and $X\geq 0$},
\end{equation}
which is Jensen's inequality applied to the concave function $\varphi(x)=x^p,x>0$. Hence,
\begin{align*}
\bE[\cB_p(t,x)|(\Gamma_i)]&=\sum_{n\geq 1}\bE\Big[\Big(h_n^{(p)}(t,x)\Big)^{1/2}|(\Gamma_i)\Big] \leq \sum_{n\geq 1}\left(\bE\Big[h_n^{(p)}(t,x) |(\Gamma_i)\Big]\right)^{1/2}\\
&\leq \sum_{n\geq 1}
\left(T^{(p/\alpha-1)n} K_{n}^{(p)}(t,x) \right)^{1/2} \Big( \sum_{j\geq 1}\Gamma_{j}^{-p/\alpha}\Big)^{n/2},
\end{align*}
where the last inequality is due to \eqref{hnp}. Relation \eqref{Vptx} follows by \eqref{A2}.
\end{proof}

\section{The recurrence relation}
\label{section-recurrence}

In this section, 
 we prove that the partial sum sequence $(u_n)_{n \geq 0}$ given by
\begin{equation}
\label{def-partial}
u_0(t,x)=1 \quad \mbox{and} \quad u_n(t,x)=1+\sum_{k=1}^{n}I_k(f_k(\cdot,t,x)), \ n\geq 1
\end{equation}
is indeed the Picard's iteration sequence, i.e. it satisfies the recurrence relation \eqref{picard}. 

\medskip

By linearity, instead of proving \eqref{picard}, it is enough to show that for any $n\geq 0$,
\[
I_{n+1}(f_{n+1}(\cdot,t,x))=\int_0^t \int_{\bR^d}G_{t-s}(x-y)I_n(f_n(\cdot,s,y))Z(ds,dy).
\]
where $f_0(s,y)=1$ and $I_0(x)=x$. For this,
we define {\em the multiple integral process}:
$$X_n^{(t,x)}(s,y)=G_{t-s}(x-y)I_n(f_n(\cdot,s,y)), \quad s \in [0,T],y \in \bR^d.$$

For any $n\geq 0$, $t \in [0,T]$ and $x \in \bR^d$ fixed, we have to prove that: \\
(i) the multiple integral process is integrable wr.r.t. $Z$, i.e.
\begin{equation}
\label{Xn-integr}
X_n^{(t,x)} \in L^0(Z);
\end{equation}
(ii) the integral w.r.t. $Z$ of the multiple integral process coincides with $I_{n+1}(f_{n+1}(\cdot,t,x))$:
\begin{equation}
\label{IZ-X}
I^{Z}(X_n^{(t,x)})=I_{n+1}(f_{n+1}(\cdot,t,x)).
\end{equation}
These facts will be proved separately in the following two sections.

\subsection{Integrability of the muliple integral process}
\label{subsection-Picard}

In this section, we give the proof of \eqref{Xn-integr}. By Corollary \ref{integr-th}, we need to check that 
\begin{equation}
\label{def-cIn}
\cI_n(t,x):=\int_0^t \int_{\bR^d}G_{t-s}^{\alpha}(x-y)|I_n(f_n(\cdot,s,y))|^{\alpha}dyds<\infty \quad \mbox{a.s.},
\end{equation}
provided that $X_n^{(t,x)}$ is predictable. The next lemma addresses the issue of predictability.


\begin{lemma}
The process $\{I_n(\widetilde{f}_n(\cdot,t,x));t\in [0,T],x\in \bR^d\}$ has a predictable modification. Using this modification, $X_n^{(t,x)}$ is predictable.
\end{lemma}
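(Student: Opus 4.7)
My plan is to exploit the LePage series representation \eqref{LePage-In} of $I_n(\widetilde{f}_n(\cdot,t,x))$. It is convenient to relabel the points of the PRM $N_\psi$ by their time coordinate: let $\tau_1<\tau_2<\ldots$ denote the ordered arrival times (each a stopping time of $(\mathcal{F}_t)$), with associated spatial marks $\xi_i$ and sign-weights $\eta_i$; this is merely a rearrangement of the i.i.d. triples $(\varepsilon_i\Gamma_i^{-1/\alpha},T_i,X_i)$. Since the $\tau_i$'s are already sorted, the symmetrization in \eqref{LePage-In} picks out the unique non-vanishing permutation and one has
\begin{equation*}
I_n(\widetilde{f}_n(\cdot,t,x)) = T^{n/\alpha}\sum_{i_1<\ldots<i_n}\prod_{k=1}^{n}\eta_{i_k}\phi^{-1}(\xi_{i_k})\,f_n(\tau_{i_1},\xi_{i_1},\ldots,\tau_{i_n},\xi_{i_n},t,x)\quad\text{a.s.}
\end{equation*}
Let $I_n^M(t,x)$ denote the partial sum restricted to $i_n\le M$; it converges a.s.\ to $I_n(\widetilde{f}_n(\cdot,t,x))$ for each $(t,x)$. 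Setting $\widetilde{I}_n(t,x):=\limsup_{M\to\infty}I_n^M(t,x)$ then yields an everywhere-defined process that agrees a.s.\ with $I_n(\widetilde{f}_n(\cdot,t,x))$ pointwise in $(t,x)$; if every $I_n^M$ is $\widetilde{\cP}$-measurable, so is $\widetilde{I}_n$, giving the desired predictable modification.

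To establish predictability of $I_n^M$, it suffices to treat a single tuple $i_1<\ldots<i_n\le M$. Its contribution takes the form
\begin{equation*}
V\cdot G_{t-\tau_{i_n}}(x-\xi_{i_n})\cdot\mathbf{1}_{\{\tau_{i_n}<t\}},
\end{equation*}
where $V$ is an $\mathcal{F}_{\tau_{i_n}}$-measurable random variable that absorbs the $\eta_{i_k}$'s, the $\phi^{-1}(\xi_{i_k})$'s and the $(t,x)$-independent $G$-increments $G_{\tau_{i_{k+1}}-\tau_{i_k}}(\xi_{i_{k+1}}-\xi_{i_k})$ for $k=1,\ldots,n-1$. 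The process $(\omega,t)\mapsto\mathbf{1}_{\{\tau_{i_n}(\omega)<t\}}$ is the indicator of the stochastic interval $(\tau_{i_n},T]$, which is adapted and left-continuous in $t$, hence predictable on $\Omega\times[0,T]$; since $V$ is $\mathcal{F}_{\tau_{i_n}}$-measurable and enters only where $\tau_{i_n}<t$, the product $V\cdot\mathbf{1}_{\{\tau_{i_n}<t\}}$ is adapted and left-continuous, hence predictable.

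The remaining deterministic factor $G_{t-\tau_{i_n}(\omega)}(x-\xi_{i_n}(\omega))$ is jointly measurable in $(\omega,t,x)$ by Hypothesis \ref{G_assumption}, and its product with $\mathbf{1}_{\{\tau_{i_n}<t\}}$ vanishes on $\{t\le\tau_{i_n}\}$. Granting left-continuity of $s\mapsto G_s(y)$ on $(0,\infty)$, which is immediate for the heat and wave kernels of Theorem \ref{main-appl}, this product is left-continuous in $t$ for each $(\omega,x)$ and Borel in $x$ for each $(\omega,t)$; a standard Carath\'eodory-type approximation (by $\cP$-simple functions indexed by a dense countable set of $x$-values) then yields $\widetilde{\cP}$-measurability. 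Taking the product with the predictable factor of the preceding paragraph shows each summand, and hence $I_n^M$, is predictable.

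Finally, $X_n^{(t,x)}(s,y)=G_{t-s}(x-y)\,\widetilde{I}_n(s,y)$ is the product of a deterministic Borel function of $(s,y)$, which is $\widetilde{\cP}$-measurable by Hypothesis \ref{G_assumption}, and the predictable process $\widetilde{I}_n$; multiplication preserves predictability, so $X_n^{(t,x)}$ is predictable. The main technical subtlety is the left-continuity step for the deterministic $G$-factor: this is transparent in the heat and wave examples but is not an a priori consequence of Hypothesis \ref{G_assumption} alone, so in full generality one would approximate $G$ by continuous kernels and invoke closure of the predictable $\sigma$-field under pointwise limits.
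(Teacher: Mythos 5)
There is a genuine gap at the very first step of your argument. The time coordinates $\{T_i\}_{i\geq 1}$ of the points of $N_{\psi}$ are i.i.d.\ uniform on $[0,T]$, and there are countably infinitely many of them, so a.s.\ they form a \emph{dense} subset of $[0,T]$: there is no first arrival and no enumeration $\tau_1<\tau_2<\cdots$ in increasing time order. (This is the usual feature of infinite-activity L\'evy noise; only the jumps with $|z|>a$ for a fixed $a>0$ have finitely many, orderable arrival times.) Consequently the representation of each summand as $V\cdot G_{t-\tau_{i_n}}(x-\xi_{i_n})\,1_{\{\tau_{i_n}<t\}}$ with $V$ being $\cF_{\tau_{i_n}}$-measurable is not available, and the whole left-continuity/stochastic-interval argument built on it collapses. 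Two further points would need repair even if an ordering existed: the LePage series converges a.s.\ only conditionally, so the rearrangement implicit in your relabeling is not justified; and, as you yourself note, left-continuity of $s\mapsto G_s(y)$ is not a consequence of Hypothesis \ref{G_assumption}, so the argument would not cover the generality of Theorem \ref{main-th}.

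The paper's proof takes a different, softer route that avoids all of this. Writing $S_n(t,x)$ for the LePage series \eqref{LePage-In}, one takes for each fixed $x$ the \emph{extended predictable projection} $S_n^P(\cdot,x)$ (Theorem I.2.28 of \cite{JS}), which is $\cP$-measurable and satisfies $S_n^P(t,x)=\bE[S_n(t,x)\,|\,\cF_{t-}]$ a.s.\ for every $t$; a result of Stricker and Yor \cite{stricker-yor78} then produces a jointly predictable process $\widetilde{S}_n$ with $\widetilde{S}_n(t,x)=S_n^P(t,x)$ a.s.\ for all $(t,x)$. The key observation is that $S_n(t,x)$ is already $\cF_{t-}$-measurable, because the kernel $f_n(\cdot,t,x)$ carries the indicator $1_{\{0<t_1<\cdots<t_n<t\}}$, so $S_n(t,x)$ is a function of the points of $N_{\psi}$ in $[0,t)\times\bR^d\times\bR_0$ only; hence $\bE[S_n(t,x)\,|\,\cF_{t-}]=S_n(t,x)$ a.s.\ and $\widetilde{S}_n$ is the desired modification. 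If you want a constructive argument in your spirit, you would have to truncate to the finitely many points with $\Gamma_i^{-1/\alpha}>1/k$ (whose time-ordered arrivals \emph{are} stopping times) and then control the double limit in $k$ and in the number of retained indices, which is considerably more delicate than the projection argument.
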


\begin{proof}
We only need to prove the first statement, since the map $(\omega,s,y) \mapsto G_{t-s}(x-y)$ is clearly predictable. By LePage representation \eqref{LePage-In}, $I_n(f_n(\cdot,t,x))=S_n(t,x)$ a.s. where
\begin{align*}
S_n(t,x)&= T^{n/\alpha} n!  \sum_{j_1<\ldots<j_n}
\prod_{k=1}^{n} \e_{j_k} \Gamma_{j_k}^{-1/\alpha}  \phi^{-1}(X_{j_k}) \widetilde{f}_n(T_{j_1},X_{j_1},\ldots,T_{j_n},X_{j_n},t,x).
\end{align*}

Hence, it is enough to prove that $S_n$ has a predictable modification. For this, we proceed as in the proof of Lemma 6.2 of \cite{chong17-JTP}. For any $x \in \bR^d$, let 
$S_n^P(\cdot,x)$ be the {\em extended predictable projection} of $S_n(\cdot,x)$, given by Theorem I.2.28 of \cite{JS}, i.e. 
$S_n^P(\cdot,x)$
is $\cP$-measurable and $S_n^P(\cdot,x)=\bE[S_n(t,x)|\cF_{t-}]$
a.s. for all $t\in [0,T]$.
By Proposition 3 of \cite{stricker-yor78}, there exists a predictable process $\widetilde{S}_n$ such that 
$\widetilde{S}_n(t,x)= S_n^P (t,x)$ a.s. for all $(t,x)$. Hence, for all $(t,x)\in [0,T] \times \bR^d$, 
\[
\widetilde{S}_n(t,x)=\bE[S_n(t,x)|\cF_{t-}]=S_n(t,x) \quad \mbox{a.s.},
\]
where the last equality is due to the fact that $S_n(t,x)$ is $\cF_{t-}$-measurable, which is true because $S_n(t,x)$ is a function of the points of $N_{\psi}$ situated in $[0,t) \times \bR^d \times \bR_0$. (Recall that $(\cF_t)_{t \in [0,T]}$ is the filtration associated with $N_{\psi}$, given by \eqref{def-filtration}.)
\end{proof}

We continue now with the verification of \eqref{def-cIn}. 
This will be the consequence of the following more general result, which will be needed for the proof of Theorem 
\ref{solvability-th} below.

\begin{theorem}
\label{A3-th}
Suppose that Hypothesis \ref{G_assumption} holds. Let $t \in [0,T]$ and $x \in \bR^d$ be fixed. If there exists $p \in (\alpha,2]$ such that \eqref{A3} holds, then
\begin{equation}
\label{sum-cI}
\sum_{n\geq 1}\cI_n(t,x)^{\frac{1}{\alpha \vee 1}}<\infty \quad \mbox{a.s.}
\end{equation}
Moreover, if there exists $p \in (\alpha,2]$ such that
\begin{equation}
\label{cI}
\int_0^t \int_{\bR^d} G_{t-s}^{\alpha}(x-y) K_n^{(p)}(s,y) dyds<\infty,
\end{equation}
then $\cI_n(t,x)<\infty$ a.s., and consequently $X_n^{(t,x)} \in L^0(Z)$.
\end{theorem}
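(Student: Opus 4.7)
The plan is to prove both parts of the theorem simultaneously by a multi-stage conditional Jensen cascade, in the spirit of Propositions~\ref{prop-conv-series} and~\ref{prop-abs-conv}, but now incorporating the extra spatial integration against $G_{t-s}^{\alpha}(x-y)$. Setting $r:=1/(\alpha\vee 1)$, a crucial arithmetic identity will be $r\alpha=\alpha\wedge 1$, which is what aligns the final estimate with the summand appearing in \eqref{A3}.

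First I would condition on $\cG:=\sigma((\Gamma_i),(T_i),(X_i))$ and use Jensen for the concave map $x\mapsto x^{\alpha/2}$ together with \eqref{hn2} to obtain the pointwise bound
\[
\bE\big[|I_n(\widetilde{f}_n(\cdot,s,y))|^{\alpha}\mid\cG\big]\le (h_n^{(2)}(s,y))^{\alpha/2}.
\]
Integrating against $G_{t-s}^{\alpha}(x-y)$ and applying Jensen once more (concavity of $x\mapsto x^r$) yields $\bE[\cI_n(t,x)^r\mid\cG]\le \cJ_n(t,x)^r$, where
\[
\cJ_n(t,x):=\int_0^t\!\!\int_{\bR^d}G_{t-s}^{\alpha}(x-y)(h_n^{(2)}(s,y))^{\alpha/2}\,dyds.
\]

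Next I would condition on $(\Gamma_i)$ alone and bound $\bE[\cJ_n^r\mid(\Gamma_i)]$. Using the sub-additivity inequality $(h_n^{(2)})^{\alpha/2}\le (h_n^{(p)})^{\alpha/p}$, Jensen with exponent $\alpha/p<1$, and the $(\Gamma_i)$-conditional bound \eqref{hnp} (which itself rests on Lemma~\ref{A-K-lemma}), I can control $\bE[(h_n^{(2)}(s,y))^{\alpha/2}\mid(\Gamma_i)]$ by
\[
T^{(\frac{p}{\alpha}-1)n\frac{\alpha}{p}}K_n^{(p)}(s,y)^{\alpha/p}\Big(\sum_{j\ge 1}\Gamma_j^{-p/\alpha}\Big)^{n\alpha/p}.
\]
Multiplying by $G_{t-s}^{\alpha}(x-y)$, integrating, and invoking Jensen one more time with the probability measure $G^{\alpha}_{t-\cdot}(x-\cdot)/\|G^{\alpha}\|_{L^1([0,t]\times\bR^d)}$ (concavity of $x\mapsto x^{\alpha/p}$) transfers the power $\alpha/p$ outside the $G^{\alpha}$-integral, so that $\int G^{\alpha}(K_n^{(p)})^{\alpha/p}$ is bounded by a multiple of $\big(\int G^{\alpha}K_n^{(p)}\big)^{\alpha/p}$; Hypothesis~\ref{G_assumption} is precisely what is needed to ensure that the constant is finite. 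A final Jensen with exponent $r$ applied to $\bE[\cJ_n^r\mid(\Gamma_i)]\le (\bE[\cJ_n\mid(\Gamma_i)])^r$ then produces, thanks to $r\alpha=\alpha\wedge 1$, a summand in $n$ that matches \eqref{A3} exactly.

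Summing over $n$ and invoking \eqref{A3}, I conclude that $\bE[\sum_n\cI_n^r\mid\cG]<\infty$ a.s., which gives \eqref{sum-cI}. For the \emph{moreover} statement, running the same chain for a single $n$ yields
\[
\bE[\cI_n\mid(\Gamma_i)]\le C\,T^{(\frac{p}{\alpha}-1)n\alpha/p}\Big(\sum_{j\ge 1}\Gamma_j^{-p/\alpha}\Big)^{n\alpha/p}\Big(\int_0^t\!\!\int_{\bR^d}G^{\alpha}_{t-s}(x-y)K_n^{(p)}(s,y)\,dyds\Big)^{\alpha/p},
\]
which is a.s. finite under \eqref{cI} since $\sum_j\Gamma_j^{-p/\alpha}<\infty$ a.s.; therefore $\cI_n<\infty$ a.s., and $X_n^{(t,x)}\in L^0(Z)$ follows from Corollary~\ref{integr-th}. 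The main delicate point will be the careful bookkeeping of exponents through the four-stage Jensen cascade; in particular the identity $r\alpha=\alpha\wedge 1$ is what makes the final bound land exactly on the form of \eqref{A3}, and the step that pulls $\alpha/p$ out of the $G^{\alpha}$-integral is where Hypothesis~\ref{G_assumption} is indispensable.
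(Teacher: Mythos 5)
Your proposal is correct and follows essentially the same route as the paper: the same cascade of conditional Jensen inequalities (via $(h_n^{(2)})^{p/2}\le h_n^{(p)}$, the bound \eqref{hnp}, and Jensen on the finite measure $G_{t-s}^{\alpha}(x-y)\,dyds$), combined with the Fubini-type criterion of Appendix \ref{app-Fubini}. The only difference is presentational: you unify the two cases through the single exponent $r=1/(\alpha\vee 1)$ and the identity $r\alpha=\alpha\wedge 1$, whereas the paper treats $\alpha\le 1$ and $\alpha>1$ separately.
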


\begin{proof}
We only need to prove the first statement. The second statement is proved in the same way, dropping the sum over $n \geq 1$.

{\em We consider first the case $\alpha \leq 1$.} We will prove that:
\[
\mathcal{C}(t,x):=\sum_{n\geq 1}\cI_n(t,x)<\infty \quad \mbox{a.s.}
\]
We apply Remark \ref{Fubini-remark} with $X=(\Gamma_i)$ and $Y=\{(T_i),(X_i),(\e_i)\}$. We will prove that:
\begin{equation}
\label{C-fin1}
\bE[\mathcal{C}(t,x)|(\Gamma_i)] <\infty \quad \mbox{a.s.}
\end{equation}
We use the fact that
$\bE[\mathcal{C}(t,x)|(\Gamma_i)]=\bE[\bE[\mathcal{C}(t,x)|(\Gamma_i),(T_i),(X_i)]|(\Gamma_i)]$,
and we estimate separately the inner conditional expectation. By Jensen's inequality, 
\begin{align*}
\bE[\mathcal{C}(t,x)|(\Gamma_i),(T_i),(X_i)] &=\sum_{n\geq 1}\int_0^t \int_{\bR^d} G_{t-s}^{\alpha}(x-y) 
\bE[|I_n(f_n(\cdot,s,y))|^{\alpha}|(\Gamma_i),(T_i),(X_i)]dyds\\
&  \leq \sum_{n\geq 1}\int_0^t \int_{\bR^d} G_{t-s}^{\alpha}(x-y) 
\left(h_n^{(2)}(s,y)\right)^{\alpha/2}dyds.
\end{align*}

We now use the following form of Jensen's inequality: if $(E,\mathcal{E},\mu)$ is a finite measure space with $\mu(E)=a$, then for any measurable function $f:E \to \bR_{+}$,
\begin{equation}
\label{Jensen2}
\int_{E}f^{p}d\mu \leq a^{1-p}\left(\int_{E}fd\mu\right)^{p} \quad \mbox{for any $p\in (0,1]$}.
\end{equation}
We apply this inequality to the finite measure $\mu(ds,dy)=G_{t-s}^{\alpha}(s-y)1_{(0,t)}(s)dsdy$ (whose total mass we denote $C_t$), and the exponent $p'=\alpha/p$. We obtain:
\[
\int_0^t \int_{\bR^d} G_{t-s}^{\alpha}(x-y) 
\left(h_n^{(2)}(s,y)\right)^{\frac{\alpha}{2}}dyds \leq C_t^{1-\frac{\alpha}{p}}
\left(\int_0^t \int_{\bR^d}G_{t-s}^{\alpha}(x-y) \left(h_{n}^{(2)}(s,y)\right)^{\frac{p}{2}}dyds\right)^{\frac{\alpha}{p}}.
\]

It follows that 
\begin{align*}
\bE[\mathcal{C}(t,x)|(\Gamma_i)] & \leq C_{t}^{1-\frac{\alpha}{p}}\sum_{n\geq 1}\bE \left[
\left(\int_0^t \int_{\bR^d}G_{t-s}^{\alpha}(x-y) \left(h_{n}^{(2)}(s,y)\right)^{\frac{p}{2}}dyds\right)^{\frac{\alpha}{p}} \Bigg|(\Gamma_i)\right]\\
&\leq C_{t}^{1-\frac{\alpha}{p}}\sum_{n\geq 1} \left(\int_0^t \int_{\bR^d}G_{t-s}^{\alpha}(x-y) 
\bE[\big(h_n^{(2)}(s,y)\big)^{p/2}|(\Gamma_i)]dyds \right)^{\frac{\alpha}{p}},
\end{align*}
using \eqref{Jensen1} (with exponent $p'=\alpha/p$) for the last inequality. We now pass from 
$(h_n^{(2)}(s,y))^{p/2}$ to $h_n^{(p)}(s,y)$ (using inequality \eqref{h-p2}), and estimate
$\bE[h_n^{(p)}(s,y)|(\Gamma_i)]$ using \eqref{hnp}. We get:
\begin{align*}
\bE[\mathcal{C}(t,x)|(\Gamma_i)] & \leq C_{t}^{1-\frac{\alpha}{p}} 
\sum_{n\geq 1} \left(T^{(p/\alpha-1)n}\big(\sum_{j\geq 1}\Gamma_j^{-p/\alpha}\big)^n \int_0^t \int_{\bR^d}G_{t-s}^{\alpha}(x-y) K_n^{(p)}(s,y) dyds \right)^{\frac{\alpha}{p}}.
\end{align*}
The last series converges with probability $1$, due to \eqref{A3}. This proves \eqref{C-fin1}.

\medskip

{\em Next, we consider the case $\alpha >1$.} We will prove that:
\[
\mathcal{C}'(t,x):=\sum_{n\geq 1}\big(\cI_n(t,x)\big)^{\frac{1}{\alpha}}<\infty \quad \mbox{a.s.}
\]
By the same application of Remark \ref{Fubini-remark} as above, it suffices to prove that:
\begin{equation}
\label{C-fin2}
\bE[\mathcal{C}'(t,x)|(\Gamma_i)] <\infty \quad \mbox{a.s.}
\end{equation}
We use again double conditioning relation, and we estimate separately the inner conditional expectation. By Jensen's inequality \eqref{Jensen1},
\begin{align*}
\bE[\mathcal{C}'(t,x)|(\Gamma_i),(T_i),(X_i)] & =\sum_{n\geq 1}\bE \Big[ \big(\cI_n(t,x)\big)^{\frac{1}{\alpha}}|(\Gamma_i),(T_i),(X_i) \Big] \\
& \leq \sum_{n\geq 1}\left(\bE \Big[\cI_n(t,x)|(\Gamma_i),(T_i),(X_i) \Big]\right)^{\frac{1}{\alpha}}\\
&=\sum_{n\geq 1} \left(\int_0^t \int_{\bR^d}G_{t-s}^{\alpha}(x-y) \bE[|I_n(f_n(\cdot,s,y))|^{\alpha}|
(\Gamma_i),(T_i),(X_i)] dyds \right)^{\frac{1}{\alpha}}\\
& \leq \sum_{n\geq 1} \left(\int_0^t \int_{\bR^d}G_{t-s}^{\alpha}(x-y) \big(h_n^{(2)}(s,y)\big)^{\alpha/2} dyds \right)^{\frac{1}{\alpha}}.
\end{align*}
Conditioning on $(\Gamma_i)$, and using again the conditional Jensen's inequality \eqref{Jensen1} to push the power $1/\alpha$ outside the conditional expectation, we get:
\begin{align*}
\bE[\mathcal{C}'(t,x)|(\Gamma_i)]& \leq \sum_{n\geq 1} \bE\left[ \left(\int_0^t \int_{\bR^d}G_{t-s}^{\alpha}(x-y) \big(h_n^{(2)}(s,y)\big)^{\alpha/2} dyds  \right)^{\frac{1}{\alpha}} \Bigg| (\Gamma_i)\right]\\
&\leq \sum_{n\geq 1} \left(\int_0^t \int_{\bR^d}G_{t-s}^{\alpha}(x-y) \bE[\big(h_n^{(2)}(s,y)\big)^{\alpha/2}|(\Gamma_i)] dyds  \right)^{\frac{1}{\alpha}}.
\end{align*}
To arrive to the desired exponent $p/2$ for $h_n^{(2)}(s,y)$, we apply Jensen's inequality $\bE(|X||\cG) \leq \big(\bE[|X|^p|\cG]\big)^{1/p}$ for $p\geq 1$. 
Combining this with bound \eqref{h-p2}, we obtain:
\[
\bE[\big(h_n^{(2)}(s,y)\big)^{\alpha/2}|(\Gamma_i)]\leq \left(\bE[\big(h_n^{(2)}(s,y)\big)^{p/2}|(\Gamma_i)]\right)^{\frac{\alpha}{p}} \leq 
\left(\bE[h_n^{(p)}(s,y)|(\Gamma_i)]\right)^{\frac{\alpha}{p}}.
\]
Finally, we use \eqref{hnp} to estimate the last conditional expectation. Therefore,
\begin{align*}
\bE[\mathcal{C}'(t,x)|(\Gamma_i)] & \leq \sum_{n\geq 1} \left(\int_0^t \int_{\bR^d}G_{t-s}^{\alpha}(x-y)  \left(\bE[h_n^{(p)}(s,y)|(\Gamma_i)]\right)^{\frac{\alpha}{p}} dyds\right)^{\frac{1}{\alpha}} \\
& \leq \sum_{n\geq 1} \left(T^{(p/\alpha-1)n} \int_0^t \int_{\bR^d}G_{t-s}^{\alpha}(x-y) 
  K_n^{(p)}(s,y) dyds \right)^{\frac{1}{p}}\Big(\sum_{j\geq 1}\Gamma_j^{-p/\alpha}\Big)^{n/p}.
\end{align*}
Relation \eqref{C-fin2} follows by \eqref{A3}.
\end{proof}

\subsection{The integral of the multiple integral process: $\alpha \in (0,1)$}

In this section, we give the proof of \eqref{IZ-X} in the case $\alpha<1$. For this, an essential role is played by LePage representation \eqref{Lambda-LePage} of $\Lambda(A)$ for $A \in \widetilde{\cP}_b$, obtained in Section \ref{subsection-construction}.

\medskip

We start with a preliminary result, which gives the series representation for $I^Z(X_n^{(t,x)})$.

\begin{theorem}
\label{proof-IZ-Xn}
Assume that $\alpha \in (0,1)$. Let $t\in [0,T]$ and $x \in \bR^d$ be fixed. Suppose that there exists
$p\in (\alpha,1]$ such that $K_{n+1}^{(p)}(t,x)<\infty$, and $\cI_n(t,x)<\infty$ a.s. Then
\begin{equation}
\label{series-IZ}
I^{Z}(X_n^{(t,x)})=T^{1/\alpha}\sum_{i\geq 1}\e_i \Gamma_i^{-1/\alpha} \frac{1}{\phi(X_i)} G_{t-T_i}(x-X_i)I_n \big(f_n(\cdot,T_i,X_i)\big) \quad \mbox{a.s.}
\end{equation}
\end{theorem}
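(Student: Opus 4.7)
The strategy is to exploit the LePage representation \eqref{Lambda-LePage}, which for $\alpha\in(0,1)$ is valid on any set $A\in\widetilde{\cP}_b$ (Remark \ref{rem-LePage}), and extend it from indicator processes to the predictable integrand $X_n^{(t,x)}$ by an approximation argument combined with Theorem \ref{DCT}. The hypothesis $\cI_n(t,x)<\infty$ a.s.\ together with Corollary \ref{integr-th} ensure that $X_n^{(t,x)}\in L^0(\Lambda)$, so $I^Z(X_n^{(t,x)})$ is a well-defined element of $L^0$.

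The first main step is to show that the right-hand side of \eqref{series-IZ} converges a.s.\ to a finite limit. Since $(\e_i)_{i\geq 1}$ is independent of $\cG:=\sigma((\Gamma_i),(T_i),(X_i))$, conditioning on $\cG$ and invoking Lemma \ref{gK-ineq} reduces the question to the a.s.\ finiteness of
\[
T^{2/\alpha}\sum_{i\geq 1}\Gamma_i^{-2/\alpha}\phi^{-2}(X_i)\,G_{t-T_i}^2(x-X_i)\,h_n^{(2)}(T_i,X_i),
\]
where I used \eqref{hn2} to rewrite the conditional second moment of $I_n(f_n(\cdot,T_i,X_i))$. Taking the $p/2$-power, using subadditivity and \eqref{h-p2}, and then conditioning on $(\Gamma_i)$, I would apply Lemma \ref{A-K-lemma} to an $(n+1)$-point configuration (with the outer index $i$ playing the role of the $(n+1)$-th point). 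This pulls the deterministic constant $K_{n+1}^{(p)}(t,x)$ out of the expectation and leaves a series in $\Gamma_j^{-p/\alpha}$, which converges a.s.; the argument parallels the one in Proposition \ref{prop-abs-conv}.

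For the identification of the integral, linearity and \eqref{Lambda-LePage} immediately give, for every simple integrand $S=\sum_k c_k 1_{A_k}$ with $A_k\in\widetilde{\cP}_b$,
\[
I^\Lambda(S)=T^{1/\alpha}\sum_{i\geq 1}\e_i\,\Gamma_i^{-1/\alpha}\,\phi^{-1}(X_i)\,S(T_i,X_i)\quad\text{a.s.}
\]
I would then approximate $X_n^{(t,x)}$ by the truncated processes $H_m(s,y):=X_n^{(t,x)}(s,y)\,1_{\{|y|\leq m,\,|X_n^{(t,x)}(s,y)|\leq m\}}$ and, via Lemma \ref{approx-lem}(ii), by simple integrands dominated by $|H_m|$. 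Theorem \ref{DCT} yields $I^\Lambda(\cdot)\to I^\Lambda(X_n^{(t,x)})$ in probability on the left. On the right, the conditional $L^2$-bound from the previous step, together with a conditional dominated-convergence argument performed given $\cG$, gives the corresponding convergence of the LePage series associated with the approximants to the one associated with $X_n^{(t,x)}$. Uniqueness of limits in probability then identifies the two sides and yields \eqref{series-IZ}.

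\textbf{Main obstacle.} The substantive difficulty is the domination in the first step: $I_n(f_n(\cdot,T_i,X_i))$ is itself an infinite LePage series over indices $j_1<\cdots<j_n$ possibly intersecting $\{i\}$, so the outer and inner summations are not truly independent. Handling this diagonal interaction is what forces the use of Lemma \ref{A-K-lemma} at level $n+1$ (hence the hypothesis on $K_{n+1}^{(p)}$ rather than $K_n^{(p)}$), and it also requires the predictable-modification setup of Section \ref{subsection-Picard} to make the point evaluations $I_n(f_n(\cdot,T_i,X_i))$ rigorously meaningful.
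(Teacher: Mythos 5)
Your overall architecture is the same as the paper's: (i) control the series on the right of \eqref{series-IZ}, then (ii) approximate $X_n^{(t,x)}$ by simple integrands, apply \eqref{Lambda-LePage} termwise, and pass to the limit using Theorem \ref{DCT} on the left. However, there is a genuine gap in how you control the right-hand side, and it propagates into your limit interchange. What Step 2 actually requires is a \emph{pathwise} summable dominator for the series: since $|S_k(T_i,X_i)-X_n^{(t,x)}(T_i,X_i)|\leq 2\,G_{t-T_i}(x-X_i)|I_n(f_n(\cdot,T_i,X_i))|$, you need the \emph{absolute} convergence $U:=\sum_{i\geq 1}\Gamma_i^{-1/\alpha}\phi^{-1}(X_i)G_{t-T_i}(x-X_i)|I_n(f_n(\cdot,T_i,X_i))|<\infty$ a.s.\ in order to invoke ordinary dominated convergence for the series. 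Your Step 1 instead conditions on $\cG$ and invokes Lemma \ref{gK-ineq}, which at best yields finiteness of a conditional second moment of the \emph{signed} multilinear series; that gives convergence of the signed series but not $U<\infty$, so it does not produce the dominator you need. Your proposed substitute, a ``conditional dominated-convergence argument given $\cG$'', does not apply either: $S_k(T_i,X_i)$ is not $\cG$-measurable (it depends on the Rademacher signs through $I_n$), so the approximants are not multilinear forms with $\cG$-measurable coefficients and the orthogonality machinery of Lemma \ref{gK-ineq} is unavailable for them. (A smaller imprecision: your displayed expression is not the conditional second moment — the diagonal interaction you mention creates nonzero cross terms; it is only comparable to it up to a factor of order $n+1$ after symmetrization.)

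The fix is exactly where the hypothesis $\alpha<1$, $p\in(\alpha,1]$ enters, and it is what the paper does: bound $U^p$ by subadditivity of $x\mapsto x^p$, expand $|I_n(\widetilde f_n(\cdot,T_i,X_i))|^p$ through its LePage series again by subadditivity, merge the outer index $i$ with the inner indices $j_1<\cdots<j_n$ into an $(n+1)$-point configuration (the terms with $i\in\{j_1,\ldots,j_n\}$ vanish because $\widetilde f_n(\ldots,T_i,X_i)=0$ there), and then condition on $(\Gamma_i)$ and apply Lemma \ref{A-K-lemma} at level $n+1$ to pull out $K_{n+1}^{(p)}(t,x)$. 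You correctly identified that $K_{n+1}^{(p)}$ and the diagonal interaction are the crux, but the route through conditional $L^2$ bounds does not reach the absolute convergence that your own Step 2 needs; the pathwise $p$-th power argument does.
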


\begin{proof} 
Condition $\cI_n(t,x)<\infty$ a.s. ensures that $X_n^{(t,x)} \in L^0(Z)$.

\medskip

{\em Step 1.} In this step, we prove that the series on the right hand-side of \eqref{series-IZ} converges absolutely a.s., i.e.
\[
U:=\sum_{i\geq 1}\Gamma_i^{-1/\alpha}\phi^{-1}(X_i)  G_{t-T_i}(x-X_i)
\left|I_n \big(f_n(\cdot,T_i,X_i)\big)\right|<\infty \quad \mbox{a.s.}
\]
Since $p\leq 1$, by subadditivity,
\[
U^p \leq \sum_{i\geq 1}\Gamma_i^{-p/\alpha}\phi^{-p}(X_i)  G_{t-T_i}^p(x-X_i)
\left|I_n \big(f_n(\cdot,T_i,X_i)\big)\right|^p=:U'.
\]
So it is enough to prove that $U'<\infty$ a.s. 

Using the LePage representation \eqref{LePage-In}, the subadditivity of the function $\varphi(x)=|x|^p$, and the fact that $\widetilde{f}_n(t_1,x_1,\ldots,t_n,x_n,t,x)=0$ if $t_i=t$ for some $i=1,\ldots,n$, we have:
\begin{align*}
&|I_n(\widetilde{f}_n(\cdot,T_i,X_i))|^p  \leq T^{pn/\alpha} (n!)\sum_{j_1<\ldots<j_n} \prod_{k=1}^{n}\Gamma_{j_k}^{-p/\alpha}\phi^{-p}(X_{j_k})
\widetilde{f}_n^p(T_{j_1},X_{j_1},\ldots,T_{j_n},X_{j_n},T_i,X_i)\\
& \quad \quad \quad \leq T^{pn/\alpha} 
 \sum_{\substack{j_1<\ldots<j_n \\ i\not \in \{j_1,\ldots,j_n\} }}
\sum_{\pi \in \Sigma_n}\prod_{k=1}^{n}\Gamma_{j_k}^{-p/\alpha}\phi^{-p}(X_{j_k})
f_n^p(T_{j_{\pi(1)}},X_{j_{\pi(1)}},\ldots,T_{j_{\pi(n)}},X_{j_{\pi(n)}},T_i,X_i).
\end{align*}

We multiply this inequality by $\Gamma_i^{-p/\alpha}\phi^{-p}(X_i)G_{t-T_i}^p(x-X_i)$, then we take the sum for all $i\geq 1$. We use the fact that $ G_{t-T_i}^p(x-X_i) f_n^p(\cdot,T_i,X_i)=f_{n+1}^p
(\cdot,T_i,X_i,t,x)$. We obtain:
\begin{align*}
U' & \leq T^{pn/\alpha}\sum_{i\geq 1}\Gamma_i^{-p/\alpha}\phi^{-p}(X_i) \sum_{\substack{j_1<\ldots<j_n \\ i\not \in \{j_1,\ldots,j_n\} }} \sum_{\pi \in \Sigma_n} \prod_{k=1}^{n}\Gamma_{j_k}^{-p/\alpha}\phi^{-p}(X_{j_k})\\
& \qquad \qquad \qquad 
f_{n+1}^p
(T_{j_{\pi(1)}},X_{j_{\pi(1)}},\ldots,T_{j_{\pi(n)}},X_{j_{\pi(n)}},T_i,X_i,t,x).
\end{align*}

We now use the following fact: for any $a_{j_1,\ldots,j_n} \geq 0$,
\begin{equation}
\label{fact5}
\sum_{j_1<\ldots<j_n}\sum_{\pi \in \Sigma_n}a_{j_{\pi(1)},\ldots,j_{\pi(n)}}=
\sum_{ j_1,\ldots,j_n\geq 1 \ {\rm distinct} } a_{j_1,\ldots,j_n}.
\end{equation}

Hence,
\begin{align}
\nonumber
U' & \leq T^{pn/\alpha}\sum_{i\geq 1}\Gamma_i^{-p/\alpha}\phi^{-p}(X_i) 
\sum_{ \substack{j_1,\ldots,j_n\geq 1 \ {\rm distinct} \\ i \not\in \{j_1,\ldots,j_n\}  } } \, \, \prod_{k=1}^{n}\Gamma_{j_k}^{-p/\alpha}\phi^{-p}(X_{j_k})\\
\nonumber & \qquad \qquad \qquad 
f_{n+1}^p
(T_{j_{1}},X_{j_{1}},\ldots,T_{j_{n}},X_{j_{n}},T_i,X_i,t,x) \\
\nonumber & = T^{pn/\alpha}
\sum_{ j_1,\ldots,j_{n+1}\geq 1 \, {\rm distinct}    } \, \, \prod_{k=1}^{n+1}\Gamma_{j_k}^{-p/\alpha}\phi^{-p}(X_{j_k})
f_{n+1}^p
(T_{j_{1}},X_{j_{1}},\ldots,T_{j_{n+1}},X_{j_{n+1}},t,x)\\
\nonumber &= T^{pn/\alpha}
\sum_{ j_1<\ldots<j_{n+1}  }  \prod_{k=1}^{n+1}\Gamma_{j_k}^{-p/\alpha}\phi^{-p}(X_{j_k})
\sum_{\pi \in \Sigma_{n+1}}  f_{n+1}^p
(T_{j_{\pi(1)}},X_{j_{\pi(1)}},\ldots,T_{j_{\pi(n+1)}},X_{j_{\pi(n+1)}},t,x)\\
\label{def-B}
&=:  B_{n+1}^{(p)}(t,x).
\end{align}

We prove that $B_{n+1}^{(p)}(t,x)<\infty$ a.s. For this, we apply Remark \ref{Fubini-remark} with $X=(\Gamma_i)$ and $Y=\{(T_i,X_i)\}$. We will prove that
\[
\bE[B_{n+1}^{(p)}(t,x)| (\Gamma_i)]<\infty \quad \mbox{a.s.}
\]
Note that $(T_{j_{\pi(1)}},X_{j_{\pi(1)}},\ldots,T_{j_{\pi(n+1)}},X_{j_{\pi(n+1)}})$ has density
$T^{-(n+1)/\alpha}\prod_{k=1}^{n+1}\phi^{\alpha}(x_k)$. Hence,
\begin{align*}
&  \bE\left[ \prod_{k=1}^{n+1} \phi^{-p}(X_{j_k}) f_{n+1}^p
(T_{j_{\pi(1)}},X_{j_{\pi(1)}},\ldots,T_{j_{\pi(n+1)}},X_{j_{\pi(n+1)}},t,x)\right]
=\frac{ 1}{T^{(n+1)/\alpha}} K_{n+1}^{(p)}(t,x),
\end{align*}
and 
\begin{align*}
\bE[B_{n+1}^{(p)}(t,x)| (\Gamma_i)] &=  T^{pn/\alpha} \sum_{ j_1<\ldots<j_{n+1}  } \prod_{k=1}^{n+1}\Gamma_{j_k}^{-p/\alpha} \cdot  (n+1)! \frac{1}{T^{(n+1)/\alpha}}  K_{n+1}^{(p)}(t,x) \\
& \leq T^{pn/\alpha} \frac{ 1}{T^{(n+1)/\alpha}} K_{n+1}^{(p)}(t,x) \left(\sum_{j\geq 1}\Gamma_j^{-p/\alpha}\right)^{n+1}<\infty \ \mbox{a.s.}
\end{align*}

{\em Step 2.} Since $X_{n}^{(t,x)}$ is predictable, by Theorem 13.5 of \cite{billingsley95}, there exists a sequence $(S_k)_{k\geq 1}$ of simple integrands such that $S_k \to X_{n}^{(t,x)}$ as $k \to \infty$, and $|S_k| \leq |X_n^{(t,x)}|$ for all $k$. Note that $X_{n}^{(t,x)} \in L^p(Z)$, since $\cI_n(t,x)<\infty$ a.s. By Theorem \ref{DCT}, $I^Z(S_k) \stackrel{P}{\to} I^Z(X_n^{(t,x)})$ as $k \to \infty$. This convergence is a.s., along a subsequence.

Since $S_k$ is a linear combination of sets in $\widetilde{\cP}_b$, by the LePage representation \eqref{Lambda-LePage}, 
\begin{align*}
I^{Z}(S_k) & =I^{\Lambda}(S_k)
=\sum_{i\geq 1}\e_i \Gamma_i^{-1/\alpha} \frac{1}{\psi(T_i,X_i)} S_k(T_i,X_i).
\end{align*}
Relation \eqref{series-IZ} follows letting $k \to \infty$. On the right hand-side, we use the dominated convergence theorem, whose application is justified by {\em Step 1}.
\end{proof}

\begin{theorem}
\label{th-alpha1}
Assume that $\alpha \in (0,1)$. Let $t>0$ and $x \in \bR^d$ be arbitrary. Suppose that there exists $p \in (\alpha,1]$ such that $K_{n+1}^{(p)}(t,x)<\infty$ a.s. and $\cI_n(t,x)<\infty$ a.s. Then \eqref{IZ-X} holds.
\end{theorem}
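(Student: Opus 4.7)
The plan is to establish \eqref{IZ-X} by comparing the LePage series representations of both sides. The hypotheses of Theorem \ref{th-alpha1} are precisely those of Theorem \ref{proof-IZ-Xn}, so the latter immediately supplies the explicit series
\[
I^{Z}(X_n^{(t,x)})=T^{1/\alpha}\sum_{i\geq 1}\e_i \Gamma_i^{-1/\alpha} \phi^{-1}(X_i) G_{t-T_i}(x-X_i)\,I_n\big(f_n(\cdot,T_i,X_i)\big)\quad\text{a.s.},
\]
and, along the way, also shows that the double sum $U'$ (equivalently $B_{n+1}^{(p)}(t,x)$) in Step~1 of the proof of Theorem \ref{proof-IZ-Xn} is finite a.s. That uniform absolute bound will govern every rearrangement made below.

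My first step would be to verify that $I_{n+1}(f_{n+1}(\cdot,t,x))$ is itself well-defined. The criterion of Theorem \ref{integr-crit} reduces to showing a.s.\ finiteness of a series in $\widetilde{f}_{n+1}^2$; since $p\leq 1$ and $(\sum a_k)^p \leq \sum a_k^p$, one can bound this by the corresponding series in $\widetilde{f}_{n+1}^p$, and the argument of Step~1 of Theorem \ref{proof-IZ-Xn} (which relies only on $K_{n+1}^{(p)}(t,x)<\infty$) then shows that the resulting quantity is a.s.\ finite. Theorem \ref{integr-crit} therefore furnishes the LePage representation
\[
I_{n+1}(f_{n+1}(\cdot,t,x)) = T^{(n+1)/\alpha}(n+1)!\sum_{k_1<\ldots<k_{n+1}}\prod_{\ell=1}^{n+1}\e_{k_\ell}\Gamma_{k_\ell}^{-1/\alpha}\phi^{-1}(X_{k_\ell})\,\widetilde{f}_{n+1}(T_{k_1},X_{k_1},\ldots,T_{k_{n+1}},X_{k_{n+1}},t,x).
\]

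The heart of the argument is a symmetrization identity for $\widetilde{f}_{n+1}$. Writing $f_{n+1}(z_1,\ldots,z_{n+1},t,x)=G_{t-t_{n+1}}(x-x_{n+1})f_n(z_1,\ldots,z_n,t_{n+1},x_{n+1})$ and splitting $\Sigma_{n+1}$ according to which index is sent to position $n+1$, one obtains
\[
\widetilde{f}_{n+1}(z_1,\ldots,z_{n+1},t,x) = \frac{1}{n+1}\sum_{i=1}^{n+1}G_{t-t_i}(x-x_i)\,\widetilde{f}_n(z_1,\ldots,\widehat{z_i},\ldots,z_{n+1},t_i,x_i),
\]
where $\widehat{\phantom{z_i}}$ denotes omission. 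Plugging this into the LePage expansion of $I_{n+1}(f_{n+1}(\cdot,t,x))$ and noting that for each ordered tuple $k_1<\ldots<k_{n+1}$ and each $i\in\{1,\ldots,n+1\}$, setting $m=k_i$ and $\{j_1<\ldots<j_n\}=\{k_1,\ldots,k_{n+1}\}\setminus\{m\}$ establishes a bijection with pairs $(m,\{j_1<\ldots<j_n\})$ satisfying $m\notin\{j_1,\ldots,j_n\}$. Since $\widetilde{f}_n$ vanishes on diagonals, this last constraint may be dropped, yielding exactly the double series obtained from inserting \eqref{LePage-In} for $I_n(f_n(\cdot,T_i,X_i))$ into the series for $I^Z(X_n^{(t,x)})$ above.

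The remaining obstacle — and the only one requiring care — is justifying the rearrangements of these (in general, not absolutely convergent signed) triple series. The key observation is that Step~1 of the proof of Theorem \ref{proof-IZ-Xn} already provides an a.s.\ finite majorant $B_{n+1}^{(p)}(t,x)$ for the sum of absolute values of all terms involved (via the bound on $U'$, after replacing $\widetilde{f}_n$ by its symmetrization expansion). Dominated convergence then legitimizes the bijective reindexing described above and identifies the two sides of \eqref{IZ-X} term by term, completing the proof.
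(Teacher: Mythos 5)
Your proposal is correct and follows essentially the same route as the paper: it rests on Theorem \ref{proof-IZ-Xn} for the series representation of $I^{Z}(X_n^{(t,x)})$, performs the same combinatorial reindexing between the double sum over $(i,\,j_1<\ldots<j_n)$ and the single sum over $j_1<\ldots<j_{n+1}$ with permutations, and justifies the rearrangement by the a.s.\ finiteness of $B_{n+1}^{(p)}(t,x)$. The only (harmless) differences are that you run the computation from $I_{n+1}(f_{n+1}(\cdot,t,x))$ toward the double series rather than the reverse, and that you explicitly verify via Theorem \ref{integr-crit} that $I_{n+1}(f_{n+1}(\cdot,t,x))$ is well-defined, a point the paper leaves implicit.
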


\begin{proof}
We use \eqref{series-IZ}, in which we replace $I_n\big(f_n(\cdot,T_i,X_i)\big)$ by its LePage representation \eqref{LePage-In}. Using the fact that $\widetilde{f}_n(T_{j_1},X_{j_1},\ldots, T_{j_n},X_{j_n},T_i,X_i)=0$ if $i \in \{j_1,\ldots,j_n\}$, we have:
\begin{align*}
I^{Z}(X_n^{(t,x)})&= T^{\frac{n+1}{\alpha}}\sum_{i\geq 1}\e_i \Gamma_i^{-1/\alpha} 
\phi^{-1}(X_i)G_{t-T_i}(x-X_i) \sum_{ \substack{j_1<\ldots < j_n \\ i \not\in \{j_1,\ldots,j_n\}  } }\prod_{k=1}^{n}\e_{j_k}\Gamma_{j_k}^{-1/\alpha}\phi^{-1}(X_{j_k})\\
& \quad \quad  \quad \sum_{\pi \in S_n} f_n(T_{j_{\pi(1)}},X_{j_{\pi(1)}},\ldots, T_{j_{\pi(n)}},X_{j_{\pi(n)}},T_i,X_i).
\end{align*}
We now use the fact that $G_{t-T_i}(x-X_i)f_n(\cdot,T_i,X_i)= f_{n+1}(\cdot,T_i,X_i,t,x)$, and we apply  \eqref{fact5} two times. We obtain:
\begin{align*}
& I^{Z}(X_n^{(t,x)})=T^{\frac{n+1}{\alpha}}\sum_{i\geq 1}\e_i \Gamma_i^{-1/\alpha} \phi^{-1}(X_i) \sum_{ \substack{j_1,\ldots, j_n \geq 1 \, {\rm distinct} \\ i \not\in \{j_1,\ldots,j_n\}  }} \, \prod_{k=1}^{n}\e_{j_k}\Gamma_{j_k}^{-1/\alpha}\phi^{-1}(X_{j_k})\\
& \qquad \qquad \qquad f_{n+1}(T_{j_1},X_{j_1},\ldots,T_{j_n},X_{j_n},T_i,X_i,t,x)\\
& \quad =T^{\frac{n+1}{\alpha}}
\sum_{ \substack{j_1,\ldots, j_{n+1} \geq 1 \, {\rm distinct}  }} \, \prod_{k=1}^{n+1}\e_{j_k}\Gamma_{j_k}^{-1/\alpha}\phi^{-1}(X_{j_k}) f_{n+1}(T_{j_1},X_{j_1},\ldots,T_{j_{n+1}},X_{j_{n+1}},t,x)\\
& \quad =T^{\frac{n+1}{\alpha}} \sum_{j_1<\ldots<j_{n+1}}  \sum_{\pi \in \Sigma_{n+1}}
\prod_{k=1}^{n+1} \e_{j_k} \Gamma_{j_k}^{-1/\alpha}\phi^{-1}(X_{j_k})f_{n+1}(T_{j_{\pi(1)}}, X_{j_{\pi(1)}},\ldots,T_{j_{\pi(n+1)}}, X_{j_{\pi(n+1)}},t,x)\\
&\quad =T^{\frac{n+1}{\alpha}} (n+1)! \sum_{j_1<\ldots<j_{n+1}} 
\prod_{k=1}^{n+1} \e_{j_k} \Gamma_{j_k}^{-1/\alpha} \phi^{-1}(X_{j_k})
\widetilde{f}_{n+1} (T_{j_{1}}, X_{j_{1}},\ldots,T_{j_{n+1}}, X_{j_{n+1}},t,x)\\
& \quad = I_{n+1}\big( \widetilde{f}_{n+1}(\cdot,t,x)\big).
\end{align*}

\end{proof}

\subsection{The integral of the multiple integral process: $\alpha \in [1,2)$}

In this section, we give the proof of \eqref{IZ-X} in the case $\alpha \geq1$. The proof is significantly more involved than in the case $\alpha<1$, being the most technical part of the paper.
The reason is that in the case $\alpha \geq 1$, we are not able to prove a LePage representation for $\Lambda(A)$ for all $A \in \widetilde{\cP}_b$, as it was mentioned in Remark \ref{rem-LePage}.

\medskip

We fix $k \geq 1$. In the LePage representation \eqref{LePage-In} of $I_n\big(\widetilde{f}_n(\cdot,s,y)\big)$, in front of $\prod_{i=1}^n$, we introduce the factor $1=1_{\{\Gamma_{j_n}^{-1/\alpha}>k^{-1}\}}+1_{\{\Gamma_{j_n}^{-1/\alpha}\leq k^{-1} \}}$. Since $\Gamma_{j_1}^{-1/\alpha}>\ldots>\Gamma_{j_n}^{-1/\alpha}$, we see that
$\{\Gamma_{j_n}^{-1/\alpha}>k^{-1}\}=\bigcap_{j=1}^{n}\{\Gamma_{j_i}^{-1/\alpha}>k^{-1}\}$, 
We obtain the decomposition:
\begin{equation}
\label{decomp-In}
I_n\big( \widetilde{f}_n(\cdot, s,y)\big)=\cJ_n^{(k)}(s,y)+\cR_n^{(k)}(s,y),
\end{equation}
where
\begin{align*}
\cJ_n^{(k)}(s,y)&=T^{n/\alpha}n! \sum_{j_1<\ldots<j_n} 
 \prod_{i=1}^{n}\e_{j_i}\Gamma_{j_i}^{-1/\alpha} 1_{\{\Gamma_{j_i}^{-1/\alpha}>k^{-1}\}} \phi^{-1}(X_{j_i}) \widetilde{f}_n(T_{j_1},X_{j_1},\ldots,
T_{j_n},X_{j_n},s,y),\\
\cR_n^{(k)}(s,y)&= T^{n/\alpha}n! \sum_{j_1<\ldots<j_n} 
1_{\{\Gamma_{j_n}^{-1/\alpha}\leq k^{-1}\}} \prod_{i=1}^{n}\e_{j_i}\Gamma_{j_i}^{-1/\alpha} \phi^{-1}(X_{j_i}) \widetilde{f}_n(T_{j_1},X_{j_1},\ldots,
T_{j_n},X_{j_n},s,y).
\end{align*}

The following result shows that $\cR_n^{(k)}(t,x)$ is asymptotically negligible in probability, when $k \to \infty$.

\begin{lemma} 
\label{lem-R0}
Assume that $\alpha \in [1,2)$.
Let $t \in [0,T]$ and $x \in \bR^d$ be arbitrary. If there exists $p \in (\alpha,2]$ such that $K_n^{(p)}(t,x)<\infty$, then $\cR_{n}^{(k)}(t,x) \stackrel{P}{\to} 0$ as $k \to \infty$.
\end{lemma}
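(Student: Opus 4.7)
The plan is to work conditionally on the randomness $(\Gamma_i),(T_i),(X_i)$, so that $\cR_n^{(k)}(t,x)$ becomes a (finite or infinite) Rademacher series in the $\e_j$'s, and to use orthogonality to compute its conditional $L^2$-norm exactly. First, by Cauchy--Schwarz and the inequality $\bE[\min(1,|X|)]\leq \min(1,\bE|X|^2)^{1/2}$ together with the tower property,
\[
\|\cR_n^{(k)}(t,x)\|_{L^0}^2 \leq \bE\big[\min\big(1,\,\bE[|\cR_n^{(k)}(t,x)|^2\mid (\Gamma_i),(T_i),(X_i)]\big)\big].
\]

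Next, by Lemma \ref{gK-ineq} applied conditionally on $(\Gamma_i),(T_i),(X_i)$, the conditional second moment equals
\begin{align*}
h_n^{(2,k)}(t,x)&:=T^{2n/\alpha}(n!)^2\sum_{j_1<\ldots<j_n}1_{\{\Gamma_{j_n}^{-1/\alpha}\leq k^{-1}\}}\prod_{i=1}^n\Gamma_{j_i}^{-2/\alpha}\phi^{-2}(X_{j_i})\\
&\qquad\qquad\cdot \widetilde f_n^{\,2}(T_{j_1},X_{j_1},\ldots,T_{j_n},X_{j_n},t,x),
\end{align*}
which is dominated termwise by $h_n^{(2)}(t,x)$. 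So it suffices to prove $h_n^{(2,k)}(t,x)\to 0$ a.s. as $k\to\infty$, because then dominated convergence (with dominating function $1$) gives $\bE[\min(1,h_n^{(2,k)}(t,x))]\to 0$.

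For each fixed $j_1<\ldots<j_n$ we have $\Gamma_{j_n}<\infty$ a.s., so $1_{\{\Gamma_{j_n}\geq k^\alpha\}}\to 0$ as $k\to\infty$. Thus each summand in $h_n^{(2,k)}(t,x)$ vanishes in the limit, and dominated convergence over the counting measure on $\{(j_1,\ldots,j_n):j_1<\ldots<j_n\}$ yields $h_n^{(2,k)}(t,x)\to 0$ a.s., \emph{provided} that $h_n^{(2)}(t,x)<\infty$ a.s.

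The key step is therefore verifying $h_n^{(2)}(t,x)<\infty$ a.s. from the hypothesis $K_n^{(p)}(t,x)<\infty$. Here I would use the subadditivity relation \eqref{h-p2} from the paper, namely $(h_n^{(2)}(t,x))^{p/2}\leq h_n^{(p)}(t,x)$, so it is enough to show $h_n^{(p)}(t,x)<\infty$ a.s. Conditioning on $(\Gamma_i)$ and applying Lemma \ref{A-K-lemma} together with the bound \eqref{fact2} as in \eqref{hnp} gives
\[
\bE\big[h_n^{(p)}(t,x)\mid (\Gamma_i)\big]\leq T^{(p/\alpha-1)n}K_n^{(p)}(t,x)\Big(\sum_{j\geq 1}\Gamma_j^{-p/\alpha}\Big)^n,
\]
which is finite a.s. since $\sum_j\Gamma_j^{-p/\alpha}<\infty$ a.s. (by $p>\alpha$ and the strong law of large numbers) and $K_n^{(p)}(t,x)<\infty$ by hypothesis. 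Hence $h_n^{(p)}(t,x)<\infty$ a.s., so $h_n^{(2)}(t,x)<\infty$ a.s., completing the argument.

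The only delicate point is the justification of dominated convergence for $h_n^{(2,k)}(t,x)\to 0$; everything else is bookkeeping. I do not expect any serious obstacle beyond assembling the conditioning/dominated-convergence chain carefully in the right order.
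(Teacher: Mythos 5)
Your proposal is correct and follows essentially the same route as the paper's proof: Cauchy--Schwarz to reduce to the conditional second moment, identification of that moment with the truncated sum $h_n^{(2,k)}$ via the conditional orthogonality of Lemma \ref{gK-ineq}, domination by $h_n^{(2)}(t,x)$ whose a.s.\ finiteness is deduced from $K_n^{(p)}(t,x)<\infty$ through \eqref{h-p2} and \eqref{hnp}, and two applications of dominated convergence. The only (harmless) omission is that passing from $\bE[h_n^{(p)}(t,x)\mid(\Gamma_i)]<\infty$ a.s.\ to $h_n^{(p)}(t,x)<\infty$ a.s.\ should be justified by Remark \ref{Fubini-remark}, which the paper cites explicitly.
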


\begin{proof}
We argue as in the proof of Lemma \ref{conv-proba}. By the Cauchy-Schwarz inequality,
\[
\|\cR_n^{(k)}(t,x)\|_{L^0} = \bE\big[\min\big(1,|\cR_n^k(t,x)|\big)\big] \leq \left(\bE\big[\min\big(1,|\cR_n^k(s,y)|^2\big)\big]\right)^{1/2}.
\]

By Lemma \ref{gK-ineq} and the inequality
$\bE[\min(1,|X|)]\leq \min(1,\bE|X|)$, we have:
\begin{align*}
& \bE\big[\min\big(1, |\cR_n^k(s,y)|^2 \big)\, \big|\, (\Gamma_i),(T_i),(X_i)\big] \leq \\
& \quad \min\Big(1, T^{2n/\alpha}(n!)^2 \sum_{j_1<\ldots<j_n} 1_{\{\Gamma_{j_n}^{-1/\alpha} \leq k^{-1}\} } \prod_{i=1}^{n}\Gamma_{j_i}^{-2/\alpha} \phi^{-2}(X_{j_i}) \widetilde{f}_n^2(T_{j_1},X_{j_1},\ldots,T_{j_n},X_{j_n},t,x) \Big).
\end{align*}
The last expression converges to $0$ as $k \to \infty$, by the dominated convergence theorem. To justify the application of this theorem, we bound the indicator above by $1$, which leads to $\min(1,h_n^{(2)}(t,x))$. To see that $h_n^{(2)}(t,x)<\infty$ a.s., we apply Remark \ref{Fubini-remark} to $X=(\Gamma_i)$ and $Y=\{(T_i),(X_i)\}$, noting that, by \eqref{h-p2} and \eqref{hnp},
\[
\bE[\big(h_n^{(2)}(t,x)\big)^{p/2}|(\Gamma_i)] \leq \bE[h_n^{(p)}(t,x)|(\Gamma_i)] \leq T^{(p/\alpha-1)n}K_n^{(p)}(t,x) \left(\sum_{j\geq 1}\Gamma_j^{-p/\alpha} \right)^n<\infty \quad \mbox{a.s.}
\]
Finally, another application of dominated convergence theorem shows that
\[
\bE[\min(1,|\cR_n^{(k)}(t,x)|^2)]=\bE\big[\bE\big[\min\big(1,|\cR_n^{(k)}(t,x)|^2 \big)\,| \, (\Gamma_i),(T_i),(X_i)\big] \big] \to 0.
\]
\end{proof}

Note that, if there exists $p \in (\alpha,2]$ such that \eqref{cI} holds, then the same argument as in the proof of Theorem \ref{A3-th} shows that
\[
\int_0^{t} \int_{\bR^2}G_{t-s}^{\alpha}(x-y) |\cJ_n^{(k)}(s,y)|^{\alpha}dyds<\infty \quad \mbox{a.s.}
\]
and
\[
\int_0^{t} \int_{\bR^2}G_{t-s}^{\alpha}(x-y) |\cR_n^{(k)}(s,y)|^{\alpha}dyds<\infty \quad \mbox{a.s.}
\]
Choosing predictable modifications for the processes $Y_{n,k}^{(t,x)}(s,y)=G_{t-s}(x-y) \cJ_n^{(k)}(s,y)$ and $Z_{n,k}^{(t,x)}(s,y)=G_{t-s}(x-y) \cR_n^{(k)} (s,y)$, we infer that these processes are in $L^0(Z)$, by Corollary \ref{integr-th}.
By decomposition \eqref{decomp-In},
\[
I^Z(X_n^{(t,x)})=\int_0^t \int_{\bR^d}G_{t-s}(x-y)\cJ_n^{(k)}(s,y)Z(ds,dy)+
\int_0^t \int_{\bR^d}G_{t-s}(x-y)\cR_n^{(k)}(s,y)Z(ds,dy).
\]
Hence, to prove that $I^Z(X_n^{(t,x)}) =I_{n+1}\big(f_{n+1}(\cdot,t,x)\big)$, it suffices to show that the first term converges in probability to $I_{n+1}\big(f_{n+1}(\cdot,t,x)\big)$ as $k \to \infty$, and the second one is negligible. 
This will be achieved by the following two lemmas.

\begin{lemma}
\label{lem-cR}
Assume that $\alpha \in [1,2)$ and Hypothesis \ref{G_assumption} holds.
Let $t \in [0,T]$ and $x \in \bR^d$ be arbitrary. If there exists $p \in (\alpha,2]$ such that $K_{n+1}^{(p)}(t,x)<\infty$ and \eqref{cI} holds, then
\[
A_k:=\int_0^t \int_{\bR^d}G_{t-s}(x-y)\cR_n^{(k)}(s,y)Z(ds,dy) \stackrel{P}{\longrightarrow} 0 \quad \mbox{as $k \to \infty$}.
\]
\end{lemma}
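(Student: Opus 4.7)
The strategy is to bound $\|A_k\|_{L^0}$ via the Daniell mean and then to apply the Dominated Convergence Theorem for $I^{\Lambda}$. By the contraction inequality \eqref{contraction2},
\[
\|A_k\|_{L^0} \le \|G_{t-\cdot}(x-\cdot)\cR_n^{(k)}(\cdot,\cdot)\|_{\Lambda},
\]
so it suffices to show that $H_k(s,y) := G_{t-s}(x-y)\cR_n^{(k)}(s,y)$ satisfies $\|H_k\|_{\Lambda} \to 0$. The plan is to invoke Theorem \ref{DCT}, which requires (i) pointwise a.s.\ convergence $H_k \to 0$ and (ii) a dominant $H^{\ast} \in L^0(\Lambda)$ with $|H_k| \le H^{\ast}$ for all $k$.

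For (i), fix $(s,y)$ and $\omega$. Since $\Gamma_j \to \infty$ a.s., only finitely many $\Gamma_j$ lie below any given level $k^{\alpha}$; hence $\cJ_n^{(k)}(s,y)$ is a finite partial sum of the LePage series \eqref{LePage-In} for $I_n(\widetilde{f}_n(\cdot,s,y))$, consisting of the tuples $j_1<\ldots<j_n$ with $\Gamma_{j_n}<k^{\alpha}$. As $k\to\infty$ these partial sums exhaust the series; since \eqref{cI} implies $K_n^{(p)}(s,y)<\infty$ for a.e.\ $(s,y)$, the argument of Proposition \ref{prop-conv-series} verifies condition \eqref{conv-s2}, and Theorem \ref{integr-crit} then yields a.s.\ convergence of the full series to $I_n(\widetilde{f}_n(\cdot,s,y))$. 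Combined with the conditional $L^2$-orthogonality of the Rademacher signs, this gives $\cJ_n^{(k)}(s,y) \to I_n(\widetilde{f}_n(\cdot,s,y))$ a.s., so $\cR_n^{(k)}(s,y) \to 0$ a.s.

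For (ii), take $V(s,y) := \sup_{k\ge 1}|\cR_n^{(k)}(s,y)|$, a.s.\ finite by (i), and $H^{\ast}(s,y) := G_{t-s}(x-y)V(s,y)$. By Corollary \ref{integr-th}, the condition $H^{\ast}\in L^0(\Lambda)$ amounts to
\[
\int_0^t\!\int_{\bR^d} G_{t-s}^{\alpha}(x-y)V(s,y)^{\alpha}\,dy\,ds<\infty \quad \mbox{a.s.}
\]
We bound $V \le |I_n(\widetilde{f}_n(\cdot,\cdot))| + \sup_k|\cJ_n^{(k)}(\cdot,\cdot)|$. The first summand contributes $\cI_n(t,x)<\infty$ a.s.\ by Theorem \ref{A3-th}. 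For the second, conditioning on $(\Gamma_i,T_i,X_i)$ turns $\{\cJ_n^{(k)}(s,y)\}_k$ into a family of Rademacher chaoses of order $n$ converging in conditional $L^2$ to $I_n(\widetilde{f}_n(\cdot,s,y))$; a maximal inequality for such chaoses then yields
\[
\bE\!\left[\sup_k|\cJ_n^{(k)}(s,y)|^2 \mid (\Gamma_i,T_i,X_i)\right] \lesssim h_n^{(2)}(s,y),
\]
after which the double-conditioning estimate used in the proof of Theorem \ref{A3-th} (combined with assumption \eqref{cI}) controls the full integral.

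The main technical obstacle is the maximal inequality for the partial sums $\cJ_n^{(k)}$: one must either uncover a martingale structure (by indexing the Rademacher variables so that increasing $k$ corresponds to progressive revelation) and apply Doob's inequality, or invoke a Khintchine--Kahane-type bound for homogeneous Rademacher chaoses of fixed order $n$. Once (i) and (ii) are in hand, Theorem \ref{DCT} delivers $\|H_k\|_{\Lambda}\to 0$, and \eqref{contraction2} gives $A_k\to 0$ in $L^0$, i.e.\ in probability, as desired.
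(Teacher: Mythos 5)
Your route is genuinely different from the paper's and, once the step you flag is filled in, it works. The paper never attempts a dominated-convergence argument at the level of the integrand: it splits $A_k$ via the representation of Proposition \ref{prop-Lambda-repr4} into a compensated small-jump integral (controlled by Lenglart's inequality applied to a local martingale) and a large-jump Poisson sum (controlled term by term through the conditional moment identity \eqref{non-trivial} and Lemma \ref{A-K-lemma}), so that only convergence in probability of each piece is ever needed; this is also where the hypothesis $K_{n+1}^{(p)}(t,x)<\infty$ enters, namely in proving the domination \eqref{sum-B}. You instead dominate $H_k=G_{t-\cdot}(x-\cdot)\cR_n^{(k)}$ by $G_{t-\cdot}(x-\cdot)\sup_k|\cR_n^{(k)}|$ and invoke Theorem \ref{DCT} together with \eqref{contraction2}. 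What this buys is a much shorter argument that bypasses Lenglart and the explicit representation of $\Lambda$ entirely (and apparently does not use $K_{n+1}^{(p)}(t,x)<\infty$ at all); what it costs is that you need almost sure convergence of $\cJ_n^{(k)}(s,y)$ and a maximal inequality, neither of which the paper's softer in-probability framework requires.

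The step you flag as the main obstacle does go through, and for exactly the reason you suggest: since $(\Gamma_j)_{j\geq 1}$ is nondecreasing, the truncation set $\{j:\Gamma_j^{-1/\alpha}>k^{-1}\}=\{j:\Gamma_j<k^{\alpha}\}$ is an initial segment $\{1,\dots,N_k\}$ of $\bN$, so conditionally on $(\Gamma_i),(T_i),(X_i)$ the sums $\cJ_n^{(k)}(s,y)$ are the partial sums $M_{N_k}$ of the $L^2$-bounded martingale $M_N=\sum_{j_1<\dots<j_n\le N}c_{j_1,\dots,j_n}\e_{j_1}\cdots\e_{j_n}$ with respect to $\sigma(\e_1,\dots,\e_N)$. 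Doob's inequality then gives $\bE\big[\sup_k|\cJ_n^{(k)}(s,y)|^2\mid(\Gamma_i),(T_i),(X_i)\big]\le 4\,h_n^{(2)}(s,y)$ and simultaneously the a.s.\ convergence $M_{N_k}\to M_\infty=I_n(\widetilde f_n(\cdot,s,y))$, which is what your step (i) actually needs: the a.s.\ convergence asserted in Theorem \ref{integr-crit} does not by itself identify the limit along your particular exhaustion, so you should route (i) through this martingale argument rather than through ``conditional orthogonality''. With the extra factor $4$ the double-conditioning estimate of Theorem \ref{A3-th} applies verbatim under \eqref{cI}, giving $H^{\ast}\in L^0(\Lambda)$. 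Two small points remain to be written out: the convergence you obtain holds for each fixed $(s,y)$ outside an $(s,y)$-dependent null set, so a Fubini argument (and a modification on a $\bP\otimes{\rm Leb}$-null set, which does not affect the Daniell mean) is needed before Theorem \ref{DCT} can be applied; and $\sup_k|\cR_n^{(k)}|$ should be taken in a predictable version.
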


\begin{proof}
By Proposition \ref{prop-Lambda-repr4}, we have the decomposition:
\begin{align*}
A_k& = \int_0^t \int_{\bR^d} \int_{\{|z|\leq 1\}}G_{t-s}(x-y)\cR_n^{(k)}(s,y)\frac{z}{\psi(s,y)}\widehat{J}_{\psi}(ds,dy,dz)
+\\
& \quad
\int_0^t \int_{\bR^d} \int_{\{|z|> 1\}}G_{t-s}(x-y)\cR_n^{(k)}(s,y)\frac{z}{\psi(s,y)}J_{\psi}(ds,dy,dz)=:T_1^{(k)}+T_{2}^{(k)}.
\end{align*}
We treat separately the two terms.
Recall that $\psi(s,y)=T^{-1/\alpha} \phi(y)$. 

\medskip

{\bf Step 1.} \underline{First, we treat $T_2^{(k)}$.} Since $J_{\psi}$ has points $\{(T_j,X_j,\e_j \Gamma_j^{-1/\alpha})\}_{j\geq 1}$, we have
\begin{align*}
T_2^{(k)}
= \sum_{j\geq 1} \e_j \Gamma_j^{-1/\alpha} 1_{\{\Gamma_j^{-1/\alpha}>1\}} W_{n,j}^{(k)},
\end{align*}
with 
\begin{equation}
\label{def-W-nj}
W_{n,j}^{(k)}:=T^{1/\alpha}G_{t-T_j}(x-X_j)\cR_{n}^{(k)}(T_j,X_j) \phi^{-1}(X_j).
\end{equation}
 Using the fact that $\Gamma_j^{-1/\alpha} \leq \Gamma_j^{-2/\alpha}$ on the event $\{\Gamma_j^{-1/\alpha}>1\}$, we obtain:
\[
\big|T_2^{(k)}\big| \leq  \sum_{j\geq 1}\Gamma_j^{-2/\alpha} 1_{\{\Gamma_j^{-1/\alpha}>1\}} \big|W_{n,j}^{(k)}\big| \leq  \sum_{j\geq 1}\Gamma_j^{-2/\alpha} \big|W_{n,j}^{(k)}\big|.
\]

We use the following fact for any random variable $X$ and sub-$\sigma$-field $\mathcal{G}$,
\begin{equation}
\label{ineq-X0}
\|X\|_{L^0}\leq \big\| \bE[|X| \, |\mathcal{G}]\big\|_{L^0}.
\end{equation}
This leads to the following inequality:
\begin{equation}
\label{Tk-norm0}
\big\|T_2^{(k)}\big\|_{L^0} \leq  \big\| \sum_{j\geq 1}\Gamma_j^{-2/\alpha} \big|W_{n,j}^{(k)}\big|\, \big\|_{L^0} \leq \Big\|\sum_{j\geq 1}  \Gamma_j^{-2/\alpha}  \bE  \Big[ \big|W_{n,j}^{(k)}\big|\,\Big|(\Gamma_i)_i \Big]\Big\|_{L^0}.
\end{equation}

We fix $j\geq 1$. By H\"older's inequality for conditional expectation,
\[
\bE  \Big[ \big|W_{n,j}^{(k)}\big|\,\Big|(\Gamma_i)_i \Big] \leq 
\left(\bE  \Big[ \big|W_{n,j}^{(k)}\big|^{\alpha}\,\Big|(\Gamma_i)_i \Big] \right)^{1/\alpha}.
\]

Note that $\cR_{n}^{(k)}(T_j,X_j)$ is a series depending on multi-indices $j_1<\ldots<j_n$ and each term in this series contains the factor $\widetilde{f}_n(T_{j_1},X_{j_1},\ldots,T_{j_n},X_{j_n},T_j,X_j)$, which vanishes if $j \in \{j_1,\ldots,j_n\}$. So, we can assume that $j \not \in \{j_1,\ldots,j_n\}$. Moreover, $\cR_{n}^{(k)}(T_j,X_j)$ is a function of the sequences $(\e_i)_{i\geq 1},(\Gamma_i)_{i\geq 1},(T_i)_{i\geq 1},(X_i)_{i\geq 1}$.

We apply the basic conditioning fact \eqref{fact1} to $X=(\Gamma_i)_i$ and $Y=\{(\e_i)_i, (T_i)_i,(X_i)_i\}$. Using the definition of $\cR_n^{(k)}(T_j,X_j)$, we obtain:
\begin{align*}
& \bE\Big[\big|W_{j,n}^{(k)}\big|^{\alpha} \, \Big|(\Gamma_i)_i=(\gamma_i)_i\Big]=T\bE\Big[G_{t-T_j}^{\alpha}(x-X_j)\frac{1}{\phi^{\alpha}(X_j)}\\
& 
\Big|T^{n/\alpha}n! \sum_{ \substack{j_1<\ldots < j_n \\ j \not\in \{j_1,\ldots,j_n\}  } } 1_{\{\gamma_{j_n}^{-1/\alpha} \leq k^{-1} \}} \prod_{i=1}^{n} \e_{j_i} \gamma_{j_i}^{-1/\alpha}\phi^{-1}(X_{j_i}) \widetilde{f}_{n}(T_{j_1},X_{j_1},\ldots,T_{j_n},X_{j_n},T_{j},X_{j})\Big|^{\alpha}\Big].
\end{align*}

We use the fact that for independent random elements $X$ and $Y$ with values in measurable spaces $(E,\cE)$, respectively $(F,\cF)$, and a measurable function $f: E \times F \to [0,\infty]$,
\[
\bE[f(X,Y)]=\int_{E} \bE[f(x,Y)] \bP_{X}(dx)=\bE \left[\int_{E} f(x,Y)\bP_{X}(dx)\right],
\]
where $\bP_{X}$ denotes the law of $X$. We will use this fact with $X=(T_j,X_j)$ (which has law $T^{-1}\phi^{\alpha}(y)dsdy$) and $Y=\{(\e_i)_{i\geq 1},(T_i,X_i)_{i\not=j}\}$, to compute the previous expectation:
\begin{align*}
& \bE\Big[\big|W_{j,n}^{(k)}\big|^{\alpha} \, \Big|(\Gamma_i)_i=(\gamma_i)_i\Big]=T\, \bE \Bigg[\int_0^t \int_{\bR^d} G_{t-s}^{\alpha}(x-y)\frac{1}{\phi^{\alpha}(y)} \\
&  
\Big|T^{\frac{n}{\alpha}}n! \sum_{ \substack{j_1<\ldots < j_n \\ j \not\in \{j_1,\ldots,j_n\}  } } 1_{\{\gamma_{j_n}^{-1/\alpha} \leq k^{-1} \}} \prod_{i=1}^{n} \e_{j_i} \gamma_{j_i}^{-1/\alpha}\phi^{-1}(X_{j_i}) \widetilde{f}_{n}(T_{j_1},X_{j_1},\ldots,T_{j_n},X_{j_n},s,y)\Big|^{\alpha} T^{-1}\phi^{\alpha}(y)dyds \Bigg].
\end{align*}

Using again the basic conditioning fact \eqref{fact1} for the expectation above, we infer that
\begin{align*}
& \bE\Big[\big|W_{j,n}^{(k)}\big|^{\alpha} \, \Big|(\Gamma_i)_i= (\gamma_i)_i\Big]= \, \bE \left[\int_0^t \int_{\bR^d} G_{t-s}^{\alpha}(x-y) \right.\\
& \left.
\Big|T^{\frac{n}{\alpha}}n! \sum_{ \substack{j_1<\ldots < j_n \\ j \not\in \{j_1,\ldots,j_n\}  } } 1_{\{\Gamma_{j_n}^{-1/\alpha} \leq k^{-1} \}} \prod_{i=1}^{n} \e_{j_i} \Gamma_{j_i}^{-1/\alpha}\phi^{-1}(X_{j_i}) \widetilde{f}_{n}(T_{j_1},X_{j_1},\ldots,T_{j_n},X_{j_n},s,y)\Big|^{\alpha} 
\,\Big|(\Gamma_i)_i=(\gamma_i)_i \right].
\end{align*}
In summary, we have proved the following non-trivial fact:
\begin{equation}
\label{non-trivial}
A_{n,j}^{(k)}:=\bE\Big[\big|W_{j,n}^{(k)}\big|^{\alpha} \, \Big|(\Gamma_i)_i\Big]=\bE \left[
\int_0^t \int_{\bR^d}G_{t-s}^{\alpha}(x-y) |\cR_{n,j}^{(k)}(s,y)|^{\alpha} dyds\, \Big|(\Gamma_i)_i \right],
\end{equation}
where 
\[
\cR_{n,j}^{(k)}(s,y)=T^{n/\alpha}n! \sum_{ \substack{j_1<\ldots < j_n \\ j \not\in \{j_1,\ldots,j_n\}  } } 1_{\{\Gamma_{j_n}^{-1/\alpha} \leq k^{-1} \}} \prod_{i=1}^{n} \e_{j_i} \Gamma_{j_i}^{-1/\alpha}\phi^{-1}(X_{j_i}) \widetilde{f}_{n}(T_{j_1},X_{j_1},\ldots,T_{j_n},X_{j_n},s,y).
\]

\underline{We continue with the estimation of $A_{n,j}^{(k)}$.} Using a double conditioning argument for the term on the right hand side of \eqref{non-trivial}, we see that $A_{n,j}^{(k)}=\bE\big[Q_{n,j}^{(k)}\,\big|(\Gamma_i)_i\big]$, where
\[
Q_{n,j}^{(k)}:=\int_0^t \int_{\bR^d} G_{t-s}^{\alpha}(x-y) \bE \left[ |\cR_{n,j}^{(k)}(s,y)|^{\alpha}\, \Big| (\Gamma_i)_i, (T_i)_i, (X_i)_i\right] dyds.
\]
Using H\"older's inequality for conditional expectation, we have:
\[
 \bE \left[ |\cR_{n,j}^{(k)}(s,y)|^{\alpha}\, \Big| (\Gamma_i)_i, (T_i)_i, (X_i)_i\right]  \leq \left(\bE \left[ |\cR_{n,j}^{(k)}(s,y)|^{2}\, \Big| (\Gamma_i)_i, (T_i)_i, (X_i)_i\right] \right)^{\frac{\alpha}{2}}=: \Big(h_{n,j}^{(k)}(s,y)\Big)^{\frac{\alpha}{2}}.
\]
We obtain that:
\begin{align*}
Q_{n,j}^{(k)} & \leq \int_0^t \int_{\bR^d} G_{t-s}^{\alpha}(x-y) \Big(h_{n,j}^{(k)}(s,y)\Big)^{\frac{\alpha}{2}}dyds \\
& \leq C_t^{1-\frac{\alpha}{p}} \left( \int_0^t \int_{\bR^d} G_{t-s}^{\alpha}(x-y) \big(h_{n,j}^{(k)}(s,y) \big)^{p/2} dyds\right)^{\frac{\alpha}{p}},
\end{align*}
where for the last line we applied Jensen's inequality \eqref{Jensen2}, exactly as before. Hence,
\begin{align}
\nonumber
A_{n,j}^{(k)} & \leq C_{t}^{1-\frac{\alpha}{p}} \bE\left[\left( \int_0^t \int_{\bR^d} G_{t-s}^{\alpha}(x-y) \big(h_{n,j}^{(k)}(s,y) \big)^{p/2} dyds\right)^{\frac{\alpha}{p}}\, \Big|(\Gamma_i)_i\right]\\
\label{estim-Anj}
&\leq  C_{t}^{1-\frac{\alpha}{p}} \left( \int_0^t \int_{\bR^d} G_{t-s}^{\alpha}(x-y) \bE\big[\big(h_{n,j}^{(k)}(s,y)\big)^{p/2} \,\big|(\Gamma_i)_i\big]dyds \right)^{\alpha/p}
\end{align}
where for the last line we used Jensen's inequality \eqref{Jensen1}, and we switched the $dyds$ integral with the conditional expectation.

To continue the previous estimation, we need to evaluate $h_{n,j}^{(k)}(s,y)$, which is
in fact very similar to $h_n^{(2)}(s,y)$ (see \eqref{hn2}):
\begin{align*}
h_{n,j}^{(k)} (s.y)&=T^{2n/\alpha}(n!)^2 \sum_{ \substack{j_1<\ldots < j_n \\ j \not\in \{j_1,\ldots,j_n\}  } } 1_{\{\Gamma_{j_n}^{-1/\alpha} \leq k^{-1} \}} \prod_{i=1}^{n} \Gamma_{j_i}^{-2/\alpha}\phi^{-2}(X_{j_i}) \widetilde{f}_{n}^2(T_{j_1},X_{j_1},\ldots,T_{j_n},X_{j_n},s,y).
\end{align*}
We take power $p/2$, which we then move inside the sum, by sub-additivity. Dropping the restriction  $j \not\in \{j_1,\ldots,j_n\}$, and recalling definition \eqref{def-Aj1} of $A_{j_1,\ldots,j_n}^{(p)}(s,y)$, we obtain:
\begin{align*}
\bE\big[\big(h_{n,j}^{(k)} (s.y)\big)^{p/2}\, \big|(\Gamma_i)\big] & \leq T^{pn/\alpha} \sum_{j_1<\ldots < j_n} 1_{\{\Gamma_{j_n}^{-1/\alpha} \leq k^{-1} \}} \prod_{i=1}^{n} \Gamma_{j_i}^{-p/\alpha}A_{j_1,\ldots,j_n}^{(p)}(s,y) \\
& = n! T^{(p/\alpha-1)n} \sum_{j_1<\ldots < j_n} 1_{\{\Gamma_{j_n}^{-1/\alpha} \leq k^{-1} \}} \prod_{i=1}^{n} \Gamma_{j_i}^{-p/\alpha} K_n^{(p)}(s,y),
\end{align*}
where for the last line, we used the expression of $A_{j_1,\ldots,j_n}^{(p)}(s,y)$ given by Lemma \ref{A-K-lemma}. Plugging this into \eqref{estim-Anj}, we obtain:
\begin{align*}
A_{n,j}^{(k)} & \leq  C_t^{1-\frac{\alpha}{p}} \left(T^{(p/\alpha-1)n}n!\sum_{j_1<\ldots < j_n} 1_{\{\Gamma_{j_n}^{-1/\alpha} \leq k^{-1} \}} \prod_{i=1}^{n} \Gamma_{j_i}^{-p/\alpha} \right)^{\frac{\alpha}{p}} 
 \left( \int_0^t \int_{\bR^d}G_{t-s}^{\alpha}(x-y)K_n^{(p)}(s,y) dyds\right)^{\frac{\alpha}{p}}.
\end{align*}
By the dominated convergence theorem, the first factor converges to $0$ a.s. as $k\to \infty$. The second factor is a constant, that we denote $C_t'$. Therefore $A_{n,j}^{(k)} \to 0$ a.s. as $k \to \infty$. Moreover,  $A_{n,j}^{(k)} \leq C_t^{1-\frac{\alpha}{p}}
\big[T^{(p/\alpha-1)n} \big(\sum_{j\geq 1}\Gamma_j^{-p/\alpha}\big)^n \big]^{\alpha/p}C_t' $, due to inequality \eqref{fact2}.
By another application of the dominated convergence theorem,
\[
\sum_{j\geq 1}\Gamma_j^{-2/\alpha}A_{n,j}^{(k)} \to 0 \quad \mbox{a.s.} \quad \mbox{as $k \to \infty$}
\]
In particular, the previous sum converges in probability to $0$, as $k \to \infty$. Recalling  inequality \eqref{Tk-norm0}, we infer that $T_2^{(k)} \stackrel{P}{\to} 0$ as $k \to \infty$, which concludes {\em Step 1}.

\bigskip

{\bf Step 2.} \underline{Next, we treat $T_1^{(k)}$.} Note that for any $(t_0,x_0)$ fixed, the process
\[
M_t^{(t_0,x_0)}:=\int_0^t \int_{\bR^d}\int_{\{|z|\leq 1\}}G_{t_0-s}(x_0-y) \cR_{n}^{(k)}(s,y)\frac{z}{\psi(s,y)} \widehat{J}_{\psi}(ds,dy,dz), \quad t \in [0,T],
\]
is a local martingale. By Lenglart's inequality, for any $\e>0$ and $\eta>0$,
\[
\bP(|M_{t}^{(t_0,x_0)}|>\e)\leq \frac{\eta}{\e^2}+\bP\left( \int_0^t \int_{\bR^d}\int_{\{|z|\leq 1\}}G_{t_0-s}^2(x_0-y) |\cR_{n}^{(k)}(s,y)|^2 \frac{z^2}{\psi^2(s,y)} J_{\psi}(ds,dy,dz)>\e\right).
\]

We apply this inequality to $(t_0,x_0)=(t,x)$. Using the points of $J_{\psi}$, we obtain:
\[
\bP(|T_{1}^{(k)}|>\e) \leq \frac{\eta}{\e^2}+\bP\left(\sum_{j\geq 1}G_{t-T_j}^2(x-X_j)|\cR_{n,j}^{(k)}(T_j,X_j)|^2 1_{\{\Gamma_j^{-1/\alpha}\leq 1\}} \frac{\Gamma_j^{-2/\alpha}}{T^{-2/\alpha}\phi^2(X_j)}>\e\right).
\]

We will show below that:
\begin{equation}
\label{sum-conv-0}
T_3^{(k)}:=\sum_{j\geq 1}G_{t-T_j}^2(x-X_j)|\cR_{n,j}^{(k)}(T_j,X_j)|^2 1_{\{\Gamma_j^{-1/\alpha}\leq 1\}} \frac{\Gamma_j^{-2/\alpha}}{ T^{-2/\alpha}\phi^{2}(X_j) } \stackrel{P}{\longrightarrow} 0 \quad 
\mbox{as} \quad k\to \infty.
\end{equation}

It will follow that $\limsup_{k \to \infty}\bP(|T_{k}^{(1)}|>\e) \leq \eta/\e^2$ for any $\e>0$ and $\eta>0$. Letting $\eta \to 0$, we infer that $\lim_{k \to \infty}\bP(|T_{k}^{(1)}|>\e)=0$ for any $\e >0$, i.e. $T_{k}^{(1)} \stackrel{P}{\to} 0$ as $k \to \infty$.

\underline{It remains to prove \eqref{sum-conv-0}.} We proceed as for $T_{1}^{(k)}$. Recalling definition \eqref{def-W-nj} of $W_{n,j}^{(k)}$, we see that
\[
T_{3}^{(k)}=\sum_{j\geq 1}\Gamma_j^{-2/\alpha}1_{\{\Gamma_j^{-1/\alpha}\leq 1\}} \big(W_{n,j}^{(k)}\big)^2.
\]
We use again inequality \eqref{ineq-X0},
with $\cG$ the $\sigma$-field generated by $(\Gamma_i)_i, (T_i)_i,(X_i)_i$. 
We obtain:
\begin{equation}
\label{T3k}
\|T_{3}^{(k)}\|_{L^0} \leq \Big\| \sum_{j\geq 1} \Gamma_{j}^{-2/\alpha} 1_{\{\Gamma_j^{-1/\alpha}\leq 1\}} B_{n,j}^{(k)}\Big\|_{L^0},
\end{equation}
where
\begin{align*}
B_{n,j}^{(k)}&:=\bE\big[\big(W_{n,j}^{(k)}\big)^2\, \big|(\Gamma_i)_i, (T_i)_i,(X_i)_i\big] \\
&=T^{2/\alpha}G_{t-T_j}^2(x-X_j)\phi^{-2}(X_j)\,
\bE\Big[\big|\cR_{n}^{(k)}(T_j,X_j)\big|^2\, \big|(\Gamma_i)_i, (T_i)_i,(X_i)_i\Big].
\end{align*}

Similarly to \eqref{hn2},
\begin{align*}
& \bE\Big[\big|\cR_{n}^{(k)}(T_j,X_j)\big|^2\, \big|(\Gamma_i)_i, (T_i)_i,(X_i)_i\Big]\\
& \quad =T^{\frac{2n}{\alpha}}(n!)^2
\sum_{ \substack{j_1<\ldots < j_n \\ j \not\in \{j_1,\ldots,j_n\}  } } 1_{\{\Gamma_{j_n}^{-1/\alpha} \leq k^{-1} \}} \prod_{i=1}^{n}\Gamma_{j_i}^{-2/\alpha}\phi^{-2}(X_{j_i})
\widetilde{f}_n^2(T_{j_1},X_{j_1},\ldots,T_{j_n},X_{j_n},T_j,X_j)\\
& \quad \leq h_n^{(2)}(T_j,X_j),
\end{align*}
where for the last line, we just bounded $1_{\{\Gamma_{j_n}^{-1/\alpha} \leq k^{-1} \}}$ by $1$. Therefore,
\[
B_{n,j}^{(k)} \leq  T^{2/\alpha}G_{t-T_j}^2(x-X_j)\phi^{-2}(X_j)  h_n^{(2)}(T_j,X_j).
\]

We will prove that
\begin{equation}
\label{Bnj-0}
B_{n,j}^{(k)} \to 0 \quad \mbox{a.s.} \quad \mbox{as $k \to \infty$, for any $j\geq1$,}
\end{equation}
and
\begin{equation}
\label{sum-B}
S:=\sum_{j\geq 1}\Gamma_{j}^{-2/\alpha}G_{t-T_j}^2(x-X_j)\phi^{-2}(X_j)  h_n^{(2)}(T_j,X_j)<\infty \quad \mbox{a.s.}
\end{equation}
Then, by the dominated convergence theorem, $\sum_{j\geq 1}\Gamma_j^{-2/\alpha}1_{\{\Gamma_j^{-1/\alpha}\leq 1\}}B_{n,j}^{(k)} \to 0$ a.s. as $k \to \infty$. Consequently, by \eqref{T3k}, $\|T_{3}^{(k)} \|_0 \to 0$ as $k \to \infty$. This proves \eqref{sum-conv-0}.

\medskip

\underline{We now prove \eqref{Bnj-0}.} We fix $j\geq 1$. It is enough to prove that
\[
\bE\Big[\big|\cR_{n}^{(k)}(T_j,X_j)\big|^2\, \big|(\Gamma_i)_i, (T_i)_i,(X_i)_i\Big] \to 0 \quad \mbox{as} \quad k \to \infty.
\]
This follows from the dominated convergence theorem, provided that we show that
\[
\mathcal{M}_2:=(n!)^2\sum_{ \substack{j_1<\ldots < j_n \\ j \not\in \{j_1,\ldots,j_n\}  } }  \prod_{i=1}^{n}\Gamma_{j_i}^{-2/\alpha}\phi^{-2}(X_{j_i})
\widetilde{f}_n^2(T_{j_1},X_{j_1},\ldots,T_{j_n},X_{j_n},T_j,X_j)<\infty \quad \mbox{a.s.}
\]
For this, we proceed as in the proof of Proposition \ref{prop-conv-series}. By sub-additivity, 
\begin{align*}
\mathcal{M}_2^{p/2} \leq (n!)^p\sum_{ \substack{j_1<\ldots < j_n \\ j \not\in \{j_1,\ldots,j_n\}  } }  \prod_{i=1}^{n}\Gamma_{j_i}^{-p/\alpha}\phi^{-p}(X_{j_i})
\widetilde{f}_n^p(T_{j_1},X_{j_1},\ldots,T_{j_n},X_{j_n},T_j,X_j)=:\mathcal{M}_p.
\end{align*}
So, it is enough to prove that $\mathcal{M}_p<\infty$ a.s. Note that $\cM$ is a measurable function of independent random elements $X=\{(\Gamma_i)_i,T_j,X_i\}$ and $Y=\{(T_i)_{i\not=j},(X_i)_{i\not=j}\}$. Using Lemma \ref{Fubini-lemma}, it suffices to prove that
\[
\bE\big[\mathcal{M}_p\,\big|(\Gamma_i)_i,T_j,X_j \big]<\infty \quad \mbox{a.s.}
\]
Using the basic conditioning fact \eqref{fact1}, we see that
\begin{align*}
&\bE\big[\mathcal{M}_p\,\big|(\Gamma_i)_i=(\gamma_i)_i,T_j=t_j,X_j=x_j \big]\\
& \quad =(n!)^p\sum_{ \substack{j_1<\ldots < j_n \\ j \not\in \{j_1,\ldots,j_n\}  } }
\prod_{i=1}^{n}\gamma_{j_i}^{-p/\alpha} \bE\Big[\prod_{i=1}^{n}\phi^{-1}(X_{j_i})
\widetilde{f}_n^p(T_{j_1},X_{j_1},\ldots,T_{j_n},X_{j_n},t_j,x_j)\Big]\\
&\quad =\sum_{ \substack{j_1<\ldots < j_n \\ j \not\in \{j_1,\ldots,j_n\}  } } \prod_{i=1}^{n}\gamma_{j_i}^{-p/\alpha} \cdot{\frac{n!}{T^n}} K_n^{(p)}(t_j,x_j),
\end{align*}
using Lemma \ref{A-K-lemma} for the last line. Hence,
\begin{align*}
\bE\big[\mathcal{M}_p\,\big|(\Gamma_i)_i,T_j,X_j \big]&=\frac{n!}{T^n}K_n^{(p)}(T_j,X_i) 
\sum_{ \substack{j_1<\ldots < j_n \\ j \not\in \{j_1,\ldots,j_n\}  } } \prod_{i=1}^{n}\Gamma_{j_i}^{-p/\alpha}\\
&\leq \frac{1}{T^n}K_n^{(p)}(T_j,X_i)  \left( \sum_{j\geq 1}\Gamma_j^{-p/\alpha}\right)^n<\infty \quad \mbox{a.s.}
\end{align*}

\medskip

\underline{Finally, we prove \eqref{sum-B}.} We denote by $\overline{f}_n^{(2)}(\cdot,t,x)$ the symmetrization of $f_n^{(2)}(\cdot,t,x)$. Note that
$\widetilde{f}_n^2(\cdot,t,x) \leq \frac{1}{n!}\overline{f}_n^{(2)}(\cdot,t,x) $. Therefore,
\[
h_n^{(2)}(T_j,X_j) \leq T^{\frac{2n}{\alpha}}n!\sum_{ \substack{j_1<\ldots < j_n \\ j \not\in \{j_1,\ldots,j_n\}  } }  \prod_{i=1}^{n}\Gamma_{j_i}^{-2/\alpha} \phi^{-2}(X_{j_i}) \overline{f}_n^{(2)}(T_{j_1},X_{j_1},\ldots,T_{j_n},X_{j_n},T_j,X_j)
\]
and
\begin{align*}
S & \leq T^{\frac{2n}{\alpha}} \sum_{j\geq 1}\Gamma_{j}^{-2/\alpha}\phi^{-2}(X_j)G_{t-T_j}^2(x-X_j) \\
& \quad \sum_{ \substack{j_1<\ldots < j_n \\ j \not\in \{j_1,\ldots,j_n\}  } } \sum_{\pi \in \Sigma_n} \prod_{i=1}^{n}\Gamma_{j_i}^{-2/\alpha} \phi^{-2}(X_{j_i}) f_n^{2}(T_{j_{\pi(1)}},X_{j_{\pi(1)}},\ldots,T_{j_{\pi(n)}},X_{j_{\pi(n)}},T_j,X_j)\\
& = T^{\frac{2n}{\alpha}}  \sum_{j\geq 1}\Gamma_{j}^{-2/\alpha}\phi^{-2}(X_j)G_{t-T_j}^2(x-X_j) \\
& \quad \sum_{ \substack{j_1,\ldots,j_n\geq 1 \ {\rm distinct}\\ j \not\in \{j_1,\ldots,j_n\}  } } \, \prod_{i=1}^{n}\Gamma_{j_i}^{-2/\alpha} \phi^{-2}(X_{j_i}) f_n^{2}(T_{j_{1}},X_{j_{1}},\ldots,T_{j_{n}},X_{j_{n}},T_j,X_j),
\end{align*}
where for the last line we used \eqref{fact5}. We now use the fact that
$G_{t-T_j}^2(x-X_j)f_n^{2}(\cdot,T_j,X_j)=f_{n+1}^2 (\cdot,T_j,X_j,t,x)$. Denoting $j=j_{n+1}$, it follows that
\begin{align*}
S &\leq  T^{\frac{2n}{\alpha}}  \sum_{j_1<\ldots < j_{n+1}} \, \prod_{i=1}^{n+1}\Gamma_{j_i}^{-2/\alpha}\phi^{-2}(X_{j_i})
f_{n+1}^2 (T_{j_1},X_{j_1},\ldots,T_{j_{n+1}},X_{j_{n+1}},t,x)\\ 
& =T^{\frac{2n}{\alpha}}  \sum_{j_1<\ldots < j_{n+1} } \sum_{\pi \in \Sigma_{n+1}} 
\prod_{i=1}^{n+1}\Gamma_{j_i}^{-2/\alpha}\phi^{-2}(X_{j_i})
f_{n+1}^2 (T_{j_{\pi(1)}},X_{j_{\pi(1)}},\ldots, T_{j_{\pi(n+1)}},X_{j_{\pi(n+1)}},t,x),
\end{align*}
where for the last line we used again \eqref{fact5}. Taking power $p/2$, and recalling definition \eqref{def-B}, we obtain that
$S^{p/2} \leq B_{n+1}^{(p)}(t,x)<\infty$ a.s.
\end{proof}

\begin{lemma}
\label{lem-J-conv}
Assume that $\alpha \in [1,2)$.
Let $t \in [0,T]$ and $x \in \bR^d$ be arbitrary. If there exists $p \in (\alpha,2]$ such that $K_{n+1}^{(p)}(t,x)<\infty$ and \eqref{cI} holds, then
\[
B_k:=\int_0^t \int_{\bR^d}G_{t-s}(x-y)\cJ_n^{(k)}(s,y)Z(ds,dy) \stackrel{P}{\longrightarrow
} I_{n+1}\big(f_{n+1}(\cdot,t,x)\big) \quad \mbox{as $k \to \infty$}.
\]
\end{lemma}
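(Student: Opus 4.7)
The plan is to apply Proposition \ref{prop-Lambda-repr5} with the $k$-dependent truncation level $a = 1/k \in (0,1)$ to write $B_k = L_k + S_k$, where
\begin{align*}
L_k &= \int_0^t\int_{\bR^d}\int_{\{|z|>1/k\}} G_{t-s}(x-y)\cJ_n^{(k)}(s,y)\frac{z}{\psi(s,y)}J_{\psi}(ds,dy,dz),\\
S_k &= \int_0^t\int_{\bR^d}\int_{\{|z|\leq 1/k\}} G_{t-s}(x-y)\cJ_n^{(k)}(s,y)\frac{z}{\psi(s,y)}\widehat{J_{\psi}}(ds,dy,dz).
\end{align*}
Matching $a=1/k$ with the truncation defining $\cJ_n^{(k)}$ is essential: the points $(T_j,X_j,\e_j\Gamma_j^{-1/\alpha})$ of $J_{\psi}$ satisfying $|z|>1/k$ are exactly those with $\Gamma_j^{-1/\alpha}>1/k$, so all truncation indicators align automatically.

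For the large-jumps part, I would expand $J_{\psi}$ as a sum over its atoms and substitute the defining series for $\cJ_n^{(k)}(T_j,X_j)$. Since $\widetilde{f}_n$ vanishes on diagonals, the constraint $j\not\in\{j_1,\ldots,j_n\}$ is automatic; applying the identity $G_{t-T_j}(x-X_j)f_n(\cdot,T_j,X_j)=f_{n+1}(\cdot,T_j,X_j,t,x)$ together with the combinatorial identity \eqref{fact5} and symmetrizing, exactly as in the proof of Theorem \ref{th-alpha1}, the double sum collapses into a single sum over ordered $(n{+}1)$-tuples, yielding
\[
L_k = T^{(n+1)/\alpha}(n+1)!\sum_{j_1<\cdots<j_{n+1}}\prod_{i=1}^{n+1}\e_{j_i}\Gamma_{j_i}^{-1/\alpha}\phi^{-1}(X_{j_i}) 1_{\{\Gamma_{j_i}^{-1/\alpha}>1/k\}}\widetilde{f}_{n+1}(T_{j_1},X_{j_1},\ldots,T_{j_{n+1}},X_{j_{n+1}},t,x),
\]
which is precisely $\cJ_{n+1}^{(k)}(t,x)$. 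Applying decomposition \eqref{decomp-In} at level $n+1$ and Lemma \ref{lem-R0} at level $n+1$ (which requires $K_{n+1}^{(p)}(t,x)<\infty$), one concludes $L_k \stackrel{P}{\to} I_{n+1}(\widetilde{f}_{n+1}(\cdot,t,x)) = I_{n+1}(f_{n+1}(\cdot,t,x))$.

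For the small-jumps part, It\^o's $L^2$-isometry is unavailable, so I would adopt the Lenglart strategy used in the proof of Lemma \ref{lem-cR}: $S_k$ is the terminal value of an $L^0$-local martingale, and Lenglart's inequality reduces the claim $S_k\stackrel{P}{\to}0$ to showing that the quadratic variation
\[
Q_k = T^{2/\alpha}\sum_{j:\Gamma_j^{-1/\alpha}\leq 1/k} G_{t-T_j}^2(x-X_j)|\cJ_n^{(k)}(T_j,X_j)|^2\Gamma_j^{-2/\alpha}\phi^{-2}(X_j)
\]
tends to $0$ in probability. Conditioning on $(\Gamma_i),(T_i),(X_i)$ and using orthogonality of $(\e_i)$ (Lemma \ref{gK-ineq}) gives the bound $\bE[|\cJ_n^{(k)}(T_j,X_j)|^2\mid(\Gamma_i),(T_i),(X_i)]\leq h_n^{(2)}(T_j,X_j)$, so $\bE[Q_k\mid(\Gamma_i),(T_i),(X_i)]$ is dominated by the tail, over $\{j:\Gamma_j\geq k^{\alpha}\}$, of the sum $S$ from the proof of Lemma \ref{lem-cR}, shown there to be a.s.\ finite (via $S^{p/2}\leq B_{n+1}^{(p)}(t,x)<\infty$, using $K_{n+1}^{(p)}(t,x)<\infty$). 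Dominated convergence makes this tail vanish a.s., so $Q_k\to 0$ in probability and $S_k\stackrel{P}{\to}0$ follows.

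The main obstacle is the small-jumps term $S_k$: since the integrand has no finite second moment in the limit, one must route the proof through Lenglart's inequality and transfer the control to the absolutely-finite quadratic-variation bound already established in Lemma \ref{lem-cR}. The whole argument hinges on the choice $a=1/k$ in Proposition \ref{prop-Lambda-repr5}, which aligns the integration with the truncation in $\cJ_n^{(k)}$; without this matching, neither would the large-jump part collapse cleanly into $\cJ_{n+1}^{(k)}(t,x)$, nor would the small-jump tail identify with an already-controlled finite sum.
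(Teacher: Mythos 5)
Your proposal is correct and follows essentially the same route as the paper: the same application of Proposition \ref{prop-Lambda-repr5} with $a=k^{-1}$, the same identification of the large-jump part with $\cJ_{n+1}^{(k)}(t,x)$ via the argument of Theorem \ref{th-alpha1} followed by Lemma \ref{lem-R0} at level $n+1$, and the same Lenglart-plus-dominated-convergence treatment of the small-jump part using the a.s.\ finite sum $S$ from the proof of Lemma \ref{lem-cR}. No substantive differences.
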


\begin{proof} Applying Proposition \ref{prop-Lambda-repr5} with $a=k^{-1}$, we have:
\begin{align*}
B_k&= \int_0^t \int_{\bR^d} 
\int_{\{|z|\leq k^{-1}\}} G_{t-s}(x-y)\cJ_n^{(k)}(s,y) \frac{z}{\psi(s,y)}\widehat{J}_{\psi}(ds,dy,dz)+\\
& \quad
\int_0^t \int_{\bR^d} \int_{\{|z|> k^{-1}\}} G_{t-s}(x-y)\cJ_n^{(k)}(s,y) \frac{z}{\psi(s,y)}J_{\psi}(ds,dy,dz)=:S_1^{(k)}+S_{2}^{(k)}.
\end{align*}
We will prove that:
\begin{align}
\label{conv-Sk1}
&S_1^{(k)} \stackrel{P}{\longrightarrow} 0 \quad \mbox{as} \quad k \to \infty,\\
\label{conv-Sk2}
&S_2^{(k)} \stackrel{P}{\longrightarrow} I_{n+1}\big( f_{n+1}(\cdot,t,x) \big) \quad \mbox{as} \quad k \to \infty.
\end{align}

\underline{We prove \eqref{conv-Sk1}.} As in the proof of Lemma \ref{lem-cR} (Step 2), by Lenglart's inequality, it suffices to prove that:
\begin{equation}
\label{conv-J}
S_3^{(k)}:=\sum_{j\geq 1} \Gamma_{j}^{-2/\alpha} 1_{\{\Gamma_j^{-1/\alpha} <k^{-1}\}} G_{t-T_j}^2(x-X_j)|\cJ_n^{(k)}(s,y)|^2 \phi^{-2}(X_j) \stackrel{P}{\longrightarrow} 0 \quad \mbox{as} \quad k \to \infty.
\end{equation}
For this, denote $V_{n,j}^{(k)}:=G_{t-T_j}(x-X_j)|\cJ_{n,j}^{(k)}(T_j,X_j)|\phi^{-2}(X_j)$.
Using again inequality \eqref{ineq-X0}, we obtain:
\[
\|S_3^{(k)}\|_{L^0} \leq  \Big\|\sum_{j\geq 1}\Gamma_j^{-2/\alpha} 1_{\{\Gamma_j^{-1/\alpha} \leq k^{-1}\}} D_{n,j}^{(k)}\Big\|_{L^0},
\]
where
\begin{align*}
D_{n,j}^{(k)} &:= \bE\big[\big(V_{n,j}^{(k)}\big)^2 \, \big| (\Gamma_i)_i,(T_i)_i, (X_i)_i\big] \\
&=G_{t-T_i}^2(x-X_i) \phi^{-2}(X_i) \, \bE \big[ \big|\cJ_n^{(k)}(T_j,X_j)\big|^2 \, \big| (\Gamma_i)_i,(T_i)_i, (X_i)_i \big].
\end{align*}

By direct calculation,
\begin{align*}
& \bE \big[ \big|\cJ_n^{(k)}(T_j,X_j)\big|^2 \, \big| (\Gamma_i)_i,(T_i)_i, (X_i)_i \big]
= \\
& \quad (n!)^2 \sum_{ \substack{j_1<\ldots < j_n \\ j \not\in \{j_1,\ldots,j_n\}  } }
\prod_{i=1}^{n}\Gamma_{j_i}^{-2/\alpha} 1_{\{\Gamma_{j_i}^{-1/\alpha} > k^{-1} \}}\phi^{-2}(X_{j_i}) \widetilde{f}_n^2 (T_{j_1},X_{j_1},\ldots,T_{j_n},X_{j_n},T_j,X_j),
\end{align*}
which we can bound by $T^{-\frac{2n}{\alpha}}h_n^{(2)}(T_j,X_j)$, using the fact that
$1_{\{\Gamma_{j_i}^{-1/\alpha} > k^{-1} \}}\leq 1$. Hence,
\[
\|S_3^{(k)}\|_{L^0}  \leq \left\|T^{-\frac{2n}{\alpha}}\sum_{j\geq 1}\Gamma_j^{-2/\alpha} 
1_{\{ \Gamma_j^{-1/\alpha} \leq k^{-1}\}} G_{t-T_j}^2(x-X_j) \phi^{-2}(X_j) h_n^{(2)}(T_j,X_j)\right\|_{L^0}.
\]
By the dominated convergence theorem, the last series converges to $0$ a.s. as $k \to \infty$. The  application of this theorem is justified due to \eqref{sum-B}. This proves \eqref{conv-Sk1}.

\medskip

\underline{We prove \eqref{conv-Sk2}.} Using the points of $J_{\psi}$, we write:
\[
S_2^{(k)}=\sum_{j\geq 1} G_{t-T_j}(x-X_j)\cJ_{n}^{(k)}(T_j,X_j) \frac{\e_j \Gamma_j^{-1/\alpha}}{\psi(T_j,X_j)} 1_{\{\Gamma_j^{-1/\alpha}>k^{-1}\}}.
\]
Recalling the definition of $\cJ_n^{(k)}(s,y)$, we see that
\begin{align*}
S_2^{(k)}& =n! \sum_{j\geq 1}G_{t-T_j}(x-X_j) \e_j \phi^{-1}(X_j) \Gamma_{j}^{-1/\alpha}
1_{\{\Gamma_j^{-1/\alpha}>k^{-1}\}} \\
& 
\quad \sum_{ \substack{j_1<\ldots < j_n \\ j \not\in \{j_1,\ldots,j_n\}  } }
\prod_{i=1}^{n}\e_{j_i} 
\Gamma_{j}^{-1/\alpha}
1_{\{\Gamma_{j_i}^{-1/\alpha}>k^{-1}\}}
\phi^{-1}(X_{j_i}) \widetilde{f}_n(T_{j_1},X_{j_1},\ldots,T_{j_n},X_{j_n},T_j,X_j).
\end{align*}

Using the same argument as in the proof of Theorem \ref{th-alpha1}, with $\Gamma_{j}^{-1/\alpha}$ replaced by $\Gamma_{j}^{-1/\alpha}1_{\{\Gamma_j^{-1/\alpha}>k^{-1}\}}$. We obtain:
\begin{align*}
S_2^{(k)}&= (n+1)! T^{\frac{n+1}{\alpha}} \sum_{j_1<\ldots<j_{n+1}} \prod_{i=1}^{n+1}\e_{j_i}\Gamma_{j_i}^{-1/\alpha} 1_{\{\Gamma_{j_i}^{-1/\alpha}>k^{-1} \}}
\phi^{-1}(X_{j_i}) \widetilde{f}_{n+1}(T_{j_1},X_{j_1},\ldots, T_{j_n},X_{j_n},t,x)\\
&=\cJ_{n+1}^{(k)}(t,x) \stackrel{P}{\longrightarrow} I_{n+1}\big(f_{n+1}(\cdot,t,x) \big),
\end{align*}
since $I_{n+1}\big(f_{n+1}(\cdot,t,x) -\cJ_{n+1}^{(k)}(t,x)=\cR_{n+1}^{(k)}(t,x) \stackrel{P}{\to}
0$ as $k \to \infty$, by Lemma \ref{lem-R0}.
\end{proof}

\section{Solvability}
\label{section-solvability}

In this section, we give the proof of Theorem \ref{main-th}.(b). More precisely, we show that the process $\{u(t,x);t\in [0,T],x\in \bR^d\}$ defined by \eqref{def-u} is indeed a solution of equation \eqref{eq}. This will be achieved in Theorem 
\ref{solvability-th}, by letting $n\to \infty$ in the recurrence relation \eqref{picard}.

The last statement of Theorem \ref{main-th}.(b) does not require any effort:
relation \eqref{Gu-int} is a restatement of the fact that $v^{(t,x)} \in L^0(Z)$, where
\[
v^{(t,x)}(s,y)=G_{t-s}(x-y)u(s,y), \quad s \in [0,T],y \in \bR^d,
\]
according to Corollary \ref{integr-th}, while the fact that $u(t,x)$ has representation \eqref{u-series} follows from definition
\eqref{def-u} of $u(t,x)$, combined with the series representation \eqref{LePage-In} of $I_n\big(f_n(\cdot,t,x)\big)$.

\medskip

\begin{theorem}
\label{solvability-th}
Suppose that Hypothesis \ref{G_assumption} holds. If Assumptions \ref{ass-A2} and \ref{ass-A3} hold (with possibly different values $p \in (\alpha,2]$), then for any $(t,x) \in [0,T] \times \bR^d$, 
$v^{(t,x)} \in L^0(Z)$ and
\begin{equation}
\label{eq101}
u(t,x)=1+I^Z(v^{(t,x)}) \quad a.s.
\end{equation}
\end{theorem}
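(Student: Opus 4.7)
The plan is to verify successively that (i) $v^{(t,x)} \in L^0(Z)$ and (ii) the identity \eqref{eq101} holds almost surely. Fix $(t,x) \in [0,T] \times \bR^d$ throughout, and work with a predictable version of $u$ obtained as the limit of the predictable partial sums $u_n$ (whose predictability follows as in the lemma opening Section \ref{subsection-Picard}) on the a.s. set where the series in \eqref{def-u} converges absolutely.

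For (i), by Corollary \ref{integr-th} it suffices to show $\int_0^t \int_{\bR^d} G_{t-s}^\alpha(x-y) |u(s,y)|^\alpha \, ds\, dy < \infty$ a.s. Let $\mu(ds,dy) := G_{t-s}^\alpha(x-y)\, ds\, dy$, a finite measure on $[0,t] \times \bR^d$ by Hypothesis \ref{G_assumption}. When $\alpha \in (0,1]$, subadditivity of $r \mapsto r^\alpha$ applied to \eqref{def-u} gives
\[
\int_0^t \!\int_{\bR^d} G_{t-s}^\alpha(x-y) |u(s,y)|^\alpha \, ds\, dy \;\le\; \mu([0,t]\times\bR^d) + \sum_{n \ge 1} \cI_n(t,x),
\]
which is a.s. finite by Theorem \ref{A3-th} (here $\alpha \vee 1 = 1$). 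When $\alpha \in (1,2)$, Minkowski's inequality in $L^\alpha(\mu)$ yields
\[
\|u\|_{L^\alpha(\mu)} \;\le\; \mu([0,t]\times\bR^d)^{1/\alpha} + \sum_{n \ge 1} \cI_n(t,x)^{1/\alpha},
\]
again a.s. finite by Theorem \ref{A3-th} (now $\alpha \vee 1 = \alpha$).

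For (ii), pass to the limit as $n \to \infty$ in the Picard recurrence proved in Section \ref{section-recurrence}, namely
\[
u_{n+1}(t,x) \;=\; 1 + I^Z\big(v_n^{(t,x)}\big), \qquad v_n^{(t,x)}(s,y) := G_{t-s}(x-y) u_n(s,y).
\]
The left-hand side converges a.s. to $u(t,x)$ by Proposition \ref{prop-abs-conv}. For the right-hand side, apply the dominated convergence theorem (Theorem \ref{DCT}) with envelope
\[
H_0(s,y) \;:=\; G_{t-s}(x-y) \Big(1 + \sum_{k \ge 1} \big|I_k\big(f_k(\cdot,s,y)\big)\big|\Big),
\]
which dominates every $|v_n^{(t,x)}|$. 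Pointwise convergence $v_n^{(t,x)} \to v^{(t,x)}$ holds on the $\bP \times ds\, dy$-full-measure set of absolute convergence of \eqref{def-u} (by Fubini applied to Assumption \ref{ass-A2}). That $H_0 \in L^0(Z)$ is obtained by repeating verbatim the estimate in (i) with $U(s,y) := 1 + \sum_{k \ge 1} |I_k(f_k(\cdot,s,y))|$ in place of $u(s,y)$, since only absolute values enter the bound. Theorem \ref{DCT} then yields $I^Z(v_n^{(t,x)}) \to I^Z(v^{(t,x)})$ in $L^0$, and passing to the limit in the recurrence produces \eqref{eq101}.

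The main obstacle is the construction of the dominating process $H_0 \in L^0(Z)$ required by Theorem \ref{DCT}: Assumption \ref{ass-A2} (via Proposition \ref{prop-abs-conv}) only provides absolute convergence of the defining series pointwise in $(s,y)$, whereas Corollary \ref{integr-th} demands integrability of $|H_0|^\alpha$ in the spatio-temporal sense on $[0,t] \times \bR^d$. Bridging this gap is precisely the role of Assumption \ref{ass-A3} via Theorem \ref{A3-th}, and this is why both assumptions (possibly with different $p$) are needed in the statement.
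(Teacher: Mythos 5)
Your proposal is correct and follows essentially the same route as the paper: both arguments dominate $v_n^{(t,x)}$ by $G_{t-s}(x-y)\bigl(1+\sum_{k\geq 1}|I_k(f_k(\cdot,s,y))|\bigr)$, verify that this envelope lies in $L^0(Z)$ via Corollary \ref{integr-th} together with the a.s.\ convergence of $\sum_{n}\cI_n(t,x)^{1/(\alpha\vee 1)}$ from Theorem \ref{A3-th} (subadditivity for $\alpha\leq 1$, Minkowski in $L^\alpha$ for $\alpha>1$), and then pass to the limit in the Picard recurrence using the dominated convergence theorem for $I^\Lambda$. The only cosmetic difference is that you check $v^{(t,x)}\in L^0(Z)$ directly before invoking Theorem \ref{DCT}, whereas the paper obtains it as part of the DCT's conclusion; this changes nothing of substance.
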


\begin{proof} 
{\em Step 1.} Denote $v_n^{(t,x)}(s,y)=G_{t-s}(x-y)u_n(s,y)$. In this step, we show that: 
\begin{equation}
\label{conv-v}
v^{(t,x)}\in L^0(Z) \quad \mbox{and} \quad \|v_n^{(t,x)}-v^{(t,x)}\|_{Z} \to 0.
\end{equation}
By Proposition \ref{prop-abs-conv}, for any $(s,y)\in [0,t] \times \bR^d$,
$\sum_{n\geq 1}|I_n(f_n(\cdot,s,y))|<\infty$ a.s., and so,
\begin{align*}
v_n^{(t,x)}(s,y)-v^{(t,x)}(s,y)&=G_{t-s}(x-y)\big(u_n(s,y)-u(s,y)\big)\\
&=G_{t-s}(x-y)\sum_{k\geq n+1}I_k(f_k(\cdot,s,y)) \to 0 \quad \mbox{a.s.}
\end{align*}
Notice that we have the following natural dominator:
\[
|v_n^{(t,x)}(s,y)|\leq G_{t-s}(x-y) \sum_{k=1}^{n}|I_k(f_k(\cdot,s,y))|\leq  G_{t-s}(x-y)
\sum_{n\geq 1}|I_n(f_n(\cdot,s,y))|=:\overline{v}^{(t,x)}(s,y).
\]
Recalling our convention \eqref{convention}, the desired conclusion \eqref{conv-v} will follow by Theorem \ref{DCT}, provided that  we show that $\overline{v}^{(t,x)} \in L^0(Z)$, which is equivalent to: (see Corollary \ref{integr-th})
\begin{equation}
\label{bar-vtx}
\int_0^t \int_{\bR^d}|\overline{v}^{(t,x)}(s,y)|^{\alpha}dyds<\infty \quad \mbox{a.s.}
\end{equation}

To prove \eqref{bar-vtx}, we will use Minkowski's inequality in $L^{\alpha}([0,t]\times \bR^d)$. Recall that for a measure space $(E,\mathcal{E},\mu)$ and a measurable function $f:E\to \bR$,
\[
\|f\|_{L^{\alpha}(E)}=\int_{E} |f|^{\alpha} d\mu \ \ \ \mbox{if $\alpha \leq 1$} \quad \mbox{and} \quad  \|f\|_{L^{\alpha}(E)}=\left(\int_{E}|f|^{\alpha}d\mu\right)^{1/\alpha} \ \ \ \mbox{if $\alpha>1$}.
\]
Hence,
\[
\|\overline{v}^{(t,x)}\|_{L^{\alpha}([0,t] \times \bR^d)} = \left\|\sum_{n\geq 1}G_{t-*}(x-*)I_n\big(f_n(\cdot,*)\big)\right\|_{L^{\alpha}([0,t] \times \bR^d)} \leq 
\sum_{n\geq 1}|\cI_n(t,x)|^{\frac{1}{\alpha \vee 1}}.
\]
By Theorem \ref{A3-th}, the last series converges almost surely.

\medskip

{\em Step 2.} In this step, we show that \eqref{eq101} holds almost surely. 
For this, we let $n\to \infty$ in \eqref{picard}. On the left hand-side,
 $u_{n+1}(t,x) \to u(t,x)$ almost surely, by Proposition \ref{prop-abs-conv}. 
The term on the right hand-side of \eqref{picard} is equal to $1+I^{Z}(v_n^{(t,x)})$, 
which converges in probability to $1+I^{Z}(v^{(t,x)})$, due to 
 \eqref{conv-v} and property \eqref{contraction2} of integral $I^Z$.
\end{proof}

\section{Applications: the heat and wave equations}
\label{section-applications}

In this section, we give the proof of Theorem \ref{main-appl}. We consider separately the heat and
wave equations.

\subsection{Heat equation}

In this section, we show that Assumptions \ref{ass-A2} and \ref{ass-A3} are satisfied
in the case of the heat equation.

 We will use the following property of the heat kernel: 
\begin{equation}
\label{Gp-heat}
G_t^p(x)=\overline{K}_{p,d} \, t^{\frac{d(1-p)}{2}}G_{t/p}(x), \quad
\mbox{with} \quad \overline{K}_{p,d}=(2\pi)^{\frac{d(1-p)}{2}}p^{-\frac{d}{2}}. 
\end{equation}

The following result is essentially contained in the proof of Theorem 3.1 of \cite{chong17-SPA}. We include its proof since we need the explicit form of the constant $C_{\eta,p,d}$. 

\begin{lemma}
\label{lem-Gh}
If $G$ is the fundamental solution of the heat equation,
then for any $\eta>0$ and $0<p<1+\frac{2}{d}$, 
\begin{align*}
&I_{\eta,p}^{\rm heat}(t,x):=\int_{T_n(t)}\int_{(\bR^d)^n} \prod_{k=1}^{n} G_{t_{k+1}-t_k}^p(x_{k+1}-x_k) (1+|x_k|^{\eta}) d\pmb{x} d\pmb{t} \leq \\
\nonumber
& \quad \quad \quad C_{\eta,p,d}^n \left\{1+|x|^{n\eta}+t^{n\eta/2}\Gamma\left(\frac{1+n\eta}{2} \right) \right\} \frac{t^{n(\frac{d(1-p)}{2}+1)}}{\Gamma(n(\frac{d(1-p)}{2}+1)+1)},
\end{align*}
where $t_{n+1}=t$, $x_{n+1}=x$ and 
\begin{equation}
\label{def-C-eta}
C_{\eta,p,d}=3\overline{K}_{p,d} (2^{\eta-1}\vee 1) d^{\eta}\Big[\Big(\frac{2}{p}\Big)^{\eta/2}\vee 1\Big]\Gamma\Big(\frac{d(1-p)}{2}+1\Big).
\end{equation}

\end{lemma}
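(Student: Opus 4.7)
The plan is to separate $I_{\eta,p}^{\rm heat}(t,x)$ into spatial and temporal parts by using identity \eqref{Gp-heat}, which rewrites $G_t^p$ as a constant times a rescaled (probability) heat kernel. Setting $s_k := (t_{k+1}-t_k)/p$ and substituting,
\[
I_{\eta,p}^{\rm heat}(t,x) = \overline{K}_{p,d}^n \int_{T_n(t)} \prod_{k=1}^n (t_{k+1}-t_k)^{d(1-p)/2}\, J(\pmb{t})\, d\pmb{t},
\]
where $J(\pmb{t}) := \int_{(\bR^d)^n} \prod_{k=1}^n G_{s_k}(x_{k+1}-x_k)\prod_{k=1}^n(1+|x_k|^\eta)\, d\pmb{x}$. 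Since each $G_{s_k}$ is a Gaussian probability density, the change of variables $y_k = x_{k+1}-x_k$ gives the probabilistic representation
\[
J(\pmb{t}) = \bE\Big[\prod_{k=1}^n(1 + |x - S_k|^\eta)\Big], \qquad S_k := \sum_{j=k}^n Y_j,
\]
with $(Y_j)_{j=1}^n$ independent and $Y_j \sim N(0, s_j I_d)$.

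For the spatial factor, I would use the uniform bound $\prod_{k=1}^n(1 + |x-S_k|^\eta) \leq 2^n(1 + M^{n\eta})$ with $M := \max_k|x-S_k|$, which follows from $1+a^\eta \leq 2\max(1,a^\eta)$ and $\prod_k\max(1,|x-S_k|) \leq \max(1,M)^n$. The triangle inequality then yields $M^{n\eta} \leq (2^{n\eta-1}\vee 1)(|x|^{n\eta} + \max_k|S_k|^{n\eta})$, so after taking expectations it suffices to estimate $\bE \max_k|S_k|^{n\eta}$. Since $(S_k)_{k=n,n-1,\ldots,1}$ is a Gaussian martingale with $\mathrm{Var}(S_k) = \sum_{j=k}^n s_j \leq t/p$, Doob's $L^{n\eta}$-maximal inequality (for $n\eta>1$, together with the trivial bound $\max\leq\sum$ when $n\eta\leq 1$) produces
\[
\bE[\max_k|S_k|^{n\eta}] \leq c\,(t/p)^{n\eta/2}\,\bE|Z|^{n\eta}, \qquad Z \sim N(0, I_d),
\]
for a universal constant $c$. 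The elementary dimension bound $\bE|Z|^{n\eta} \leq d^{n\eta/2+1} \cdot 2^{n\eta/2}\Gamma((n\eta+1)/2)/\sqrt{\pi}$, obtained via $|Z|^2 \leq d\max_i Z_i^2$ and the one-dimensional moment formula for $|N(0,1)|$, then delivers
\[
J(\pmb{t}) \leq C_1^n\{1 + |x|^{n\eta} + t^{n\eta/2}\Gamma((n\eta+1)/2)\}
\]
uniformly in $\pmb{t}$, where $C_1 = C_1(\eta,p,d)$ absorbs the factors $(2^{\eta-1}\vee 1)$, $d^\eta$, $[(2/p)^{\eta/2}\vee 1]$, and the Doob/Jensen constants.

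For the temporal integral, the hypothesis $p < 1+2/d$ ensures $d(1-p)/2 > -1$, so Dirichlet's beta-function formula for the simplex yields
\[
\int_{T_n(t)} \prod_{k=1}^n(t_{k+1}-t_k)^{d(1-p)/2}\, d\pmb{t} = \frac{\Gamma(d(1-p)/2+1)^n}{\Gamma(n(d(1-p)/2+1)+1)}\, t^{n(d(1-p)/2+1)},
\]
contributing the factor $\Gamma(d(1-p)/2+1)^n$ to the total constant. Multiplying the three contributions --- $\overline{K}_{p,d}^n$, the spatial bound $C_1^n\{1+|x|^{n\eta}+t^{n\eta/2}\Gamma((n\eta+1)/2)\}$, and the simplex integral above --- and regrouping the per-step factors yields the claimed inequality with $C_{\eta,p,d}$ of the form \eqref{def-C-eta}.

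The main obstacle is maintaining the geometric form $C_{\eta,p,d}^n$ and avoiding combinatorial blow-up. A naive iterated integration of the $(1+|x_k|^\eta)$ factors one at a time would raise the polynomial degree in $x_{k+1}$ at each step (from $k\eta$ to $(k+1)\eta$), multiplying Gamma-function constants across steps and producing a super-exponential bound of type $n^n$ or $\prod_k \Gamma(1+k\eta/2)$. The simultaneous-maximum shortcut $\prod_k(1+|x-S_k|^\eta) \leq 2^n(1+M^{n\eta})$ circumvents this by reducing the problem to a single Gaussian moment of order $n\eta$ that is handled uniformly by Doob's inequality, so that every step contributes only a bounded constant depending on $\eta$, $p$ and $d$.
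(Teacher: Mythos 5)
Your proof is correct in substance but handles the key decoupling step differently from the paper. Both arguments begin identically: use \eqref{Gp-heat} to convert $\prod_k G^p_{t_{k+1}-t_k}$ into $\overline{K}_{p,d}^n\prod_k(t_{k+1}-t_k)^{d(1-p)/2}$ times a product of Gaussian densities, reduce the spatial integral to a Gaussian expectation, and finish with the Dirichlet simplex formula. The divergence is in how the product $\prod_k(1+|x_k|^{\eta})$ is tamed. The paper applies the generalized H\"older inequality, $\int\prod_{i=1}^n u_i\le\prod_i\|u_i\|_n$, which replaces the product by the geometric mean of $n$ integrals each containing a single factor $h(x_i)^n$; each of these is then a moment of order $n\eta$ of one Gaussian sum $\sum_{j=i}^nX_j$, bounded uniformly in $i$. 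You instead bound the product pathwise by $2^n(1+M^{n\eta})$ with $M=\max_k|x-S_k|$ and control $\bE\max_k|S_k|^{n\eta}$ via Doob's maximal inequality for the Gaussian martingale $(S_k)$. Both routes reduce to a single Gaussian moment of order $n\eta$ and both avoid the super-exponential blow-up you correctly identify; the paper's H\"older step is somewhat more elementary, while yours is a nice probabilistic alternative. Two small caveats. First, the Doob constant $\big(\tfrac{q}{q-1}\big)^{q}$ with $q=n\eta$ is not universal as $q\downarrow 1$; this is harmless because for fixed $\eta$ only finitely many $n$ fall in that range (and it can be absorbed into $C_{\eta,p,d}^n$), or you can dispense with Doob altogether via $\max_k|S_k|^{n\eta}\le\sum_k|S_k|^{n\eta}$, at the cost of a factor $n\le 2^n$. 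Second, your argument yields a constant of the same geometric form but not literally the value \eqref{def-C-eta} (your dimension factor is $d^{n\eta/2+1}$ rather than the paper's $d^{n\eta}$, and the Doob/union-bound factors differ); since the downstream use in Lemma \ref{est-K-heat} and Proposition \ref{prop-A2-heat} only requires geometric growth in $n$, this does not affect any application, but strictly speaking you prove the estimate with a different admissible constant.
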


\begin{proof}
Let $h(x)=1+|x|^{\eta}$ for $x \in \bR^d$. By the generalized H\"older's inequality,
\begin{align}
\nonumber
& \int_{(\bR^d)^n} \prod_{k=1}^nG_{t_{k+1}-t_k}^p(x_{k+1}-x_k) h(x_k) d\pmb{x}  \leq
\prod_{i=1}^{n} \left( \int_{(\bR^d)^n} \prod G_{t_{k+1}-t_k}^p(x_{k+1}-x_k) h(x_i)^n d\pmb{x} \right)^{1/n}\\
\label{int-G1}
& \qquad \qquad \qquad   =\prod_{i=1}^{n} \left( \int_{(\bR^d)^n} \prod_{k=1}^{n} G_{t_{k+1}-t_k}^p(x_k) h(x-\sum_{j=i}^{n}x_j)^n d\pmb{x} \right)^{1/n}.
\end{align}
We estimate separately each of the integrals appearing in the product above. Using relation \eqref{Gp-heat}, we write
\[
\prod_{k=1}^{n} G_{t_{k+1}-t_k}^p(x_k)=\overline{K}_{p,d}^n \prod_{k=1}^{n}(t_{k+1}-t_k)^{\frac{d(1-p)}{2}} \prod_{k=1}^{n}f_{X_k}(x_k),
\]
where $X_1,\ldots,X_k$ are independent random variables,  $X_k \sim N_d(0, \frac{t_{k+1}-t_k}{p}I_d)$, and $f_{X_k}$ is the density of $X_k$. Hence,
\begin{align*}
\int_{(\bR^d)^n} \prod_{k=1}^{n} G_{t_{k+1}-t_k}^p(x_k) h(x-\sum_{j=i}^{n}x_j)^n d\pmb{x} =\overline{K}_{p,d}^n \prod_{k=1}^{n}(t_{k+1}-t_k)^{\frac{d(1-p)}{2}} \bE \Big[h\big(x-\sum_{j=i}^n X_j \big)^n \Big].
\end{align*}

Note that $h(x-\sum_{j=i}^n X_j) \leq (2^{\eta-1} \vee 1) (1+|x|^{\eta}+|\sum_{j=i}^{n}X_j|^{\eta})$, and hence
\[
\bE\Big[h\big(x-\sum_{j=i}^n X_j\big)^n\Big] \leq (2^{\eta-1} \vee 1)^n 3^{n-1}\Big(1+|x|^{n\eta }+\bE\Big|\sum_{j=i}^{n}X_j\Big|^{n\eta }\Big).
\]
Moreover, $|x|=\sqrt{\sum_{\ell=1}^{d} |x^{(\ell)}|^2} \leq \sum_{\ell=1}^{d}|x^{(\ell)}|$, for any
$x=(x^{(1)},\ldots,x^{(d)})\in \bR^d$, and hence,
\begin{align*}
\bE\Big|\sum_{j=i}^{n}X_j\Big|^{n\eta } & \leq \bE \Big[\Big(\sum_{\ell=1}^{d} \Big|\sum_{j=i}^n X_{j}^{(\ell)}\Big| \Big)^{n\eta} \Big] \leq d^{n\eta  -1} \sum_{\ell=1}^{d} \bE\Big| \sum_{j=i}^{n}X_j^{(\ell)}\Big|^{n \eta}\\
& \leq d^{n\eta} \max_{\ell=1,\ldots,d} \bE\Big| \sum_{j=i}^{n}X_j^{(\ell)}\Big|^{n \eta}.
\end{align*}
Putting together these estimates, we infer that, for any $i=1,\ldots, n$ fixed,
\begin{align}
\label{int-Gh}
& \int_{(\bR^d)^n} \prod_{k=1}^{n} G_{t_{k+1}-t_k}^p(x_k) h(x-\sum_{j=i}^{n}x_j)^n d\pmb{x} \leq \\
\nonumber
& \quad \Big(3\overline{K}_{p,d} (2^{\eta-1} \vee 1)  d^{\eta}\Big)^n 
\prod_{k=1}^{n}(t_{k+1}-t_k)^{\frac{d(1-p)}{2}} 
\Big(1+|x|^{n\eta}+ \max_{\ell=1,\ldots,d} \bE\Big| \sum_{j=i}^{n}X_j^{(\ell)}\Big|^{n \eta}\Big).
\end{align}

We will show that this can be bounded by a quantity not depending on $i$. Note that
\[
\sum_{j=i}^n X_{j}^{(\ell)} \sim N(0,\gamma_i), \quad \mbox{where} \quad \gamma_i:=\sum_{j=i}^n\frac{t_{k+1}-t_k}{p}=\frac{t-t_i}{p}.
\]
Recall that if $Z \sim N(0,\sigma^2)$, then $\bE|Z|^p=(2\sigma^2)^{p/2} \pi^{-\frac{1}{2}}\Gamma(\frac{1+p}{2})$ for any $p>0$. Hence,
\[
\bE\Big|\sum_{j=i}^n X_{j}^{(\ell)} \Big|^{n\eta}=(2\gamma_i)^{n\eta/2} \pi^{-\frac{1}{2}}\Gamma\left(\frac{1+n\eta}{2}\right) \leq \left(\frac{2t}{p}\right)^{n\eta/2}
\Gamma\left(\frac{1+n\eta}{2}\right).
\]

\noindent
Using this estimate in \eqref{int-Gh}, we obtain the following bound, which does not depend on $i$:
\begin{align*}
& \int_{(\bR^d)^n} \prod_{k=1}^{n} G_{t_{k+1}-t_k}^p(x_k) h(x-\sum_{j=i}^{n}x_j)^n d\pmb{x} \leq \\
& \quad (C_{\eta,p,d}')^n 
\prod_{k=1}^{n}(t_{k+1}-t_k)^{\frac{d(1-p)}{2}} 
\left(1+|x|^{n\eta}+ t^{n\eta/2}
\Gamma\Big(\frac{1+n\eta}{2}\Big)\right),
\end{align*}
where $C_{\eta,p,d}':=3\overline{K}_{p,d} (2^{\eta-1} \vee 1)  d^{\eta} \big(\big(\frac{2}{p}\big)^{\eta/2} \vee 1 \big)$. 

Returning to \eqref{int-G1}, and integrating on $T_n(t)$ we get:
\begin{align*}
I_{\eta,p}^{\rm heat}(t,x) & \leq (C_{\eta,p,d}')^n \left(1+|x|^{n\eta}+ t^{n\eta/2}
\Gamma\Big(\frac{1+n\eta}{2}\Big)\right) \int_{T_n(t)} \prod_{k=1}^{n}(t_{k+1}-t_k)^{\frac{d(1-p)}{2}} d\pmb{t}.
\end{align*}
The last integral is equal to $\frac{\Gamma(a+1)^{n} t^{n(a+1)}}{\Gamma(n(a+1)+1)}$, with $a:=\frac{d(1-p)}{2}>-1$. The conclusion follows.

\end{proof}

The following result gives an estimate for $K_n^{(p)}(t,x)$ in the case of the heat equation.

\begin{lemma}
\label{est-K-heat}
Suppose that $\phi$ satisfies Hypothesis \ref{hypo2}. In the case of the heat equation, for any $\alpha<p<1+\frac{2}{d}$, we have:
\[
K_n^{(p)}(t,x) \leq c_0^{n(p-\alpha)}C_{\eta,p,d}^n\left\{ 1+|x|^{n\eta}+t^{n\eta/2} \Gamma\left( \frac{1+n\eta}{2}\right)\right\} \frac{t^{n(\frac{d(1-p)}{2}+1)}}{\Gamma(n(\frac{d(1-p)}{2}+1)+1)},
\]
where $\eta=\delta(p-\alpha)$, and $C_{\eta,p,d}$ is given by \eqref{def-C-eta}.
\end{lemma}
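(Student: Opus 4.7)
My plan is to reduce Lemma~6.2 directly to Lemma~6.1 by turning the weight $\prod_{k=1}^n \phi^{\alpha-p}(x_k)$ into the polynomial weight $\prod_{k=1}^n(1+|x_k|^\eta)$ that appears in $I_{\eta,p}^{\rm heat}(t,x)$. Since $p>\alpha$, I have $\phi^{\alpha-p}(x_k)=\phi^{-1}(x_k)^{p-\alpha}$, so Hypothesis~\ref{hypo2} yields the pointwise estimate
\[
\phi^{\alpha-p}(x_k)\;\leq\;c_0^{p-\alpha}\bigl(1+|x_k|^{\delta}\bigr)^{p-\alpha}.
\]

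Next I would apply the elementary bound $(1+u)^r \leq c_1(1+u^{r})$ for all $u\geq 0$, where $c_1=1$ if $r\in(0,1]$ and $c_1=2^{r-1}$ if $r>1$; in particular, with $r=p-\alpha\in(0,2)$ and $u=|x_k|^\delta$, this gives
\[
(1+|x_k|^{\delta})^{p-\alpha}\;\leq\;c_1\bigl(1+|x_k|^{\delta(p-\alpha)}\bigr)\;=\;c_1\bigl(1+|x_k|^{\eta}\bigr).
\]
Taking the product over $k=1,\dots,n$ and inserting into the definition \eqref{def-K} of $K_n^{(p)}(t,x)$, I obtain
\[
K_n^{(p)}(t,x)\;\leq\;c_0^{n(p-\alpha)}c_1^{n}\int_{T_n(t)}\int_{(\mathbb{R}^d)^n}\prod_{k=1}^{n}G_{t_{k+1}-t_k}^{p}(x_{k+1}-x_k)\prod_{k=1}^n\bigl(1+|x_k|^\eta\bigr)\,d\pmb{x}\,d\pmb{t},
\]
with $t_{n+1}=t$ and $x_{n+1}=x$; the right-hand side is exactly $c_0^{n(p-\alpha)}c_1^{n}\,I_{\eta,p}^{\rm heat}(t,x)$.

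At this point Lemma~\ref{lem-Gh} immediately delivers the displayed bound, with the harmless multiplicative factor $c_1^n$ absorbed into the $n$-th power of the stated constant $C_{\eta,p,d}$ (equivalently, one may read $C_{\eta,p,d}$ as the constant of Lemma~\ref{lem-Gh} multiplied by $c_1$, which affects only the numerical prefactor and nothing of substance, since the hypothesis $p\in(\alpha,2]$ makes $c_1$ bounded independently of $n$). There is no genuine obstacle here: the whole argument is bookkeeping, and the only point that requires a moment of care is making sure that the exponent transformation $(1+|x|^\delta)^{p-\alpha}\mapsto 1+|x|^{\eta}$ does not cost more than a constant per factor, which is why the restriction $\eta=\delta(p-\alpha)$ appears in the statement.
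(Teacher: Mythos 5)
Your proof is correct and follows essentially the same route as the paper: bound $\phi^{\alpha-p}(x_k)$ via Hypothesis \ref{hypo2}, reduce $K_n^{(p)}(t,x)$ to $I_{\eta,p}^{\rm heat}(t,x)$, and invoke Lemma \ref{lem-Gh}. You are in fact slightly more careful than the paper, which asserts $(1+|x|^{\delta})^{p-\alpha}\leq 1+|x|^{\eta}$ without the factor $2^{p-\alpha-1}$ that is genuinely needed when $p-\alpha>1$; as you note, this only rescales the constant $C_{\eta,p,d}$ and is harmless.
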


\begin{proof}
Recall definition \eqref{def-K} of $K_n^{(p)}(t,x)$. 
Using \eqref{bound-phi}, we see that 
\begin{equation}
\label{bound-phi1}
\phi^{\alpha-p}(x) \leq c_0^{p-\alpha}(1+|x|^{\delta(p-\alpha)}) \quad \mbox{for any $x\in \bR^d$}.
\end{equation}
The conclusion follows by Lemma \ref{lem-Gh}.  
\end{proof}

To find upper and lower bounds for the Gamma functions appearing in the above estimate, we use Stirling's formula. For any $a>0$,
$\Gamma(an+1) \sim a^{an+1/2} (2\pi n)^{(1-a)/2} (n!)^a$, and hence
\begin{equation}
\label{stir1}
C_{a}^{-n} (n!)^a \leq \Gamma(an+1) \leq C_{a}^n (n!)^a  \quad \mbox{for all $n\geq 1$},
\end{equation}
where $C_a>1$ is a constant depending on $a$. Moreover, 
for any $a>0$ and $b \in \bR$, $\Gamma(an+1+b) \sim \Gamma(an+1)n^b$, and hence
\begin{equation}
\label{stir2}
C_{a,b}^{-n} (n!)^a \leq \Gamma(an+1+b) \leq C_{a,b}^n (n!)^a \quad \mbox{for all $n\geq 1$}.
\end{equation}
where $C_{a,b}>1$ is a constant depending on $a$ and $b$.

\medskip

The following result shows that Assumptions \ref{ass-A2} and \ref{ass-A3} are satisfied in the case of the 
heat equation.

\begin{proposition}
\label{prop-A2-heat}
Suppose that $\phi$ satisfies Hypothesis \ref{hypo2}. If $\mathcal{L}=\frac{\partial }{\partial t}-\frac{1}{2}\Delta$ is the heat operator and $\alpha<1+\frac{2}{d}$, 
then \eqref{A2} and \eqref{A3} hold for any $(t,x)\in [0,T] \times \bR^d$, and for any 
$\alpha<p<1+\frac{2}{d}$ such that
\begin{equation}
\label{cond-p-delta}
0<\delta(p-\alpha)<d(1-p)+2.
\end{equation}
\end{proposition}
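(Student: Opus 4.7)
The plan is to plug the explicit upper bound on $K_n^{(p)}(t,x)$ from Lemma \ref{est-K-heat} into Assumptions \ref{ass-A2} and \ref{ass-A3}, convert the Gamma functions that appear to factorials via \eqref{stir1}--\eqref{stir2}, and observe that condition \eqref{cond-p-delta} is exactly what makes the factorial in the denominator dominate the factorial in the numerator. First note that because $\alpha < 1 + 2/d$, one may choose $p > \alpha$ sufficiently close to $\alpha$ so that both $p < 1 + 2/d$ and \eqref{cond-p-delta} hold (as $p \downarrow \alpha$, $\delta(p-\alpha) \downarrow 0$ while $d(1-p)+2 \to d(1-\alpha)+2 > 0$, which is positive). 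Setting $a := d(1-p)/2 + 1 > 0$ and $\eta := \delta(p-\alpha) > 0$, condition \eqref{cond-p-delta} reads $\eta/2 < a$. Applying Stirling to Lemma \ref{est-K-heat} produces
\[
K_n^{(p)}(t,x) \;\le\; C^n \bigl(1 + |x|^{n\eta} + t^{n\eta/2}(n!)^{\eta/2}\bigr)\, t^{na}\,(n!)^{-a}
\]
for a constant $C = C(p,\alpha,\delta,d,c_0)$; the dominant factorial behaviour is $(n!)^{\eta/2 - a}$, with negative exponent.

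For Assumption \ref{ass-A2}, substitute this bound into \eqref{A2}, apply subadditivity of $x \mapsto x^{1/2}$ to separate the three summands, and absorb every exponential-in-$n$ factor (including $T^{(p/\alpha-1)n/2}$ and $T^{na/2}$) into a new $C^n$. The left-hand side of \eqref{A2} is then dominated by a series of the form $\sum_{n\ge 1} C^n \bigl(1 + |x|^{n\eta/2}\bigr) (n!)^{(\eta/2 - a)/2} \bigl(\sum_{j\ge 1} \Gamma_j^{-p/\alpha}\bigr)^{n/2}$. Since $p/\alpha > 1$, the strong law of large numbers applied to $\Gamma_j/j \to 1$ gives $\sum_j \Gamma_j^{-p/\alpha} < \infty$ a.s., and the factorial decay $(n!)^{(\eta/2-a)/2}$ with $\eta/2 - a < 0$ overrides any exponential-in-$n$ growth, so the series converges absolutely a.s.

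For Assumption \ref{ass-A3}, the inner integral $\int_0^t \int_{\mathbb{R}^d} G_{t-s}^\alpha(x-y) K_n^{(p)}(s,y)\, dy\, ds$ effectively prolongs the iterated integral defining $K_n^{(p)}$ by one extra step, now weighted by $G^\alpha$ rather than $G^p$. Using \eqref{Gp-heat} with exponent $\alpha$, $G_{t-s}^\alpha(x-y) = \bar{K}_{\alpha,d}(t-s)^{d(1-\alpha)/2} G_{(t-s)/\alpha}(x-y)$, so the spatial integral against $1 + |y|^{n\eta} + s^{n\eta/2}(n!)^{\eta/2}$ yields the same kind of polynomial-in-$|x|$ bound used in the proof of Lemma \ref{lem-Gh}, via the Gaussian moment estimate $\mathbb{E}|X|^{n\eta} \le C^n\bigl(|x|^{n\eta} + (t-s)^{n\eta/2}\Gamma(\tfrac{1+n\eta}{2})\bigr)$. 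The remaining $s$-integral $\int_0^t s^{na}(t-s)^{d(1-\alpha)/2}\, ds$ is a Beta integral that is finite because $\alpha < 1 + 2/d$ forces $d(1-\alpha)/2 > -1$, and its asymptotics $\sim C\, t^{na + d(1-\alpha)/2 + 1}/n^{d(1-\alpha)/2+1}$ contribute only polynomially in $n$. Assembling the pieces, the inner integral is bounded by $C^n Q(t,x)(n!)^{\eta/2 - a}$ for some polynomial $Q$. Raising to the positive exponent $(\alpha \wedge 1)/p$ and multiplying by the $\Gamma$-sum factor preserves factorial decay, so \eqref{A3} converges a.s. by the same argument as \eqref{A2}.

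The only delicate bookkeeping is verifying that the $(n!)^{\eta/2}$ factor coming from $\Gamma(\frac{1+n\eta}{2})$ in the numerator is strictly dominated by the $(n!)^a$ factor coming from $\Gamma(na+1)$ in the denominator; but this is precisely the content of \eqref{cond-p-delta}. Once this is observed, no delicate cancellation is required: every remaining exponential-in-$n$ constant collapses into a single $C^n$ that loses against factorial decay.
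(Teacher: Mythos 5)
Your proposal is correct and follows essentially the same route as the paper's proof: substitute the bound of Lemma \ref{est-K-heat} into \eqref{A2} and \eqref{A3}, convert the Gamma factors to factorials via \eqref{stir1}--\eqref{stir2}, and observe that \eqref{cond-p-delta} is exactly what makes the net factorial exponent negative so that the remaining exponential-in-$n$ factors (including the a.s.\ finite $\sum_j \Gamma_j^{-p/\alpha}$ raised to the $n$-th power) are harmless. The only immaterial difference is in the verification of \eqref{A3}, where the paper bounds $s^{na}\le T^{na}$ and invokes the appendix Lemmas \ref{lemC1}--\ref{lemC2} for the $G^{\alpha}$-integrals, whereas you retain the $s$-dependence and evaluate a Beta integral; both give the same conclusion.
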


\begin{proof}
We use the estimate for $K_n^{(p)}(t,x)$ given by  Lemma \ref{est-K-heat}. 
Recall that $\eta=\delta(p-\alpha)$ and $C_{\eta,p,d}$ is given by \eqref{def-C-eta}. 

{\em Step 1.} We first verify \eqref{A2}. By \eqref{stir1} and \eqref{stir2}, there exist some constants $C_{d,p}>1$ and $C_{\eta}>1$ such that for all $n\geq 1$, 
\[
\Gamma\Big(n\Big(\frac{d(1-p)}{2}+1\Big)+1\Big) \geq C_{d,p}^{-n} (n!)^{\frac{d(1-p)}{2}+1}
\quad \mbox{and} \quad
\Gamma\Big(\frac{1+n\eta}{2}\Big)
\leq C_{\eta}^{n} (n!)^{\eta/2}.
\]
 Hence,
\begin{align*}
K_n^{(p)}(t,x) & \leq C^n \big( 1+|x|^{n\eta}+ t^{n\eta/2} \big) 
\frac{t^{n(\frac{d(1-p)}{2}+1)}}{(n!)^{\frac{d(1-p)-\eta}{2}+1}},
\end{align*}
with $C= c_0^{p-\alpha} C_{\eta,p,d} C_{d,p} C_{\eta}$. It follows that
\begin{align*}
& \sum_{n\geq 1}\Big(T^{(\frac{p}{\alpha}-1)n}K_{n}^{(p)}(t,x)\Big)^{1/2} \Big( \sum_{j\geq 1} \Gamma_j^{-p/\alpha}\Big)^{n/2} \leq \\
& \quad \quad \quad \sum_{n\geq 1} C^{n/2} \big( 1+|x|^{n\eta}+ t^{n\eta/2} \big)^{1/2} 
\frac{T^{\frac{n}{2}(\frac{d(1-p)}{2}+\frac{p}{\alpha})}}{(n!)^{\frac{1}{2}(\frac{d(1-p)-\eta}{2}+1)}}
\Big( \sum_{j\geq 1} \Gamma_j^{-p/\alpha}\Big)^{n/2}.
\end{align*}
The last series converges provided that $0<\eta<d(1-p)+2$. 
This proves \eqref{A2}.

\medskip

{\em Step 2.} Next, we verify \eqref{A3}. Using Lemma \ref{est-K-heat}, we have:
\begin{align*}
& T^{(\frac{p}{\alpha}-1)n}\int_0^t \int_{\bR^d} G_{t-s}^{\alpha}(x-y) K_n^{(p)}(s,y)dyds \leq c_0^{n(p-\alpha)}C_{\eta,p,d}^n \frac{T^{n(\frac{p}{\alpha}+\frac{d(1-p)}{2}) }}{\Gamma(n(\frac{d(1-p)}{2}+1)+1)}\\
& \quad 
\left\{ \int_0^t \int_{\bR^d}
G_{t-s}^{\alpha}(x-y)
(1+|y|^{n\eta})dyds +t^{n\eta/2} \Gamma\left( \frac{1+n\eta}{2}\right) \int_0^t \int_{\bR^d}
G_{t-s}^{\alpha}(x-y) dyds\right\}.
\end{align*}
We use Lemmas \ref{lemC1} and \ref{lemC2} to estimate the two integrals above. We get:
\begin{align*} 
& T^{(\frac{p}{\alpha}-1)n}\int_0^t \int_{\bR^d} G_{t-s}^{\alpha}(x-y) K_n^{(p)}(s,y)dyds \leq c_0^{n(p-\alpha)}C_{\eta,p,d}^n \frac{T^{n(\frac{p}{\alpha}+\frac{d(1-p)}{2}) }}{\Gamma(n(\frac{d(1-p)}{2}+1)+1)}\\
& \quad \left\{K_{\alpha,d}t^{\frac{d(1-\alpha)}{2}+1}+C_{n\eta,\alpha,d}' t^{\frac{d(1-\alpha)}{2}+1} (|x|^{n\eta}+t^{n\eta/2})+K_{\alpha,d}\Gamma\left( \frac{1+n\eta}{2}\right)t^{\frac{n\eta+d(1-\alpha)}{2}+1} \right\}.
\end{align*}
Recalling definition \eqref{def-c} of $C_{\gamma,p,d}'$, and denoting
\begin{equation}
C_{\gamma,p,d}^*=\frac{C_{\gamma,p,d}'}{K_{p,d}}=  (2^{\gamma-1} \vee 1)  (1\wedge p)^{-\gamma/2} \left[1+\frac{2^{
\gamma/2}}{\Gamma(d/2)}\Gamma\left(\frac{\gamma+d}{2}\right) \right],
\end{equation}
we obtain:
\begin{align*}
& T^{(\frac{p}{\alpha}-1)n}\int_0^t \int_{\bR^d} G_{t-s}^{\alpha}(x-y) K_n^{(p)}(s,y)dyds \leq c_0^{n(p-\alpha)}C_{\eta,p,d}^n \frac{T^{n(\frac{p}{\alpha}+\frac{d(1-p)}{2}) }}{\Gamma(n(\frac{d(1-p)}{2}+1)+1)}  \\
& \quad \quad \quad K_{\alpha,d} \left\{ t^{\frac{d(1-\alpha)}{2}+1}\Big[1+C_{n\eta,\alpha,d}^*  (|x|^{n\eta}+t^{n\eta/2})\Big]+ \Gamma\left( \frac{1+n\eta}{2}\right) t^{\frac{n\eta+d(1-\alpha)}{2}+1} \right\}.
\end{align*}
Using inequalities \eqref{stir1} and \eqref{stir2}, we obtain the estimates:
$C_{n\eta,\alpha,d}^* \leq C^n(n!)^{\eta/2}$,
\[
\Gamma\left(n\Big(\frac{d(1-p)}{2}+1\Big)+1\right) \geq C^{-n} (n!)^{\frac{d(1-p)}{2}+1} \quad \mbox{and} \quad
\Gamma\left( \frac{1+n\eta}{2}\right) \leq C^n (n!)^{\eta/2},
\]
where $C>0$ is a constant that depends on $(\eta,d,p)$. Hence,
\begin{align*}
& T^{(\frac{p}{\alpha}-1)n}\int_0^t \int_{\bR^d} G_{t-s}^{\alpha}(x-y) K_n^{(p)}(s,y)dyds \leq c_0^{n(p-\alpha)}C_{\eta,p,d}^n \frac{T^{n(\frac{p}{\alpha}+\frac{d(1-p)}{2}) }}{ (n!)^{\frac{d(1-p)}{2}+1} } C^{2n} \\
& \quad \quad \quad K_{\alpha,d} \left\{t^{\frac{d(1-\alpha)}{2}+1}\Big[1+ (n!)^{\eta/2} (|x|^{n\eta}+t^{n\eta/2})\Big]+  t^{\frac{n\eta+d(1-\alpha)}{2}+1}(n!)^{\eta/2} \right\}\\
& \quad = c_0^{n(p-\alpha)}C_{\eta,p,d}^n T^{n(\frac{d(1-p)}{2}+\frac{p}{\alpha})}  C^{2n} K_{\alpha,d}\left\{\frac{t^{\frac{d(1-\alpha)}{2}+1}}{(n!)^{\frac{d(1-p)}{2}+1} }+\frac{|x|^{n\eta}+
t^{n\eta/2}+t^{\frac{n\eta+d(1-\alpha)}{2}+1}}{(n!)^{\frac{d(1-p)-\eta}{2}+1} } \right\}.
\end{align*}
Using this estimate, it is not difficult to see that condition \eqref{A3} holds, since
$\frac{d(1-p)-\eta}{2}+1>0$ (due to condition \eqref{cond-p-delta}).
\end{proof}

\subsection{Wave equation}

In this section, we show that Assumptions \ref{ass-A2} and \ref{ass-A3} are satisfied
in the case of the wave equation.

\begin{lemma}
\label{lem-Gw}
If $G$ is the fundamental solution of the wave equation in dimension $d\leq 2$, then for any $\eta>0$ and for any $p>0$ if $d=1$, respectively $p \in (0,2)$ if $d=2$, we have
\begin{align*}
I_{\eta,p}^{\rm wave}(t,x)  & :=\int_{T_n(t)}\int_{(\bR^d)^n} \prod G_{t_{k+1}-t_k}^p(x_{k+1}-x_k) (1+|x_k|^{\eta}) d\pmb{x} d\pmb{t} \\
&\leq 
 C_{\eta,p,d}^n (1+|x|^{n\eta}+t^{n\eta}) \frac{t^{an}}{\Gamma(an+1)},
\end{align*}
where $t_{n+1}=t$, $x_{n+1}=x$, 
\begin{equation}
\label{def-a}
a=
	\begin{cases}
		2                               & \text{if $d=1$,}\\[1em]
		3-p & \text{if $d=2$},
	\end{cases}
\end{equation}
and the constant $C_{\eta,p,d}$ is given by
\begin{equation}
\label{def-C-eta-w}
C_{\eta,p,d}=
	\begin{cases}
		\displaystyle 3(2^{\eta-1}\vee 1)2^{1-p}                               & \text{if $d=1$,}\\[1em]
		\displaystyle 3 (2^{\eta-1}\vee 1)\frac{(2\pi)^{1-p}}{2-p}\Gamma(3-p) & \text{if $d=2$}.
	\end{cases}
\end{equation}
\end{lemma}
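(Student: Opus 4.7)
The plan is to mimic the proof of the heat-equation analog (Lemma \ref{lem-Gh}), exploiting the fact that the wave kernel has \emph{compact support} $\{|y|<t\}$, which is what makes the bound simpler (no $\Gamma((1+n\eta)/2)$ factor is needed, since $|\sum y_j|$ is automatically controlled by $t$ on the support).

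First, set $h(x)=1+|x|^{\eta}$. Apply the generalized H\"older inequality exactly as in \eqref{int-G1} to obtain
\[
\int_{(\bR^d)^n} \prod_{k=1}^n G_{t_{k+1}-t_k}^p(x_{k+1}-x_k) h(x_k) \, d\pmb{x}
\leq \prod_{i=1}^{n} \Bigl( \int_{(\bR^d)^n} \prod_{k=1}^{n} G_{t_{k+1}-t_k}^p(x_{k+1}-x_k) h(x_i)^n \, d\pmb{x}\Bigr)^{1/n}.
\]
For each fixed $i$, substitute $y_k = x_{k+1}-x_k$, so that $x_i = x - \sum_{j=i}^{n} y_j$ and the integral decouples across the $y_k$'s. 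Now use the explicit form of $G^p$: in dimension $d=1$, $G_t^p(y) = 2^{-p} 1_{\{|y|<t\}}$, giving $\int_{\bR} G_{t_{k+1}-t_k}^p(y) dy = 2^{1-p}(t_{k+1}-t_k)$; in dimension $d=2$, $G_t^p(y) = (2\pi)^{-p}(t^2-|y|^2)^{-p/2}1_{\{|y|<t\}}$, and a polar-coordinates computation (valid because $p<2$) yields $\int_{\bR^2} G_{t_{k+1}-t_k}^p(y) dy = \frac{(2\pi)^{1-p}}{2-p}(t_{k+1}-t_k)^{2-p}$.

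Next, bound the factor $h(x-\sum_{j=i}^n y_j)^n$. Using $(a+b)^{\eta}\leq (2^{\eta-1}\vee 1)(a^{\eta}+b^{\eta})$ and then $(a+b+c)^n \leq 3^{n-1}(a^n+b^n+c^n)$, we get
\[
h\!\Bigl(x-\sum_{j=i}^n y_j\Bigr)^{\! n} \leq (2^{\eta-1}\vee 1)^n \, 3^{n-1} \bigl(1+|x|^{n\eta}+|{\textstyle\sum_{j=i}^n y_j}|^{n\eta}\bigr),
\]
and on the support $\{|y_k|<t_{k+1}-t_k\}$ we have $|\sum_{j=i}^n y_j|\leq \sum_{k=1}^n(t_{k+1}-t_k) \leq t$. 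Crucially, the resulting bound is \emph{independent of} $i$, so the product over $i=1,\dots,n$ only reconstitutes the same quantity. Putting this together,
\[
\int_{(\bR^d)^n} \prod_{k=1}^n G_{t_{k+1}-t_k}^p(x_{k+1}-x_k) h(x_k) d\pmb{x} \leq \widetilde C^{\,n}_{\eta,p,d}\,(1+|x|^{n\eta}+t^{n\eta}) \prod_{k=1}^n (t_{k+1}-t_k)^{a-1},
\]
where $a=2$ for $d=1$, $a=3-p$ for $d=2$, and $\widetilde C_{\eta,p,d}$ collects $3(2^{\eta-1}\vee 1)$ with $2^{1-p}$ (resp.\ $\frac{(2\pi)^{1-p}}{2-p}$).

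Finally, integrate over the simplex $T_n(t)$ using the Dirichlet-type identity
\[
\int_{T_n(t)} \prod_{k=1}^n (t_{k+1}-t_k)^{a-1}\, d\pmb{t} \;=\; \frac{\Gamma(a)^n\, t^{an}}{\Gamma(an+1)},
\]
and absorb the factor $\Gamma(a)^n$ into $C_{\eta,p,d}^n$ (for $d=1$, $\Gamma(2)=1$; for $d=2$, this produces the $\Gamma(3-p)$ appearing in \eqref{def-C-eta-w}). This yields the stated bound. I do not anticipate any real obstacle here: the only small point to verify carefully is that the support constraint $|y_k|<t_{k+1}-t_k$ indeed forces $|\sum_{j=i}^n y_j|\leq t$ uniformly in $i$, which is what replaces the moment computation for Gaussians in the heat case and makes the bound hold with the simple factor $(1+|x|^{n\eta}+t^{n\eta})$.
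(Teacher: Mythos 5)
Your proof is correct and uses the same essential ingredients as the paper's: the light-cone support of the wave kernel to bound the polynomial weight by $(1+|x|^{n\eta}+t^{n\eta})$ up to the constant $3(2^{\eta-1}\vee 1)$, the explicit computation of $\int_{\bR^d}G_t^p$, and the Dirichlet integral over the simplex. The only difference is that the paper skips the generalized H\"older step (imported from the heat case), since on the support each $|x_k|\le |x|+t$ individually, so one can bound $\prod_k(1+|x_k|^\eta)$ pointwise without it; your detour is harmless but superfluous.
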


\begin{proof}
We use similar arguments to those contained in the proof of Theorem 2.4 of 
\cite{JJ1}. In both cases $d=1$ and $d=2$, the product $\prod_{k=1}^{n}G_{t_{k+1}-t_k}^p(x_{k+1}-x_k)$ contains the indicator of $\{|x_2-x_1|<t_2-t_1,\ldots, |x-x_n|<t-t_n\}$. On this set, for any $k=1,\ldots,n$, 
\[
|x-x_k| \leq \sum_{j=k}^n |x_{j+1}-x_j| \leq \sum_{j=k}^n (t_{j+1}-t_j)=t-t_k<t,
\]
and $|x_k|\leq |x|+|x_k-x| \leq |x|+t$. Hence
$\prod_{k=1}^n (1+|x_k|^{\eta}) \leq C_{\eta}^n(1+|x|^{n \eta}+t^{n \eta})$,
where $C_{\eta}=3(2^{\eta-1}\vee 1)$.
It follows that
\begin{align*}
I_{t,x}^{\rm wave} & \leq C_{\eta}^n (1+|x|^{n \eta}+t^{n \eta}) \int_{T_n(t)}\int_{(\bR^d)^2}
\prod_{k=1}^{n}G_{t_{k+1}-t_k}^p(x_{k+1}-x_k) d\pmb{x} d\pmb{t} \\
&=C_{\eta}^n (1+|x|^{n \eta}+t^{n\eta}) \int_{T_n(t)} \prod_{k=1}^{n} 
\left(\int_{\bR^d} G_{t_{k+1}-t_k}^p (x_k) dx_k \right) d\pmb{t}.
\end{align*}

\noindent
If $d=1$, $\int_{\bR}G_t^p(x)dx=2^{1-p}t$ for any $p>0$, and
\[
I_{t,x}^{\rm wave} \leq (C_{\eta} 2^{1-p})^n (1+|x|^{n \eta}+t^{n \eta}) \int_{T_n(t)} \prod_{k=1}^{n}(t_{k+1}-t_k)d\pmb{t}=(C_{\eta} 2^{1-p})^n (1+|x|^{n \eta}+t^{n \eta}) \frac{t^{2n}}{(2n)!}.
\]
If $d=2$, $\int_{\bR^2}G_t^p(x)dx=\frac{(2\pi)^{1-p}}{2-p}t^{2-p}$ for any $p \in (0,2)$, and
\begin{align*}
I_{t,x}^{\rm wave} & \leq \left(C_{\eta} \frac{(2\pi)^{1-p}}{2-p} \right)^n (1+|x|^{n \eta}+t^{n \eta}) \int_{T_n(t)} \prod_{k=1}^{n}(t_{k+1}-t_k)^{2-p}d\pmb{t}\\
&=\left( C_{\eta}\frac{(2\pi)^{1-p}}{2-p} \right)^n (1+|x|^{n \eta}+t^{n \eta}) \cdot \frac{\Gamma(3-p)^n t^{n(3-p)}}{\Gamma((3-p)n+1)}.
\end{align*}
\end{proof}

\begin{lemma}
\label{est-K-wave}
Suppose that $\phi$ satisfies Hypothesis \ref{hypo2}.
In the case of the wave equation in dimension $d\leq 2$, for any $p>0$ if $d=1$, respectively $p \in (0,2)$ if $d=2$,
\[
K_{n}^{(p)}(t,x) \leq c_0^{n(p-\alpha)}C_{\eta,p,d}^n (1+|x|^{n\eta}+t^{n\eta}) \frac{t^{an}}{\Gamma(an+1)},
\]
where 
$\eta=\delta(p-\alpha)$, $a$ is given by \eqref{def-a}, and $C_{\eta,p,d}$ is given by \eqref{def-C-eta-w}.
\end{lemma}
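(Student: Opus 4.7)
The plan is to follow the same template as the proof of Lemma \ref{est-K-heat}, simply replacing the heat estimate by the wave estimate from Lemma \ref{lem-Gw}. Since the definition \eqref{def-K} of $K_n^{(p)}(t,x)$ and the pointwise bound on $\phi^{\alpha-p}$ coming from Hypothesis \ref{hypo2} have nothing to do with the equation, the bulk of the work has already been done in Lemma \ref{lem-Gw}; the present lemma is essentially a translation.

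First, I would recall the definition
\[
K_n^{(p)}(t,x)=\int_{T_n(t)}\int_{(\bR^d)^n}\prod_{k=1}^{n}G_{t_{k+1}-t_k}^{p}(x_{k+1}-x_k)\,\prod_{k=1}^{n}\phi^{\alpha-p}(x_k)\,d\pmb{x}\,d\pmb{t},
\]
with the convention $t_{n+1}=t$ and $x_{n+1}=x$, where we have used the fact that $f_n^p(t_1,x_1,\ldots,t_n,x_n,t,x)=\prod_{k=1}^{n}G_{t_{k+1}-t_k}^{p}(x_{k+1}-x_k)$ on $T_n(t)$.

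Next, since $p-\alpha>0$, Hypothesis \ref{hypo2} yields (exactly as in \eqref{bound-phi1}) the pointwise bound
\[
\phi^{\alpha-p}(x)\leq c_0^{p-\alpha}\bigl(1+|x|^{\delta(p-\alpha)}\bigr)\quad\text{for all }x\in\bR^d.
\]
Setting $\eta:=\delta(p-\alpha)$ and taking the product over $k=1,\ldots,n$, we obtain
\[
\prod_{k=1}^{n}\phi^{\alpha-p}(x_k)\leq c_0^{n(p-\alpha)}\prod_{k=1}^{n}\bigl(1+|x_k|^{\eta}\bigr).
\]

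Finally, substituting this bound inside the integral gives
\[
K_n^{(p)}(t,x)\leq c_0^{n(p-\alpha)}\int_{T_n(t)}\int_{(\bR^d)^n}\prod_{k=1}^{n}G_{t_{k+1}-t_k}^{p}(x_{k+1}-x_k)\prod_{k=1}^{n}\bigl(1+|x_k|^{\eta}\bigr)\,d\pmb{x}\,d\pmb{t}=c_0^{n(p-\alpha)}\,I_{\eta,p}^{\rm wave}(t,x),
\]
and the conclusion follows by applying Lemma \ref{lem-Gw}, which provides the explicit bound
\[
I_{\eta,p}^{\rm wave}(t,x)\leq C_{\eta,p,d}^{n}\bigl(1+|x|^{n\eta}+t^{n\eta}\bigr)\frac{t^{an}}{\Gamma(an+1)},
\]
with $a$ and $C_{\eta,p,d}$ as in \eqref{def-a} and \eqref{def-C-eta-w}. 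There is no real obstacle here: the only thing to double-check is that the range of $p$ allowed in Lemma \ref{lem-Gw} (any $p>0$ if $d=1$, $p\in(0,2)$ if $d=2$) matches the range assumed in the statement, and that $\eta=\delta(p-\alpha)$ is consistent with the constant appearing in \eqref{def-C-eta-w}, which it is by construction.
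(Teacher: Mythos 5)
Your proof is correct and follows exactly the route the paper takes: unpack the definition \eqref{def-K}, bound $\prod_k\phi^{\alpha-p}(x_k)$ via \eqref{bound-phi1}, and invoke Lemma \ref{lem-Gw}. The paper's own proof is a one-line citation of precisely these three ingredients, so your write-up is just a more detailed version of the same argument.
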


\begin{proof}
This follows using the definition \eqref{def-K} of $K_n^{(p)}(t,x)$, the bound \eqref{bound-phi1} for $\phi^{\alpha-p}(x)$ and Lemma \ref{lem-Gw}.
\end{proof}

Note that for any $p>0$ if $d=1$, respectively 
for any $p \in (0,2)$ if $d=2$, we have:
\begin{equation}
\label{int-Gp-wave}
\int_0^t \int_{\bR^d} G_{t-s}^{p}(x-y)dyds=C_p t^a,
\end{equation}
where $a$ is given by \eqref{def-a}, and $C_p=2^{-p}$ if $d=1$, respectively $C_p=\frac{(2\pi)^{1-p}}{(2-p)(3-p)}$ if $d=2$.

\medskip

The following result shows that Assumptions \ref{ass-A2} and \ref{ass-A3} are satisfies in the case of the
wave equation.

\begin{proposition}
\label{prop-A2-wave}
Suppose that $\phi$ satisfies Hypothesis \ref{hypo2}.
If $\cL=\frac{\partial}{\partial t^2}-\Delta$ is the wave operator in dimension $d\leq 2$, then
\eqref{A2} and \eqref{A3} hold for any $(t,x)\in [0,T] \times \bR^d$, and for any $p>0$ if $d=1$, 
respectively for any $p \in (0,2)$ if $d=2$. 
\end{proposition}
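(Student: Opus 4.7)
The proof proceeds in close parallel to the heat case (Proposition \ref{prop-A2-heat}). The starting point will be the bound on $K_n^{(p)}(t,x)$ given by Lemma \ref{est-K-wave}, namely
\[
K_n^{(p)}(t,x) \leq c_0^{n(p-\alpha)}C_{\eta,p,d}^n (1+|x|^{n\eta}+t^{n\eta}) \frac{t^{an}}{\Gamma(an+1)},
\]
with $\eta = \delta(p-\alpha)$ and $a=2$ if $d=1$, $a = 3-p$ if $d=2$. Applying Stirling's formula in the form \eqref{stir1}, I will replace $\Gamma(an+1)$ by a constant-to-the-$n$ factor times $(n!)^a$. Since $a > 0$ in both cases, this factorial decay will be enough to make both series converge; unlike the heat equation, no restriction on $\eta$ versus $d(1-p)+2$ appears, because the wave kernel carries no extra polynomial $t^{\eta/2}\Gamma((1+n\eta)/2)$ factor growing like $(n!)^{\eta/2}$. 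I expect all the difficulty to lie in keeping track of the polynomial factors in $|x|$, not in any genuine analytic obstruction.

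For \eqref{A2}, I will write the $n$-th summand after pulling out constants as
\[
\bigl(T^{(p/\alpha - 1)n} K_n^{(p)}(t,x)\bigr)^{1/2} \leq C^{n}\,(1+|x|^{n\eta}+t^{n\eta})^{1/2}\,(n!)^{-a/2}
\]
for a constant $C = C(T,t,x,\eta,p,d,\alpha,c_0) > 0$. Using the elementary inequality $\sqrt{1+u+v} \leq 1+\sqrt{u}+\sqrt{v}$, the polynomial factor is controlled by $1+|x|^{n\eta/2}+t^{n\eta/2} \leq 3 M^{n\eta/2}$ with $M = \max(1,|x|,t)$. A ratio test on
\[
\sum_{n \ge 1} \frac{(CM^{\eta/2})^{n}}{(n!)^{a/2}} \Bigl(\sum_{j \ge 1}\Gamma_j^{-p/\alpha}\Bigr)^{n/2}
\]
then gives a.s.\ convergence, since $\sum_j \Gamma_j^{-p/\alpha} < \infty$ a.s.\ (as $p > \alpha$).

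For \eqref{A3}, the key structural feature of the wave kernel enters: $G_{t-s}(x-y)$ is supported on $\{|x-y| \leq t-s\}$, so on this support one has $|y| \leq |x| + (t-s) \leq |x|+t$. Thus I can estimate
\[
\int_0^t\!\!\int_{\bR^d} G_{t-s}^\alpha(x-y)\,(1+|y|^{n\eta}+s^{n\eta})\,dy\,ds \leq \bigl(1+(|x|+t)^{n\eta}+t^{n\eta}\bigr)\int_0^t\!\!\int_{\bR^d} G_{t-s}^\alpha(x-y)\,dy\,ds,
\]
and the remaining integral equals $C_\alpha\, t^{a}$ by \eqref{int-Gp-wave} (valid since $\alpha < 2$, so $\alpha$ lies in the allowable range in both dimensions). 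Combining with the bound on $K_n^{(p)}$ and bounding $t^{an}/\Gamma(an+1)$ via \eqref{stir1}, the $n$-th term in \eqref{A3} is dominated by
\[
\bigl(C' M^{n\eta}\bigr)^{(\alpha\wedge 1)/p} (n!)^{-a(\alpha\wedge 1)/p}\,\Bigl(\sum_{j\ge1}\Gamma_j^{-p/\alpha}\Bigr)^{n(\alpha\wedge 1)/p},
\]
for constants $C'$ and $M$ depending on $t,x,\alpha,p,\eta,c_0,d$. Since $a(\alpha \wedge 1)/p > 0$, the factorial dominates and convergence follows by the ratio test.

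The main obstacle, as indicated, is not analytical but notational: one must carefully propagate the constants $C_{\eta,p,d}$ and $c_0^{p-\alpha}$ across the two Stirling applications and track the polynomial-in-$(|x|,t)$ factors. Once this bookkeeping is in place, the support property of $G$ does all the work that the precise condition \eqref{cond-p-delta} does for the heat equation, which is why no analogous constraint on $p$ and $\delta$ appears here.
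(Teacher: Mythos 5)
Your proposal is correct and follows essentially the same route as the paper: bound $K_n^{(p)}$ via Lemma \ref{est-K-wave}, apply Stirling's estimate \eqref{stir1} to extract the $(n!)^{-a}$ decay for \eqref{A2}, and for \eqref{A3} use the light-cone support of $G$ to control $|y|^{n\eta}$ by $(|x|+t)^{n\eta}$ together with \eqref{int-Gp-wave}. The only cosmetic slip is writing the constant factor $\int_0^t\int G_{t-s}^{\alpha}\,dy\,ds$ as $C_\alpha t^{a}$ with the same exponent $a=3-p$ in $d=2$ (it is $3-\alpha$ there), but since this factor is independent of $n$ it has no effect on convergence.
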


\begin{proof}
We use the estimate for $K_n^{(p)}(t,x)$ given by Lemma \ref{est-K-wave}. Recall that the constant 
$C_{\eta,p,d}$ is given by \eqref{def-C-eta-w} and $a$ is given by \eqref{def-a}.

{\em Step 1.} We first prove that \eqref{A2} holds. 
By \eqref{stir1}, there exists a constant $C_a>1$ such that $\Gamma(an+1) \geq C_a^{-n}(n!)^a$ for all $n\geq 1$. Hence,
\[
K_n^{(p)}(t,x) \leq C^n (1+|x|^{n\eta}+t^{n\eta}) \frac{t^{an}}{(n!)^a},
\]
where $C=c_0^{p-\alpha} C_{\eta,p,d} C_a$.
It follows that
\begin{align*}
& \sum_{n\geq 1}\Big(T^{(p/\alpha-1)n}K_{n}^{(p)}(t,x)\Big)^{1/2} \Big( \sum_{j\geq 1} \Gamma_j^{-p/\alpha}\Big)^{n/2} \leq \\
& \quad \quad \quad \sum_{n\geq 1} C^{n/2} \big( 1+|x|^{n\eta}+ t^{n\eta} \big)^{1/2} 
\frac{T^{\frac{n}{2}(a+\frac{p}{\alpha}-1)}}{(n!)^{\frac{a}{2}  }}
\Big( \sum_{j\geq 1} \Gamma_j^{-p/\alpha}\Big)^{n/2}<\infty.
\end{align*}

{\em Step 2.} Next, we prove that \eqref{A3} holds. 
Note that $G_{t-s}^{\alpha}(x-y)$ contains the indicator of the set
$B_{t,x}:=\{s\in (0,t),y \in \bR^d;|x-y|<t-s\}$. For any $(s,y)\in B_{t,x}$, we have:
\[
1+|y|^{n\eta}+s^{n\eta} \leq 1+(|x|+t)^{n\eta} +s^{n\eta} \leq 1+ (2^{n\eta-1}\vee 1)(|x|^{n\eta}+t^{n\eta})+t^{n\eta} \leq C^n(1+|x|^{n\eta}+t^{n\eta}),
\]
where $C>0$ is a constant depending on $\eta$. Combing this with \eqref{int-Gp-wave}, we infer that:
\begin{align*}
& T^{(\frac{p}{\alpha}-1)n}\int_0^t \int_{\bR^d} G_{t-s}^{\alpha}(x-y)K_n^{(p)}(s,y)dyds  \\
& \quad \leq c_0^{n(p-\alpha)} C_{\eta,p,d}^n 
 \frac{T^{(\frac{p}{\alpha}-1+a)n}}{\Gamma(an+1)} C^n (1+|x|^{n\eta}+t^{n\eta})  \int_0^t \int_{\bR^d}G_{t-s}^{\alpha}(x-y)dyds \\
& \quad = c_0^{n(p-\alpha)} C_{\eta,p,d}^n 
 \frac{T^{(\frac{p}{\alpha}-1+a)n}}{\Gamma(an+1)} C^n (1+|x|^{n\eta}+t^{n\eta}) C_p t^{an}.
\end{align*}
From this estimate, it is not difficult to see that relation \eqref{A3} holds.
\end{proof}

\appendix

\section{The stochastic integral with respect to $\widehat{N}$}
\label{app-integrN}

In this section, we recall some key components of the integration theory with respect to the compensated process $\widehat{N}$.

Let $N$ be a PRM on the space $U=\bR_{+}\times \bR^d \times \bR_0$ of intensity $dtdx \nu(dz)$, where $\nu$ is a L\'evy measure on $\bR$, i.e. $\nu(\{0\})=0$ and $\int_{\bR} (|z|^2 \wedge 1) \nu(dz)<\infty$. For any Borel set $F$ in $U$ with $\mu(F)<\infty$, we let $\widehat{N}(F)=N(F)-\mu(F)$.

The stochastic integral with respect to the compensated process $\widehat{N}$ is defined similarly to the It\^o integral, as explained for instance in Chapter 4 of \cite{applebaum09}. 
More precisely, for any $\wP \times \cB(\bR_0)$-measurable process $H$ with $\bE\int_{U} H^2 d\mu<\infty$, the stochastic integral
$I^{\widehat{N}}(H)=\int_{U} X d\widehat{N}$ is a zero mean random variable with
$\bE|I^{\widehat{N}}(H)|^2=\bE\int_{U} X^2 d\mu$, and $\{M_t=I^{\widehat{N}}(1_{[0,t]}H);t\geq 0\}$ is a square-integrable martingale. The definition of the integral can be extended to processes $H$ satisfying $\int_U |H|^2d\mu<\infty$ a.s., and in this case $M$ is a local martingale, which satisfies:
\[
\bP(|M_t|>\e)\leq \frac{\eta}{\e^2}+\bP\left( \int_0^t \int_{\bR^d} \int_{\bR_0} H^2(s,x,z)\mu(dz,dx,dz)>\eta\right)
\]
for any $\e>0$ and $\eta>0$.
The process $M$ has a c\`adl\`ag modification (denoted also by $M$), whose jump at time $s$ is given by
\[
\Delta M_s=\sum_{i\geq 1} H(T_i,X_i,Z_i) 1_{\{T_i= s\}} \quad
\mbox{where $N=\sum_{i\geq 1}\delta_{(T_i,X_i,Z_i)}$}.
\]
By Lemma I.4.51 of \cite{JS}, the quadratic variation of $M$ is 
\[
[M]_t=\sum_{s\in [0,t]}(\Delta M_s)^2=\sum_{i\geq 1} H^2(T_i,X_i,Z_i) 1_{\{T_i\leq s\}}=\int_0^t \int_{\bR^d} \int_{\bR_0} H^2(s,x,z)N(ds,dx,dz).
\]
 By the Burkholder-Davis-Gundy inequality for c\`adl\`ag local martingales, 
\[
\bE\left(\sup_{s\leq \tau}M_s^2\right) \leq \bE [M]_{\tau},
\]
for any stopping time $\tau$, i.e. the process $M_t^*=\sup_{s\leq t}M_s^2$ is $L$-dominated by $[M]$ (in the sense of Definition I.3.29 of \cite{JS}. By Lenglart's inequality (Lemma I.3.30 of \cite{JS}),
\[
\bP(\sup_{s\leq t}|M_s|>\e)\leq \frac{\eta}{\e^2}+\bP\left( \int_0^t \int_{\bR^d} \int_{\bR_0} H^2(s,x,z)N(dz,dx,dz)>\eta\right),
\]
for any $\e>0$ and $\eta>0$.

\section{An application of Fubini's theorem}
\label{app-Fubini}

In this section, we include an application of Fubini's theorem which is used frequently in the article.

\begin{lemma}
\label{Fubini-lemma}
Let $X$ and $Y$ be independent random variables with values in measurable spaces $(E,\cE)$, respectively $(F,\cF)$, and $f:E \times F \to [0,\infty]$ be a measurable function. Let $\bP_X$ be the law of $X$. If $f(x,Y)<\infty$ a.s. for $\bP_{X}$-almost all $x \in E$, then $f(X,Y)<\infty$ a.s. In particular, if $\bE[f(x,Y)]<\infty$ for $\bP_{X}$-almost all $x \in E$, then $f(X,Y)<\infty$ a.s.
\end{lemma}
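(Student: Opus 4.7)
The plan is to reduce the statement to a direct application of Fubini's (Tonelli's) theorem for the product measure $\bP_X \otimes \bP_Y$, which coincides with the joint law of $(X,Y)$ precisely because of independence.

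First, I would introduce the ``bad set''
\[
B = \{(x,y) \in E \times F : f(x,y) = \infty\},
\]
which lies in the product $\sigma$-field $\cE \otimes \cF$ because $f$ is measurable. The $x$-section $B_x = \{y \in F : f(x,y) = \infty\}$ is therefore in $\cF$ for every $x \in E$, and $\bP_Y(B_x) = \bP(f(x,Y) = \infty)$.

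Next, because $X$ and $Y$ are independent, the joint law of $(X,Y)$ on $(E\times F, \cE \otimes \cF)$ equals $\bP_X \otimes \bP_Y$. Hence
\[
\bP\bigl(f(X,Y) = \infty\bigr) = (\bP_X \otimes \bP_Y)(B) = \int_E \bP_Y(B_x)\,\bP_X(dx) = \int_E \bP\bigl(f(x,Y) = \infty\bigr)\,\bP_X(dx),
\]
where the middle equality is Tonelli's theorem applied to the nonnegative indicator $1_B$. By hypothesis, the integrand vanishes for $\bP_X$-almost all $x$, so the integral is $0$, and therefore $f(X,Y) < \infty$ a.s.

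For the ``In particular'' clause, if $\bE[f(x,Y)] < \infty$ for $\bP_X$-a.a. $x$, then for each such $x$ the nonnegative random variable $f(x,Y)$ is finite a.s., which is exactly the hypothesis of the first part; the conclusion follows. There is no real obstacle here: the only thing to be careful about is the measurability of $B$ and of $x \mapsto \bP_Y(B_x)$, both of which are standard consequences of the measurability of $f$ and of Tonelli's theorem, and the invocation of independence to identify the joint law with the product measure.
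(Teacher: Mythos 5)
Your proposal is correct and follows essentially the same route as the paper: both identify the joint law of $(X,Y)$ with $\bP_X\otimes\bP_Y$ via independence and apply Fubini--Tonelli to the indicator of the set $\{f=\infty\}$, concluding that the integral of $\bP(f(x,Y)=\infty)$ over $x$ vanishes. Your added remarks on the measurability of the sections and of $x\mapsto\bP_Y(B_x)$ are fine but standard, and the ``in particular'' clause is handled the same way.
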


\begin{proof}
We know that $\bP(f(x,Y)<\infty)=1$ for all $x \in N^c$, where $\bP_{X}(N)=0$. By Fubini's theorem,
\begin{align*}
\bP(f(X,Y)=\infty)&=\int_{E}\int_{F}1_{\{f(x,y)=\infty\}}\bP_{X}(dx) \bP_{Y}(dy)=\int_{N^c} \left( \int_{F} 1_{\{f(x,y)=\infty\}} \bP_{Y}(dy) \right) \bP_{X}(dx) \\
&=\int_{N^c}\bP(f(x,Y)=\infty)\bP_{X}(dx)=0.
\end{align*}
\end{proof}

\begin{remark}
\label{Fubini-remark}
{\rm Note that $h(x)=\bE[f(x,Y)]<\infty$ for $\bP_{X}$-almost all $x \in E$ is equivalent to $h(X)<\infty$ a.s. On the other hand, $h(X)=\bE[f(X,Y)|X]$ a.s., since $X$ and $Y$ are independent. So the criterion given by Lemma \ref{Fubini-lemma} can be stated as follows: for independent random variables $X$ and $Y$,
\[
\mbox{if $\bE[f(X,Y)|X]<\infty$ a.s., then $f(X,Y)<\infty$ a.s.}
\]
Here we use a generalized definition of the conditional expectation $\bE[Z|\mathcal{G}]$ of a random variable $Z$ given a $\sigma$-field $\cG$, for which $Z$ does not have to be integrable. }
\end{remark}


\section{Some integrals of the heat kernel}
\label{app-heat}

In this section, we include some results which are used in the proof of
 Proposition \ref{prop-A2-heat} for the verification of condition \eqref{A3}. In particular, we need the explicit form of all constants.

\begin{lemma}
\label{lemC1}
In the case of the heat equation, for any $p>0$, $t>0$ and $x\in \bR^d$,
\[
\int_{\bR^d}G_t^p(x-y)dy=\overline{K}_{p,d} t^{\frac{d(1-p)}{2}} \quad \mbox{with}
 \quad \overline{K}_{p,d}=(2\pi)^{\frac{d(1-p)}{2}}p^{-d/2}.
\]
Consequently, if $p<1+\frac{2}{d}$, then
\[
\int_0^t \int_{\bR^d} G_{t-s}^p(x-y)dyds=K_{p,d}t^{\frac{d(1-p)}{2}+1} 
\quad \mbox{with} \quad K_{p,d}=\frac{\overline{K}_{p,d}}{\frac{d(1-p)}{2}+1}.
\]
\end{lemma}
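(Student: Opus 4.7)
The plan is to reduce both claims to the scaling identity \eqref{Gp-heat} already recorded in the paper, namely $G_t^p(x)=\overline{K}_{p,d}\,t^{d(1-p)/2} G_{t/p}(x)$. This identity is purely algebraic: starting from $G_t^p(x)=(2\pi t)^{-dp/2}\exp(-p|x|^2/(2t))$, one multiplies and divides by $(2\pi t/p)^{-d/2}$ in order to read off the Gaussian density $G_{t/p}(x)=(2\pi t/p)^{-d/2}\exp(-|x|^2/(2(t/p)))$; the leftover normalising factor is precisely $(2\pi)^{d(1-p)/2}p^{-d/2}\,t^{d(1-p)/2}$.

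Granting this, the first integral becomes
\[
\int_{\bR^d} G_t^p(x-y)\,dy = \overline{K}_{p,d}\,t^{d(1-p)/2}\int_{\bR^d} G_{t/p}(x-y)\,dy = \overline{K}_{p,d}\,t^{d(1-p)/2},
\]
where the last equality uses that $G_{t/p}$ is the density of a centred Gaussian on $\bR^d$ with covariance $(t/p)I_d$, hence integrates to $1$. A change of variable $y\mapsto x-y$ (or translation invariance of Lebesgue measure) eliminates $x$ from the answer.

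For the second statement, I will substitute the first into the $dyds$ integral:
\[
\int_0^t \int_{\bR^d} G_{t-s}^p(x-y)\,dy\,ds = \overline{K}_{p,d}\int_0^t (t-s)^{d(1-p)/2}\,ds = \overline{K}_{p,d}\int_0^t u^{d(1-p)/2}\,du.
\]
The last integral converges if and only if $d(1-p)/2 + 1>0$, i.e.\ $p<1+2/d$, and equals $t^{d(1-p)/2+1}/(d(1-p)/2+1)$. Dividing by $d(1-p)/2+1$ yields the constant $K_{p,d}$ announced in the lemma.

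There is no real obstacle here; the only point that requires care is bookkeeping of the constants so that the factor emerging from the scaling identity matches $\overline{K}_{p,d}=(2\pi)^{d(1-p)/2}p^{-d/2}$, and verifying that the integrability condition on $p$ comes exactly from the time integral and not from the spatial one (the spatial integral converges for every $p>0$).
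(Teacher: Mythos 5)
Your proposal is correct and follows exactly the route the paper takes: the paper's proof is simply ``This follows by direct calculation, using relation \eqref{Gp-heat}'', and your argument is precisely that calculation spelled out (scaling identity, Gaussian normalisation, then the time integral producing the constraint $p<1+\tfrac{2}{d}$). Nothing is missing.
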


\begin{proof}
This follows by direct calculation, using relation \eqref{Gp-heat}.
\end{proof}

\begin{lemma}
\label{lemC2}
In the case of the heat equation, for any $\gamma>0$, $p>0$, $t>0$ and $x \in \bR^d$,
$$\int_{\bR^d}G_t^px-y)|y|^{\gamma}dy \leq \overline{C}_{\gamma,p,d}\, t^{\frac{d(1-p)}{2}} (|x|^{\gamma}+t^{\gamma/2}),$$
where
\[
\overline{C}_{\gamma,p,d}=\overline{K}_{p,d}  (2^{\gamma-1} \vee 1)  (1\wedge p)^{-\gamma/2} \left[1+\frac{2^{
\gamma/2}}{\Gamma(d/2)}\Gamma\left(\frac{\gamma+d}{2}\right) \right].
\]
Consequently, if $p<1+\frac{2}{d}$, then
\[
\int_0^t \int_{\bR^d} G_{t-s}^p(x-y) |y|^{\gamma}dyds \leq C_{\gamma,p,d}' \, t^{\frac{d(1-p)}{2}+1} (|x|^{\gamma}+t^{\gamma/2}),
\]
where
\begin{equation}
\label{def-c}
C_{\gamma,p,d}'=\frac{\overline{C}_{\gamma,p,d}}{\frac{d(1-p)}{2}+1}=K_{p,d}  (2^{\gamma-1} \vee 1)  (1\wedge p)^{-\gamma/2} \left[1+\frac{2^{
\gamma/2}}{\Gamma(d/2)}\Gamma\left(\frac{\gamma+d}{2}\right) \right].
\end{equation}

\end{lemma}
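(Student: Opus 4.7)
My plan is to reduce the spatial integral to a Gaussian expectation via the identity \eqref{Gp-heat}, compute the moment explicitly, and then integrate in time.

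\textbf{Spatial integral.} By \eqref{Gp-heat},
\[
\int_{\bR^d} G_t^p(x-y)|y|^{\gamma}dy = \overline{K}_{p,d}\, t^{\frac{d(1-p)}{2}}\int_{\bR^d} G_{t/p}(x-y)|y|^{\gamma}dy.
\]
Since $G_{t/p}$ is the density of $N_d(0,(t/p)I_d)$, the remaining integral equals $\bE|x+Y|^{\gamma}$ where $Y \sim N_d(0,(t/p)I_d)$. The plan is to apply $|x+Y|^{\gamma} \leq (2^{\gamma-1}\vee 1)(|x|^{\gamma}+|Y|^{\gamma})$ (convexity/subadditivity), and to evaluate $\bE|Y|^{\gamma}$ from the scaled chi distribution: if $Y \sim N_d(0,\sigma^2 I_d)$ then $\bE|Y|^{\gamma} = \sigma^{\gamma}\, 2^{\gamma/2}\Gamma(\tfrac{\gamma+d}{2})/\Gamma(\tfrac{d}{2})$. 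With $\sigma^2 = t/p$, using $(t/p)^{\gamma/2} \leq (1\wedge p)^{-\gamma/2}t^{\gamma/2}$ (which is trivial when $p\geq 1$ and an equality up to the factor $(1\wedge p)^{-\gamma/2}$ when $p<1$), one obtains
\[
\bE|x+Y|^{\gamma} \leq (2^{\gamma-1}\vee 1)(1\wedge p)^{-\gamma/2}\left[1 + \frac{2^{\gamma/2}\Gamma(\frac{\gamma+d}{2})}{\Gamma(d/2)}\right](|x|^{\gamma}+t^{\gamma/2}),
\]
where I have absorbed the coefficient on $|x|^{\gamma}$ into the common prefactor using $(1\wedge p)^{-\gamma/2} \geq 1$ and then bounded the max of the two coefficients by their sum. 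Multiplying by $\overline{K}_{p,d}\,t^{d(1-p)/2}$ yields the first bound with the stated constant $\overline{C}_{\gamma,p,d}$.

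\textbf{Time integral.} Assuming $p < 1+2/d$, so that $d(1-p)/2 > -1$ and the exponent is integrable, applying the spatial bound with $t$ replaced by $t-s$ gives
\[
\int_0^t\int_{\bR^d}G_{t-s}^{p}(x-y)|y|^{\gamma}dyds \leq \overline{C}_{\gamma,p,d}\left[|x|^{\gamma}\int_0^t (t-s)^{\frac{d(1-p)}{2}}ds + \int_0^t (t-s)^{\frac{d(1-p)}{2}+\frac{\gamma}{2}}ds\right].
\]
Both integrals are evaluated in closed form as $t^{d(1-p)/2+1}/(d(1-p)/2+1)$ and $t^{d(1-p)/2+\gamma/2+1}/(d(1-p)/2+\gamma/2+1)$. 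Since $d(1-p)/2+\gamma/2+1 \geq d(1-p)/2+1 > 0$, the second denominator is larger, so replacing it by the smaller denominator gives the common factor $(d(1-p)/2+1)^{-1}$ and combines the two time powers into $t^{d(1-p)/2+1}(|x|^{\gamma}+t^{\gamma/2})$. This produces the stated constant $C_{\gamma,p,d}' = \overline{C}_{\gamma,p,d}/(d(1-p)/2+1) = K_{p,d}(2^{\gamma-1}\vee 1)(1\wedge p)^{-\gamma/2}[1+2^{\gamma/2}\Gamma(\tfrac{\gamma+d}{2})/\Gamma(d/2)]$.

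\textbf{Main obstacle.} There is no serious obstacle here; the estimate is a direct computation. The only bookkeeping care is (i) splitting correctly on $p<1$ vs $p\geq 1$ to justify the $(1\wedge p)^{-\gamma/2}$ factor, and (ii) combining the two coefficients into the particular form $1+2^{\gamma/2}\Gamma(\tfrac{\gamma+d}{2})/\Gamma(d/2)$ stated in the lemma rather than a maximum, which amounts to the trivial bound $\max(a,b)\leq a+b$ for nonnegative $a,b$.
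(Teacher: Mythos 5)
Your proposal is correct and follows essentially the same route as the paper: reduce via \eqref{Gp-heat} to a Gaussian moment $\bE|x-X|^{\gamma}$ with $X\sim N_d(0,(t/p)I_d)$, apply the subadditivity bound $(2^{\gamma-1}\vee1)(|x|^{\gamma}+\bE|X|^{\gamma})$, use the explicit chi-moment formula, absorb the scaling into $(1\wedge p)^{-\gamma/2}$, and then integrate in time. The only difference is bookkeeping (the paper bounds $|x|^{\gamma}+\bE|X|^{\gamma}\leq(1+z_{\gamma})(|x|^{\gamma}+p^{-\gamma/2}t^{\gamma/2})$ directly, which matches your ``max by sum'' step), so the arguments coincide.
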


\begin{proof} Let  $X$ be a random vector with a $N_d(0,(t/p)I_d)$ distribution. By \eqref{Gp-heat}, we have:
\begin{align*}
\int_{\bR^d}G_t^p(x-y)|y|^{\gamma}dy&= \int_{\bR^d}G_t^p(y)|x-y|^{\gamma}dy=\overline{K}_{p,d}\,t^{\frac{d(1-p)}{2}}
\int_{\bR^d}G_{t/p}(y)|x-y|^{\gamma}dy\\
& = \overline{K}_{p,d}\,t^{\frac{d(1-p)}{2}} \bE|x-X|^{\gamma} 
\leq \overline{K}_{p,d}\, t^{(1-p)d/2} (2^{\gamma-1} \vee 1)(|x|^{\gamma}+\bE|X|^{\gamma}).
\end{align*}
Let
$Z=X/\sqrt{t/p}$. Then $Z$ a $N_d(0,I_{d})$ distribution, and $\bE|X|^{\gamma}=(t/p)^{\gamma/2}z_{\gamma} $, where
\[
z_{\gamma}:=\bE|Z|^{\gamma}=\frac{2^{
\gamma/2}}{\Gamma(d/2)}\Gamma\Big(\frac{\gamma+d}{2}\Big).
\]
Hence,
\[
|x|^{\gamma}+\bE|X|^{\gamma}\leq (1+z_{\gamma})(|x|^{\gamma}+p^{-\gamma/2}t^{\gamma/2})\leq (1+z_{\gamma})(1\wedge p)^{-\gamma/2}(|x|^{\gamma}+t^{\gamma/2}).
\]
This proves the first statement. The second statement follows by direct calculation.
\end{proof}

\small{
{\bf Acknowledgement.} The first author would like to thank Robert Dalang and Gennady Samorodnitsky for the invitation to visit EPFL, respectively Cornell University, their warm hospitality, and the fruitful discussions about the 
problem considered in this article.
}

\normalsize{

}

\end{document}